\documentclass[a4paper,reqno,10pt]{amsart}

\usepackage[top=2.5cm,bottom=2.5cm,left=2.5cm,right=2.5cm]{geometry}

\usepackage{amssymb,amsmath,amsthm,array,bm,mathtools,bbm,enumitem,booktabs}
\usepackage{graphicx}
\usepackage{xcolor}
\usepackage[pagewise]{lineno}
\usepackage[colorlinks=true,bookmarks=true,bookmarksnumbered=true,bookmarkstype=toc,linktocpage=true,hypertexnames=false]{hyperref}
\usepackage{mathtools}
\mathtoolsset{showonlyrefs=true}

\newtheorem{thm}{Theorem}[section]

\newtheorem{lem}[thm]{Lemma}
\newtheorem{prop}[thm]{Proposition}
\newtheorem{cor}[thm]{Corollary}
\newtheorem{rem}[thm]{Remark}

\newtheorem{ass}[thm]{Assumption}

\numberwithin{equation}{section}
\allowdisplaybreaks

\newcommand{\mcl}{\mathcal{L}}

\newcommand{\mbbs}{\mathbb{S}}
\newcommand{\mbbn}{\mathbb{N}}
\newcommand{\E}{\mathbb{E}}
\newcommand{\Prob}{\mathbb{P}}

\newcommand{\p}{\partial}

\newcommand{\cil}{\xrightarrow{\mcl}}

\newcommand{\cip}{\xrightarrow{p}}

\newcommand{\argmax}{\mathop{\rm argmax}}

\newcommand{\abs}[1]{\left|#1\right|}
\newcommand{\R}{\mathbb{R}}
\newcommand{\ind}{\mathbbm{1}}
\newcommand{\Y}{\mathbb{Y}}

\newcommand{\al}{\alpha}
\newcommand{\gam}{\gamma}

\newcommand{\Lam}{\Lambda}

\newcommand{\Del}{\Delta}

\title[Hybrid switching L\'{e}vy-driven SDEs]{
	Ergodicity and High-Frequency Inference for Hybrid Switching L\'{e}vy-Driven Stochastic Differential Equations
}

\author{Yuzhong Cheng}
\address{Kyushu University, Institute of Mathematics for Industry, 744 Motooka Fukuoka, Japan}
\email{cheng.yuzhong.451@m.kyushu-u.ac.jp}

\date{}

\keywords{exponential ergodicity, Gaussian quasi-likelihood, hybrid switching, L\'evy-driven SDE, polynomial-type large deviation inequality, state-dependent switching}

\subjclass[2020]{Primary 62M05, 62F12, 60H10; Secondary 60J27, 60F05, 60G51}

\begin{document}
	\setlength{\baselineskip}{4.5mm}
	
	\begin{abstract}
		Hybrid switching L\'evy-driven stochastic differential equations with pure-jump noise and state-dependent switching rates are studied under high-frequency observation. A three-stage inference procedure is proposed for the drift, scale, and switching-rate parameters, combining a staged Gaussian quasi-likelihood with an intensity-type contrast. Checkable sufficient conditions for weighted exponential ergodicity are established for the hybrid process; the proof does not rely on Brownian smoothing, but uses a fixed skeleton-chain argument combining small-jump accessibility and regime connectivity.  Under ergodicity and the high-frequency sampling scheme, consistency, joint asymptotic normality, and a polynomial-type large deviation inequality are proved for the full estimator. The joint limit exhibits a transparent covariance structure: the drift and scale blocks are coupled through the third moment of the driving L\'evy noise, whereas the switching-rate block is asymptotically uncorrelated with the continuous-coefficient blocks. Numerical experiments for models driven by normal inverse Gaussian noise illustrate the finite-sample behavior of the proposed estimators.
		
	\end{abstract}
	
	\maketitle

	\section{Introduction}
	
	Switching stochastic differential equations are a standard framework for
	random dynamical systems whose local behavior changes across finitely many
	regimes. In the classical setting the regime process is an autonomous
	continuous-time Markov chain, while more general hybrid models allow the
	switching rates to depend on the current continuous state; see
	\cite{mao2006stochastic,yin2009hybrid,XiYin2011}.
	
	This paper studies the hybrid switching L\'evy-driven stochastic
	differential equation 
	\begin{equation}
		\label{eq:intro-model}
		dX_t
		=
		b(X_t,\Lambda_t,\alpha)\,dt
		+
		c(X_{t-},\Lambda_{t-},\gamma)\,dL_t,
	\end{equation}
	where \(L\) is a one-dimensional pure-jump L\'evy process and
	\(\Lambda\in\mathbb S=\{1,\ldots,m\}\) has state-dependent switching rates
	\(q_{ij}(X_t,\vartheta)\). The unknown parameter is
	\(\zeta=(\alpha,\gamma,\vartheta)\). We observe the full hybrid path
	\(\mathcal D_n=\{(X_{t_j},\Lambda_{t_j})\}_{j=0}^n\) at times
	\(t_j=jh_n\), where \(h_n\to0\), \(T_n:=nh_n\to\infty\), and
	\(n h_n^2\to0\). Our objective is to derive checkable ergodicity conditions
	for the model and then estimate the full parameter from this high-frequency
	ergodic sample.
	
	The long-time condition \(T_n\to\infty\) is essential for the drift and
	switching-rate parameters. As in the standard high-frequency theory for
	ergodic SDEs, a fixed time horizon does not provide enough information for
	consistent drift estimation, and in the present hybrid model it also gives
	only finite switching information. The invariant law averages generated by
	an ergodic trajectory are therefore the deterministic limits of the
	quasi-likelihood contrasts. Exponential ergodicity is used not only to
	identify these limits, but also to control the behavior of the process at
	infinity and to provide the mixing and moment bounds needed for the central
	limit and polynomial large deviation arguments.

	Ergodicity for switching diffusions and switching jump-diffusions has been
	studied extensively; see, for example,
	\cite{Xi2009,XiYin2011,XiYin2017}. In many such results the diffusion part
	plays an essential role through smoothing, irreducibility, or strong
	Feller-type arguments. The present model has no Brownian component, so these
	approaches do not apply directly. For L\'evy-driven SDEs without switching,
	exponential ergodicity and mixing estimates are available in
	\cite{Masuda2007,Kulik2009}, but those results do not cover the additional
	state-dependent switching structure considered here.
	
	From the statistical viewpoint, a substantial part of the literature on
	switching SDEs is computational, especially when the regime is hidden or the
	model is fitted by simulation-based methods; see, for example,
	\cite{Hibbah2020,MariMari2023}. On the theoretical side, high-frequency quasi-likelihood
	methods are well developed for ergodic diffusions and L\'evy-driven SDEs,
	including Gaussian or quasi-likelihood approaches in
	\cite{Kessler1997,Gobet2002,UchidaYoshida2012,Mas13as} and the two-step
	procedure of \cite{MasudaUehara2017}. By contrast, there appears to be
	almost no high-frequency asymptotic theory for switching L\'evy-driven SDEs,
	and even the Markovian switching diffusion case has only recently been
	studied in \cite{Yuzhong2025}.
	
	In this paper we first prove checkable sufficient conditions for weighted
	exponential ergodicity of the pure-jump state-dependent hybrid model, and
	then construct a three-stage estimator for the full parameter \(\zeta\). The
	ergodicity proof is based on a fixed skeleton-chain argument, a small-jump
	minorization of the L\'evy measure, regime connectivity on compact sets, and
	a Foster--Lyapunov drift condition, in the spirit of
	\cite{meyn1993stability,Masuda2007}.
	The estimation procedure combines a staged Gaussian quasi-likelihood for
	\((\alpha,\gamma)\), adapted from the high-frequency L\'evy-driven SDE
	methodology of \cite{Mas13as,MasudaUehara2017}, with an intensity-type
	contrast for \(\vartheta\), adapted from the counting-process likelihood
	framework in \cite{Andersen1993} and its discrete-time
	Markov jump analogue in \cite{bladt2005statistical}. Both parts are
	extracted from the same observed hybrid path.
	
	The ergodicity conditions consist of dissipativity of the drift, boundedness
	and local positivity of the scale coefficient, a small-jump lower bound on
	the L\'evy measure, and upper and lower control of the switching rates. In
	particular, the switching mechanism is required to satisfy a uniform upper
	bound on the total switching rate and positive lower bounds on
	\(q_{ij}(\cdot,\vartheta_0)\) on compact \(x\)-sets. Relative to the
	non-switching L\'evy-driven SDE setting of \cite{Masuda2007,Kulik2009}, this
	is the additional ingredient needed to control regime movement and guarantee
	accessibility of the discrete states. Such switching-rate conditions also
	parallel the role played by regime-connectivity assumptions in the
	ergodicity theory of hybrid diffusions and jump-diffusions
	\cite{XiYin2011,XiYin2017}.
	
	Our main results are consistency, joint \(T_n^{1/2}\)-asymptotic
	normality, and a polynomial-type large deviation inequality for the full
	estimator. A key technical point is that the hybrid structure generates
	additional within-step remainder terms: the increment \(\Delta_jX\) is
	affected by possible regime changes inside \([t_{j-1},t_j]\), and the
	endpoint transition indicators only approximate the continuous-time
	switching counts. We show that these terms are negligible under
	\(n h_n^2\to0\). Compared with the existing quasi-likelihood theory for
	ordinary L\'evy-driven SDEs \cite{Mas13as,MasudaUehara2017}, the continuous
	part \((\hat\alpha_n,\hat\gamma_n)\) keeps the same \(T_n^{1/2}\)-rate and
	the same asymptotic covariance structure as in the non-switching case.
	For the switching part, we also obtain the \(T_n^{1/2}\)-rate for
	\(\hat\vartheta_n\). Compared with the recent Markovian switching diffusion
	result of \cite{Yuzhong2025}, we allow state-dependent switching and
	pure-jump L\'evy noise. An interesting feature of the joint limit is that
	the asymptotic covariance blocks between the continuous part
	\((\hat\alpha_n,\hat\gamma_n)\) and the switching part \(\hat\vartheta_n\)
	are zero. Thus, although \(X\) and \(\Lambda\) interact dynamically and the
	two parameter blocks are estimated from the same observed hybrid path, the
	corresponding estimators are asymptotically uncorrelated.
	
	The rest of the paper is organized as follows.
	Section~\ref{sec:model} introduces the model and assumptions.
	Section~\ref{sec:exp-erg} gives sufficient conditions for exponential
	ergodicity. Section~\ref{sec:estimation} develops the three-stage estimator
	and its asymptotic theory. Section~\ref{sec:numerical} reports numerical
	experiments. Proofs are collected in Section~\ref{sec:thmproof}, and
	auxiliary lemmas are given in Section~\ref{sec:tech}.

	\section{Model and assumptions}
	\label{sec:model}
	
	\subsection{Model}
	Let
	$\zeta=(\alpha,\gamma,\vartheta)
	\in
	\Theta_\alpha\times\Theta_\gamma\times\Theta_\vartheta
	\subset
	\mathbb R^{p_\alpha}\times\mathbb R^{p_\gamma}\times\mathbb R^{p_\vartheta}$.
	Let $\mathbb S=\{1,\ldots,m\}.$
	On a filtered measurable space
	\((\Omega,\mathcal F,\{\mathcal F_t\}_{t\ge0})\), let
	\(\{\mathbb P_\zeta:\zeta\in\Theta\}\) be a family of probability measures
	under which the filtration is usual. 
	
	For \(i\neq j\), let \(q_{ij}(\cdot,\vartheta)\ge0\), $q_{ii}(x,\vartheta) := - \sum_{j\neq i}q_{ij}(x,\vartheta)$, set $Q(x,\vartheta):=(q_{ij}(x,\vartheta))_{i,j\in\mathbb S}$.
	Choose consecutive (with respect to the lexicographic ordering on $\mathbb S\times \mathbb S$) 
	left-closed, right-open intervals $\Gamma_{ij}(x,\vartheta)
	\subset\mathbb R_+$ with $|\Gamma_{ij}(x,\vartheta)|=q_{ij}(x,\vartheta)$ for $i\neq j$ (see, for example, \cite{yin2009hybrid,ZhuYinBaran2017}),
	and define
	\[
	h(x,i,z;\vartheta)
	:=
	\sum_{j\neq i}(j-i)\mathbf 1_{\Gamma_{ij}(x,\vartheta)}(z).
	\]
	Under \(\mathbb P_\zeta\), the process
	\((X,\Lambda)\in\mathbb R\times\mathbb S\) solves
	the following stochastic differential equation:
	\begin{equation}
		\label{eq:model}
		\begin{cases}
			dX_t
			=
			b(X_{t},\Lambda_{t},\alpha)\,dt
			+
			c(X_{t-},\Lambda_{t-},\gamma)\,dL_t,
			\\[0.4em]
			d\Lambda_t
			=
			\displaystyle
			\int_{\mathbb R_+}
			h(X_{t-},\Lambda_{t-},z;\vartheta)\,N(dt,dz),
		\end{cases}
	\end{equation}
	where 
	$L$
	is a one-dimensional pure-jump L\'evy process with L\'evy measure \(\nu\),
	and \(N(dt,dz)\) is a Poisson random measure on
	\(\mathbb R_+\times\mathbb R_+\) with intensity \(dt\,dz\).
	From \eqref{eq:model}, the switching process \(\Lambda\) satisfies, for \(j\neq i\),
	\[
	\Prob_\zeta
	\left(
	\Lambda_{t+\delta}=j
	\mid
	\mathcal F_t,\ X_t=x,\ \Lambda_t=i
	\right)
	=
	q_{ij}(x,\vartheta)\delta+o(\delta),
	\qquad \delta\downarrow0.
	\]
	The matrix \(Q(x,\vartheta)\) is called the state-dependent rate matrix of \(\Lambda\).
	
	The initial condition $X_0$, $L$ and $N$ are independent.
	
	Throughout, $\Theta$ is assumed to be compact and convex, with sufficiently regular boundary. We denote the true parameter value by $\zeta_0=(\alpha_0,\gamma_0,\vartheta_0)\in\operatorname{int}(\Theta)$.

	In the special case where $Q(x) \equiv Q$, the process $\Lambda$ reduces to a continuous-time Markov chain. This configuration corresponds to the standard Markovian switching framework (see, e.g., \cite{mao2006stochastic}).
	For general background on L\'evy-driven stochastic equations we refer to
	\cite{applebaum2009levy,Sat99}. For hybrid and
	regime-switching stochastic systems, see
	\cite{mao2006stochastic,yin2009hybrid,XiYin2011}.

	\subsection{Notations}
	Throughout, for \(a,u,v\in\mathbb R^d\) and
	\(A\in\mathbb R^{d\times d}\), write
	$a[v]:=a^\top v$,
	$A[u,v]:=u^\top A v$,
	$a^{\otimes2}:=aa^\top$ .
	
	For \(\zeta\in\Theta\), let \(\mathbb E_\zeta\) denote expectation under
	\(\mathbb P_\zeta\). Set
	$\mathbb P:=\mathbb P_{\zeta_0}$,
	$\mathbb E:=\mathbb E_{\zeta_0}$,
	$\mathbb E_{j-1}[\cdot]
	:=
	\mathbb E[\cdot\mid\mathcal F_{t_{j-1}}]$.
	For \(z=(x,i)\in\mathbb R\times\mathbb S\), let
	\(\mathbb P_{\zeta,z}\) be the law of \((X,\Lambda)\) under parameter
	\(\zeta\) with initial state \(z\), and let
	\(\mathbb E_{\zeta,z}\) be the corresponding expectation. We abbreviate $\mathbb P_z:=\mathbb P_{\zeta_0,z}$, $\mathbb E_z:=\mathbb E_{\zeta_0,z}$.
	
	For a set \(A\), let \(A^c\) denote its complement. For sets \(A,B\), write
	\(A\Subset B\) if \(\overline A\) is compact and
	\(\overline A\subset\operatorname{int}(B)\).
	
	For increments of the processes, $\Delta_j X := X_{t_j} - X_{t_{j-1}}$ and $\Delta_j L := L_{t_j} - L_{t_{j-1}}$. For the discretely sampled coefficients, we write $b_{j-1}(\alpha) := b(X_{t_{j-1}}, \Lambda_{t_{j-1}}, \alpha)$ and $c_{j-1}(\gamma) := c(X_{t_{j-1}}, \Lambda_{t_{j-1}}, \gamma)$.
	
	\subsection{Assumptions}
	
	\begin{ass}
		\label{ass:Levy}
		$\mathbb{E}[L_1]=0$, $\mathbb{E}[L_1^2]=1$, and $\mathbb{E}|L_1|^q<\infty \quad (\forall q>0).$
	\end{ass}
	
	\begin{ass}
		\label{ass:coeff}
		\begin{enumerate}[label=(C\arabic*),leftmargin=3.0em]
			\item There exists a constant $C>0$ such that for all $x,y\in\mathbb{R}$, $i\in\mathbb{S}$, \(\alpha\in\Theta_\alpha\), and \(\gamma\in\Theta_\gamma\),
			$$ \begin{aligned}
				|b(x,i,\alpha)-b(y,i,\alpha)|+|c(x,i,\gamma)-c(y,i,\gamma)| &\le C|x-y|,\\
				|b(x,i,\alpha)|^2+|c(x,i,\gamma)|^2 &\le C(1+|x|^{2}).
			\end{aligned} $$
			
			\item For each $i\in\mathbb{S}$, \(b(\cdot,i,\cdot)\in C^{2,3}(\mathbb R\times\Theta_\alpha)\) and
			\(c(\cdot,i,\cdot)\in C^{2,3}(\mathbb R\times\Theta_\gamma)\). Furthermore, there exists a constant $C>0$ such that
			$$ \max_{i\in\mathbb{S}}\sup_{(x,\alpha,\gamma)\in\mathbb{R}\times\Theta_\alpha\times \Theta_\gamma}
			\frac{1}{1+|x|^C}
			\left(
			\max_{\substack{0\le k\le3\\0\le \ell\le2}}
			\Big\{
			|\partial_\alpha^{k}\partial_x^{\ell}b(x,i,\alpha)|
			+|\partial_\gamma^{k}\partial_x^{\ell}c(x,i,\gamma)|
			\Big\}
			+c(x,i,\gamma)^{-1}
			\right)<\infty, $$
			where $\partial_\theta^{k}$ and $\partial_x^{\ell}$ denote the $k$-th and $\ell$-th order partial derivatives with respect to $\theta$ and $x$, respectively.
			
			\item $\inf_{(x,i,\gamma)\in\mathbb{R}\times\mathbb{S}\times\Theta_\gamma}c(x,i,\gamma) \ge c_0 >0$. 
			
			\item 
			For each \(i\neq j\),
			\(q_{ij}\in C^{1,3}(\mathbb R\times\Theta_\vartheta)\) and
			\[
			0<
			\min_{i\neq j}\inf_{(x,\vartheta)}
			q_{ij}(x,\vartheta),
			\qquad
			\max_{i}\sup_{(x,\vartheta)}
			\sum_{j\neq i}q_{ij}(x,\vartheta)
			<\infty,
			\]
			and, for some \(C>0\),
			\[
			\max_{i\neq j}
			\sup_{(x,\vartheta)\in\mathbb R\times\Theta_\vartheta}
			\frac{1}{1+|x|^C}
			\max_{\substack{0\le \ell\le1\\0\le r\le3}}
			\left\{
			\left|
			\partial_x^\ell\partial_\vartheta^r q_{ij}(x,\vartheta)
			\right|
			+
			\left|
			\partial_x^\ell\partial_\vartheta^r \log q_{ij}(x,\vartheta)
			\right|
			\right\}
			<\infty .
			\]
		\end{enumerate}
	\end{ass}
	
	Under Assumption \ref{ass:coeff} the system \eqref{eq:model} admits a unique, non-explosive strong solution. Furthermore, the joint process $(X, \Lambda)$ is a strong Markov process; see \cite{XiYin2011}.

	

	\begin{ass}
		\label{ass:ergodic}
		\begin{enumerate}[label=(E\arabic*),leftmargin=3.0em]
			\item
			For every $q>0$, there exist constants $C_q>0$, $a_q>0$ and a measurable function $V_q:\R\times\mbbs\to[1,\infty)$ such that for all $t\ge0$ and $z=(x,i)\in\mathbb{R}\times\mathbb{S}$,
			$$ \sup_{|f|\le V_q}\left| \mathbb{E}_z[f(X_t,\Lambda_t)]-\int_{\mathbb{R}\times\mathbb{S}} f\,d\pi_0 \right| \le C_q e^{-a_q t} V_q(z), $$
			with $V_q(x,i)\asymp 1+|x|^q$.
			
			\item For every $q>0$, $\pi_0$ has finite polynomial moments:
			$ \int_{\mathbb{R}\times\mathbb{S}}|x|^q\,\pi_0(dx,di)<\infty. $
		\end{enumerate}
	\end{ass}
	Under Assumption \ref{ass:ergodic}, the joint Markov process $(X, \Lambda)$ admits a unique invariant probability measure $\pi_0$ on $\mathbb{R}\times\mathbb{S}$. We assume that $(X_0, \Lambda_0) \sim \pi_0$. 
	In Section~\ref{sec:exp-erg}, we provide a direct verification of
	Assumption~\ref{ass:ergodic} for the present pure-jump hybrid model. 

	
	We next define the limiting contrasts. Let 
	\(q_i(x,\vartheta):=\sum_{j\neq i}q_{ij}(x,\vartheta)\). Define
	\begin{align*}
		G_\gamma(x,i,\gamma)
		&:=
		\log\frac{c(x,i,\gamma)^2}{c(x,i,\gamma_0)^2}
		+
		\frac{c(x,i,\gamma_0)^2}{c(x,i,\gamma)^2}
		-1,\\
		G_\alpha(x,i,\alpha)
		&:=
		\frac{\{b(x,i,\alpha)-b(x,i,\alpha_0)\}^2}
		{c(x,i,\gamma_0)^2},\\
		F_Q(x,i,\vartheta)
		&:=
		\sum_{j\neq i}
		q_{ij}(x,\vartheta_0)
		\log\frac{q_{ij}(x,\vartheta)}{q_{ij}(x,\vartheta_0)}
		-
		q_i(x,\vartheta)+q_i(x,\vartheta_0).
	\end{align*}
	Set
	\[
	\mathbb Y_\gamma(\gamma)
	:=
	-\frac12\int_{\mathbb{R}\times\mathbb{S}} G_\gamma(x,i,\gamma)\,\pi_0(dx,di),
	\qquad
	\mathbb Y_\alpha(\alpha)
	:=
	-\frac12\int_{\mathbb{R}\times\mathbb{S}} G_\alpha(x,i,\alpha)\,\pi_0(dx,di),
	\]
	\[
	\mathbb Y_Q(\vartheta)
	:=
	\int_{\mathbb{R}\times\mathbb{S}} F_Q(x,i,\vartheta)\,\pi_0(dx,di),
	\qquad
	\mathbb Y(\zeta)
	:=
	\mathbb Y_\gamma(\gamma)
	+
	\mathbb Y_\alpha(\alpha)
	+
	\mathbb Y_Q(\vartheta).
	\]
	
	For the information matrices, write
	\[
	\Psi_\gamma(x,i)
	:=
	\partial_\gamma\log c(x,i,\gamma_0)^2,
	\qquad
	A_\alpha(x,i)
	:=
	\frac{\partial_\alpha b(x,i,\alpha_0)}
	{c(x,i,\gamma_0)},
	\]
	\[
	G_\alpha^{(2)}(x,i,\alpha,\gamma)
	:=
	\frac{\partial_\alpha b(x,i,\alpha)^{\otimes2}}
	{c(x,i,\gamma)^2},
	\qquad
	\Psi_Q^{ij}(x)
	:=
	\partial_\vartheta\log q_{ij}(x,\vartheta_0).
	\]
	Then
	\[
	\Gamma_\gamma
	:=
	\frac12\int_{\mathbb{R}\times\mathbb{S}}
	\Psi_\gamma(x,i)^{\otimes2}\,\pi_0(dx,di),
	\qquad
	\Gamma_\alpha
	:=
	\int_{\mathbb{R}\times\mathbb{S}}
	G_\alpha^{(2)}(x,i,\alpha_0,\gamma_0)\,\pi_0(dx,di),
	\]
	and
	\[
	\Gamma_Q
	:=
	\int_{\mathbb{R}\times\mathbb{S}}
	\sum_{j\neq i}
	q_{ij}(x,\vartheta_0)\,
	\Psi_Q^{ij}(x)^{\otimes2}
	\,\pi_0(dx,di).
	\]

	The next two assumptions concern global separation and local nondegeneracy. 
	\begin{ass}
		\label{ass:Y-global}
		There exist constants
		\(\chi_\gamma,\chi_\alpha,\chi_Q>0\) such that
		\[
		-\mathbb Y_\gamma(\gamma)
		\ge
		\chi_\gamma|\gamma-\gamma_0|^2,
		\quad
		-\mathbb Y_\alpha(\alpha)
		\ge
		\chi_\alpha|\alpha-\alpha_0|^2,
		\quad
		-\mathbb Y_Q(\vartheta)
		\ge
		\chi_Q|\vartheta-\vartheta_0|^2.
		\]
		Consequently, with
		\(\chi:=\chi_\gamma\wedge\chi_\alpha\wedge\chi_Q\),
		$-\mathbb Y(\zeta)
		\ge
		\chi|\zeta-\zeta_0|^2$.
	\end{ass}
	
	\begin{ass}
		\label{ass:positive-defi}
		The matrices
		$\Gamma_\gamma$,
		$\Gamma_\alpha$,
		$\Gamma_Q$
		are positive definite.
	\end{ass}
	
	We collect the moment inequalities used repeatedly in the subsequent proofs.
	
	\begin{lem}\label{lem:short-time-moment}
		Under Assumptions~\ref{ass:Levy}, \ref{ass:coeff}, and~\ref{ass:ergodic}, for every $q>0$, all $t\ge 0$, and every $h\in(0,1]$:
		\begin{enumerate}
			\item[\textup{(i)}] for every $s\in[t,t+h]$,
			$\Prob(\Lam_s\ne\Lam_t\mid\mathcal F_t)\le C(s-t)\bigl(1+|X_t|^{C}\bigr)$;
			\item[\textup{(ii)}]
			$\sup_{0\le u\le h}\E\bigl[|X_{t+u}-X_t|^q\,\big|\,\mathcal F_t\bigr]
			\le C_q\,h^{(q/2)\wedge 1}\bigl(1+|X_t|^{C_q}\bigr)$;
			\item[\textup{(iii)}]
			$\sup_{0\le u\le h}\E\bigl[|X_{t+u}|^q\,\big|\,\mathcal F_t\bigr]
			\le C_q\bigl(1+|X_t|^{C_q}\bigr)$.
		\end{enumerate}
	\end{lem}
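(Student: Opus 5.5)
The plan is to work with the strong Markov property of $(X,\Lambda)$ at time $t$, so that every conditional expectation given $\mathcal F_t$ becomes an expectation $\mathbb E_z$ with $z=(X_t,\Lambda_t)$. Then $t=0$ and all bounds only need to be uniform in the initial point $z=(x,i)$, with polynomial dependence on $|x|$. The three items are proved in the order (iii), (ii), (i). Item (iii) feeds into (ii), and (i) uses only the switching mechanism.

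\textbf{Step (i).} The switching rates satisfy $\sup_{x,\vartheta}q_i(x,\vartheta)\le \bar q<\infty$ by (C4). From the Poisson-measure representation in \eqref{eq:model}, the first switching time after $t$ is at least the first atom of $N$ in $[t,\infty)\times[0,\bar q]$. This holds because $\Gamma_{ij}(x,\vartheta)\subset[0,q_i(x,\vartheta))\subset[0,\bar q)$ when the intervals are stacked consecutively from $0$. Hence
\[
\mathbb P(\Lambda_s\neq\Lambda_t\mid\mathcal F_t)\le \mathbb P\bigl(N((t,s]\times[0,\bar q])\ge1\bigr)=1-e^{-\bar q(s-t)}\le \bar q(s-t).
\]
This is the stated bound, and it does not even need the factor $1+|X_t|^C$. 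If one prefers not to rely on the interval layout, the alternative is to compensate $\mathbf 1\{\Lambda \text{ jumps in }(t,s]\}\le \int_t^s\int \mathbf 1\{h\neq0\}N(du,dz)$ and take conditional expectations.

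\textbf{Step (iii).} Write $X_{t+u}-X_t=\int_t^{t+u}b\,ds+\int_t^{t+u}c_{s-}\,dL_s$. The first term is bounded by $u^{q-1}\int|b|^q\,ds$ together with the linear growth in (C1). For the stochastic integral, use the L\'evy--It\^o decomposition of $L$ as a compensated Poisson integral, possibly plus a drift, which is harmless. Because $L$ has all moments by Assumption~\ref{ass:Levy}, $\int|z|^p\nu(dz)<\infty$ for every $p\ge2$. For $q\ge2$, the Burkholder--Davis--Gundy inequality, or Kunita's inequality for Poisson stochastic integrals, gives
\[
\mathbb E\sup_{v\le u}\Bigl|\int_t^{t+v}c\,dL\Bigr|^q\le C_q\Bigl\{\Bigl(\int_t^{t+u}\mathbb E|c_s|^2ds\Bigr)^{q/2}+\int_t^{t+u}\mathbb E|c_s|^q ds\Bigr\}.
\]
Combining this with $|c|\le C(1+|x|)$ and Gronwall applied to $\sup_{v\le u}\mathbb E|X_{t+v}|^q$ yields (iii) with $C_q=q$. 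Switching does not enter, since the coefficient bounds in (C1) are uniform in $i$. The case $q<2$ follows from Jensen's inequality.

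\textbf{Step (ii).} Apply the same two estimates to the increment. For the drift term,
\[
\Bigl|\int b\Bigr|^q\le u^{q}\sup|b|^q,
\]
which contributes $h^q(1+|X_t|^{C})$ by (iii). For the jump integral and $q\ge2$, Kunita's bound gives
\[
\Bigl(\int_t^{t+u}\mathbb E|c_s|^2\,ds\Bigr)^{q/2}\lesssim h^{q/2}(1+|X_t|^q)
\quad\text{and}\quad
\int_t^{t+u}\mathbb E|c_s|^q\,ds\lesssim h(1+|X_t|^q).
\]
For $h\le1$ and $q\ge2$ the dominant power is $h^{1}=h^{(q/2)\wedge1}$. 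For $q<2$, the $L^2$ bound $\mathbb E|\cdot|^2\le Ch(1+|X_t|^2)$ plus Jensen gives $h^{q/2}$. This produces exactly the rate $h^{(q/2)\wedge1}$, which is the correct rate for pure-jump noise: high moments of increments are of order $h$, not $h^{q/2}$.

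\textbf{Main obstacle.} The main technical point is controlling the $q$-th moment of the jump stochastic integral with a random, state-dependent integrand that also jumps with the regime. This requires Kunita's inequality, with the $\int|z|^q\nu(dz)$ term producing the linear-in-$h$ rate. It also requires closing the resulting Gronwall loop before using (iii) inside (ii). Assumption~\ref{ass:ergodic} is not actually needed beyond ensuring that $X_t$ has moments of all orders under the stationary start.
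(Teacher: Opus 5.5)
Your proof is correct in substance but takes a different route from the paper. For (i), the paper bounds $\mathbb P(\Lambda_s\neq\Lambda_t\mid\mathcal F_t)$ by the conditional expectation of the compensator $\int_t^s\sum_{k\neq\Lambda_{r-}}q_{\Lambda_{r-}k}(X_{r-})\,dr$. It then uses polynomial growth of $q_{ik}$ together with (iii); your compensator fallback is exactly this argument.

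Your main argument instead dominates switching by atoms of $N$ in a fixed bounded strip. This uses the uniform total-rate bound in (C4) and gives the sharper bound $C(s-t)$, with no $|X_t|^C$ factor. The paper's route, by contrast, does not depend on the interval construction and would still work if the rates only grew polynomially.

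One correction is needed in (i). Under the lexicographic stacking the intervals of row $i$ are shifted by $\sum_{k<i}q_k$, so the inclusion $\Gamma_{ij}\subset[0,q_i)$ fails for $i\ge2$. Use the strip $[0,m\bar q]$ instead; this only changes the constant.

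For (ii)--(iii), the paper simply cites references written for Brownian-driven switching diffusions. Your Kunita/BDG plus Gronwall argument actually derives the pure-jump rate $h^{(q/2)\wedge1}$: the term $u\int|z|^q\nu(dz)$ is what caps the rate at $h$. This is a genuine gain over the citation.

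Three small repairs are needed there.
\begin{itemize}
\item The first Kunita term is $\mathbb E\bigl(\int_t^{t+u}|c_s|^2\,ds\bigr)^{q/2}$, not $\bigl(\int_t^{t+u}\mathbb E|c_s|^2\,ds\bigr)^{q/2}$, since Jensen runs the wrong way. Bound it via H\"older by $u^{q/2-1}\int_t^{t+u}\mathbb E|c_s|^q\,ds$, which gives the same $h^{q/2}(1+|X_t|^q)$.
\item Bound the drift term by $u^{q-1}\int|b|^q\,ds$ rather than $u^q\sup|b|^q$. Item (iii) controls $\sup_u\mathbb E$, not $\mathbb E\sup$.
\item Localize with stopping times before applying Gronwall, so that the moments involved are a priori finite.
\end{itemize}
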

	
	\begin{proof}
		For (i), let $N_t^\Lam:=\sum_{0<s\le t}\mathbf 1_{\{\Lam_s\ne\Lam_{s-}\}}$ denote the jump counting process of $\Lam$. Markov's inequality gives
		\[
		\Prob(\Lam_s\ne\Lam_t\mid\mathcal F_t)
		\le \Prob(N_s^\Lam-N_t^\Lam\ge 1\mid\mathcal F_t)
		\le \E\bigl[N_s^\Lam-N_t^\Lam\mid\mathcal F_t\bigr].
		\]
		Since $N^\Lam$ admits the compensator $u\mapsto\int_0^u\sum_{k\ne\Lam_{r-}}q_{\Lam_{r-}k}(X_{r-})\,dr$,
		\[
		\E\bigl[N_s^\Lam-N_t^\Lam\mid\mathcal F_t\bigr]
		=\E\!\left[\int_t^s\sum_{k\ne\Lam_{r-}}q_{\Lam_{r-}k}(X_{r-})\,dr\,\bigg|\,\mathcal F_t\right],
		\]
		and combining this identity with the polynomial growth bound on $q_{ik}$ from Assumption~\ref{ass:coeff} and part~(iii) below yields (i). Parts (ii) and~(iii) are the standard moment estimates for the hybrid-switching Lévy SDE; see, e.g., \cite[Chapter~2]{yin2009hybrid} and \cite[Chapter~4]{mao2006stochastic}.
	\end{proof}

	
	\section{Conditions for exponential ergodicity}
	\label{sec:exp-erg}
	
	This section gives conditions under which Assumption~\ref{ass:ergodic} holds.
	The stability argument relies on standard Markov-process theory. Once a
	fixed skeleton chain is \(\varphi\)-irreducible and has compact petite sets,
	a Foster--Lyapunov drift condition yields \(V\)-uniform exponential
	ergodicity by the Meyn--Tweedie framework
	\cite{meyn1992criteria,meyn1993stability,down1995exponential,
		meyn2012markov}. The Lyapunov estimates for the pure-jump component are also
	in the spirit of \cite{Masuda2007,Kulik2009}. The model-specific point is
	the verification of irreducibility for the state-dependent hybrid process.
	Since there is no Brownian part, local movement in the continuous coordinate
	must come from a small-jump minorization of the L\'evy measure. Since the
	switching rates depend on \(X\), accessibility of the finite regimes must be
	checked along paths on compact \(x\)-sets. 
	Thus the new ingredient in this section is a direct verification of the petite set and
	irreducibility conditions for the pure-jump state-dependent hybrid model.
	
	We write $\{P_t\}_{t\ge0}$ for the transition semigroup of the Markov
	process $(X,\Lambda)$
	$P_t f(x,i)
	=
	\E_{(x,i)}\!\left[f(X_t,\Lambda_t)\right]$
	for every bounded Borel measurable function
	$f:\R\times\mbbs\to\R$.
	The associated transition kernel is
	$P_t\bigl((x,i),A\bigr)
	=
	\Prob_{(x,i)}\bigl((X_t,\Lambda_t)\in A\bigr)$ for
	$A\in\mathcal B(\R\times\mbbs)$.
	
	For
	\(\zeta=(\alpha,\gamma,\vartheta)\in\Theta\), define
	\(\mathcal D(\mathcal A_\zeta)\) as the set of functions
	\(f:\mathbb R\times\mathbb S\to\mathbb R\) such that \(f(\cdot,i)\in C^2(\mathbb R)\) for each
	\(i\in\mathbb S\), and
	\[
	\int_{\mathbb R}
	\left|
	f(x+c(x,i,\gamma)z,i)-f(x,i)
	-\partial_x f(x,i)c(x,i,\gamma)z\mathbf 1_{\{|z|\le1\}}
	\right|
	\nu(dz)
	<\infty
	\]
	for all \((x,i)\in \mathbb R\times\mathbb S\). For \(f\in\mathcal D(\mathcal A_\zeta)\),
	\begin{align}
		\label{eq:generator}
		\mathcal A_\zeta f(x,i)
		&=
		b(x,i,\alpha)\partial_x f(x,i)
		+
		\int_{\mathbb R}\left(f(x+c(x,i,\gamma)z,i)-f(x,i)
		-\partial_x f(x,i)c(x,i,\gamma)z\mathbf 1_{\{|z|\le1\}}\right)\,\nu(dz)
		\nonumber\\
		&\quad+
		\sum_{j\neq i}
		q_{ij}(x,\vartheta)\{f(x,j)-f(x,i)\},
		\qquad (x,i)\in \mathbb R\times\mathbb S .
	\end{align}
	We write \(\mathcal A_\zeta^{\mathrm{ext}}\) for the corresponding extended
	generator as in \cite{meyn1993stability}.
	
	A measurable function
	\(V:\mathbb R\times\mathbb S\to[1,\infty)\) is called norm-like if
	$\min_{i\in\mathbb S} V(x,i)\to\infty$ as $|x|\to\infty$.

	\begin{ass}
		\label{ass:erg-coeff}
		The following conditions hold.
		
		\begin{enumerate}[label=\textnormal{(E\arabic*)},leftmargin=2.8em]
			
			\item For each \(i\in\mathbb S\), the maps
			$x\mapsto b(x,i,\alpha_0)$,
			and
			$x\mapsto c(x,i,\gamma_0)$
			are twice continuously differentiable. Moreover, for every compact interval
			\(K\subset\mathbb R\),
			\[
			\max_{i\in\mathbb S}
			\sup_{x\in K}
			\sum_{\ell=0}^2
			\left\{
			|\partial_x^\ell b(x,i,\alpha_0)|
			+
			|\partial_x^\ell c(x,i,\gamma_0)|
			\right\}
			<\infty .
			\]
			There exists \(C>0\) such that 
			$|b(x,i,\alpha_0)|
			\le
			C(1+|x|)$
			for $(x,i)\in\mathbb R\times\mathbb S$ .

			\item
			For every compact interval $K\subset\mathbb R$,
			\[
			\inf_{x\in K,\ i\in\mathbb S} c(x,i,\gamma_0)>0,
			\qquad
			\inf_{x\in K}\min_{i\neq j}q_{ij}(x,\vartheta_0)>0 .
			\]
			
			\item For each \(i\neq j\), the map
			$x\mapsto q_{ij}(x,\vartheta_0)$
			is locally Lipschitz. Moreover,
			\[
			\sup_{x\in\mathbb R,\ i\in\mathbb S}
			\sum_{j\neq i}q_{ij}(x,\vartheta_0)
			<\infty.
			\]

			\item $\sup_{(x,i)\in\mathbb R\times\mathbb S}
			|c(x,i,\gamma_0)|
			<\infty$
			
			\item There exist constants \(\lambda_0>0\) and \(K_0>0\) such that
			\[
			x\,b(x,i,\alpha_0)
			\le
			-\lambda_0x^2+K_0,
			\qquad (x,i)\in\mathbb R\times\mathbb S .
			\]
			
			\item There exist constants \(r_0,\kappa_0>0\) such that
			\[
			\nu(B)
			\ge
			\kappa_0\,\lambda\bigl(B\cap(-r_0,r_0)\bigr),
			\qquad B\in\mathcal B(\mathbb R).
			\]
			
		\end{enumerate}
	\end{ass}

	%
	
			
			
			
	
	Conditions (E1), (E2) and (E3) are contained in Assumptions~\ref{ass:coeff}. 
	Other conditions in Assumption~\ref{ass:erg-coeff} play two roles. The
	dissipativity of \(b(\cdot,\cdot,\alpha_0)\), together with the boundedness
	of \(c(\cdot,\cdot,\gamma_0)\), yields a polynomial Foster--Lyapunov drift.
	The small-jump lower bound for \(\nu\) and the compact lower bound for
	\(q_{ij}(\cdot,\vartheta_0)\) yield accessibility of the continuous and
	discrete coordinates, respectively.

	Denote the \(h\)-skeleton chain by
	$\Phi^{(h)}=(\Phi_{nh})_{n\ge0}=(X_{nh},\Lambda_{nh})$.
	We first record the irreducibility statement. Its proof uses the
	compound-Poisson component extracted from the lower bound on \(\nu\), the
	local positivity of \(c(\cdot,\cdot,\gamma_0)\), and the compact
	connectivity of the rates \(q_{ij}(\cdot,\vartheta_0)\).	
	
	\begin{prop}
		\label{prop:fixed-h-skeleton}
		Suppose Assumptions~\ref{ass:Levy} and \ref{ass:erg-coeff} hold.
		There exists a time $h_0>0$ such that the $h_0$-skeleton chain
		$\Phi^{(h_0)}$
		is a $\varphi$-irreducible T-chain, where
		\[
		\varphi(B)
		:=
		\sum_{j=1}^m\sum_{n=1}^\infty
		2^{-j-n}
		\frac{\lambda(B_j\cap I_n)}{1+2n},
		\qquad
		B_j:=\{x\in\mathbb R:(x,j)\in B\}.
		\]
		for \(B\in\mathcal{B}(\mathbb R)\otimes 2^{\mathbb S}\), where
		\(I_n := [-n,n]\).
	\end{prop}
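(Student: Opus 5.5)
Fix $h_0>0$; small values are allowed but none is needed. The plan is to show that for every $(x,i)$ the kernel $P_{h_0}((x,i),\cdot)$ dominates a continuous component that is nontrivial near every point. By Meyn--Tweedie this gives both the T-chain property and $\varphi$-irreducibility.

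\textbf{Decomposition of the noise.} By (E6) write $\nu=\nu_1+\nu_2$ with
\[
\nu_1(dz)=\kappa_0\mathbf 1_{(-r_0,r_0)}(z)\,dz .
\]
Realize $L=L^{(1)}+L^{(2)}$ independently, where $L^{(1)}$ is compound Poisson with rate $\rho=2r_0\kappa_0$ and jump law uniform on $(-r_0,r_0)$, up to a deterministic drift absorbed into $b$. Condition on $L^{(1)}$ having exactly $k$ jumps in $[0,h_0]$, at times $s_1<\dots<s_k$ with sizes $u_1,\dots,u_k$.

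Between these jumps, $(X,\Lambda)$ evolves as the hybrid system driven by $L^{(2)}$ and $N$. On a positive-probability event one of two things happens. Either $\Lambda$ makes prescribed switches along a path $i\to j$ (at most one step is needed, since all $q_{ij}>0$ on compacts by (E2)), or $\Lambda$ makes no switch. In both cases $X$ stays in a compact set: use (E3) for bounded total rates, and (E1), (E4) with Lemma~\ref{lem:short-time-moment} for moments.

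\textbf{Density via the uniform jumps.} The key step is that after each uniform jump the post-jump position is
\[
x'=X_{s_\ell-}+c(X_{s_\ell-},\Lambda_{s_\ell-},\gamma_0)\,u_\ell .
\]
Given the pre-jump state, this is uniform on an interval of length $2r_0c(\cdot)\ge 2r_0c_K>0$ on compacts, by (E2). The subsequent flow map $y\mapsto X_{h_0}$ (with the other randomness fixed) is a $C^1$ map with derivative bounded away from $0$ on compacts, using (E1) and Gronwall.

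So conditionally on the remaining randomness, $X_{h_0}$ has a density bounded below by a lower semicontinuous function of the initial point. Iterating over $k$ jumps, with $k\approx n\cdot(\text{distance}/r_0c_K)$, lets the support reach any interval $I_n$ with positive density. A finite number of jumps within time $h_0$ has positive Poisson probability, and switching to any $j$ can be inserted on the same event.

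This yields a lower semicontinuous component
\[
T((x,i),B)\ \ge\ \sum_j\int_{B_j} g(x,i;y,j)\,dy,
\]
where $g>0$ and lower semicontinuity in $(x,i)$ come from continuity of the flow in the initial condition. The weights $2^{-n}/(1+2n)$ then give $\varphi\ll$ this kernel, summed over $n$ jump counts if one uses the resolvent $K_a=\sum_n a_nP_{nh_0}$.

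\textbf{Main obstacle.} The hardest part is making the lower semicontinuity and positivity rigorous when the flow between jumps is itself random, driven by $L^{(2)}$, which may have infinite activity, and by regime switches. I would try first to restrict to the event on which $L^{(2)}$ has small sup-norm increment on $[0,h_0]$ (positive probability by stochastic continuity) and $N$ produces the prescribed switches. Then I would compare with the deterministic ODE flow $\dot y=b(y,i,\alpha_0)$ via a perturbation argument, obtaining a uniform positive density lower bound by a change of variables in the last uniform jump only. This last-jump trick avoids differentiating the flow in the noise, because the density of $X_{s_k}$ is explicit and the remaining flow after time $s_k$ can be made short, with $s_k$ close to $h_0$.
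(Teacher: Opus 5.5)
Your ingredients match the paper's: the uniform compound-Poisson part extracted from (E6), a change of variables in one uniform jump, a good event on which the residual flow tracks the ODE flow, and lower bounds on no-switch probabilities and on $q_{ij}$ over compacts. The gap is in how you assemble them. You aim for an everywhere-positive density for $P_{h_0}$ in one step of fixed length, and this breaks where you control the $L^{(2)}$-driven dynamics on all of $[0,h_0]$.

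\textbf{The small-noise event over $[0,h_0]$.} Stochastic continuity gives $\Prob(\sup_{t\le h}|L^{(2)}_t|>\epsilon)\to0$ only as $h\to0$; over a fixed horizon the small-increment event can be null. Suppose $\nu-\nu_1$ has an atom, so $L^{(2)}_t=aN_t-\beta a t$ with $N$ Poisson of rate $\beta$; this is compatible with Assumption~\ref{ass:Levy} and (E6). Without jumps, $\sup_{t\le h_0}|L^{(2)}_t|=\beta a h_0$, and any jump produces an excursion of size about $a$. Hence $\{\sup_{t\le h_0}|L^{(2)}_t|<\epsilon\}$ has probability zero once $\epsilon<\min(\beta a h_0,a/2)$. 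Shrinking $h_0$ does not rescue the argument as you set it up. The precision needed to confine the path and keep the flow derivative positive depends on the compact set being crossed, hence on the start point and on the target $I_n$, whereas $h_0$ must be fixed beforehand.

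\textbf{The flow derivative.} The claim ``derivative bounded away from $0$ by (E1) and Gronwall'' is false for jump-driven flows. The derivative is the stochastic exponential of $\int\partial_x b\,ds+\int\partial_x c\,dL^{(2)}$. It vanishes or changes sign at any jump with $1+\partial_x c(X_{s-})\Delta L^{(2)}_s\le0$, and Gronwall only gives upper bounds.

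\textbf{Lower semicontinuity.} Deriving lower semicontinuity of $g$ ``from continuity of the flow'' is not justified. Your density bounds hold only on good events that depend on the initial point. The switching times, obtained by thinning $N$ through $\Gamma_{ij}(X_{t-})$, depend on it as well. Indicators of such events are not lower semicontinuous in the initial point.

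\textbf{What is missing.} You need to work only on short steps and let the Markov property do the global work.
\begin{enumerate}
\item The paper takes $\Delta=h_0/L$, with $L$ depending on the compact set, and proves a minorization $P_\Delta((u,i),B\times\{i\})\ge\varepsilon\,\lambda(B\cap J)$ valid uniformly for $u$ in an open neighbourhood.
\item This uses exactly one small jump at a time in $[\Delta/4,3\Delta/4]$, no switch, and an event of probability at least $3/4$ on which the residual flow and its derivative are uniformly close to the ODE flow. The probability bound comes from an $L^2$ estimate of order $\Delta$.
\item Iterating $L$ times and taking a countable subcover gives $T(\zeta,B)=\sum_n 2^{-n}a_n\mathbf 1_{G_n\times\{i_n\}}(\zeta)\lambda(B_{i_n}\cap G_n)$, which is lower semicontinuous for free.
\item Irreducibility follows by chaining local steps to a density point of $B_j\cap I_n$ and inserting a separate one-step switching lemma. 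A residue-class argument makes the number of $\Delta$-steps a multiple of $L$, so $B$ is reached in $N$ skeleton steps, not one.
\end{enumerate}
Your last-jump idea can be rescued along similar lines, but only if two things hold. All steering jumps, not just the last one, must be confined to a terminal window whose length is chosen after the compact set, using only tightness before it. The resulting bound must also be made uniform over a neighbourhood of the initial point.

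\textbf{A smaller point.} Lemma~\ref{lem:short-time-moment} assumes Assumption~\ref{ass:ergodic}, so citing it here is circular. Derive the moment bounds directly from (E1) and (E4) of Assumption~\ref{ass:erg-coeff}.
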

	
	Proposition~\ref{prop:fixed-h-skeleton} provides only the
	irreducibility input. The recurrence input is the Lyapunov drift obtained
	from Assumption~\ref{ass:erg-coeff}: for
	\(V_q(x,i)=(1+x^2)^{q/2}\),
	\[
	\mathcal A_0^{\mathrm{ext}}V_q(x,i)
	\le
	-a_qV_q(x,i)+b_q .
	\]
	Dynkin's formula transfers this estimate to the fixed skeleton chain. Since
	\(V_q\) is norm-like, its sublevel sets are compact and hence petite by
	Proposition~\ref{prop:fixed-h-skeleton}. The
	Meyn--Tweedie theorem then yields
	\(V_q\)-uniform exponential ergodicity.
	
	\begin{thm}[Exponential ergodicity]
		\label{thm:hybrid-exp-erg}
		Suppose Assumptions~\ref{ass:Levy}, \ref{ass:erg-coeff} hold. Then, for every
		\(r>0\), with $V_r(x,i):=(1+x^2)^{r/2}$,
		there exist constants \(a_r,b_r>0\) such that
		\[
		\mathcal A_0^{\mathrm{ext}}V_r(x,i)
		=
		\mathcal A_0V_r(x,i)
		\le
		-a_rV_r(x,i)+b_r,
		\qquad (x,i)\in\mathbb R\times\mathbb S .
		\]
		Moreover, \((X,\Lambda)\) admits a unique invariant probability measure
		\(\pi_0\), \(\pi_0(V_r)<\infty\), and there exist constants
		\(B_r<\infty\), \(\rho_r>0\) such that
		\[
		\sup_{|f|\le V_r}
		\left|
		P_tf(x,i)-\pi_0(f)
		\right|
		\le
		B_rV_r(x,i)e^{-\rho_rt},
		\qquad t\ge0 .
		\]
	\end{thm}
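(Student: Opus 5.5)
We first establish the Lyapunov drift by direct computation with the generator \eqref{eq:generator} at \(\zeta_0\). Since \(V_r(x,i)=(1+x^2)^{r/2}\) does not depend on \(i\), the switching term \(\sum_{j\neq i}q_{ij}\{V_r(x,j)-V_r(x,i)\}\) vanishes identically, and only the drift and jump parts remain. For the drift part,
\[
b(x,i,\alpha_0)\partial_xV_r(x,i)=r\,x\,b(x,i,\alpha_0)(1+x^2)^{r/2-1}\le r(-\lambda_0x^2+K_0)(1+x^2)^{r/2-1}
\]
by (E5), and this is \(\le -r\lambda_0V_r+C\) for large \(|x|\). For the jump part, we split the integral into \(|z|\le1\) and \(|z|>1\). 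On \(|z|\le1\) we use Taylor's formula with \(|\partial_x^2V_r(y)|\le C(1+y^2)^{r/2-1}\), the boundedness of \(c\) from (E4), and \(\int_{|z|\le1}z^2\nu(dz)<\infty\); the resulting bound is \(C(1+|x|)^{r-2}\), which is \(o(V_r)\) when \(r\ge2\) and bounded when \(r<2\), as one checks by handling \((1+y^2)^{r/2-1}\le 1\) for \(r<2\). On \(|z|>1\) we use \(|V_r(x+cz)-V_r(x)|\le C(|x|^{r-1}|z|+|z|^r+1)\) (for \(r\le1\) simply \(\le C|z|^r\) by subadditivity) together with \(\int_{|z|>1}|z|^r\nu(dz)<\infty\), which follows from Assumption~\ref{ass:Levy}. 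This part is also \(o(V_r)\). Absorbing all lower-order terms gives \(\mathcal A_0V_r\le -a_rV_r+b_r\) with, say, \(a_r=r\lambda_0/2\). Finiteness of the integral also places \(V_r\in\mathcal D(\mathcal A_0)\). The identity \(\mathcal A_0^{\mathrm{ext}}V_r=\mathcal A_0V_r\) then follows from Itô's formula for the hybrid system: the compensated jump integrals are local martingales, and after localization by \(\tau_k=\inf\{t:|X_t|>k\}\) we obtain Dynkin's formula, which is exactly the defining property of the extended generator in \cite{meyn1993stability}.

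Next we pass from the generator to the skeleton. By Dynkin's formula and Gronwall's lemma (using Fatou's lemma to remove the localization),
\[
P_tV_r\le e^{-a_rt}V_r+b_r/a_r .
\]
Hence for the skeleton time \(h_0\) of Proposition~\ref{prop:fixed-h-skeleton} we have \(P_{h_0}V_r\le \beta V_r+b\,\mathbf 1_C\), where \(\beta=e^{-a_rh_0}<1\) up to enlarging constants and \(C=\{V_r\le R\}\) for large \(R\). Since \(V_r\) is norm-like, \(C\) is compact. Proposition~\ref{prop:fixed-h-skeleton} says \(\Phi^{(h_0)}\) is a \(\varphi\)-irreducible T-chain, and for such chains every compact set is petite (Meyn--Tweedie, Thm.~6.2.5). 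We also need aperiodicity. We would obtain it from the T-chain property together with the fact that the continuous component \(T\) is nontrivial on an open set, or more concretely by running the same minorization argument at times \(h_0\) and \(2h_0\). Alternatively we could bypass it by working with the continuous-time process, for which the resolvent-chain argument of \cite{down1995exponential} makes aperiodicity automatic once some skeleton is irreducible.

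With these ingredients we apply \cite[Thm.~16.0.1]{meyn2012markov} to the skeleton. This gives a unique invariant \(\pi\) for \(\Phi^{(h_0)}\), with \(\pi(V_r)<\infty\) and \(\|P_{nh_0}(z,\cdot)-\pi\|_{V_r}\le BV_r(z)\rho^n\). To transfer this to continuous time, write \(t=nh_0+s\) with \(s\in[0,h_0)\). For \(|f|\le V_r\), the bound \(P_sV_r\le V_r+b_r/a_r\le C'V_r\) gives \(|P_tf-\pi(f)|=|P_{nh_0}(P_sf)-\pi(P_sf)|\le C'BV_r\rho^n\), provided \(\pi\) is \(P_s\)-invariant. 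Invariance of \(\pi\) under every \(P_s\) follows from uniqueness: \(\pi P_s\) is again invariant for \(P_{h_0}\) because the semigroup operators commute, so \(\pi P_s=\pi\). Setting \(\pi_0:=\pi\), uniqueness for the continuous process follows because any \((P_t)\)-invariant law is \(P_{h_0}\)-invariant.

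The main obstacle is not this assembly but the uniform control of the jump term for small and noninteger \(r\), and the aperiodicity of the skeleton. For the latter we would first try the \(\varphi\)-irreducibility of \(\Phi^{(h)}\) for all \(h\) in a small interval, which the proof of Proposition~\ref{prop:fixed-h-skeleton} should give equally well, since its minorization uses only short-time accessibility.
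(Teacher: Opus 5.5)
Your argument is correct, and it reaches the conclusion by a different assembly than the paper. The Lyapunov computation matches the paper's: the switching term vanishes because $V_r$ is regime-free, you use Taylor on $\{|z|\le1\}$ with (E4), moments of $\nu$ and subadditivity on $\{|z|>1\}$, and (E5) for the drift. You also justify $\mathcal A_0^{\mathrm{ext}}V_r=\mathcal A_0V_r$ by a localized It\^o/Dynkin argument, which the paper only asserts.

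The routes diverge after the drift bound. The paper feeds the continuous-time drift and the petiteness of compact sets for the $h_0$-skeleton directly into \cite[Theorem~6.1]{meyn1993stability}. You instead:
\begin{enumerate}
\item integrate the drift to $P_tV_r\le e^{-a_rt}V_r+b_r/a_r$;
\item apply the discrete-time $V$-uniform ergodicity theorem to $\Phi^{(h_0)}$;
\item interpolate to all $t$ via $P_sV_r\le(1+b_r/a_r)V_r$ and $\pi P_s=\pi$.
\end{enumerate}
Your uniqueness argument for $\pi P_s=\pi$ is right. Your version is more self-contained and makes explicit what the citation packages; the paper's is shorter.

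The extra ingredient your route needs, aperiodicity, is cheap rather than the main obstacle. Step~1 of the proof of Proposition~\ref{prop:fixed-h-skeleton} already gives $P_{h_0}((u,i),A\times\{i\})\ge a_z\lambda(A\cap G_z)$ for $u\in G_z$. So $G_z\times\{i\}$ is a one-step small set whose minorizing measure charges the set itself, and the skeleton is strongly aperiodic. There is also an abstract reason an irreducible skeleton of a continuous-time process cannot be periodic. Suppose $\Phi^{(h_0)}$ had period $d$. The $h_0/d$-skeleton $Q$ is irreducible since $Q^d=P_{h_0}$. If $Q$ has period $d'$, irreducibility of $Q^d$ forces $\gcd(d,d')=1$, and then $Q^d$ has period $d'$, so $d=d'=1$. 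Hence none of your tentative alternatives is needed. Likewise, the small-$r$ jump estimate you list as an obstacle is already settled by the subadditivity bound in your first paragraph.
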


	\begin{rem}
		Theorem~\ref{thm:hybrid-exp-erg} uses a regime-independent Lyapunov
		function, so the switching part of the generator vanishes. More flexible
		criteria are possible with weighted functions
		\(V(x,i)=\beta_i(1+x^2)^{q/2}\), for which the switching term may contribute
		to stabilization, but we do not pursue this extension here.
	\end{rem}


	\section{Estimation}
	\label{sec:estimation}
	
	This section studies high-frequency inference for the full parameter
	\[
	\zeta=(\alpha,\gamma,\vartheta)
	\in
	\Theta_\alpha\times\Theta_\gamma\times\Theta_\vartheta .
	\]
	The observed hybrid path
	$\{(X_{t_j},\Lambda_{t_j})\}_{j=0}^n$
	contains two types of local information. The increments of \(X\) identify
	\((\alpha,\gamma)\), while the endpoint transitions of \(\Lambda\) identify
	\(\vartheta\). We therefore use a three-stage contrast:
	a Gaussian quasi-likelihood for the scale and drift, and an
	intensity-type quasi-likelihood for the switching rates.
	
	\subsection{Contrasts and the three-stage estimator}
	\label{sec:three-stage-estimator}

	For $i\neq k$, define the observed one-step transition indicator
	\[
	\Del_j N_{ik}^{\mathrm{o}}
	:=
	\mathbf 1_{\{\Lam_{t_{j-1}}=i,\ \Lam_{t_j}=k\}},
	\qquad j=1,\dots,n.
	\]
	The variable \(\Del_j N_{ik}^{\mathrm{o}}\) is an endpoint transition
	indicator rather than the full continuous-time transition count. Multiple
	switches within a single interval have probability \(O(h_n^2)\) under the
	bounded-rate condition, and hence are negligible on the \(T_n\)-scale.
	By the state-dependent switching property, one has
	\[
	\Prob_{\vartheta}\!\left(\Lam_{t_j}=k\mid \mathcal F_{t_{j-1}}\right)
	=
	\begin{cases}
		q_{ik}(X_{t_{j-1}},\vartheta)\,h_n+O(h_n^2),
		& \text{on }\{\Lam_{t_{j-1}}=i\},\ k\neq i,\\[1mm]
		1-q_i(X_{t_{j-1}},\vartheta)\,h_n+O(h_n^2),
		& \text{on }\{\Lam_{t_{j-1}}=i\},\ k=i.
	\end{cases}
	\]
	
	\begin{itemize}
		\item For the continuous component, define
		\begin{equation}
			\label{eq:G1-stage}
			\mathbb G_{1,n}(\gamma)
			:=
			-\frac1{2T_n}\sum_{j=1}^n
			\left\{
			h_n\log c_{j-1}(\gamma)^2
			+
			\frac{(\Delta_jX)^2}{c_{j-1}(\gamma)^2}
			\right\},
		\end{equation}
		and
		\begin{equation}
			\label{eq:G2-stage}
			\mathbb G_{2,n}(\alpha;\gamma)
			:=
			-\frac1{2T_n}\sum_{j=1}^n
			\frac{\{\Delta_jX-h_nb_{j-1}(\alpha)\}^2}
			{h_nc_{j-1}(\gamma)^2}.
		\end{equation}
		
		\item 
		For the switching component,
		define
		\begin{equation}
			\label{eq:G3-stage}
			\mathbb G_{3,n}(\vartheta)
			:=
			\frac1{T_n}
			\sum_{j=1}^n\sum_{i=1}^m\sum_{k\neq i}
			\Del_j N_{ik}^{\mathrm{o}}
			\log q_{ik}(X_{t_{j-1}},\vartheta)
			-
			\frac1{T_n}
			\sum_{j=1}^nh_n\sum_{i=1}^m
			\mathbf 1_{\{\Lambda_{t_{j-1}}=i\}}
			q_i(X_{t_{j-1}},\vartheta).
		\end{equation}
	\end{itemize}

	The full estimator is constructed by
	\begin{equation}
		\label{eq:full-three-stage-estimator}
		\hat\gamma_n
		\in
		\argmax_{\gamma\in\Theta_\gamma}\mathbb G_{1,n}(\gamma),
		\qquad
		\hat\alpha_n
		\in
		\argmax_{\alpha\in\Theta_\alpha}
		\mathbb G_{2,n}(\alpha;\hat\gamma_n),
	\end{equation}
	and
	\begin{equation}
		\label{eq:vartheta-estimator}
		\hat\vartheta_n
		\in
		\argmax_{\vartheta\in\Theta_\vartheta}\mathbb G_{3,n}(\vartheta).
	\end{equation}
	We write
	\begin{equation}
		\label{eq:full-estimator}
		\hat\zeta_n
		:=
		(\hat\alpha_n,\hat\gamma_n,\hat\vartheta_n).
	\end{equation}
	
	The three stages use the leading local characteristics of the same observed
	hybrid path. For the continuous coordinate,
	\[
	\E_{j-1}[\Delta_jX]\simeq h_nb_{j-1}(\alpha),
	\qquad
	\E_{j-1}\!\left[
	\{\Delta_jX-h_nb_{j-1}(\alpha)\}^2
	\right]
	\simeq h_nc_{j-1}(\gamma)^2 .
	\]
	Thus \(\mathbb G_{1,n}\) and \(\mathbb G_{2,n}\) are Gaussian
	quasi-likelihood contrasts based on the local variance and local mean of
	\(\Delta_jX\), respectively. This is the standard high-frequency
	Gaussian quasi-likelihood construction for ergodic diffusions and
	L\'evy-driven SDEs \cite{Kes97,Gobet2002,UchidaYoshida2012,Mas13as}, with
	the staged scale--drift construction in the spirit of
	\cite{MasudaUehara2017}. For the switching coordinate,
	\[
	\E_{j-1}[\Del_j N_{ik}^{\mathrm{o}}]
	\simeq
	h_n\mathbf 1_{\{\Lambda_{t_{j-1}}=i\}}
	q_{ik}(X_{t_{j-1}},\vartheta),
	\qquad i\neq k,
	\]
	which leads to the intensity-type contrast \eqref{eq:G3-stage}, the
	high-frequency discrete analogue of the counting-process likelihood
	\cite{Aalen1978,Borgan1984,Andersen1993} and of likelihoods for Markov jump
	processes \cite{bladt2005statistical}. 
	
	These contrasts are used
	for a pure-jump state-dependent hybrid system: the continuous contrast must
	be stable under within-step regime changes, while the switching contrast has
	state-dependent intensities evaluated along the same ergodic path
	\((X,\Lambda)\).
	
	\begin{rem}
		The three criteria may be viewed as components of the composite contrast
		\[
		\mathbb G_n(\alpha,\gamma,\vartheta)
		:=
		\mathbb G_{1,n}(\gamma)
		+
		\mathbb G_{2,n}(\alpha;\gamma)
		+
		\mathbb G_{3,n}(\vartheta).
		\]
		We use staged maximization because the scale, drift, and switching blocks
		enter the high-frequency asymptotics through different local structures.
	\end{rem}
	
	\subsection{Hybrid one-step structure}
	\label{sec:hybrid-one-step}
	
	The hybrid structure affects the continuous
	quasi-likelihood in the short-time expansion of \(\Delta_jX\). Over
	\([t_{j-1},t_j]\), the regime may switch, and this produces an additional
	within-step error absent from ordinary L\'evy-driven SDEs.

	Set
	\[
	\eta_j:=\Del_j X-h_n b_{j-1}(\al_0),
	\qquad
	\xi_j:=\frac{\eta_j^2}{h_n c_{j-1}(\gam_0)^2}-1,
	\qquad
	\rho_{j-1}(\gam):=\frac{c_{j-1}(\gam_0)^2}{c_{j-1}(\gam)^2}-1,
	\]
	and decompose the observed increment as
	\[
	\Del_j X=h_n b_{j-1}(\al_0)+c_{j-1}(\gam_0)\,\Del_j L+r_{j,n}.
	\]
	The following proposition collects the moment estimates for $\eta_j$, $\xi_j$, and $r_{j,n}$ used throughout the proofs.

	\begin{prop}
		\label{prop:hybrid-one-step}
		Suppose Assumptions~\ref{ass:Levy}--\ref{ass:ergodic} hold. Then, for every \(q>0\),
		\begin{enumerate}
			\item $\mathbb E\!\left[
			|r_{j,n}|^q
			\mid\mathcal F_{t_{j-1}}
			\right]
			\le
			Ch_n^{q\wedge2}
			(1+|X_{t_{j-1}}|^{C})$;
			\item $\mathbb E[\eta_j\mid\mathcal F_{t_{j-1}}]
			=
			O\!\left(h_n^2(1+|X_{t_{j-1}}|^{C})\right)$;
			\item $\mathbb E[\eta_j^2\mid\mathcal F_{t_{j-1}}]
			=
			h_nc_{j-1}(\gamma_0)^2
			+
			O\!\left(h_n^{3/2}(1+|X_{t_{j-1}}|^{C})\right)$;
			\item $\mathbb E[\eta_j^3\mid\mathcal F_{t_{j-1}}]
			=
			h_n\mathbb E[L_1^3]c_{j-1}(\gamma_0)^3
			+
			O\!\left(h_n^{3/2}(1+|X_{t_{j-1}}|^{C})\right)$;
			\item $\mathbb E\!\left[
			|\eta_j|^q
			\mid\mathcal F_{t_{j-1}}
			\right]
			\le
			C h_n^{q/2\wedge1}
			(1+|X_{t_{j-1}}|^{C})$.
		\end{enumerate}
	\end{prop}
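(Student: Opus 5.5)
The plan is to write the remainder explicitly and bound its pieces; items (2)--(5) then follow from (1) and the moments of $\Delta_jL$. Integrating \eqref{eq:model} over $[t_{j-1},t_j]$ gives
\[
r_{j,n}
=
\int_{t_{j-1}}^{t_j}\{b(X_s,\Lambda_s,\alpha_0)-b_{j-1}(\alpha_0)\}\,ds
+
\int_{t_{j-1}}^{t_j}\{c(X_{s-},\Lambda_{s-},\gamma_0)-c_{j-1}(\gamma_0)\}\,dL_s .
\]
In each integrand I split the coefficient difference into two parts. The first is a spatial part, $b(X_s,\Lambda_s,\cdot)-b(X_{t_{j-1}},\Lambda_s,\cdot)$, which by (C1) is bounded by $C|X_s-X_{t_{j-1}}|$. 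The second is a regime part, $b(X_{t_{j-1}},\Lambda_s,\cdot)-b(X_{t_{j-1}},\Lambda_{t_{j-1}},\cdot)$, which is bounded by $C(1+|X_{t_{j-1}}|)\mathbf 1_{\{\Lambda_s\ne\Lambda_{t_{j-1}}\}}$.

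For item (1), I bound the $ds$-integral by H\"older's inequality together with Lemma~\ref{lem:short-time-moment}(i)--(iii); this gives order $h_n^{q}\cdot h_n^{(q/2)\wedge1}$ or better. The stochastic integral against $L$ is handled by Burkholder/Kunita-type inequalities for L\'evy integrals, using all moments from Assumption~\ref{ass:Levy}:
\[
\E_{j-1}\Bigl|\int H_s\,dL_s\Bigr|^q\le C\Bigl(\E_{j-1}\Bigl(\int|H_s|^2ds\Bigr)^{q/2}+\E_{j-1}\int|H_s|^q ds\Bigr)
\quad (q\ge2).
\]
With $H_s$ the coefficient difference, the lemma gives $\E_{j-1}|H_s|^q\le C\{h_n^{(q/2)\wedge1}+h_n\}(1+|X_{t_{j-1}}|^C)$. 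The regime part contributes $h_n$ through (i), because the indicator is raised to any power. The result is $\E_{j-1}|r_{j,n}|^q\lesssim h_n^{1+1}=h_n^2$ for $q\ge2$. Smaller $q$ follow by Jensen's inequality, giving $h_n^{q}$ for $q\le2$. Together these give the claimed order $h_n^{q\wedge2}$.

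Items (2)--(5) use $\eta_j=c_{j-1}\Delta_jL+r_{j,n}$, with $\E_{j-1}[\Delta_jL]=0$, $\E[(\Delta_jL)^2]=h_n$, and $\E[(\Delta_jL)^3]=h_n\E[L_1^3]$; all higher moments of $\Delta_jL$ are $O(h_n)$. For (5), I combine (1) with $\E|\Delta_jL|^q\le Ch_n^{(q/2)\wedge1}$.

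For (3) and (4), the cross terms are handled by Cauchy--Schwarz. For example, $\E_{j-1}|\Delta_jL\,r_{j,n}|\le (h_n\cdot h_n^2)^{1/2}=h_n^{3/2}$. The cubic cross terms are bounded the same way, using $\E|\Delta_jL|^4\sim h_n$. This gives the $O(h_n^{3/2})$ remainders.

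Item (2) is the main obstacle, since Cauchy--Schwarz alone only yields $O(h_n)$. Here I take conditional expectations directly. The $dL$-integral is a martingale, because $L$ is centered and has finite variance, so it has zero conditional mean. Hence
\[
\E_{j-1}[\eta_j]=\int_{t_{j-1}}^{t_j}\E_{j-1}\{b(X_s,\Lambda_s,\alpha_0)-b_{j-1}(\alpha_0)\}\,ds .
\]
I bound the integrand by $O((s-t_{j-1})(1+|X_{t_{j-1}}|^C))$ via the Dynkin/It\^o formula applied to $f(x,i)=b(x,i,\alpha_0)$. Its generator image under $\mathcal A_{\zeta_0}$ has polynomial growth: this uses (C2) for $\partial_x b$ and $\partial_x^2b$, the moments of $\nu$, and (C4) for the switching term. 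Combining this with Lemma~\ref{lem:short-time-moment}(iii) gives $\E_{j-1}[f(X_s,\Lambda_s)]-f(X_{t_{j-1}},\Lambda_{t_{j-1}})=O(s-t_{j-1})$. Integrating over $[t_{j-1},t_j]$ then yields $O(h_n^2)$. The delicate point is justifying Dynkin's formula for a polynomially growing $C^2$ function, which I would do by a localization argument together with the uniform moment bounds.
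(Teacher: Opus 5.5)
Your proposal is correct and follows the paper's route: split $r_{j,n}$ into the drift integral (H\"older) and the $dL$-integral (BDG/Kunita), pass to $q<2$ by Jensen, handle the cross terms in (3)--(4) by Cauchy--Schwarz, and obtain the $h_n^2$ conditional mean in (2) by applying Dynkin's formula to $f=b(\cdot,\cdot,\alpha_0)$ after noting that the $dL$-integral has zero conditional mean. The only deviation is how you bound the moments of the stochastic integrand, i.e.\ $\E_{j-1}|c(X_{s-},\Lambda_{s-},\gamma_0)-c_{j-1}(\gamma_0)|^q\le Ch_n(1+|X_{t_{j-1}}|^C)$ for $q\ge2$: you use the Lipschitz/regime-indicator split together with Lemma~\ref{lem:short-time-moment}(i)--(ii), while the paper applies Dynkin's formula to $|c(\cdot,\cdot,\gamma_0)-c(x_0,i_0,\gamma_0)|^r$; your version is slightly more elementary and gives the same bound.
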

	Let 
	$\chi_j^{ik}:=\Del_j N_{ik}^{\mathrm o}-\E_{j-1}\!\left[\Del_j N_{ik}^{\mathrm o}\right]$. The following proposition collects the estimates for the switching component.
	\begin{prop}
		\label{prop:hybrid-switching}
		Suppose Assumptions~\ref{ass:Levy}--\ref{ass:ergodic} hold. Then, for every \(i\neq k\),
		\begin{enumerate}
			\item $\E\!\left[
			\Delta_j N_{ik}^{\mathrm{o}}
			\Bigm| \mathcal F_{t_{j-1}}
			\right]
			=
			\mathbf 1_{\{\Lambda_{t_{j-1}}=i\}}
			q_{ik}(X_{t_{j-1}},\vartheta_0)\,h_n
			+r_{j,n}^{ik}$ with \[|r_{j,n}^{ik}|
			\le
			C h_n^{3/2}\bigl(1+|X_{t_{j-1}}|^{C}\bigr);\]
			\item let \(F_{j-1}\) be
			\(\mathcal F_{t_{j-1}}\)-measurable satisfying
			$|F_{j-1}|\le C(1+|X_{t_{j-1}}|^C)$,
			\[
			\left|
			\mathbb E_{j-1}
			\left[
			F_{j-1}\Delta_jL\,\chi_j^{ik}
			\right]
			\right|
			\le
			Ch_n^{3/2}(1+|X_{t_{j-1}}|^C),
			\]
			\[
			\left|
			\mathbb E_{j-1}
			\left[
			F_{j-1}\{(\Delta_jL)^2-h_n\}\chi_j^{ik}
			\right]
			\right|
			\le
			Ch_n^{3/2}(1+|X_{t_{j-1}}|^C).
			\] 
		\end{enumerate}
	\end{prop}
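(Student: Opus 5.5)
We must prove Proposition~\ref{prop:hybrid-switching}. Throughout, write $t=t_{j-1}$, $h=h_n$ and $i=\Lam_t$ on the relevant event. The rates are bounded by (C4), so Lemma~\ref{lem:short-time-moment} gives bounds with polynomial weights in $X_t$.

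\emph{Part (i).} First I decompose on the number of jumps of $\Lam$ in $(t,t+h]$. On $\{\Lam_t=i\}$,
\[
\E_{j-1}[\Del_jN^{\mathrm o}_{ik}]=\E_{j-1}\bigl[N^{ik}_{t+h}-N^{ik}_t\bigr]+O\bigl(\Prob_{j-1}(\text{at least two switches})\bigr),
\]
where $N^{ik}$ counts the $i\to k$ transitions. Because the total rate is bounded, the number of switches is dominated by a Poisson variable of mean $Ch$. The two-switch probability is therefore $O(h^2)$.

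By the compensator, $\E_{j-1}[N^{ik}_{t+h}-N^{ik}_t]=\E_{j-1}\int_t^{t+h}\mathbf 1_{\{\Lam_s=i\}}q_{ik}(X_s)\,ds$. I compare this integral with $h\,q_{ik}(X_t)$. The regime error is at most $\sup_s|q_{ik}|\cdot\int_t^{t+h}\Prob_{j-1}(\Lam_s\ne i)\,ds=O(h^2)$ by Lemma~\ref{lem:short-time-moment}(i). For the continuous coordinate, (C4) bounds $|\p_xq_{ik}|$ polynomially, so the mean value theorem and Cauchy--Schwarz give
\[
\E_{j-1}|q_{ik}(X_s)-q_{ik}(X_t)|\le C\bigl(\E_{j-1}(1+|X_t|+|X_s|)^{2C}\bigr)^{1/2}\bigl(\E_{j-1}|X_s-X_t|^2\bigr)^{1/2}.
\]
By Lemma~\ref{lem:short-time-moment}(ii) with $q=2$ this is $\le Ch^{1/2}(1+|X_t|^C)$. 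Integrating over $s\in[t,t+h]$ gives $O(h^{3/2})$, which proves (i).

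\emph{Part (ii).} Since $F_{j-1}$ is $\mathcal F_t$-measurable, it suffices to bound $\E_{j-1}[\Del_jL\,\chi^{ik}_j]$ and $\E_{j-1}[\{(\Del_jL)^2-h\}\chi^{ik}_j]$ by $Ch^{3/2}(1+|X_t|^C)$ on $\{\Lam_t=i\}$; off this event $\Del_jN^{\mathrm o}_{ik}=0$ and both terms vanish. Let $Y$ be either $\Del_jL$ or $(\Del_jL)^2-h$. Both have conditional mean zero, since $L$ has independent increments with $\E L_1=0$ and $\E L_1^2=1$. Hence $\E_{j-1}[Y\chi^{ik}_j]=\E_{j-1}[Y\,\Del_jN^{\mathrm o}_{ik}]$. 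The key structural fact is that $N$ and $L$ are independent. Switching therefore depends on $L$ only through the path of $X$, which enters only via the rate $q_{ik}(X_s)$, and $q_{ik}$ is nearly frozen at $X_t$.

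To exploit this, I would construct a coupling. Run the same Poisson random measure $N$ with the frozen rates $q_{\cdot\cdot}(X_t)$, producing $\tilde\Lam$ on $[t,t+h]$. This $\tilde\Lam$ is independent of $\Del_jL$ given $\mathcal F_t$, so $\E_{j-1}[Y\mathbf 1_{\{\tilde\Lam_{t+h}=k\}}]=0$. The two chains start together and differ only if some point of $N$ falls into the symmetric difference of the intervals $\Gamma_{\cdot\cdot}(X_s)$ and $\Gamma_{\cdot\cdot}(X_t)$ up to the first discrepancy. That set has Lebesgue measure at most $C\sum_{i',k'}|q_{i'k'}(X_s)-q_{i'k'}(X_t)|$, because the consecutive intervals have endpoints that are partial sums of rates. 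So
\[
\Prob_{j-1}(\Lam_{t+h}\ne\tilde\Lam_{t+h}\mid L)\le\int_t^{t+h}C(1+|X_t|+|X_s|)^C|X_s-X_t|\,ds .
\]
Hölder's inequality then gives
\[
|\E_{j-1}[Y\Del_jN^{\mathrm o}_{ik}]|\le\bigl(\E_{j-1}Y^2\bigr)^{1/2}\bigl(\E_{j-1}[\mathbf 1_{\{\Lam_{t+h}\ne\tilde\Lam_{t+h}\}}]\bigr)^{1/2}.
\]
This bound is too crude: $(\E_{j-1}Y^2)^{1/2}\sim h^{1/2}$ and the probability is only $O(h^{3/2})$, so the product is $h^{5/4}$.

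Instead I keep the conditional probability inside the expectation. Since the indicator is bounded, $|\E_{j-1}[Y\mathbf 1_{\{\Lam\ne\tilde\Lam\}}]|\le\E_{j-1}\bigl[|Y|\,\Prob(\cdot\mid L,\mathcal F_t)\bigr]$. Combined with the bound above, this is $\le Ch\,\E_{j-1}[|Y|\sup_s|X_s-X_t|\,(1+\dots)^C]$. Using $\E_{j-1}|Y|^2\le Ch$ and $\E_{j-1}\sup_s|X_s-X_t|^2\le Ch$ (a maximal version of Lemma~\ref{lem:short-time-moment}(ii), obtained via Burkholder--Davis--Gundy), Cauchy--Schwarz gives $Ch\cdot h^{1/2}\cdot h^{1/2}=Ch^2$, with a margin of $h^{1/2}$. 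The main obstacle is making the coupling rigorous: the first-discrepancy argument and the conditional independence of $\tilde\Lam$ from $L$ given $\mathcal F_t$. As a fallback, I would bound the weaker target $h^{3/2}$ using only the probability of at least one switch ($Ch$) together with $\E|Y|\lesssim h^{1/2}$ after conditioning on the switch times.
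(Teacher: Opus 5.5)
\paragraph{Comparison with the paper's proof.} Your part (i) is essentially the paper's argument. The paper applies Dynkin's formula to $f_k(x,\ell)=\mathbf 1_{\{\ell=k\}}$, which absorbs multiple switches automatically, whereas you go through the $i\to k$ counting compensator plus a Poisson two-switch correction.

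Part (ii) takes a genuinely different route. The paper first replaces the endpoint indicator $\Delta_jN^{\mathrm o}_{ik}$ by the actual number $\Delta_j\mathcal N_{ik}$ of $i\to k$ jumps in $(t_{j-1},t_j]$. This costs $(\E_{j-1}Y^2)^{1/2}(\E_{j-1}|\Delta_jN^{\mathrm o}_{ik}-\Delta_j\mathcal N_{ik}|^2)^{1/2}\le Ch_n^{1/2}\cdot h_n$. It then splits $\Delta_j\mathcal N_{ik}$ into a compensated martingale increment and its compensator. The martingale increment is orthogonal to $Y$ because $N$ and $L$ are independent. The compensator's fluctuation $\int_{t_{j-1}}^{t_j}(\lambda^{ik}_s-\lambda^{ik}_{j-1})\,ds$ contributes only $O(h_n^2)$.

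You instead couple $\Lambda$ with the frozen-rate chain $\tilde\Lambda$ driven by the same $N$, which is conditionally independent of $Y$. You then control the endpoint discrepancy by the integrated symmetric differences of the $\Gamma$-intervals, keeping $|Y|$ inside the expectation. Your route works directly with the endpoint indicator and yields $O(h_n^2)$. This shows that the $h_n^{3/2}$ in the paper's bound comes only from its endpoint/count replacement step. The paper's route avoids building an auxiliary process and uses only Doob--Meyer and martingale orthogonality, at the price of a harmless $h_n^{1/2}$ loss.

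\paragraph{Two small repairs to your coupling.} First, $X_s$ depends on $N$ as well as on $L$. So the right-hand side of your bound on $\Prob_{j-1}(\Lambda_{t_j}\neq\tilde\Lambda_{t_j}\mid L)$ should be the conditional expectation of the integral given $\mathcal F_{t_{j-1}}\vee\sigma(L)$. This is harmless by the tower property, because $Y$ is measurable with respect to that $\sigma$-field. Second, plain Cauchy--Schwarz needs an $\mathcal F_{t_{j-1}}$-measurable weight. The bounded rates supply one through $|q_{i'k'}(X_s)-q_{i'k'}(X_{t_{j-1}})|\le C(1+|X_{t_{j-1}}|^C)|X_s-X_{t_{j-1}}|$, obtained by splitting according to whether $|X_s-X_{t_{j-1}}|\le1$.

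\paragraph{Your fallback.} The fallback is in fact the shortest complete proof, but conditioning on the switch times is not what decouples $Y$: those times depend on $X$ and hence on $L$. What works is domination. Set $\bar q:=\sup_{x,\ell}\sum_{k'\neq\ell}q_{\ell k'}(x,\vartheta_0)$. Then all intervals $\Gamma_{\ell k'}(x,\vartheta_0)$ lie in $[0,m\bar q)$, so $\Delta_jN^{\mathrm o}_{ik}\le\mathbf 1_E$ with $E:=\{N((t_{j-1},t_j]\times[0,m\bar q))\ge1\}$. The event $E$ is independent of $\mathcal F_{t_{j-1}}\vee\sigma(L)$. Hence $|\E_{j-1}[Y\Delta_jN^{\mathrm o}_{ik}]|\le\E|Y|\,\Prob(E)\le Ch_n^{1/2}\cdot h_n$, and no regularity of the rates in $x$ is needed.
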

	The proofs of these two propositions are given in Appendix~\ref{sec:tech}.
	
	\subsection{Consistency}
	\label{sec:consistency}

	\begin{thm}[Consistency of the full estimator]
		\label{thm:full-consistency}
		Suppose Assumptions~\ref{ass:Levy}--\ref{ass:ergodic} and
		\ref{ass:Y-global} hold. Then
		\[
		\hat\zeta_n
		=
		(\hat\alpha_n,\hat\gamma_n,\hat\vartheta_n)
		\cip
		\zeta_0 .
		\]
	\end{thm}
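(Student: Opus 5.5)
We follow the standard argmax route and handle the three blocks in turn: first $\hat\gamma_n$, then $\hat\vartheta_n$, and finally $\hat\alpha_n$, which is plugged in with $\hat\gamma_n$. Each block is reduced to a uniform-in-parameter law of large numbers for a centred contrast. The identifiability conditions of Assumption~\ref{ass:Y-global} then give a well-separated maximum: since $\Theta$ is compact and $\mathbb Y_\cdot$ is continuous, $\sup_{|\theta-\theta_0|\ge\epsilon}\mathbb Y_\cdot(\theta)\le-\chi\epsilon^2<0=\mathbb Y_\cdot(\theta_0)$.

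\textbf{Scale.} For $\hat\gamma_n$ we consider $\mathbb G_{1,n}(\gamma)-\mathbb G_{1,n}(\gamma_0)$. Writing $(\Delta_jX)^2=\eta_j^2+2h_nb_{j-1}(\alpha_0)\eta_j+h_n^2b_{j-1}(\alpha_0)^2$, this difference equals
\[
-\frac1{2T_n}\sum_{j=1}^n\Big\{h_n\log\frac{c_{j-1}(\gamma)^2}{c_{j-1}(\gamma_0)^2}+\frac{\eta_j^2}{c_{j-1}(\gamma_0)^2}\rho_{j-1}(\gamma)\Big\}+o_p(1).
\]
By Proposition~\ref{prop:hybrid-one-step}(iii), $\eta_j^2/c_{j-1}(\gamma_0)^2=h_n(1+\xi_j)$, where $\E_{j-1}[\xi_j]=O(h_n^{1/2}(1+|X_{t_{j-1}}|^C))$. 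Hence the difference is $-\frac1{2T_n}\sum_j h_nG_\gamma(X_{t_{j-1}},\Lambda_{t_{j-1}},\gamma)$ plus the martingale term $\frac1{T_n}\sum_jh_n\rho_{j-1}(\gamma)(\xi_j-\E_{j-1}\xi_j)$ plus an $O_p(h_n^{1/2})$ bias. To control the martingale term we use Burkholder's inequality together with Proposition~\ref{prop:hybrid-one-step}(v): $\E_{j-1}\xi_j^2\lesssim h_n^{-1}$, so the second moment of the term is $O(T_n^{-2}nh_n^2\cdot h_n^{-1})=O(T_n^{-1})\to0$. The Riemann sum converges to $\mathbb Y_\gamma(\gamma)$ by the ergodic theorem, and we obtain the $L^2$ bound $\E|\frac1n\sum f(X_{t_{j-1}},\Lambda_{t_{j-1}})-\pi_0 f|^2\lesssim (nh_n)^{-1}$ from (E1) via the covariance decay $|{\rm Cov}(f(Z_s),f(Z_t))|\lesssim e^{-a|t-s|}$. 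The moment bounds (E2) and Lemma~\ref{lem:short-time-moment} make all polynomial weights $1+|X|^C$ harmless.

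\textbf{Uniformity.} Uniformity in $\gamma$ comes from a tightness argument. The same bounds apply to $\partial_\gamma$ of each term thanks to the smoothness in (C2), so $\sup_\gamma|\partial_\gamma(\cdot)|=O_p(1)$. Alternatively, one can use Sobolev embedding: $\E\sup_\gamma|\cdot|^p\lesssim\sum_{k\le1}\int\E|\partial^k_\gamma(\cdot)|^p d\gamma$ with $p>p_\gamma$. Either route upgrades pointwise convergence to uniform convergence, and the argmax theorem then gives $\hat\gamma_n\cip\gamma_0$.

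\textbf{Switching.} For $\hat\vartheta_n$ we write $\Delta_jN^{\rm o}_{ik}=\E_{j-1}[\Delta_jN^{\rm o}_{ik}]+\chi^{ik}_j$ and apply Proposition~\ref{prop:hybrid-switching}(i). This yields
\[
\mathbb G_{3,n}(\vartheta)-\mathbb G_{3,n}(\vartheta_0)=\frac{1}{T_n}\sum_j h_nF_Q(X_{t_{j-1}},\Lambda_{t_{j-1}},\vartheta)+M_n(\vartheta)+O_p(h_n^{1/2}),
\]
where $M_n(\vartheta)=\frac1{T_n}\sum_{j,i,k}\chi^{ik}_j\log\frac{q_{ik}(X_{t_{j-1}},\vartheta)}{q_{ik}(X_{t_{j-1}},\vartheta_0)}$. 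Since $\E_{j-1}(\chi^{ik}_j)^2\le\E_{j-1}\Delta_jN^{\rm o}_{ik}\lesssim h_n(1+|X|^C)$, we get $\E M_n^2\lesssim T_n^{-1}$. The same ergodic and uniformity arguments, using (C4) for $\partial_\vartheta\log q$, then give $\hat\vartheta_n\cip\vartheta_0$.

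\textbf{Drift.} For $\hat\alpha_n$ we expand
\[
\mathbb G_{2,n}(\alpha;\gamma)-\mathbb G_{2,n}(\alpha_0;\gamma)=\frac1{T_n}\sum_j\frac{\eta_j\{b_{j-1}(\alpha)-b_{j-1}(\alpha_0)\}}{c_{j-1}(\gamma)^2}-\frac1{2T_n}\sum_j h_n\frac{\{b_{j-1}(\alpha)-b_{j-1}(\alpha_0)\}^2}{c_{j-1}(\gamma)^2}.
\]
The first sum has conditional mean $O(h_n^2)$ per term by Proposition~\ref{prop:hybrid-one-step}(ii), so its bias is $O_p(h_n)$. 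Its martingale part has variance $O(T_n^{-2}n h_n)=O(T_n^{-1})$. The second sum converges uniformly in $(\alpha,\gamma)$ to $-\frac12\int\frac{(b(\alpha)-b(\alpha_0))^2}{c(\gamma)^2}d\pi_0$, which is jointly continuous in $(\alpha,\gamma)$. Plugging in $\hat\gamma_n\cip\gamma_0$ therefore yields uniform convergence of the contrast at $\gamma=\hat\gamma_n$ to $\mathbb Y_\alpha$, and hence $\hat\alpha_n\cip\alpha_0$.

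\textbf{Main obstacle.} We expect the main difficulty to be the uniform-in-parameter control of the martingale terms. The issue is that $\xi_j$ and $\chi^{ik}_j$ carry heavy polynomial weights and only $h_n^{-1}$-order conditional variance, so an honest Sobolev or derivative bound for $\sup_\theta$ must be checked using Lemma~\ref{lem:short-time-moment}(iii) combined with stationarity under $\pi_0$. By comparison, the hybrid remainder $r_{j,n}$ enters only through the already-quantified rates in Proposition~\ref{prop:hybrid-one-step}.
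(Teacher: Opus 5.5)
Your proposal is correct and follows essentially the same route as the paper. Each centred contrast is split into an ergodic Riemann sum, a martingale part, and an $O_p(h_n^{1/2})$ predictable/endpoint bias using Propositions~\ref{prop:hybrid-one-step} and~\ref{prop:hybrid-switching}, the convergence is made uniform in the parameter, and the argmax theorem is applied under Assumption~\ref{ass:Y-global}. The remaining differences are minor and all valid: you derive the ergodic law of large numbers from an $L^2$ covariance-decay bound based on (E1) instead of Lemma~\ref{lem:ergodic-rate-unif}, you allow derivative tightness in place of Sobolev embedding, and you handle the plug-in $\hat\gamma_n$ through joint uniform convergence in $(\alpha,\gamma)$ and continuity of the limit rather than the paper's Lipschitz bound in $\gamma$.
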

	
	\subsection{Joint asymptotic normality}
	\label{sec:joint-an}
	
	We next give the joint limit theorem for the full three-stage estimator.
	The scale and drift scores have the same form as in the staged
	Gaussian quasi-likelihood analysis, while the switching-rate score comes
	from the intensity-type contrast. Define
	\[
	\Delta_{n,\gamma}
	:=
	T_n^{1/2}\partial_\gamma\mathbb G_{1,n}(\gamma_0),
	\quad
	\Delta_{n,\alpha}
	:=
	T_n^{1/2}\partial_\alpha\mathbb G_{2,n}(\alpha_0;\gamma_0),
	\quad
	\Delta_{n,Q}
	:=
	T_n^{1/2}\partial_\vartheta\mathbb G_{3,n}(\vartheta_0).
	\]
	Let
	$\Delta_{n,\zeta}
	:= (\Delta_{n,\alpha},\Delta_{n,\gamma},\Delta_{n,Q})^{\top}$.
	We set
	\[
	\Sigma_\gamma
	:=
	\frac{\kappa_4}{4}
	\int_E
	\Psi_\gamma(x,i)^{\otimes2}\,\pi_0(dx,di),
	\quad
	\Sigma_{\alpha\gamma}
	:=
	\frac{\kappa_3}{2}
	\int_E
	A_\alpha(x,i)\Psi_\gamma(x,i)^\top
	\,\pi_0(dx,di),
	\]
	where
	$\kappa_3:=\int_{\mathbb R}z^3\,\nu(dz)$ and
	$\kappa_4:=\int_{\mathbb R}z^4\,\nu(dz)$.
	Then define
	\[
	\Sigma_\zeta
	:=
	\begin{pmatrix}
		\Gamma_\alpha & \Sigma_{\alpha\gamma} & 0\\
		\Sigma_{\alpha\gamma}^\top & \Sigma_\gamma & 0\\
		0 & 0 & \Gamma_Q
	\end{pmatrix},
	\qquad
	\Gamma_\zeta
	:=
	\begin{pmatrix}
		\Gamma_\alpha & 0 & 0\\
		0 & \Gamma_\gamma & 0\\
		0 & 0 & \Gamma_Q
	\end{pmatrix}.
	\]
	
	\begin{thm}[Joint asymptotic normality] \label{thm:joint-an} Suppose Assumptions~\ref{ass:Levy}--\ref{ass:positive-defi} hold. Then \[ \Delta_{n,\zeta}\cil N(0,\Sigma_\zeta), \] and the full estimator admits the linear expansion \[ \sqrt{T_n}(\hat\zeta_n-\zeta_0) = \Gamma_\zeta^{-1}\Delta_{n,\zeta} + o_p(1). \] Consequently, \[ \sqrt{T_n}(\hat\zeta_n-\zeta_0) \cil N_{p_\alpha+p_\gamma+p_\vartheta} \left(0, \Gamma_\zeta^{-1}\Sigma_\zeta\Gamma_\zeta^{-1} \right). \] \end{thm}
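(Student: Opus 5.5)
The proof has two parts: a central limit theorem for the joint score $\Delta_{n,\zeta}$, and then a standard $M$-estimation expansion that uses Theorem~\ref{thm:full-consistency}.

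\emph{Score representation.} Differentiate the contrasts at the true value. This gives
\[
\Delta_{n,\gamma}=\frac{1}{2\sqrt{T_n}}\sum_{j=1}^n h_n\,\Psi_{\gamma,j-1}\,\xi_j'+o_p(1),
\qquad
\Delta_{n,\alpha}=\frac{1}{\sqrt{T_n}}\sum_{j=1}^n A_{\alpha,j-1}\frac{\eta_j}{c_{j-1}(\gamma_0)},
\]
where $\xi_j'=(\Delta_jX)^2/(h_nc_{j-1}(\gamma_0)^2)-1$ differs from $\xi_j$ by terms of order $h_n$. For the switching block,
\[
\Delta_{n,Q}=\frac{1}{\sqrt{T_n}}\sum_{j=1}^n\sum_{i}\sum_{k\ne i}\Psi_Q^{ik}(X_{t_{j-1}})\bigl(\Delta_jN^{\mathrm o}_{ik}-h_n\mathbf 1_{\{\Lambda_{t_{j-1}}=i\}}q_{ik}(X_{t_{j-1}},\vartheta_0)\bigr).
\]
Each block is then written as a sum of martingale differences plus a compensator remainder.

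The remainders are controlled as follows.
\begin{itemize}
\item For $\Delta_{n,\alpha}$, Proposition~\ref{prop:hybrid-one-step}(2) bounds the compensator by $T_n^{-1/2}\sum_j h_n^2(1+|X|^C)=O_p(\sqrt{nh_n^3})$, which is $o_p(1)$ because $nh_n^2\to0$.
\item For $\Delta_{n,\gamma}$, Proposition~\ref{prop:hybrid-one-step}(3) only gives $O_p(\sqrt{n}\,h_n)=O_p(\sqrt{nh_n^2})=o_p(1)$.
\item For $\Delta_{n,Q}$, Proposition~\ref{prop:hybrid-switching}(1) gives $T_n^{-1/2}\cdot n h_n^{3/2}=\sqrt{nh_n^2}=o_p(1)$.
\end{itemize}
Uniform moment bounds under stationarity (Assumption~\ref{ass:ergodic}(E2)) make the polynomial weights harmless.

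\emph{Martingale CLT.} Apply the martingale central limit theorem for triangular arrays. This requires checking two things:
\begin{itemize}
\item convergence of the sum of conditional covariances of the increments to $\Sigma_\zeta$;
\item a Lyapunov condition.
\end{itemize}

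\emph{Conditional covariances.} Proposition~\ref{prop:hybrid-one-step}(3)--(5) gives:
\begin{itemize}
\item The $\alpha$-block conditional variance is $h_nA_\alpha^{\otimes2}$.
\item The $\alpha\gamma$ cross term is $\tfrac12 h_n\mathbb E[L_1^3]A_\alpha\Psi_\gamma^\top$. Note that $\mathbb E[L_1^3]=\kappa_3$ because $L$ is pure jump with mean zero.
\item The $\gamma$-block is $\tfrac14 h_n^2\Psi_\gamma^{\otimes2}\mathbb E_{j-1}[\xi_j^2]$. The leading term of $\mathbb E_{j-1}[\eta_j^4]$ is $h_n\kappa_4 c^4$, so this is $\tfrac{\kappa_4}{4}h_n\Psi_\gamma^{\otimes2}$ to leading order. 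Verifying this fourth-moment expansion requires an extra computation of the same type as (4); I would do it by expanding $\eta_j=c_{j-1}\Delta_jL+r_{j,n}$ and using $\mathbb E[(\Delta_jL)^4]=h_n\kappa_4+3h_n^2$.
\item The $Q$-block is $\sum_{k\neq i}q_{ik}h_n\Psi_Q^{ik\otimes2}+O(h_n^{3/2})$, since the indicators have conditional second moment equal to their mean and products of distinct indicators vanish.
\item The cross blocks between $(\alpha,\gamma)$ and $Q$ vanish by Proposition~\ref{prop:hybrid-switching}(2). Write $\eta_j$ and $\eta_j^2-h_nc^2$ as $c\Delta_jL$ and $c^2\{(\Delta_jL)^2-h_n\}$ plus remainders. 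The remainders are handled by Cauchy--Schwarz with Proposition~\ref{prop:hybrid-one-step}(1) and $\mathbb E_{j-1}[(\chi_j^{ik})^2]=O(h_n)$, which yields $O(h_n^{3/2})$. After the $T_n^{-1}$ scaling, each cross covariance is $O(h_n^{1/2})\to0$.
\end{itemize}
The time averages $T_n^{-1}\sum_j h_n f(X_{t_{j-1}},\Lambda_{t_{j-1}})$ converge to $\pi_0(f)$ by an $L^2$ ergodic argument. Its input is the mixing bound of Assumption~\ref{ass:ergodic}(E1), which makes the variance of the Riemann sum $O(1/T_n)$. Lemma~\ref{lem:short-time-moment} handles the Riemann discretization error.

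\emph{Lyapunov condition.} This uses fourth moments. For example, $T_n^{-2}\sum_j h_n^2\mathbb E[\xi_j^4]\lesssim T_n^{-2}\,n\,h_n^{2}\,h_n^{-3}=1/(nh_n^2)^{\!}\cdot h_n^{-1}\cdots$. This needs care: $\mathbb E_{j-1}[\eta_j^8]=O(h_n)$, so $h_n^2\mathbb E_{j-1}[\xi_j^4]=O(h_n^{-1})$ and the sum is $O(n h_n^{-1}/T_n^2)=O(1/(nh_n^2)\cdot h_n^{0})$. This does not go to zero directly. I would therefore instead verify the conditional Lindeberg condition through the power-$(2+\delta)$ bound with small $\delta$. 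That gives order $T_n^{-1-\delta/2}\,n\,h_n^{1+\delta/2}h_n^{-\delta}\cdots$. This bookkeeping, i.e.\ Lindeberg for the heavy-tailed scale score, is where I expect the main obstacle. The fallback is the known treatment in \cite{Mas13as}: the scale block there is normalized by $T_n^{1/2}$ precisely because $\mathbb E[\eta^4]\sim h$, and its Lindeberg verification transfers verbatim since the switching only adds $O(h_n)$-probability corrections controlled by Lemma~\ref{lem:short-time-moment}(i).

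\emph{Estimator expansion.} Consistency (Theorem~\ref{thm:full-consistency}) places $\hat\zeta_n$ in the interior with probability tending to one. Taylor expand the first-order conditions, stage by stage:
\[
0=\Delta_{n,\gamma}-\Gamma_{n,\gamma}(\tilde\gamma)\sqrt{T_n}(\hat\gamma_n-\gamma_0),
\]
and similarly for $\vartheta$. The observed Hessians converge uniformly on shrinking balls to $-\Gamma_\gamma$ and $-\Gamma_Q$. This follows from the ergodic averages and the third-derivative bounds of Assumption~\ref{ass:coeff}.

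For $\alpha$, expand $\partial_\alpha\mathbb G_{2,n}(\alpha_0;\hat\gamma_n)$ around $\gamma_0$. The mixed derivative $\partial_\gamma\partial_\alpha\mathbb G_{2,n}(\alpha_0;\gamma)$ is a normalized martingale-type sum of the form $T_n^{-1}\sum (\ldots)\eta_j$, which is $o_p(1)$. Since $\sqrt{T_n}(\hat\gamma_n-\gamma_0)=O_p(1)$, the plug-in of $\hat\gamma_n$ is negligible and the Hessian is block-diagonal $\Gamma_\zeta$. Inverting gives $\sqrt{T_n}(\hat\zeta_n-\zeta_0)=\Gamma_\zeta^{-1}\Delta_{n,\zeta}+o_p(1)$. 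Assumption~\ref{ass:positive-defi} makes $\Gamma_\zeta$ invertible, and Slutsky's lemma gives the stated normal limit.
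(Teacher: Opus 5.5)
Your architecture is the paper's: martingale-plus-compensator decompositions of the three scores with remainders of order $\sqrt{nh_n^2}$, a triangular-array martingale CLT checked through conditional covariances and a fourth-moment Lyapunov bound, and a stage-wise Taylor expansion in which plugging $\hat\gamma_n$ into the drift score is negligible. The one step that fails as written is the Lyapunov condition. The ``main obstacle'' you describe there comes from an arithmetic slip, not a real difficulty.

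Your scale-score increment is $\frac{1}{2\sqrt{T_n}}\Psi_{\gamma,j-1}\,h_n\bar\xi_j$ with $\bar\xi_j:=\xi_j-\mathbb E_{j-1}[\xi_j]$. Its fourth power therefore carries $h_n^4\xi_j^4$, not $h_n^2\xi_j^4$: you used $h_n^2$ correctly for the square in the covariance, but kept that power in the fourth moment. Since $h_n\xi_j=\eta_j^2/c_{j-1}(\gamma_0)^2-h_n$, Proposition~\ref{prop:hybrid-one-step}(5) with $q=8$ gives $\mathbb E_{j-1}[(h_n\bar\xi_j)^4]\le Ch_n(1+|X_{t_{j-1}}|^C)$. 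Hence
\[
\sum_{j=1}^n\mathbb E_{j-1}\Bigl[\Bigl|\frac{1}{2\sqrt{T_n}}\Psi_{\gamma,j-1}h_n\bar\xi_j\Bigr|^4\Bigr]
\le\frac{C}{T_n^2}\sum_{j=1}^n h_n\bigl(1+|X_{t_{j-1}}|^C\bigr)=O_p(T_n^{-1})\to0,
\]
exactly as for the $\alpha$- and $\vartheta$-blocks. This is the paper's bound, written there with $(\Delta_jL)^2-h_n$ in place of $h_n\xi_j$. No $(2+\delta)$-Lindeberg bookkeeping and no appeal to \cite{Mas13as} is needed. As submitted, however, your proposal asserts that the direct bound does not vanish and leaves the step unverified.

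Beyond this, your route differs from the paper's mainly in the choice of leading martingale increments. You center $\eta_j$ and $\eta_j^2$ directly. That requires an expansion of $\mathbb E_{j-1}[\eta_j^4]$ not contained in Proposition~\ref{prop:hybrid-one-step}, and for the cross terms with $\chi_j^{ik}$ you must reduce to $c_{j-1}\Delta_jL$ and $c_{j-1}^2\{(\Delta_jL)^2-h_n\}$ anyway.

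The paper does that reduction once. It builds $M_{n,\alpha}$ and $M_{n,\gamma}$ from $\Delta_jL$ and $(\Delta_jL)^2-h_n$, and puts all hybrid effects into $R_{n,\alpha}$ and $R_{n,\gamma}$, which are controlled by Proposition~\ref{prop:hybrid-one-step}(1) and $nh_n^2\to0$. The conditional moments of the leading increments are then exact L\'evy moments, independent of $\mathcal F_{t_{j-1}}$, and they produce $\kappa_3$ and $\kappa_4$ with no further expansion.

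For the $\alpha$-decoupling, you use $o_p(1)\cdot O_p(1)$ together with $\sqrt{T_n}$-tightness of $\hat\gamma_n$. The paper instead uses a uniform $O_p(1)$ bound on $\sqrt{T_n}\,\partial_\gamma\partial_\alpha\mathbb G_{2,n}(\alpha_0;\gamma)$ times consistency. Both work, but in either version the bound must hold uniformly in $\gamma$, via martingale moments plus Sobolev embedding. The mean-value point is random, and the crude estimate $T_n^{-1}\sum_j(1+|X_{t_{j-1}}|^C)|\eta_j|$ is only $O_p(h_n^{-1/2})$.
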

	
	The off-diagonal block between the drift and scale estimators is governed
	by \(\Sigma_{\alpha\gamma}\), and hence by the third moment of the driving
	L\'evy noise. In contrast, the switching-rate score is asymptotically
	uncorrelated with the continuous-coefficient scores. This does not mean that
	the continuous and switching coordinates are independent at finite samples;
	rather, their predictable cross-covariations are of smaller order on the
	\(T_n^{1/2}\)-scale.
	
	\begin{rem}
		The asymptotic normality result above should be understood as a
		quasi-likelihood limit theorem, not as an efficiency statement in the
		Hájek--Le Cam sense. The Gaussian contrasts for the continuous component use
		only the local mean and variance structure of the Lévy-driven increments and
		are not, in general, the exact likelihood contrasts of the underlying
		experiment. Consequently the covariance matrix has the sandwich form
		\(\Gamma_\zeta^{-1}\Sigma_\zeta\Gamma_\zeta^{-1}\), and no claim is made
		that it coincides with the inverse efficient information. The
		switching-rate block is closer to a genuine counting-process likelihood,
		but efficiency of the full estimator would require a separate LAN analysis
		of the exact hybrid experiment.
	\end{rem}

	\subsection{Polynomial-type large deviation inequality and moment consequences}
	\label{sec:pldi}
	
	In this section we establish a polynomial-type large deviation inequality
	for the contrast functions $\mathbb G_{1,n}$, $\mathbb G_{2,n}$, and $\mathbb G_{3,n}$,
	in the spirit of \cite{Yos11,Yoshida2021,Mas13as}.
	
	For each $r>0$, define the local parameter neighborhoods
	\[
	\gam_n(v):=\gam_0+T_n^{-1/2}v,
	\qquad
	\mathbb U_{n,\gam}(r):=\{v:\gam_n(v)\in\Theta_\gam,\ |v|\ge r\},
	\]
	\[
	\al_n(u):=\al_0+T_n^{-1/2}u,
	\qquad
	\mathbb U_{n,\al}(r):=\{u:\al_n(u)\in\Theta_\al,\ |u|\ge r\},
	\]
	\[
	\vartheta_n(w):=\vartheta_0+T_n^{-1/2}w,
	\qquad
	\mathbb U_{n,\vartheta}(r):=\{w:\vartheta_n(w)\in\Theta_\vartheta,\ |w|\ge r\},
	\]
	and the associated local random fields
	\[
	\mathbb Z_{n,\gam}(v)
	:=\exp\!\Bigl(T_n\bigl\{\mathbb G_{1,n}(\gam_n(v))-\mathbb G_{1,n}(\gam_0)\bigr\}\Bigr),
	\]
	\[
	\mathbb Z_{n,\al}(u;\bar\gam_n)
	:=\exp\!\Bigl(T_n\bigl\{\mathbb G_{2,n}(\al_n(u);\bar\gam_n)-\mathbb G_{2,n}(\al_0;\bar\gam_n)\bigr\}\Bigr),
	\qquad \bar\gam_n\in\Theta_\gam,
	\]
	\[
	\mathbb Z_{n,Q}(w)
	:=\exp\!\Bigl(T_n\bigl\{\mathbb G_{3,n}(\vartheta_n(w))-\mathbb G_{3,n}(\vartheta_0)\bigr\}\Bigr).
	\]

	\begin{thm}[PLDI for the three-stage estimator]
		\label{thm:pldi}
		Suppose Assumptions~\ref{ass:Levy}--\ref{ass:ergodic},
		\ref{ass:Y-global}, and \ref{ass:positive-defi} hold. Then, for every
		\(L>0\), there exists \(C_L>0\) such that, for all \(r>0\),
		\[
		\sup_{n\in\mathbb N}
		\mathbb P
		\left(
		\sup_{v\in\mathbb U_{n,\gamma}(r)}
		\mathbb Z_{n,\gamma}(v)
		\ge
		e^{-r^2/C_L}
		\right)
		\le
		\frac{C_L}{r^L},\qquad r>0,
		\]
		\[
		\sup_{n\in\mathbb N}
		\mathbb P
		\left(
		\sup_{u\in\mathbb U_{n,\alpha}(r)}
		\mathbb Z_{n,\alpha}(u;\hat\gamma_n)
		\ge
		e^{-r^2/C_L}
		\right)
		\le
		\frac{C_L}{r^L},\qquad r>0,
		\]
		and
		\[
		\sup_{n\in\mathbb N}
		\mathbb P
		\left(
		\sup_{w\in\mathbb U_{n,\vartheta}(r)}
		\mathbb Z_{n,Q}(w)
		\ge
		e^{-r^2/C_L}
		\right)
		\le
		\frac{C_L}{r^L},\qquad r>0.
		\]
		Consequently, for every \(L>0\), there exists \(C_L'>0\) such that
		\[
		\sup_{n\in\mathbb N}
		\mathbb P
		\left(
		\sqrt{T_n}|\hat\zeta_n-\zeta_0|>r
		\right)
		\le
		\frac{C_L'}{r^L},\qquad r>0.
		\]
	\end{thm}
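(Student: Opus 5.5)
The plan is to verify, separately for each of the three random fields, the sufficient conditions of Yoshida's PLDI theorem \cite{Yos11} (in the form used in \cite{Mas13as}). Each field has the locally asymptotically quadratic representation
\[
\log\mathbb Z_n(v)=\Delta_n[v]-\tfrac12\Gamma[v,v]+r_n(v),
\]
with score $\Delta_n\in\{\Delta_{n,\gamma},\Delta_{n,\alpha},\Delta_{n,Q}\}$ and limit matrix $\Gamma\in\{\Gamma_\gamma,\Gamma_\alpha,\Gamma_Q\}$. Yoshida's conditions then reduce to four items:
\begin{itemize}
\item[(a)] $\sup_n\E|\Delta_n|^M<\infty$;
\item[(b)] $\sup_n\E\bigl[\sup_\theta|T_n^{-1}\partial_\theta^3\mathbb G_n(\theta)|^M\bigr]<\infty$;
\item[(c)] $\sup_n\E\bigl[\bigl(T_n^{\epsilon_1}|T_n^{-1}\partial^2\mathbb G_n(\theta_0)+\Gamma|\bigr)^M\bigr]<\infty$;
\item[(d)] $\sup_n\E\bigl[\sup_\theta\bigl(T_n^{\epsilon_1}|\mathbb G_n(\theta)-\mathbb G_n(\theta_0)-\mathbb Y(\theta)|\bigr)^M\bigr]<\infty$.
\end{itemize}
These are required for all large $M$ and some small $\epsilon_1>0$. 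Together with the global identifiability of Assumption~\ref{ass:Y-global} and the nondegeneracy of Assumption~\ref{ass:positive-defi}, they yield the three displayed inequalities. The moment bound for $\sqrt{T_n}|\hat\zeta_n-\zeta_0|$ then follows as usual: on $\{\sqrt{T_n}|\hat\gamma_n-\gamma_0|>r\}$ one has $\sup_{\mathbb U_{n,\gamma}(r)}\mathbb Z_{n,\gamma}\ge1\ge e^{-r^2/C_L}$, and similarly for the other blocks.

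\textbf{Scores (item (a)).} I would write each score as a sum of martingale differences plus a predictable remainder. For $\gamma$, $\Delta_{n,\gamma}=-\frac{1}{2\sqrt{T_n}}\sum_j\Psi_\gamma(X_{t_{j-1}},\Lambda_{t_{j-1}})\,h_n\,\xi_j$ up to small terms. By Proposition~\ref{prop:hybrid-one-step}, the compensator $\E_{j-1}[\xi_j]$ is $O(h_n^{1/2}(1+|X|^C))$, and the sum of these terms is $O(\sqrt{T_n}h_n^{1/2})=O(\sqrt{nh_n^2})\to0$ in $L^M$. The martingale part is handled by Burkholder's inequality, using $\E_{j-1}|h_n\xi_j|^M\lesssim h_n(1+|X|^C)$ from Proposition~\ref{prop:hybrid-one-step}(v) and the stationary moments of Assumption~\ref{ass:ergodic}(E2).

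The $\alpha$-score is analogous, using (i)–(ii) of the same proposition. For $\Delta_{n,Q}$ I would write
\[
\partial_\vartheta\mathbb G_{3,n}(\vartheta_0)=T_n^{-1}\sum_j\sum_{i\ne k}\Psi_Q^{ik}\,\chi_j^{ik}+T_n^{-1}\sum_j\sum_{i\ne k}\Psi_Q^{ik}\,r_{j,n}^{ik}.
\]
The first term is a martingale with increments bounded by polynomials in $X$. The second is $O_{L^M}(\sqrt{T_n}h_n^{1/2})$ after scaling, using Proposition~\ref{prop:hybrid-switching}(i) and the identity $\sum_k\Psi_Q^{ik}q_{ik}=\partial_\vartheta q_i$.

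\textbf{Hessian and uniform convergence (items (b)–(d)).} Item (b) is immediate from the polynomial growth in Assumption~\ref{ass:coeff}, from $c\ge c_0$ and $q_{ij}$ bounded below, and from Lemma~\ref{lem:short-time-moment}. For items (c) and (d), each quantity splits into a martingale part and an ergodic-average part. The martingale part is of order $T_n^{-1/2}$ in $L^M$, so multiplying by $T_n^{\epsilon_1}$ is harmless for $\epsilon_1<1/2$. Uniformity in $\theta$ comes from Sobolev embedding, with $W^{1,M}$ bounds on $\theta$-derivatives, $M>\dim\theta$. The ergodic-average part has the form $T_n^{-1}\int_0^{T_n}g(X_s,\Lambda_s)\,ds-\pi_0(g)$ plus a discretization error. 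The discretization error is $O(h_n^{1/2})=o(T_n^{-\epsilon_1})$ in $L^M$ as long as $h_n\lesssim T_n^{-2\epsilon_1}$, which holds for small $\epsilon_1$ because $nh_n^2\to0$ forces $h_n=o(T_n^{-1})$. It uses Lemma~\ref{lem:short-time-moment}(i)–(ii) to control regime switches inside a step.

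\textbf{Main obstacle.} The hard step is the $L^M$ rate $\E\bigl|T_n^{-1}\int_0^{T_n}(g-\pi_0 g)(X_s,\Lambda_s)\,ds\bigr|^M\lesssim T_n^{-M/2}$ for polynomially growing $g$. I would obtain it from Assumption~\ref{ass:ergodic}(E1) by solving the Poisson equation $\mathcal A\phi=-(g-\pi_0 g)$ with
\[
\phi=\int_0^\infty P_t(g-\pi_0g)\,dt,
\]
which has polynomial growth by (E1). Dynkin's formula then turns the time average into a martingale (bounded via Burkholder) plus boundary terms $\phi(X_{T_n})-\phi(X_0)$, which are $O(1)$ in $L^M$ under stationarity. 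Alternatively, one can use exponential $\alpha$-mixing with a covariance inequality, as in \cite{Masuda2007,Mas13as}.

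For the $\alpha$-field evaluated at the random $\hat\gamma_n$, I would prove (a)–(d) uniformly in $\bar\gamma_n\in\Theta_\gamma$. The $\bar\gamma$-dependence enters only through $c_{j-1}(\bar\gamma)^{-2}$, and the cross term $\sum_j(b_{j-1}(\alpha)-b_{j-1}(\alpha_0))\eta_j/c^2_{j-1}(\bar\gamma)$ is still a martingale plus a negligible part. Its limit $-\frac12\int(b(\cdot,\alpha)-b(\cdot,\alpha_0))^2/c(\cdot,\bar\gamma)^2\,d\pi_0$ remains bounded above by $-\chi'|\alpha-\alpha_0|^2$ uniformly in $\bar\gamma$, since $c$ is bounded above and below. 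This uniform separation is what I would need to check against Assumption~\ref{ass:Y-global}, possibly at the cost of a constant.
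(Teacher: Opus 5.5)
Your overall strategy is the same as the paper's: verify Yoshida's moment conditions for each of the three local fields (Proposition~\ref{prop:pldi-verify} together with \cite[Theorem~3(c)]{Yos11}), then use $\mathbb Z\ge1$ at the maximizer. However, one step, as you justify it, fails for this pure-jump switching model.

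\textbf{The discretization step.} You claim the discretization error $D_n$ between $\frac1n\sum_j g(X_{t_{j-1}},\Lambda_{t_{j-1}})$ and $T_n^{-1}\int_0^{T_n}g(X_s,\Lambda_s)\,ds$ is $O(h_n^{1/2})$ in $L^M$, citing Lemma~\ref{lem:short-time-moment}(i)--(ii).
\begin{itemize}
\item Those bounds give $\E|X_{t+u}-X_t|^M\le Ch$ for $M\ge2$ and $\Prob(\Lambda_s\ne\Lambda_t\mid\mathcal F_t)\le Ch(\cdots)$. So per step the $L^M$-norm is of order $h_n^{1/M}$, not $h_n^{1/2}$. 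This is sharp, because a jump of order one or a regime switch occurs in a step with probability of order $h_n$.
\item The triangle inequality then only gives $\|D_n\|_{L^M}\lesssim h_n^{1/M}$. The requirement $\sup_n\E[(T_n^{\epsilon_1}|D_n|)^M]<\infty$ therefore becomes $T_n^{\epsilon_1M}h_n=O(1)$.
\item Yoshida's conditions need $M\epsilon_1$ proportional to $L$, and you only know $T_nh_n\to0$. So this fails for large $L$.
\end{itemize}

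The same problem affects item (b), which is not ``immediate''. The sums $\frac1{T_n}\sum_j(1+|X_{t_{j-1}}|^C)(\Delta_jX)^2$ and $\frac1{T_n}\sum_j(1+|X_{t_{j-1}}|^C)\Delta_jN^{\mathrm o}_{ik}$ cannot be bounded in $L^M$ by per-step estimates. The triangle inequality gives order $h_n^{1/M-1}$, which diverges.

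\textbf{The repair.} The missing idea is to exploit cancellation across steps.
\begin{itemize}
\item Split each summand into its $\mathcal F_{t_{j-1}}$-conditional expectation and a martingale difference.
\item The conditional expectation is controlled pointwise, using only low-order moment bounds, by a polynomial in $X_{t_{j-1}}$ times the appropriate power of $h_n$.
\item Apply Burkholder's inequality to the martingale differences. The within-step discretization martingale is then $o(T_n^{-1/2})$ in every $L^M$.
\end{itemize}
This is exactly what the paper does in (G2), (A2) and (Q2). For the ergodic averages, the paper avoids your continuous-time detour entirely: it applies a Rosenthal inequality for the exponentially $\beta$-mixing sampled chain directly to the discrete sums (Lemma~\ref{lem:ergodic-rate-unif}). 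With either repair, your claimed rates hold.

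\textbf{The $\alpha$-field: a different route.} You keep $\bar\gamma$ as a nuisance with $\bar\gamma$-dependent limits $\mathbb Y_\alpha(\cdot;\bar\gamma)$ and $\Gamma_\alpha(\bar\gamma)$. The paper instead first extracts $\sup_n\E|\sqrt{T_n}(\hat\gamma_n-\gamma_0)|^K<\infty$ from the $\gamma$-PLDI. It then shows that plugging in $\hat\gamma_n$ perturbs score, Hessian and contrast only at the $T_n^{-1/2}$ scale, so all limits stay at $\gamma_0$.

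Your version has the advantage of decoupling the $\alpha$-PLDI from the rate of $\hat\gamma_n$. It needs three things:
\begin{itemize}
\item the conditions with $\sup_{\bar\gamma}$ inside the expectation, which Sobolev embedding in $(\alpha,\bar\gamma)$ provides;
\item Yoshida's theorem applied with $n$-dependent random limits $\Gamma_\alpha(\hat\gamma_n)$ and $\mathbb Y_\alpha(\cdot;\hat\gamma_n)$, which is legitimate because his argument is non-asymptotic given uniform bounds;
\item separation uniform in $\bar\gamma$.
\end{itemize}

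Your justification of the third point, ``$c$ is bounded above and below'', is not available. Assumption~\ref{ass:coeff} gives only linear growth of $c(\cdot,\gamma)$, and (E4) concerns only $\gamma_0$ and is not among the hypotheses.

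The uniform separation nonetheless holds. The weight $c(\cdot,\bar\gamma)^{-2}$ is strictly positive and continuous in $\bar\gamma$. A continuity and compactness argument on $\Theta_\alpha\times\Theta_\gamma$ then works: use Assumption~\ref{ass:Y-global} away from $\alpha_0$ and Assumption~\ref{ass:positive-defi} for directions at $\alpha_0$. This yields $\inf_{\bar\gamma}\inf_{\alpha\ne\alpha_0}\{-\mathbb Y_\alpha(\alpha;\bar\gamma)\}/|\alpha-\alpha_0|^2>0$, and likewise uniform positive definiteness of $\Gamma_\alpha(\bar\gamma)$.
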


	\begin{cor}[Moment convergence]
		\label{cor:moments}
		Suppose the assumptions of Theorems~\ref{thm:joint-an} and
		\ref{thm:pldi} hold. Set
		\[
		Y_n:=\sqrt{T_n}(\hat\zeta_n-\zeta_0),
		\qquad
		Y\sim N(0,\Gamma_\zeta^{-1}\Sigma_\zeta\Gamma_\zeta^{-1}).
		\]
		Then, for every continuous function \(f\) of polynomial growth,
		\[
		\mathbb E[f(Y_n)]\to \mathbb E[f(Y)].
		\]
	\end{cor}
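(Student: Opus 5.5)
The plan is the standard argument that convergence in law plus uniform integrability gives convergence of moments. By Theorem~\ref{thm:joint-an}, $Y_n\cil Y$. Since $f$ is continuous, the continuous mapping theorem gives $f(Y_n)\cil f(Y)$. It therefore suffices to show that $\{f(Y_n)\}_{n\in\mathbb N}$ is uniformly integrable. Convergence in law of a uniformly integrable sequence then implies convergence of expectations, for example via Skorokhod representation and Vitali's theorem, or by truncating $f$ at level $M$ and letting $M\to\infty$.

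For uniform integrability, use polynomial growth: $|f(y)|\le C(1+|y|^{k})$ for some $k>0$. It is enough to show $\sup_n\mathbb E[|Y_n|^{p}]<\infty$ for some $p>k$, since then $\sup_n\mathbb E[|f(Y_n)|^{p/k}]<\infty$ with $p/k>1$. This moment bound comes from the last assertion of Theorem~\ref{thm:pldi}. Choose $L>p$. Then
\[
\mathbb E[|Y_n|^p]
=\int_0^\infty p r^{p-1}\,\mathbb P(|Y_n|>r)\,dr
\le \int_0^1 p r^{p-1}\,dr+\int_1^\infty p r^{p-1}\frac{C_L'}{r^L}\,dr .
\]
The right-hand side is finite and does not depend on $n$, because the PLDI bound is uniform in $n\in\mathbb N$.

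Finally, the limit $Y$ is Gaussian, so $\mathbb E|f(Y)|<\infty$ automatically. Nothing beyond Theorems~\ref{thm:joint-an} and \ref{thm:pldi} is needed. There is no real obstacle. The only point to check is that the tail bound of Theorem~\ref{thm:pldi} holds uniformly in $n$ and for every $L$, so that $L$ can be taken larger than $k$. Both are part of that statement. All the substantive work, namely the PLDI for each of the three staged contrasts and the passage from the random-field bounds to the estimator tail bound, lies in the proof of Theorem~\ref{thm:pldi}.
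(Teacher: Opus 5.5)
Your proposal is correct and follows the same route as the paper: you get uniform-in-$n$ moment bounds for $Y_n$ from the estimator tail bound in Theorem~\ref{thm:pldi} via the tail-integral formula, and then combine uniform integrability of $f(Y_n)$ with $Y_n\cil Y$ and the continuous mapping theorem. Your split of the tail integral at $r=1$, bounding the probability by one near the origin, is a slightly more careful version of the paper's one-line tail-integral step.
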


	\section{Numerical experiments}
	\label{sec:numerical}
	We present a short simulation study to illustrate the finite-sample behavior
	of the three-stage estimator
	\[
	\hat\zeta_n=(\hat\alpha_n,\hat\gamma_n,\hat\vartheta_n).
	\]
	The experiments are based on two hybrid
	switching SDEs driven by normal inverse Gaussian noise.
	We use \(\mathbb S=\{1,2\}\) and
	\[
	q_{12}(x,\vartheta)=\exp\{\vartheta_{10}+\vartheta_{11}\tanh x\},
	\qquad
	q_{21}(x,\vartheta)=\exp\{\vartheta_{20}+\vartheta_{21}\tanh x\}.
	\]
	
	We consider the following two models.
	\[
	\begin{aligned}
		b(x,i,\alpha)&=-ax+\mu_i,
		\qquad
		c(x,i,\gamma)=\exp(\gamma_i),\\
		\alpha&=(a,\mu_1,\mu_2)=(1.2,-0.8,0.8),\\
		\gamma&=(\gamma_1,\gamma_2)=(-0.35,0.25),\\
		\vartheta&=(-1.2,0.7,-1.0,-0.6),\\
		L_1&\sim \mathrm{NIG}(1,0,1,0),
	\end{aligned}
	\]
	for Model 1, and
	\[
	\begin{aligned}
		b(x,i,\alpha)
		&=-ax+\mu_i+\rho_i\tanh x,\\
		c(x,i,\gamma)
		&=\exp\{\gamma_0+\gamma_1\mathbf 1_{\{i=2\}}+\gamma_2\tanh x\},\\
		\alpha&=(a,\mu_1,\mu_2,\rho_1,\rho_2)=(1.0,-0.5,0.6,0.4,-0.3),\\
		\gamma&=(\gamma_0,\gamma_1,\gamma_2)=(-0.2,0.35,0.25),\\
		\vartheta&=(-1.3,0.8,-1.1,-0.7),\\
		L_1&\sim \mathrm{NIG}(2,1,3\sqrt3/4,-3/4),
	\end{aligned}
	\]
	for Model 2.
	
	For each model we generate \(R=300\) replications. Each replication is
	simulated on an internal grid with step \(\Delta=0.001\) after a burn-in of
	length \(100\), and then subsampled to the observed mesh. The nine sampling
	designs are listed in Table~\ref{tab:num-design}. For each data set we
	compute \(\hat\zeta_n\) by the three-stage procedure
	\eqref{eq:full-three-stage-estimator}--\eqref{eq:full-estimator}, that is,
	by maximizing \(\mathbb G_{1,n}\), \(\mathbb G_{2,n}\), and
	\(\mathbb G_{3,n}\) in sequence.
	The tables report Bias, SD, and RMSE.
	
	For the asymptotic-normality diagnostics, let \(\xi\) be one component of
	\(\hat\zeta_n\). We define the standardized
	error in replication \(r\) by
	\[
	Z_{\xi}^{(r)}
	:=
	\frac{\sqrt{T}\,(\hat\xi^{(r)}-\xi_0)}{\hat s_{\xi}^{(r)}},
	\]
	where \(\xi_0\) is the true value and \(\hat s_{\xi}^{(r)}\) is the plug-in
	asymptotic standard error computed from the corresponding diagonal entry of
	the plug-in covariance matrix associated with the joint limit in
	Theorem~\ref{thm:joint-an}. Thus, if the asymptotic normal approximation is accurate,
	the distribution of \(Z_\xi^{(r)}\) should be close to \(N(0,1)\), so the
	histogram should be centered near \(0\) with variance near \(1\), and the QQ
	plot should be close to a straight line.
	
	\begin{table}[t]
		\centering
		\small
		\caption{Sampling designs. The entries are \(n=T/h\).}
		\label{tab:num-design}
		\begin{tabular}{c|ccc}
			\toprule
			& $h=0.02$ & $h=0.01$ & $h=0.005$ \\
			\midrule
			$T=50$ & 2500 & 5000 & 10000 \\
			$T=100$ & 5000 & 10000 & 20000 \\
			$T=200$ & 10000 & 20000 & 40000 \\
			\bottomrule
		\end{tabular}
	\end{table}
	
	Tables~\ref{tab:num-theta-model1-alpha}--\ref{tab:num-q-model2} and
	Figures~\ref{fig:num-hist}--\ref{fig:num-qq} summarize the results. The main
	finite-sample effect is the horizon length \(T\): for both models and for
	all three blocks of \(\hat\zeta_n\), enlarging \(T\) produces a clear
	reduction in SD and RMSE, whereas the additional gain from refining \(h\)
	over \(\{0.02,0.01,0.005\}\) is comparatively small.
	This is already visible in Model 1 for the \(\alpha\)- and
	\(\vartheta\)-blocks, and becomes even clearer in the harder nonlinear Model 2.
	For example, in Model 2 the RMSE of \(\hat\rho_{1,n}\) decreases from about
	\(0.96\) at \(T=50\) to \(0.74\) at \(T=100\) and \(0.50\) at \(T=200\), while
	the RMSE of \(\hat\vartheta_{11,n}\) decreases from about \(1.05\) to \(0.66\)
	and \(0.39\).
	
	The standardized diagnostics support the asymptotic normality results.
	For the longest design \((T,h)=(200,0.005)\), the histograms and QQ plots show
	that representative components from the \(\alpha\)- and \(\vartheta\)-blocks
	are already close
	to the \(N(0,1)\) benchmark after normalization. The fit is less accurate for
	the \(\gamma\)-block, whose standardized errors remain mildly over-dispersed, but
	the overall behavior is consistent with the \(\sqrt{T}\)-normalization in the
	sample sizes considered here. The additional \(\vartheta\)-only diagnostics
	show that this approximation is stable across all four switching-rate
	components in both models, with empirical variances close to one at
	\((T,h)=(200,0.005)\). The section ends with compact tables and
	four diagnostic plots.
	
	\begin{table}[t]
		\centering
		\scriptsize
		\caption{Model 1: \(\alpha\)-block of \(\hat\zeta_n\). Entries are Bias/SD/RMSE over \(R=300\) replications.}
		\label{tab:num-theta-model1-alpha}
		\setlength{\tabcolsep}{3pt}
		\resizebox{\textwidth}{!}{%
			\begin{tabular}{lccccccccc}
				\toprule
				& \multicolumn{3}{c}{$T=50$} & \multicolumn{3}{c}{$T=100$} & \multicolumn{3}{c}{$T=200$} \\
				\cmidrule(lr){2-4}\cmidrule(lr){5-7}\cmidrule(lr){8-10}
				Parameter & $h=0.02$ & $0.01$ & $0.005$ & $0.02$ & $0.01$ & $0.005$ & $0.02$ & $0.01$ & $0.005$ \\
				\midrule
				$a$ & 0.08/0.20/0.22 & 0.08/0.21/0.22 & 0.09/0.21/0.23 & 0.03/0.14/0.14 & 0.04/0.14/0.14 & 0.04/0.14/0.14 & 0.00/0.09/0.09 & 0.01/0.09/0.09 & 0.02/0.09/0.09 \\
				$\mu_1$ & -0.04/0.18/0.18 & -0.05/0.18/0.19 & -0.05/0.18/0.19 & -0.01/0.12/0.12 & -0.01/0.12/0.12 & -0.02/0.12/0.12 & 0.01/0.08/0.08 & 0.00/0.08/0.08 & -0.00/0.08/0.08 \\
				$\mu_2$ & 0.01/0.33/0.33 & 0.01/0.33/0.33 & 0.02/0.33/0.33 & -0.02/0.22/0.22 & -0.01/0.22/0.22 & -0.01/0.22/0.22 & -0.02/0.14/0.14 & -0.01/0.14/0.14 & -0.01/0.14/0.14 \\
				\bottomrule
			\end{tabular}
		}
	\end{table}
	
	\begin{table}[t]
		\centering
		\scriptsize
		\caption{Model 1: \(\gamma\)-block of \(\hat\zeta_n\). Entries are Bias/SD/RMSE over \(R=300\) replications.}
		\label{tab:num-theta-model1-gamma}
		\setlength{\tabcolsep}{3pt}
		\resizebox{\textwidth}{!}{%
			\begin{tabular}{lccccccccc}
				\toprule
				& \multicolumn{3}{c}{$T=50$} & \multicolumn{3}{c}{$T=100$} & \multicolumn{3}{c}{$T=200$} \\
				\cmidrule(lr){2-4}\cmidrule(lr){5-7}\cmidrule(lr){8-10}
				Parameter & $h=0.02$ & $0.01$ & $0.005$ & $0.02$ & $0.01$ & $0.005$ & $0.02$ & $0.01$ & $0.005$ \\
				\midrule
				$\gamma_1$ & -0.03/0.15/0.15 & -0.02/0.15/0.15 & -0.02/0.15/0.15 & -0.02/0.11/0.11 & -0.02/0.11/0.11 & -0.02/0.11/0.11 & -0.01/0.08/0.08 & -0.01/0.08/0.08 & -0.01/0.08/0.08 \\
				$\gamma_2$ & -0.03/0.20/0.20 & -0.03/0.20/0.20 & -0.03/0.20/0.20 & -0.02/0.13/0.13 & -0.02/0.13/0.13 & -0.02/0.13/0.13 & -0.01/0.09/0.09 & -0.00/0.09/0.09 & -0.00/0.09/0.09 \\
				\bottomrule
			\end{tabular}
		}
	\end{table}
	
	\begin{table}[t]
		\centering
		\scriptsize
		\caption{Model 2: \(\alpha\)-block of \(\hat\zeta_n\). Entries are Bias/SD/RMSE over \(R=300\) replications.}
		\label{tab:num-theta-model2-alpha}
		\setlength{\tabcolsep}{3pt}
		\resizebox{\textwidth}{!}{%
			\begin{tabular}{lccccccccc}
				\toprule
				& \multicolumn{3}{c}{$T=50$} & \multicolumn{3}{c}{$T=100$} & \multicolumn{3}{c}{$T=200$} \\
				\cmidrule(lr){2-4}\cmidrule(lr){5-7}\cmidrule(lr){8-10}
				Parameter & $h=0.02$ & $0.01$ & $0.005$ & $0.02$ & $0.01$ & $0.005$ & $0.02$ & $0.01$ & $0.005$ \\
				\midrule
				$a$ & 0.23/0.67/0.71 & 0.23/0.67/0.71 & 0.24/0.67/0.71 & 0.15/0.48/0.50 & 0.15/0.48/0.51 & 0.15/0.48/0.51 & 0.09/0.31/0.32 & 0.09/0.32/0.33 & 0.10/0.32/0.33 \\
				$\mu_1$ & -0.02/0.29/0.29 & -0.02/0.30/0.30 & -0.03/0.30/0.30 & -0.01/0.18/0.18 & -0.02/0.18/0.18 & -0.02/0.18/0.18 & -0.00/0.11/0.11 & -0.01/0.11/0.11 & -0.01/0.11/0.11 \\
				$\mu_2$ & 0.05/0.35/0.35 & 0.06/0.35/0.36 & 0.06/0.36/0.36 & 0.02/0.23/0.23 & 0.03/0.23/0.24 & 0.03/0.24/0.24 & 0.01/0.15/0.15 & 0.02/0.15/0.15 & 0.02/0.15/0.15 \\
				$\rho_1$ & 0.16/0.95/0.96 & 0.15/0.95/0.96 & 0.16/0.95/0.96 & 0.17/0.72/0.74 & 0.16/0.73/0.74 & 0.17/0.73/0.75 & 0.11/0.48/0.49 & 0.11/0.49/0.50 & 0.11/0.49/0.50 \\
				$\rho_2$ & 0.14/1.10/1.11 & 0.13/1.10/1.11 & 0.13/1.11/1.12 & 0.11/0.81/0.82 & 0.10/0.81/0.82 & 0.10/0.81/0.82 & 0.10/0.57/0.57 & 0.09/0.57/0.58 & 0.09/0.57/0.58 \\
				\bottomrule
			\end{tabular}
		}
	\end{table}
	
	\begin{table}[t]
		\centering
		\scriptsize
		\caption{Model 2: \(\gamma\)-block of \(\hat\zeta_n\). Entries are Bias/SD/RMSE over \(R=300\) replications.}
		\label{tab:num-theta-model2-gamma}
		\setlength{\tabcolsep}{3pt}
		\resizebox{\textwidth}{!}{%
			\begin{tabular}{lccccccccc}
				\toprule
				& \multicolumn{3}{c}{$T=50$} & \multicolumn{3}{c}{$T=100$} & \multicolumn{3}{c}{$T=200$} \\
				\cmidrule(lr){2-4}\cmidrule(lr){5-7}\cmidrule(lr){8-10}
				Parameter & $h=0.02$ & $0.01$ & $0.005$ & $0.02$ & $0.01$ & $0.005$ & $0.02$ & $0.01$ & $0.005$ \\
				\midrule
				$\gamma_0$ & -0.04/0.19/0.19 & -0.04/0.19/0.19 & -0.04/0.19/0.19 & -0.03/0.12/0.13 & -0.03/0.12/0.13 & -0.03/0.12/0.13 & -0.02/0.09/0.09 & -0.01/0.09/0.09 & -0.01/0.09/0.09 \\
				$\gamma_1$ & 0.02/0.27/0.27 & 0.02/0.27/0.27 & 0.02/0.27/0.27 & 0.01/0.18/0.18 & 0.01/0.18/0.18 & 0.01/0.18/0.18 & 0.01/0.14/0.14 & 0.01/0.14/0.14 & 0.01/0.14/0.14 \\
				$\gamma_2$ & -0.03/0.22/0.22 & -0.04/0.22/0.22 & -0.04/0.22/0.22 & -0.02/0.15/0.15 & -0.02/0.15/0.15 & -0.02/0.16/0.16 & -0.01/0.11/0.11 & -0.01/0.11/0.11 & -0.01/0.11/0.11 \\
				\bottomrule
			\end{tabular}
		}
	\end{table}
	
	\begin{table}[t]
		\centering
		\scriptsize
		\caption{Model 1: \(\vartheta\)-block of \(\hat\zeta_n\). Entries are Bias/SD/RMSE over \(R=300\) replications.}
		\label{tab:num-q-model1}
		\setlength{\tabcolsep}{3pt}
		\resizebox{\textwidth}{!}{%
			\begin{tabular}{lccccccccc}
				\toprule
				& \multicolumn{3}{c}{$T=50$} & \multicolumn{3}{c}{$T=100$} & \multicolumn{3}{c}{$T=200$} \\
				\cmidrule(lr){2-4}\cmidrule(lr){5-7}\cmidrule(lr){8-10}
				Parameter & $h=0.02$ & $0.01$ & $0.005$ & $0.02$ & $0.01$ & $0.005$ & $0.02$ & $0.01$ & $0.005$ \\
				\midrule
				$\vartheta_{10}$ & -0.20/0.68/0.71 & -0.20/0.68/0.71 & -0.20/0.68/0.71 & -0.11/0.44/0.45 & -0.11/0.44/0.46 & -0.10/0.44/0.46 & -0.04/0.24/0.25 & -0.03/0.24/0.25 & -0.03/0.24/0.25 \\
				$\vartheta_{11}$ & -0.20/1.18/1.20 & -0.21/1.18/1.19 & -0.21/1.17/1.19 & -0.16/0.77/0.78 & -0.16/0.77/0.78 & -0.16/0.77/0.79 & -0.05/0.45/0.45 & -0.06/0.46/0.46 & -0.05/0.46/0.46 \\
				$\vartheta_{20}$ & -0.08/0.54/0.55 & -0.08/0.54/0.55 & -0.08/0.54/0.55 & -0.05/0.32/0.32 & -0.04/0.32/0.32 & -0.04/0.31/0.32 & -0.04/0.21/0.21 & -0.03/0.21/0.21 & -0.03/0.21/0.21 \\
				$\vartheta_{21}$ & 0.13/0.90/0.91 & 0.12/0.90/0.91 & 0.12/0.90/0.91 & 0.04/0.56/0.56 & 0.03/0.56/0.56 & 0.03/0.56/0.56 & 0.03/0.38/0.38 & 0.02/0.37/0.37 & 0.02/0.37/0.37 \\
				\bottomrule
			\end{tabular}
		}
	\end{table}
	
	\begin{table}[t]
		\centering
		\scriptsize
		\caption{Model 2: \(\vartheta\)-block of \(\hat\zeta_n\). Entries are Bias/SD/RMSE over \(R=300\) replications.}
		\label{tab:num-q-model2}
		\setlength{\tabcolsep}{3pt}
		\resizebox{\textwidth}{!}{%
			\begin{tabular}{lccccccccc}
				\toprule
				& \multicolumn{3}{c}{$T=50$} & \multicolumn{3}{c}{$T=100$} & \multicolumn{3}{c}{$T=200$} \\
				\cmidrule(lr){2-4}\cmidrule(lr){5-7}\cmidrule(lr){8-10}
				Parameter & $h=0.02$ & $0.01$ & $0.005$ & $0.02$ & $0.01$ & $0.005$ & $0.02$ & $0.01$ & $0.005$ \\
				\midrule
				$\vartheta_{10}$ & -0.17/0.65/0.67 & -0.17/0.66/0.68 & -0.17/0.66/0.68 & -0.08/0.42/0.43 & -0.08/0.42/0.42 & -0.07/0.42/0.42 & -0.04/0.24/0.24 & -0.04/0.23/0.24 & -0.04/0.23/0.24 \\
				$\vartheta_{11}$ & -0.18/1.02/1.04 & -0.18/1.03/1.05 & -0.18/1.03/1.05 & -0.08/0.65/0.65 & -0.07/0.65/0.66 & -0.07/0.65/0.66 & -0.06/0.39/0.40 & -0.05/0.39/0.39 & -0.05/0.39/0.39 \\
				$\vartheta_{20}$ & -0.12/0.55/0.57 & -0.12/0.56/0.57 & -0.12/0.55/0.57 & -0.06/0.32/0.32 & -0.05/0.32/0.32 & -0.05/0.32/0.32 & -0.02/0.21/0.21 & -0.02/0.20/0.20 & -0.02/0.20/0.20 \\
				$\vartheta_{21}$ & 0.17/1.05/1.06 & 0.16/1.05/1.06 & 0.16/1.04/1.05 & 0.06/0.63/0.63 & 0.04/0.63/0.63 & 0.04/0.63/0.63 & 0.07/0.40/0.41 & 0.06/0.41/0.41 & 0.06/0.41/0.41 \\
				\bottomrule
			\end{tabular}
		}
	\end{table}
	
	\begin{figure}[t]
		\centering
		\includegraphics[width=\textwidth]{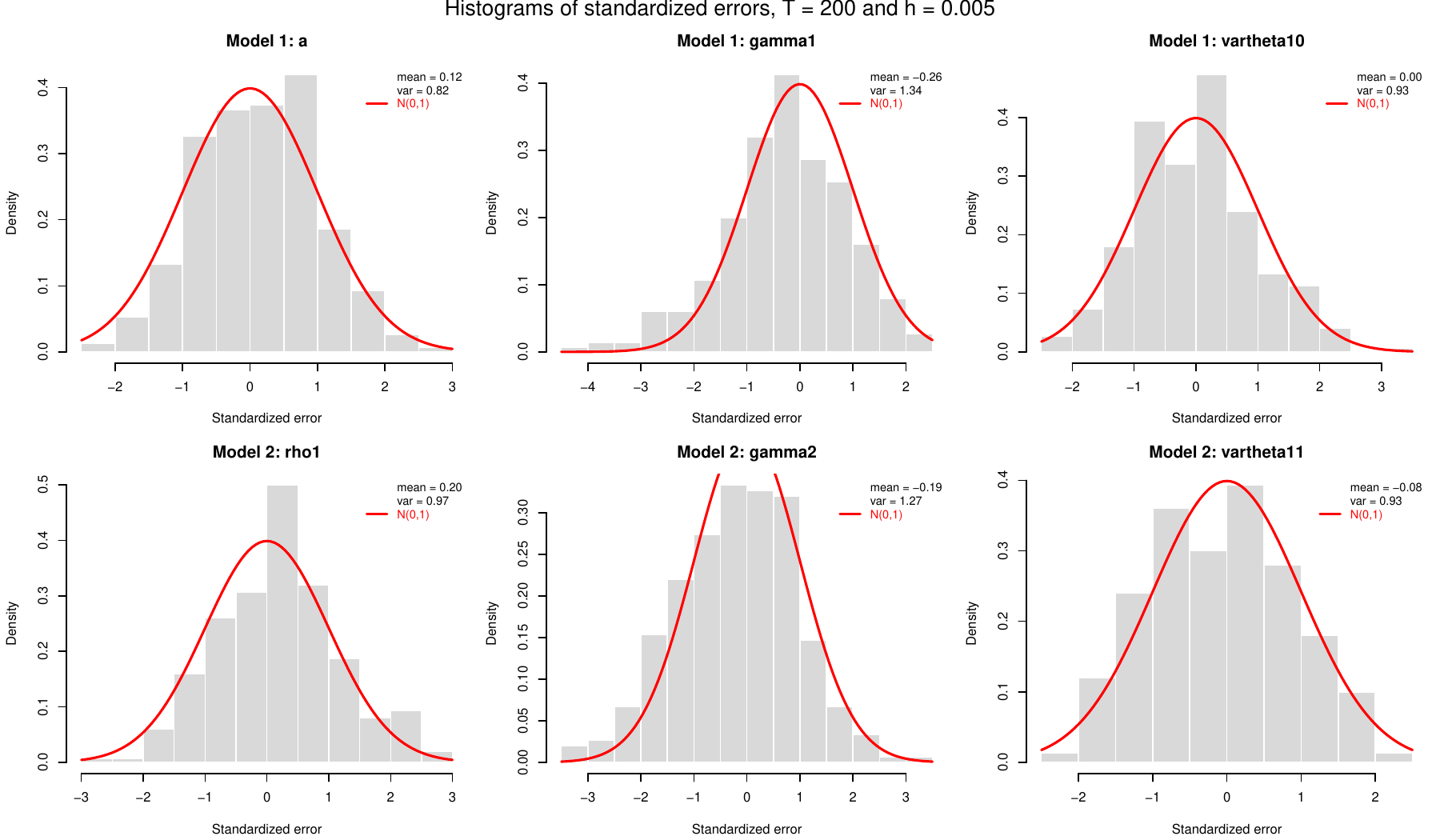}
		\caption{Histograms of selected standardized errors \(Z_\xi^{(r)}\) for representative components of \(\hat\zeta_n\) under \((T,h)=(200,0.005)\), together with the standard normal density.}
		\label{fig:num-hist}
	\end{figure}
	
	\begin{figure}[t]
		\centering
		\includegraphics[width=\textwidth]{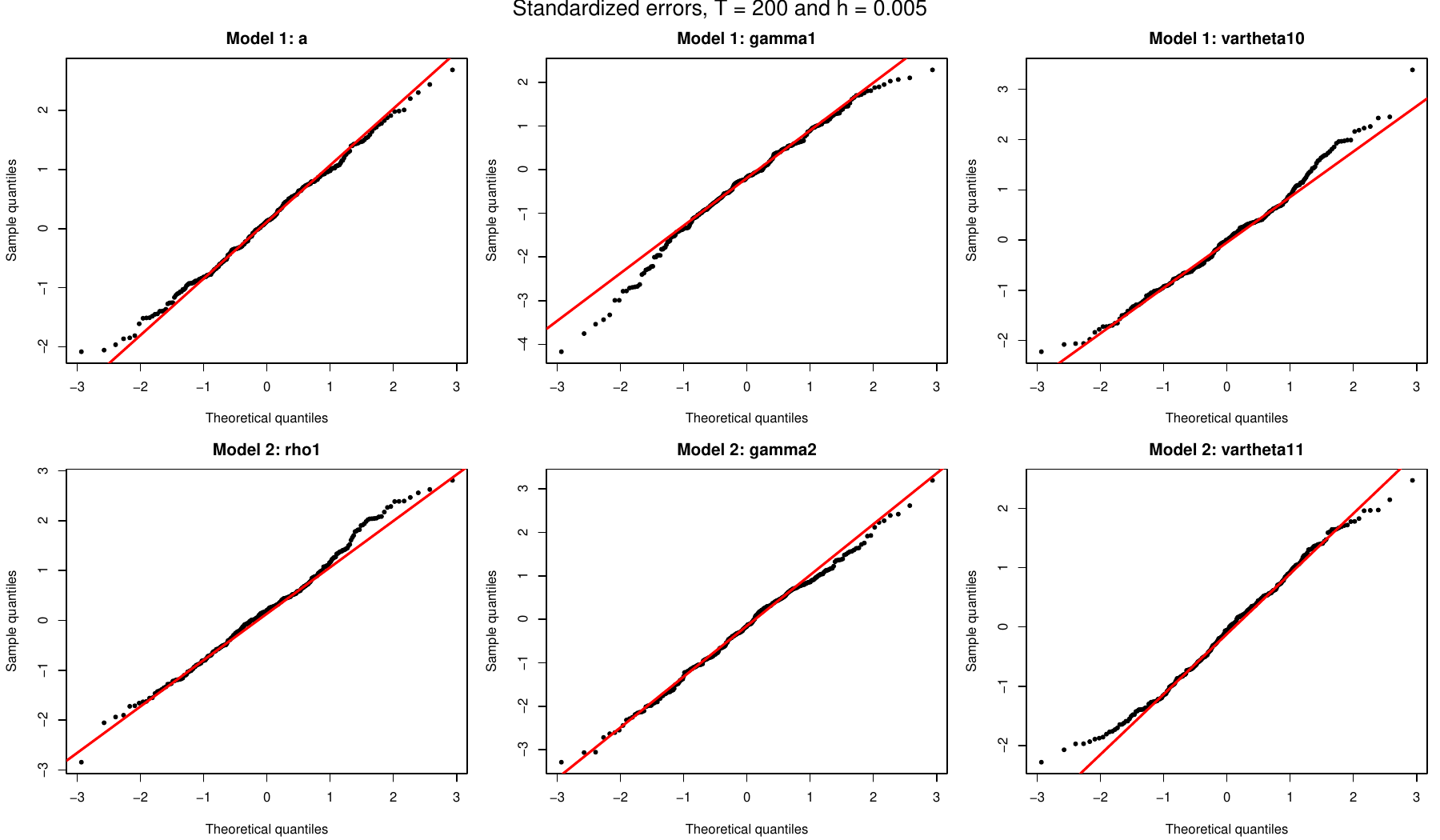}
		\caption{QQ plots of selected standardized errors \(Z_\xi^{(r)}\) for representative components of \(\hat\zeta_n\) under \((T,h)=(200,0.005)\).}
		\label{fig:num-qq}
	\end{figure}
	
	\begin{figure}[t]
		\centering
		\includegraphics[width=\textwidth]{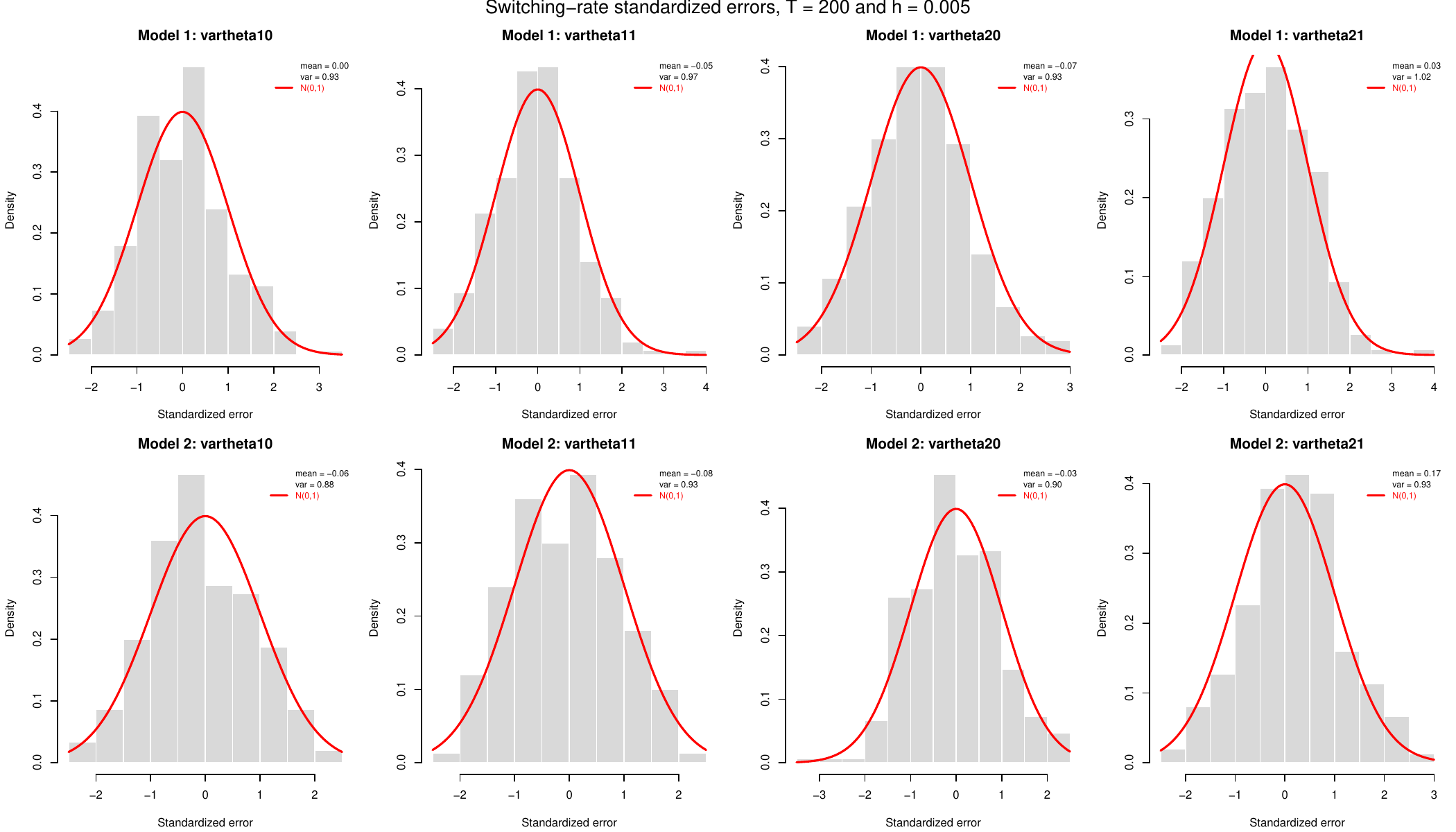}
		\caption{Histograms of standardized errors \(Z_\xi^{(r)}\) for all four components of the \(\vartheta\)-block of \(\hat\zeta_n\), shown for Models 1 and 2 under \((T,h)=(200,0.005)\), together with the standard normal density.}
		\label{fig:num-hist-vartheta}
	\end{figure}
	
	\begin{figure}[t]
		\centering
		\includegraphics[width=\textwidth]{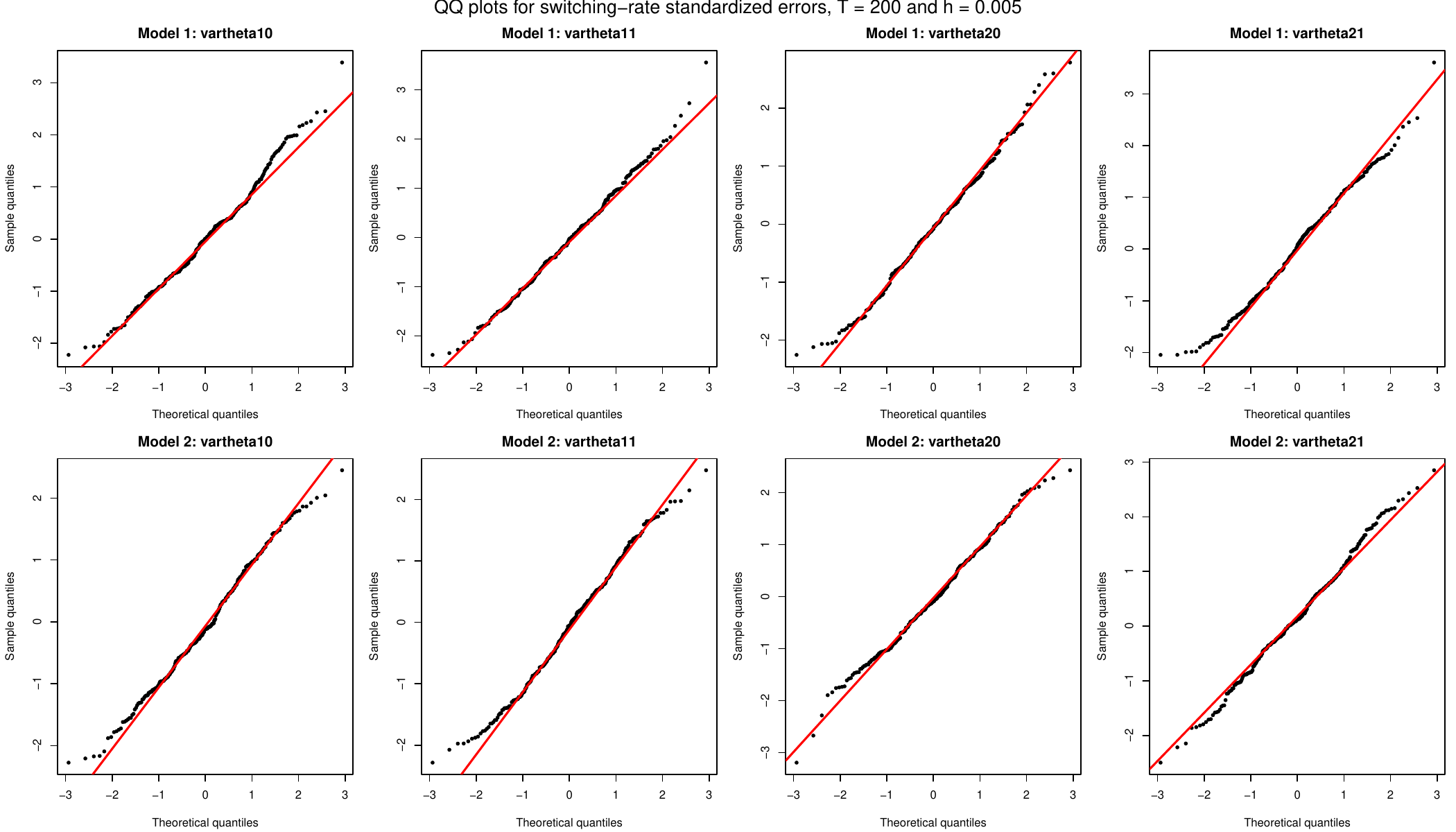}
		\caption{QQ plots of standardized errors \(Z_\xi^{(r)}\) for all four components of the \(\vartheta\)-block of \(\hat\zeta_n\), shown for Models 1 and 2 under \((T,h)=(200,0.005)\).}
		\label{fig:num-qq-vartheta}
	\end{figure}

	\section{Proofs}
	\label{sec:thmproof}

	\subsection{Proofs in Section~\ref{sec:exp-erg}}
	\label{sec:proof:Tchain}
	
	We use the following decomposition. By the small-jump condition in
	Assumption~\ref{ass:erg-coeff}, set
	$\nu^{\mathrm s}(dz):=\kappa_0\mathbf 1_{(-r_0,r_0)}(z)\,dz$,
	and
	$\nu^{\mathrm r}:=\nu-\nu^{\mathrm s}$.
	Then \(\nu^{\mathrm r}\) is a L\'evy measure, and the L\'evy--It\^o
	decomposition gives 
	$L=L^{\mathrm s}+L^{\mathrm r}$,
	where \(L^{\mathrm s}\) and \(L^{\mathrm r}\) are independent pure-jump
	L\'evy processes with jump measures \(\nu^{\mathrm s}\) and
	\(\nu^{\mathrm r}\), respectively. Since
	\[
	\rho:=\nu^{\mathrm s}(\mathbb R)=2\kappa_0r_0,
	\qquad
	\mu_{\mathrm s}(dz)
	:=
	\rho^{-1}\nu^{\mathrm s}(dz)
	=
	\frac1{2r_0}\mathbf 1_{(-r_0,r_0)}(z)\,dz,
	\]
	the process \(L^{\mathrm s}\) is compound Poisson
	$L_t^{\mathrm s}
	=
	\sum_{k=1}^{N_t^{\mathrm s}}Y_k$,
	where \(N^{\mathrm s}\) has rate \(\rho\), and \(Y_k\) are i.i.d. with law
	\(\mu_{\mathrm s}\), independent of \(N^{\mathrm s}\) and \(L^{\mathrm r}\).

	\subsubsection{Proof of Proposition~\ref{prop:fixed-h-skeleton}}
	The proof of Proposition~\ref{prop:fixed-h-skeleton} is based on two
	auxiliary estimates: a local minorization in a fixed regime and a one-step
	regime-switching estimate. The details are given in Appendix~\ref{app:erg}.
	
	Recall that $\Phi^{(h)}$ is a T-chain if there exists a substochastic
	kernel $T$ satisfying $T(\cdot,B)\le P_h(\cdot,B)$ for every Borel $B$,
	$\zeta\mapsto T(\zeta,B)$ is lower semicontinuous for every Borel $B$, and
	$T(\zeta,\mathbb{R}\times\mathbb{S})>0$ for every $\zeta$; and is $\varphi$-irreducible if
	\[
	\varphi(B)>0
	\implies
	\sum_{N\ge 1}P_h^N(\zeta,B)>0
	\qquad\text{for every }\zeta\in\mathbb{R}\times\mathbb{S}
	\]
	(see~\cite{meyn2012markov}).
	
	Fix a deterministic number $h_0>0$.
	
	\medskip
	\noindent\textbf{Step 1: T-chain property.}
	
	Fix $z=(x,i)\in\mathbb R\times\mathbb S$ and set $K_z:=[x-1,x+1]$.
	Apply Lemma~\ref{lem:fixed-h-local-minorization} to $K_z$ and regime $i$. Then there exists $h_{K_z}^i>0$ such that, for every fixed $h\in(0,h_{K_z}^i]$, the conclusion of Lemma~\ref{lem:fixed-h-local-minorization} holds.
	Choose $L_z\in\mathbb N$ so large that
	\[
	\Delta_z:=\frac{h_0}{L_z}\le h_{K_z}^i.
	\]
	For this fixed $\Delta_z$,
	Lemma~\ref{lem:fixed-h-local-minorization}
	gives constants $\delta_{K_z,\Delta_z}^i>0$, $\varepsilon_{K_z,\Delta_z}^i>0$,
	and, for the point $x\in K_z$, an open neighborhood $U_{x,\Delta_z}\ni x$
	such that \eqref{eq:fixed-h-local-minorization} holds.
	Choose an open interval $G_z \ni x$ such that $G_z \subset U_{x,\Delta_z}\cap(x-\delta_{K_z,\Delta_z}^i,x+\delta_{K_z,\Delta_z}^i)$.
	Then \eqref{eq:fixed-h-local-minorization} gives
	\[
	P_{\Delta_z}\bigl((u,i),A\times\{i\}\bigr)
	\ge
	\varepsilon_{K_z,\Delta_z}^i\,\lambda(A\cap G_z),
	\qquad u\in G_z,\ A\in\mathcal B(\mathbb R).
	\]
	Iterating this estimate $L_z$ times yields
	\[
	P_{h_0}\bigl((u,i),A\times\{i\}\bigr)
	=
	P_{\Delta_z}^{L_z}\bigl((u,i),A\times\{i\}\bigr)
	\ge
	a_z\,\lambda(A\cap G_z),
	\qquad u\in G_z,
	\]
	where $a_z:=(\varepsilon_{K_z,\Delta_z}^i)^{L_z}\lambda(G_z)^{L_z-1}>0$.
	Therefore we have, for $\zeta=(u,k)\in\mathbb R\times\mathbb S$,
	\[
	P_{h_0}(\zeta,B)
	\ge
	a_z\,\mathbf 1_{G_z \times\{i\}}(\zeta)\,
	\lambda(B_i\cap G_z),
	\]
	where $B_i:=\{y\in\mathbb R:(y,i)\in B\}$ for $B\in\mathcal{B}(\R)\otimes 2^\mbbs$.
	
	The family $\{G_z\times\{i\}:z=(x,i)\in\mathbb R\times\mathbb S\}$ is an
	open cover of $\mathbb R\times\mathbb S$. Since this space is second countable,
	choose a countable subcover $\{G_n\times\{i_n\}\}_{n\ge1}$.
	Let $a_n>0$ be the constant corresponding to $G_n\times\{i_n\}$. Define
	\[
	T(\zeta,B)
	:=
	\sum_{n=1}^\infty
	2^{-n}a_n
	\mathbf 1_{G_n\times\{i_n\}}(\zeta)
	\lambda(B_{i_n}\cap G_n).
	\]
	Then $T$ is a substochastic kernel. Moreover,
	\[
	T(\zeta,B)\le P_{h_0}(\zeta,B),
	\qquad
	\zeta\in\mathbb R\times\mathbb S.
	\]
	For each Borel $B$, the map $\zeta\mapsto T(\zeta,B)$ is lower
	semicontinuous because each $G_n\times\{i_n\}$ is open. Finally,
	$T(\zeta,\mathbb R\times\mathbb S)>0$ for every $\zeta$, since the
	sets $G_n\times\{i_n\}$ cover the state space. Hence the $h_0$-skeleton is a
	T-chain.

	\medskip
	\noindent\textbf{Step 2: $\varphi$-irreducibility.}
	
	Let $B\in\mathcal B(\mathbb R\times\mathbb S)$ satisfy $\varphi(B)>0$.
	Then there exist $j\in\mathbb S$ and $n\ge1$ such that $\lambda(B_j\cap I_n)>0$ where $B_j:=\{y\in\mathbb R:(y,j)\in B\}$.
	Choose a Lebesgue density point $y^*\in B_j\cap I_n$.
	
	Fix a point $z_0=(x_0,i_0)\in\mathbb R\times\mathbb S$.
	Fix a number $\eta_0>0$, and choose a compact interval
	$K\subset\mathbb R$ such that
	\[
	[x_0-\eta_0,x_0+\eta_0]\cup\{y^*\}
	\subset \operatorname{int}K.
	\]
	Apply Lemma~\ref{lem:fixed-h-local-minorization} to the compact interval $K$
	and the regime $j$. Then there exists $h_K^j>0$ such that, for every fixed
	$h\in(0,h_K^j]$, the conclusion of
	Lemma~\ref{lem:fixed-h-local-minorization} holds.
	
	If $i_0\neq j$, apply Lemma~\ref{lem:fixed-h-switch} to the compact interval
	$K$, the pair $(i_0,j)$, and the radius $\eta_0$. This gives a constant
	$h_{K,\eta_0}^{i_0j}>0$ such that, for every fixed
	$h\in(0,h_{K,\eta_0}^{i_0j}]$, the switching estimate \eqref{eq:fixed-h-switch} holds.
	
	Choose a large $L\in\mathbb N$ such that
	\[
	\Delta:=\frac{h_0}{L}\le h_K^j,
	\]
	and, if $i_0\neq j$, $\Delta\le h_{K,\eta_0}^{i_0j}$.
	
	For this fixed value of $\Delta$, Lemma~\ref{lem:fixed-h-local-minorization}
	gives constants $\delta_{K,\Delta}^j>0$, $\varepsilon_{K,\Delta}^j>0$,
	and, for every $x\in K$, an open neighborhood $U_{x,\Delta}\ni x$ such that
	\[
	P_\Delta\bigl((u,j),A\times\{j\}\bigr)
	\ge
	\varepsilon_{K,\Delta}^j\,
	\lambda\bigl(A\cap(x-\delta_{K,\Delta}^j,x+\delta_{K,\Delta}^j)\bigr),
	\qquad
	u\in U_{x,\Delta},
	\quad
	A\in\mathcal B(\mathbb R).
	\]
	
	We next prove the following auxiliary claim:
	Let $S\subset K$ be compact. For every residue class
	$r\in\{0,1,\dots,L-1\}$, there exist $M\in\mathbb N$ and $c>0$ such that $M\equiv r \pmod L$
	and
	\[
	P_\Delta^M\bigl((y,j),B\bigr)\ge c,
	\qquad y\in S.
	\]
	
	\smallskip
	\noindent
	To prove the claim, first choose, for every $a\in S$, a finite sequence
	\[
	p_0^a=a,\ p_1^a,\dots,p_{m_a}^a=y^*
	\]
	inside $\operatorname{int}K$, with $m_a\ge1$, such that
	\[
	|p_{\ell+1}^a-p_\ell^a|<\frac{\delta_{K,\Delta}^j}{4},
	\qquad \ell=0,\dots,m_a-1.
	\]
	The open sets $\{U_{a,\Delta}:a\in S\}$ cover $S$. Since $S$ is compact,
	choose finitely many points $a_1,\dots,a_R\in S$ such that
	\[
	S\subset\bigcup_{\rho=1}^R U_{a_\rho,\Delta}.
	\]
	For each $\rho$, write $p_\ell^\rho:=p_\ell^{a_\rho}$, $m_\rho:=m_{a_\rho}$.
	
	Choose an open interval $H_*$ containing $y^*$ such that
	\[
	H_*\subset I_n,
	\]
	\[
	H_*\subset U_{y^*,\Delta}
	\cap (y^*-\delta_{K,\Delta}^j,y^*+\delta_{K,\Delta}^j)\cap\operatorname{int}K
	,
	\]
	and
	\[
	H_*\subset
	(p_{m_\rho-1}^\rho-\delta_{K,\Delta}^j,p_{m_\rho-1}^\rho+\delta_{K,\Delta}^j),
	\qquad \rho=1,\dots,R.
	\]
	Since $y^*$ is a density point of $B_j\cap I_n$, $ \lambda(B_j\cap H_*)>0$.
	
	For each $\rho=1,\dots,R$ and each $\ell=1,\dots,m_\rho-1$, choose an open
	interval $H_\ell^\rho$ containing $p_\ell^\rho$ such that
	\[
	H_\ell^\rho
	\subset
	U_{p_\ell^\rho,\Delta}
	\cap
	(p_{\ell-1}^\rho-\delta_{K,\Delta}^j,p_{\ell-1}^\rho+\delta_{K,\Delta}^j)\cap\operatorname{int}K.
	\]
	Then repeated use of Lemma~\ref{lem:fixed-h-local-minorization} gives, for
	every $y\in U_{a_\rho,\Delta}$, the Chapman--Kolmogorov equation gives
	\[
	\begin{aligned}
		& P_\Delta^{m_\rho}
		\bigl((y,j),H_*\times\{j\}\bigr)
		\\
		&\quad \ge
		\int_{H_1^\rho}\cdots\int_{H_{m_\rho-1}^\rho}
		P_\Delta\bigl((v_{m_\rho-1},j),H_*\times\{j\}\bigr)
		\prod_{\ell=1}^{m_\rho-1}
		P_\Delta\bigl((v_{\ell-1},j),dv_\ell\times\{j\}\bigr),
		\\
		&\quad \geq c_\rho^*,
	\end{aligned}
	\]
	where
	\[
	c_\rho^*
	:=
	(\varepsilon_{K,\Delta}^j)^{m_\rho}
	\left(\prod_{\ell=1}^{m_\rho-1}\lambda(H_\ell^\rho)\right)
	\lambda(H_*)
	>0,
	\]
	with the empty product interpreted as one.
	
	Now choose $M\in\mathbb N$ such that
	\[
	M\equiv r \pmod L
	\quad\text{and}\quad
	M> \max_{\rho=1,\dots,R}m_\rho.
	\]
	For each $\rho$, set
	\[
	q_\rho:=M-m_\rho-1\ge0.
	\]
	Since $H_*\subset U_{y^*,\Delta}\cap (y^*-\delta_{K,\Delta}^j,y^*+\delta_{K,\Delta}^j)$,
	Lemma~\ref{lem:fixed-h-local-minorization} gives
	\[
	P_\Delta\bigl((u,j),H_*\times\{j\}\bigr)
	\ge
	\varepsilon_{K,\Delta}^j\lambda(H_*),
	\qquad u\in H_*,
	\]
	and
	\[
	P_\Delta\bigl((u,j),B\bigr)
	\ge
	\varepsilon_{K,\Delta}^j\lambda(B_j\cap H_*),
	\qquad u\in H_*.
	\]
	Therefore, for every $y\in U_{a_\rho,\Delta}$,
	\[
	\begin{aligned}
		P_\Delta^M\bigl((y,j),B\bigr)
		&=
		P_\Delta^{m_\rho+q_\rho+1}\bigl((y,j),B\bigr)        \\
		&\ge
		\int_{H_*}
		P_\Delta^{m_\rho}\bigl((y,j),du\times\{j\}\bigr)
		P_\Delta^{q_\rho+1}\bigl((u,j),B\bigr)            \\
		&\ge
		c_\rho^*
		\bigl(\varepsilon_{K,\Delta}^j\lambda(H_*)\bigr)^{q_\rho}
		\varepsilon_{K,\Delta}^j\lambda(B_j\cap H_*)                    \\
		&>0.
	\end{aligned}
	\]
	Taking the minimum over $\rho=1,\dots,R$ gives a constant $c>0$ satisfying
	\[
	P_\Delta^M\bigl((y,j),B\bigr)\ge c,
	\qquad y\in S.
	\]
	This proves the claim.

	We now apply the claim.
	First suppose $i_0=j$. Set $S:=\{x_0\}$.
	Apply the claim with residue class $r=0$. Then there exist $M = NL\in\mathbb N$ for $N \in \mathbb N$
	and $c>0$ such that $P_\Delta^M(z_0,B)\ge c>0$.
	Since $\Delta=h_0/L$, we have
	\[
	P_\Delta^M=P_\Delta^{NL}=P_{Nh_0}=P_{h_0}^N.
	\]
	Therefore $P_{h_0}^N(z_0,B)>0$.
	
	Next suppose $i_0\neq j$. Set $S:=[x_0-\eta_0,x_0+\eta_0]$.
	Apply the claim with residue class $r=L-1$. Then there exist
	$M\in\mathbb N$ and $c>0$ such that $M\equiv L-1\pmod L$
	and $P_\Delta^M\bigl((y,j),B\bigr)\ge c$, for $y\in S$.
	
	Since $x_0\in K$, Lemma~\ref{lem:fixed-h-switch} gives a constant $\xi_{K,\eta_0,\Delta}^{i_0j}>0$
	such that
	\[
	P_\Delta\bigl((x_0,i_0),(x-\eta_0,x+\eta_0)\times\{j\}\bigr)
	\ge \xi_{K,\eta_0,\Delta}^{i_0j},
	\qquad x\in K.
	\]
	Hence, by the Chapman--Kolmogorov equation,
	\[
	\begin{aligned}
		P_\Delta^{M+1}(z_0,B)
		&\ge
		\int_{(x_0-\eta_0,x_0+\eta_0)}
		P_\Delta\bigl(z_0,dy\times\{j\}\bigr)
		P_\Delta^M\bigl((y,j),B\bigr)  \\
		&\ge
		c\,
		P_\Delta\bigl((x_0,i_0),(x_0-\eta_0,x_0+\eta_0)\times\{j\}\bigr)\\
		&\ge c\, \xi_{K,\eta_0,\Delta}^{i_0j}>0.
	\end{aligned}
	\]
	Since $M\equiv L-1\pmod L$, there exists $N\in\mathbb N$ such that $M+1=NL$.
	Therefore
	\[
	P_\Delta^{M+1}=P_\Delta^{NL}=P_{Nh_0}=P_{h_0}^N,
	\]
	and consequently $P_{h_0}^N(z_0,B)>0$.
	
	In both cases, for the arbitrary starting point $z_0$ and every
	$B$ satisfying $\varphi(B)>0$, there exists $N\in\mathbb N$ such that
	\[
	P_{h_0}^N(z_0,B)>0.
	\]
	Thus the $h_0$-skeleton chain is $\varphi$-irreducible.

	\subsubsection{Proof of Theorem~\ref{thm:hybrid-exp-erg}}
	
	Fix \(r>0\) and set
	$\langle x\rangle:=(1+x^2)^{1/2}$,
	then 
	$V_r(x,i)=\langle x\rangle^r$.
	Then \(V_r\ge1\), \(V_r\) is norm-like, and
	\(V_r(x,i)\asymp 1+|x|^r\). Moreover,
	\[
	\partial_x V_r(x)
	=
	rx\langle x\rangle^{r-2},
	\qquad
	\partial_x^2 V_r(x)
	=
	r\langle x\rangle^{r-2}
	+
	r(r-2)x^2\langle x\rangle^{r-4}.
	\]
	Hence, with \(a_+:=\max(a,0)\),
	\begin{equation}
		\label{eq:Vr-derivative-bounds}
		|\partial_x V_r(x)|
		\le
		C_r\langle x\rangle^{(r-1)_+},
		\qquad
		|\partial_x^2 V_r(x)|
		\le
		C_r\langle x\rangle^{(r-2)_+}.
	\end{equation}
	
	We first check that \(V_r\in\mathcal D(\mathcal A_0)\). By Taylor's
	formula, Assumption~\ref{ass:erg-coeff}\textnormal{(E4)}, and
	\eqref{eq:Vr-derivative-bounds},
	\[
	\int_{\{|z|\le1\}}
	\left|
	V_r(x+c(x,i,\gamma_0)z,i)-V_r(x,i)
	-\partial_xV_r(x)c(x,i,\gamma_0)z
	\right|
	\nu(dz)
	\le
	C_r\{1+\langle x\rangle^{(r-2)_+}\}.
	\]
	For the large-jump part, if \(r\ge1\), then
	\[
	|V_r(x+y)-V_r(x)|
	\le
	C_r\{1+\langle x\rangle^{r-1}+|y|^{r-1}\}|y|,
	\]
	and Assumption~\ref{ass:Levy}, together with the boundedness of
	\(c(\cdot,\cdot,\gamma_0)\), gives
	\[
	\int_{\{|z|>1\}}
	|V_r(x+c(x,i,\gamma_0)z,i)-V_r(x,i)|\,\nu(dz)
	\le
	C_r\{1+\langle x\rangle^{r-1}\}.
	\]
	If \(0<r<1\), then
	$|\langle x+y\rangle^r-\langle x\rangle^r|
	\le |y|^r$,
	and hence the same integral is finite by Assumption~\ref{ass:Levy}. Thus
	\(V_r\in\mathcal D(\mathcal A_0)\).
	
	Since \(V_r\) does not depend on the regime,
	\[
	\sum_{j\neq i}q_{ij}(x,\vartheta_0)
	\{V_r(x,j)-V_r(x,i)\}=0 .
	\]
	By Assumption~\ref{ass:erg-coeff}\textnormal{(E5)},
	\[
	b(x,i,\alpha_0)\partial_xV_r(x)
	=
	r\langle x\rangle^{r-2}x\,b(x,i,\alpha_0)
	\le
	-r\lambda_0x^2\langle x\rangle^{r-2}
	+
	rK_0\langle x\rangle^{r-2}.
	\]
	Since
	$x^2\langle x\rangle^{r-2}
	=
	\langle x\rangle^r-\langle x\rangle^{r-2}$,
	we obtain
	\[
	b(x,i,\alpha_0)\partial_xV_r(x)
	\le
	-r\lambda_0V_r(x,i)
	+
	C_r\{1+\langle x\rangle^{(r-2)_+}\}.
	\]
	Combining this estimate with the small- and large-jump bounds yields
	\[
	\mathcal A_0V_r(x,i)
	\le
	-r\lambda_0V_r(x,i)
	+
	C_r\{1+\langle x\rangle^{(r-1)_+}\}.
	\]
	Hence, for suitable constants \(a_r,b_r>0\),
	\begin{equation}
		\label{eq:Vr-drift-final}
		\mathcal A_0V_r(x,i)
		\le
		-a_rV_r(x,i)+b_r,
		\qquad (x,i)\in E .
	\end{equation}
	Since \(V_r\in\mathcal D(\mathcal A_0)\), we also have
	\(V_r\in\mathcal D(\mathcal A_0^{\mathrm{ext}})\) and 
	$\mathcal A_0^{\mathrm{ext}}V_r
	=
	\mathcal A_0V_r$.
	
	By Proposition~\ref{prop:fixed-h-skeleton}, there exists \(h_0>0\) such
	that the \(h_0\)-skeleton chain is a \(\varphi\)-irreducible T-chain.
	Let
	\[
	C_r:=\{z\in E: V_r(z)\le 2b_r/a_r\}.
	\]
	Since \(V_r\) is norm-like, \(C_r\) is compact, hence every compact subset
	of $\R\times\mbbs$ is petite for the $h_0$-skeleton chain.
	Therefore \cite[Theorem~6.1]{meyn1993stability}
	gives a unique invariant probability measure \(\pi_0\), with
	\(\pi_0(V_r)<\infty\), and constants \(B_r<\infty\), \(\rho_r>0\) such that
	\[
	\sup_{|f|\le V_r}
	\left|
	P_tf(x,i)-\pi_0(f)
	\right|
	\le
	B_rV_r(x,i)e^{-\rho_rt},
	\qquad
	t\ge0,\quad (x,i)\in E .
	\]
	This proves the theorem.

	\subsection{Proofs in Section~\ref{sec:estimation}}
	
	\subsubsection{Proof of Theorem \ref{thm:full-consistency}}
	\label{sec:proof:consis}

	We first establish $(\hat\al_n,\hat\gam_n)\cip(\al_0,\gam_0)$.
	
	Define the auxiliary scale contrast and its remainder by
	\[
	\mathcal Y_{n,\gam}(\gam)
	:=\frac1n\sum_{j=1}^n
	\left\{
	\log\frac{c_{j-1}(\gam)^2}{c_{j-1}(\gam_0)^2}
	+\rho_{j-1}(\gam)
	\right\},
	\qquad
	r_{n,\gam}(\gam)
	:=-\frac1{2n}\sum_{j=1}^n
	\left(
	\frac{\eta_j^2}{h_n c_{j-1}(\gam_0)^2}-1
	\right)\rho_{j-1}(\gam),
	\]
	and the auxiliary drift contrast and its remainder by
	\[
	\mathcal Y_{n,\al}(\al;\gam)
	:=\frac1n\sum_{j=1}^n
	\frac{\{b_{j-1}(\al)-b_{j-1}(\al_0)\}^2}{c_{j-1}(\gam)^2},
	\qquad
	r_{n,\al}(\al,\gam)
	:=\frac1{T_n}\sum_{j=1}^n
	\frac{\{b_{j-1}(\al)-b_{j-1}(\al_0)\}\,\eta_j}{c_{j-1}(\gam)^2},
	\]
	and write $\mathcal Y_{n,\al}(\al):=\mathcal Y_{n,\al}(\al;\gam_0)$.
	
	It suffices to show that, for every $\Theta_\gam$-valued sequence $\bar\gam_n$ with $\bar\gam_n\cip\gam_0$,
	\begin{equation}\label{eq:unif-gamma}
		\sup_{\gam\in\Theta_\gam}
		\bigl|\{\mathbb G_{1,n}(\gam)-\mathbb G_{1,n}(\gam_0)\}-\Y_{\gam}(\gam)\bigr|\cip0,
	\end{equation}
	\begin{equation}\label{eq:unif-alpha}
		\sup_{\al\in\Theta_\al}
		\bigl|\{\mathbb G_{2,n}(\al;\bar\gam_n)-\mathbb G_{2,n}(\al_0;\bar\gam_n)\}-\Y_{\al}(\al)\bigr|\cip0.
	\end{equation}
	
	\smallskip
	\noindent\textit{Step 1: Uniform convergence of the scale block.}
	A direct expansion together with Proposition \ref{prop:hybrid-one-step} yields
	\begin{align*}
		\sup_{\gam\in\Theta_\gam}
		\bigl|\{\mathbb G_{1,n}(\gam)-\mathbb G_{1,n}(\gam_0)\}-\Y_{\gam}(\gam)\bigr|
		&\le \frac12\sup_{\gam\in\Theta_\gam}|\mathcal Y_{n,\gam}(\gam)+2\Y_{\gam}(\gam)|
		+\sup_{\gam\in\Theta_\gam}|r_{n,\gam}(\gam)|\\
		&\quad+\frac{C}{n}\sum_{j=1}^n(1+|X_{t_{j-1}}|^C)|\eta_j|
		+\frac{C h_n}{n}\sum_{j=1}^n(1+|X_{t_{j-1}}|^C)\\
		&= \frac12\sup_{\gam\in\Theta_\gam}|\mathcal Y_{n,\gam}(\gam)+2\Y_{\gam}(\gam)|
		+\sup_{\gam\in\Theta_\gam}|r_{n,\gam}(\gam)|+o_p(1).
	\end{align*}
	Lemma~\ref{lem:ergodic-rate-unif} gives
	$\sup_{\gam\in\Theta_\gam}|\mathcal Y_{n,\gam}(\gam)+2\Y_{\gam}(\gam)|\cip0$,
	so it remains to control the remainder.
	
	Fix $q>p_\gam$. For $|\beta|\le 1$, decompose
	\[
	\partial_\gam^\beta r_{n,\gam}(\gam)
	=M_{n,\gam}^{(\beta)}(\gam)+B_{n,\gam}^{(\beta)}(\gam),
	\]
	with
	\[
	M_{n,\gam}^{(\beta)}(\gam)
	:=-\frac1{2n}\sum_{j=1}^n \partial_\gam^\beta\rho_{j-1}(\gam)\,\bar\xi_j,
	\qquad
	B_{n,\gam}^{(\beta)}(\gam)
	:=-\frac1{2n}\sum_{j=1}^n \partial_\gam^\beta\rho_{j-1}(\gam)\,\E_{j-1}[\xi_j],
	\]
	where $\bar\xi_j:=\xi_j-\E_{j-1}[\xi_j]$. By Proposition \ref{prop:hybrid-one-step},
	\[
	|B_{n,\gam}^{(\beta)}(\gam)|
	\le C h_n^{1/2}\cdot\frac1n\sum_{j=1}^n(1+|X_{t_{j-1}}|^C),
	\qquad
	\sup_{\gam\in\Theta_\gam}\E\bigl[|B_{n,\gam}^{(\beta)}(\gam)|^q\bigr]\le C h_n^{q/2}.
	\]
	Combining the Burkholder--Davis--Gundy(BDG) inequality with the bound
	$\E_{j-1}[|\bar\xi_j|^2]\le\E_{j-1}[|\xi_j|^2]\le C h_n^{-1}(1+|X_{t_{j-1}}|^C)$
	from Proposition \ref{prop:hybrid-one-step},
	\[
	\E\bigl[|M_{n,\gam}^{(\beta)}(\gam)|^q\bigr]
	\le \frac{C}{n^q}\left\{(n h_n^{-1})^{q/2}+n h_n^{-q/2}\right\}
	\le C T_n^{-q/2}.
	\]
	Hence
	\begin{equation}\label{eq:pointwise-rn-gamma}
		\sup_{\gam\in\Theta_\gam}\E\bigl[|\partial_\gam^\beta r_{n,\gam}(\gam)|^q\bigr]
		\le C\bigl(T_n^{-q/2}+h_n^{q/2}\bigr)\to0
		\qquad(|\beta|\le 1).
	\end{equation}
	Since $q>p_\gam$, the Sobolev embedding
	$W^{1,q}(\Theta_\gam)\hookrightarrow C(\Theta_\gam)$
	(see, e.g., \cite[Section 10.2]{friedman2006stochastic}) gives
	\[
	\E\Bigl[\sup_{\gam\in\Theta_\gam}|r_{n,\gam}(\gam)|^q\Bigr]
	\le C\sum_{|\beta|\le 1}\int_{\Theta_\gam}\E\bigl[|\partial_\gam^\beta r_{n,\gam}(\gam)|^q\bigr]\,d\gam
	\le C\bigl(T_n^{-q/2}+h_n^{q/2}\bigr)\to0,
	\]
	so that $\sup_{\gam\in\Theta_\gam}|r_{n,\gam}(\gam)|\cip0$. We shall invoke this Sobolev argument repeatedly in what follows without further comment. This proves \eqref{eq:unif-gamma}.
	
	\smallskip
	\noindent\textit{Step 2: Uniform convergence of the drift block.}
	A similar expansion gives
	\begin{align*}
		&\sup_{\al\in\Theta_\al}
		\bigl|\{\mathbb G_{2,n}(\al;\bar\gam_n)-\mathbb G_{2,n}(\al_0;\bar\gam_n)\}-\Y_{\al}(\al)\bigr|\\
		&\quad\le \frac12\sup_{\al\in\Theta_\al}|\mathcal Y_{n,\al}(\al;\bar\gam_n)-\mathcal Y_{n,\al}(\al)|
		+\frac12\sup_{\al\in\Theta_\al}|\mathcal Y_{n,\al}(\al)+2\Y_{\al}(\al)|
		+\sup_{\theta\in\Theta}|r_{n,\al}(\al,\gam)|\\
		&\quad\le \frac12\sup_{\al\in\Theta_\al}|\mathcal Y_{n,\al}(\al)+2\Y_{\al}(\al)|
		+\sup_{\theta\in\Theta}|r_{n,\al}(\al,\gam)|
		+|\bar\gam_n-\gam_0|\cdot\frac{C}{n}\sum_{j=1}^n(1+|X_{t_{j-1}}|^C)\\
		&\quad=\frac12\sup_{\al\in\Theta_\al}|\mathcal Y_{n,\al}(\al)+2\Y_{\al}(\al)|
		+\sup_{\theta\in\Theta}|r_{n,\al}(\al,\gam)|+o_p(1).
	\end{align*}
	Lemma~\ref{lem:ergodic-rate-unif} ensures
	$\sup_{\al\in\Theta_\al}|\mathcal Y_{n,\al}(\al)+2\Y_{\al}(\al)|\cip0$, and the
	Sobolev argument used in Step~1 gives
	$\sup_{\theta\in\Theta}|r_{n,\al}(\al,\gam)|\cip0$. This proves \eqref{eq:unif-alpha}.
	
	\smallskip
	\noindent\textit{Step 3: Argmax conclusion.}
	Since
	$\hat\gam_n\in\argmax_{\gam\in\Theta_\gam}\mathbb G_{1,n}(\gam)$ and
	$\hat\al_n\in\argmax_{\al\in\Theta_\al}
	\{\mathbb G_{2,n}(\al;\hat\gam_n)-\mathbb G_{2,n}(\al_0;\hat\gam_n)\}$,
	Assumption~\ref{ass:Y-global} together with the standard argmax theorem
	yields $\hat\gam_n\cip\gam_0$ and $\hat\al_n\cip\al_0$.
	
	\medskip
	We now turn to $\hat\vartheta_n\cip\vartheta_0$. It is convenient to work with the centered contrast
	\[
	\bar{\mathbb G}_{3,n}(\vartheta):=\mathbb G_{3,n}(\vartheta)-\mathbb G_{3,n}(\vartheta_0).
	\]
	Set $\chi_j^{ik}:=\Delta_j N_{ik}^{\mathrm o}-\E_{j-1}[\Delta_j N_{ik}^{\mathrm o}]$.
	Substituting the one-step expansion in Lemma~\ref{lem:q-obs-direct} produces the decomposition
	\[
	T_n\bar{\mathbb G}_{3,n}(\vartheta)
	=\mathbb M_n^Q(\vartheta)+\mathbb G_n^Q(\vartheta)+\mathbb R_n^Q(\vartheta),
	\]
	where
	\begin{align*}
		\mathbb M_n^Q(\vartheta)
		&:=\sum_{j=1}^n\sum_{i=1}^m\sum_{k\ne i}
		\chi_j^{ik}\log\frac{q_{ik}(X_{t_{j-1}},\vartheta)}{q_{ik}(X_{t_{j-1}},\vartheta_0)},\\
		\mathbb G_n^Q(\vartheta)
		&:=\sum_{j=1}^n h_n\,F_Q(X_{t_{j-1}},\Lambda_{t_{j-1}},\vartheta),\\
		\mathbb R_n^Q(\vartheta)
		&:=\sum_{j=1}^n\sum_{i=1}^m\sum_{k\ne i}
		r_{j,n}^{ik}\log\frac{q_{ik}(X_{t_{j-1}},\vartheta)}{q_{ik}(X_{t_{j-1}},\vartheta_0)}.
	\end{align*}
	We treat the three terms in turn.
	
	\emph{Remainder term.} Assumption~\ref{ass:coeff} and Proposition \ref{prop:hybrid-switching} give
	\[
	\sup_{\vartheta\in\Theta_\vartheta}\frac1{T_n}|\mathbb R_n^Q(\vartheta)|
	\le \frac{C}{T_n}\sum_{j=1}^n\sum_{i=1}^m\sum_{k\ne i}
	h_n^{3/2}(1+|X_{t_{j-1}}|^C)
	=O_p(\sqrt{h_n})\to0.
	\]
	
	\emph{Drift term.} Assumption~\ref{ass:coeff} together with Lemma~\ref{lem:ergodic-rate-unif} yields
	\[
	\sup_{\vartheta\in\Theta_\vartheta}
	\Bigl|\frac1{T_n}\mathbb G_n^Q(\vartheta)-\Y_Q(\vartheta)\Bigr|\cip0.
	\]
	
	\emph{Martingale term.} Since $\Delta_j N_{ik}^{\mathrm o}\in\{0,1\}$, Proposition \ref{prop:hybrid-switching} gives
	\[
	\E_{j-1}\bigl[(\chi_j^{ik})^2\bigr]
	\le\E_{j-1}\bigl[\Delta_j N_{ik}^{\mathrm o}\bigr]
	\le C h_n(1+|X_{t_{j-1}}|^C).
	\]
	The Burkholder--Davis--Gundy inequality therefore yields, for every fixed $\vartheta$ and every $K>0$,
	\[
	\sup_{n\in\mathbb N}\E\Bigl[\Bigl|\tfrac1{\sqrt{T_n}}\mathbb M_n^Q(\vartheta)\Bigr|^K\Bigr]<\infty,
	\]
	and the Sobolev embedding argument upgrades this to
	\[
	\sup_{\vartheta\in\Theta_\vartheta}\Bigl|\tfrac1{T_n}\mathbb M_n^Q(\vartheta)\Bigr|\cip0.
	\]
	
	Combining the three estimates,
	\[
	\sup_{\vartheta\in\Theta_\vartheta}|\bar{\mathbb G}_{3,n}(\vartheta)-\Y_Q(\vartheta)|\cip0.
	\]
	Since Assumption~\ref{ass:Y-global} guarantees that $\Y_Q(\vartheta)=0$ if and only if $\vartheta=\vartheta_0$, the argmax theorem yields $\hat\vartheta_n\cip\vartheta_0$.

	\subsubsection{Proof of Theorem \ref{thm:joint-an}}
	\label{sec:proof:joint-an}
	We prove the theorem by three steps.
	
	Write $\ell_{j-1}(\gam):=\log c_{j-1}(\gam)^2$,
	$\psi_{j-1}:=\p_\gam\ell_{j-1}(\gam_0)$,
	$H_{j-1}:=\p_\gam^2\ell_{j-1}(\gam_0)$,
	and abbreviate $b_{j-1}:=b_{j-1}(\al_0)$, $c_{j-1}:=c_{j-1}(\gam_0)$.
	
	\medskip
	\noindent
	\textbf{Step 1: Limit $\Sigma_\zeta$.}

	Let $(\bar\al_n,\bar\gam_n,\bar\vartheta_n)$ be any $\Theta$-valued sequence with
	$(\bar\al_n,\bar\gam_n,\bar\vartheta_n)\cip\zeta_0$. We claim that
	\begin{equation}\label{eq:hessian-claim}
		-\p_\gam^2\mathbb G_{1,n}(\bar\gam_n)\cip\Gamma_\gam,
		\qquad
		-\p_\al^2\mathbb G_{2,n}(\bar\al_n;\bar\gam_n)\cip\Gamma_\al,
		\qquad
		-\p_\vartheta^2\mathbb G_{3,n}(\bar\vartheta_n)\cip\Gamma_Q.
	\end{equation}
	
	\smallskip
	\noindent\textit{Step 1: Convergence of $-\p_\gam^2\mathbb G_{1,n}(\bar\gam_n)$.}
	Differentiating \eqref{eq:G1-stage} twice gives
	\begin{align}
		-\p_\gam^2\mathbb G_{1,n}(\gam_0)
		&=\frac1{2T_n}\sum_{j=1}^n
		\left[h_n H_{j-1}+\frac{(\Del_j X)^2}{c_{j-1}^2}\bigl(\psi_{j-1}^{\otimes 2}-H_{j-1}\bigr)\right]\notag\\
		&=\frac1{2n}\sum_{j=1}^n\psi_{j-1}^{\otimes 2}
		+\frac1{2T_n}\sum_{j=1}^n
		\left(\frac{(\Del_j X)^2}{c_{j-1}^2}-h_n\right)\bigl(\psi_{j-1}^{\otimes 2}-H_{j-1}\bigr).
		\label{eq:hessian-gamma-at-true}
	\end{align}
	The first term converges in probability to $\Gamma_\gam$ by Lemma~\ref{lem:ergodic-rate-unif}, so it suffices to show that the second term is $o_p(1)$.
	
	Set $U_{j-1}:=\psi_{j-1}^{\otimes 2}-H_{j-1}$ and write the second term in \eqref{eq:hessian-gamma-at-true} as $R_{n,1}+R_{n,2}+R_{n,3}$, where
	\begin{align*}
		R_{n,1}&:=\frac1{2n}\sum_{j=1}^n U_{j-1}\,\xi_j,
		&R_{n,2}&:=\frac1{T_n}\sum_{j=1}^n U_{j-1}\,\frac{h_n b_{j-1}(\al_0)\eta_j}{c_{j-1}^2},\\
		R_{n,3}&:=\frac1{2T_n}\sum_{j=1}^n U_{j-1}\,\frac{h_n^2 b_{j-1}(\al_0)^2}{c_{j-1}^2}.
	\end{align*}
	
	\emph{Bound for $R_{n,1}$.} Decompose $\xi_j=\tilde\xi_j+\E_{j-1}[\xi_j]$ with $\tilde\xi_j:=\xi_j-\E_{j-1}[\xi_j]$. Proposition \ref{prop:hybrid-one-step} yields
	$\bigl|\E_{j-1}[\xi_j]\bigr|\le C h_n^{1/2}(1+|X_{t_{j-1}}|^C)$, whence
	\[
	\left|\frac1{2n}\sum_{j=1}^n U_{j-1}\,\E_{j-1}[\xi_j]\right|
	\le C h_n^{1/2}\cdot\frac1n\sum_{j=1}^n(1+|X_{t_{j-1}}|^C)\cip 0.
	\]
	Since $\{U_{j-1}\tilde\xi_j\}$ is a martingale-difference array, the orthogonality of martingale differences together with Proposition \ref{prop:hybrid-one-step} gives
	\[
	\E\left[\left|\frac1{2n}\sum_{j=1}^n U_{j-1}\tilde\xi_j\right|^2\right]
	=\frac1{4n^2}\sum_{j=1}^n\E\bigl[\|U_{j-1}\|^2\,\E_{j-1}[\tilde\xi_j^2]\bigr]
	\le\frac{C}{n^2}\sum_{j=1}^n h_n^{-1}
	=\frac{C}{T_n}\to 0.
	\]
	Hence $R_{n,1}\cip 0$. We shall invoke this orthogonality estimate for martingale-difference arrays repeatedly without further comment.
	
	\emph{Bound for $R_{n,2}$.} By Proposition \ref{prop:hybrid-one-step},
	\[
	\E[|R_{n,2}|]
	\le \frac{C h_n}{T_n}\sum_{j=1}^n \E\bigl[(1+|X_{t_{j-1}}|^C)|\eta_j|\bigr]
	\le \frac{C h_n}{T_n}\sum_{j=1}^n h_n^{1/2}
	\le C h_n^{1/2}\to 0.
	\]
	
	\emph{Bound for $R_{n,3}$.} Directly,
	\[
	\E[|R_{n,3}|]
	\le C h_n\cdot\frac1n\sum_{j=1}^n\E\bigl[1+|X_{t_{j-1}}|^C\bigr]\to 0.
	\]
	
	Combining these three bounds yields $-\p_\gam^2\mathbb G_{1,n}(\gam_0)\cip\Gamma_\gam$.
	
	It remains to transfer this convergence from $\gam_0$ to the consistent sequence $\bar\gam_n$. Assumption~\ref{ass:coeff} gives
	\[
	\sup_{\gam\in\Theta_\gam}\bigl\|\p_\gam^3\mathbb G_{1,n}(\gam)\bigr\|
	\le \frac{C}{T_n}\sum_{j=1}^n(1+|X_{t_{j-1}}|^C)\bigl\{h_n+(\Del_j X)^2\bigr\},
	\]
	and the right-hand side is $O_p(1)$ by Lemma~\ref{lem:short-time-moment}. The mean-value theorem therefore yields
	\[
	\bigl\|{-\p_\gam^2}\mathbb G_{1,n}(\bar\gam_n)+\p_\gam^2\mathbb G_{1,n}(\gam_0)\bigr\|
	\le |\bar\gam_n-\gam_0|\,\sup_{\gam\in\Theta_\gam}\bigl\|\p_\gam^3\mathbb G_{1,n}(\gam)\bigr\|\cip 0,
	\]
	which proves the first convergence in \eqref{eq:hessian-claim}.
	
	\smallskip
	\noindent\textit{Step 2: Convergence of $-\p_\al^2\mathbb G_{2,n}(\bar\al_n;\bar\gam_n)$.}
	The argument is structurally identical to that of Step~1; we omit the details.
	
	\smallskip
	\noindent\textit{Step 3: Convergence of $-\p_\vartheta^2\mathbb G_{3,n}(\bar\vartheta_n)$.}
	Applying the decomposition of $T_n\bar{\mathbb G}_{3,n}(\vartheta)$ used in the consistency proof of Theorem~\ref{thm:full-consistency} to the second derivative and arguing term by term as above yields the claim.

	\medskip
	\noindent
	\textbf{Step 2: $\Delta_{n,\zeta}\cil N(0,\Sigma_\zeta)$.}

	Propositions \ref{prop:hybrid-one-step} and \ref{prop:hybrid-switching} together with a direct computation yield the decompositions
	\[
	\Del_{n,\gam}=M_{n,\gam}+R_{n,\gam},
	\qquad
	\Del_{n,\al}=M_{n,\al}+R_{n,\al},
	\qquad
	\Del_{n,Q}=M_{n,\vartheta}+R_{n,\vartheta},
	\]
	where
	\begin{align*}
		M_{n,\gam}&:=\frac1{2\sqrt{T_n}}\sum_{j=1}^n \p_\gam\ell_{j-1}(\gam_0)\bigl\{(\Del_j L)^2-h_n\bigr\},
		&
		R_{n,\gam}&:=\frac1{2\sqrt{T_n}}\sum_{j=1}^n \p_\gam\ell_{j-1}(\gam_0)\,\zeta_{j,n},\\[2pt]
		M_{n,\al}&:=\frac1{\sqrt{T_n}}\sum_{j=1}^n \frac{\p_\al b_{j-1}(\al_0)}{c_{j-1}(\gam_0)}\,\Del_j L,
		&
		R_{n,\al}&:=\frac1{\sqrt{T_n}}\sum_{j=1}^n G_\al^{(2)}(X_{t_{j-1}},\Lam_{t_{j-1}},\al_0,\gam_0)\,r_{j,n},\\[2pt]
		M_{n,\vartheta}&:=\frac1{\sqrt{T_n}}\sum_{j=1}^n\sum_{i=1}^m\sum_{k\neq i}
		\chi_j^{ik}\,\p_\vartheta\log q_{ik}(X_{t_{j-1}},\vartheta_0),
		&
		R_{n,\vartheta}&:=\frac1{\sqrt{T_n}}\sum_{j=1}^n\sum_{i=1}^m\sum_{k\neq i}
		r_{j,n}^{ik}\,\p_\vartheta\log q_{ik}(X_{t_{j-1}},\vartheta_0),
	\end{align*}
	and
	\[
	\zeta_{j,n}
	:=\frac{2 h_n b_{j-1}}{c_{j-1}}\Del_j L
	+\frac{2\Del_j L\, r_{j,n}}{c_{j-1}}
	+\frac{2 h_n b_{j-1} r_{j,n}}{c_{j-1}^2}
	+\frac{r_{j,n}^2}{c_{j-1}^2}
	+\frac{h_n^2 b_{j-1}^2}{c_{j-1}^2}.
	\]
	The proof proceeds in two steps: we first show that the three remainders are $o_p(1)$, and then verify the conditions of the martingale triangular array CLT for $(M_{n,\gam},M_{n,\al},M_{n,\vartheta})$.
	
	\smallskip
	\noindent\textit{Step 1: The remainders are negligible.}
	
	\emph{Bound for $R_{n,\gam}$.} Assumption~\ref{ass:coeff} and Proposition \ref{prop:hybrid-one-step} give
	\begin{align*}
		\E\bigl[|\p_\gam\ell_{j-1}(\gam_0)\,\zeta_{j,n}|\bigr]
		&\le C\,\E\Bigl[(1+|X_{t_{j-1}}|^q)
		\bigl(h_n|\Del_j L|+|\Del_j L|\,|r_{j,n}|+h_n|r_{j,n}|+r_{j,n}^2+h_n^2\bigr)\Bigr]
		\le C h_n^{3/2}.
	\end{align*}
	Hence $\E|R_{n,\gam}|\le \frac{C}{\sqrt{T_n}}\sum_{j=1}^n h_n^{3/2}=C\sqrt{n h_n^2}\to 0$, so $R_{n,\gam}\cip 0$.
	
	\emph{Bound for $R_{n,\al}$.} Set $\bar r_{j,n}:=r_{j,n}-\E_{j-1}[r_{j,n}]$ and split
	$R_{n,\al}=R_{n,\al}^{(m)}+R_{n,\al}^{(d)}$ with
	\[
	R_{n,\al}^{(m)}:=\frac1{\sqrt{T_n}}\sum_{j=1}^n G_\al^{(2)}(X_{t_{j-1}},\Lam_{t_{j-1}},\theta_0)\,\bar r_{j,n},
	\quad
	R_{n,\al}^{(d)}:=\frac1{\sqrt{T_n}}\sum_{j=1}^n G_\al^{(2)}(X_{t_{j-1}},\Lam_{t_{j-1}},\theta_0)\,\E_{j-1}[r_{j,n}].
	\]
	Since $\{G_\al^{(2)}(X_{t_{j-1}},\Lam_{t_{j-1}},\theta_0)\,\bar r_{j,n}\}$ is a martingale-difference array, the orthogonality estimate together with Proposition \ref{prop:hybrid-one-step} yields
	\[
	\E\bigl[\|R_{n,\al}^{(m)}\|^2\bigr]
	=\frac{1}{T_n}\sum_{j=1}^n \E\bigl[\|G_\al^{(2)}(X_{t_{j-1}},\Lam_{t_{j-1}},\theta_0)\bar r_{j,n}\|^2\bigr]
	\le \frac{C}{T_n}\sum_{j=1}^n h_n^2
	= C h_n\to 0.
	\]
	For $R_{n,\al}^{(d)}$, note that $\E_{j-1}[r_{j,n}]=\E_{j-1}[\eta_j]$, so Proposition \ref{prop:hybrid-one-step} gives $|\E_{j-1}[r_{j,n}]|\le C h_n^2(1+|X_{t_{j-1}}|^C)$ and therefore
	\[
	\E\bigl[\|R_{n,\al}^{(d)}\|\bigr]
	\le \frac{C}{\sqrt{T_n}}\sum_{j=1}^n h_n^2
	= C\sqrt{n h_n^3}\to 0.
	\]
	Combining the two bounds, $R_{n,\al}\cip 0$.
	
	\emph{Bound for $R_{n,\vartheta}$.} By Assumption~\ref{ass:coeff} and Proposition \ref{prop:hybrid-switching},
	\[
	\frac1{\sqrt{T_n}}\left|\sum_{j=1}^n\sum_{i=1}^m\sum_{k\neq i}
	r_{j,n}^{ik}\,\p_\vartheta\log q_{ik}(X_{t_{j-1}},\vartheta_0)\right|
	\le \frac{C}{\sqrt{T_n}}\sum_{j=1}^n h_n^{3/2}(1+|X_{t_{j-1}}|^C)=o_p(1).
	\]
	
	\smallskip
	\noindent\textit{Step 2: Triangular-array CLT for $(M_{n,\gam},M_{n,\al},M_{n,\vartheta})$.}
	Define the per-step martingale increments
	\[
	m^{\gam}_{j,n}:=\frac1{2\sqrt{T_n}}\p_\gam\ell_{j-1}(\gam_0)\bigl\{(\Del_j L)^2-h_n\bigr\},
	\qquad
	m^{\al}_{j,n}:=\frac1{\sqrt{T_n}}\frac{\p_\al b_{j-1}(\al_0)}{c_{j-1}(\gam_0)}\,\Del_j L,
	\]
	\[
	m^{\vartheta}_{j,n}:=\frac1{\sqrt{T_n}}\sum_{i=1}^m\sum_{k\neq i}\p_\vartheta\log q_{ik}(X_{t_{j-1}},\vartheta_0)\,\chi_j^{ik},
	\]
	so that
	$\E_{j-1}[m^{\gam}_{j,n}]=\E_{j-1}[m^{\al}_{j,n}]=\E_{j-1}[m^{\vartheta}_{j,n}]=0$. We verify the two standard hypotheses of the triangular array CLT: convergence of the conditional covariances and a Lyapunov-type fourth moment condition.
	
	\emph{Diagonal covariance blocks.} By Lemma~\ref{lem:ergodic-rate-unif},
	\begin{align*}
		\sum_{j=1}^n \E_{j-1}[(m^{\gam}_{j,n})^{\otimes 2}]
		&=\frac{\kappa_4 h_n+O(h_n^2)}{4 T_n}\sum_{j=1}^n (\p_\gam\ell_{j-1}(\gam_0))^{\otimes 2}
		=\frac{\kappa_4+O(h_n)}{4 n}\sum_{j=1}^n (\p_\gam\ell_{j-1}(\gam_0))^{\otimes 2}\cip \Sigma_\gam,\\
		\sum_{j=1}^n \E_{j-1}[(m^{\al}_{j,n})^{\otimes 2}]
		&=\frac1{T_n}\sum_{j=1}^n \frac{(\p_\al b_{j-1}(\al_0))^{\otimes 2}}{c_{j-1}(\gam_0)^2}\,\E[(\Del_j L)^2]
		=\frac1{n}\sum_{j=1}^n \frac{(\p_\al b_{j-1}(\al_0))^{\otimes 2}}{c_{j-1}(\gam_0)^2}\cip \Gamma_\al.
	\end{align*}
	For the $\vartheta$-block, since $\Del_j N_{ik}^{\mathrm o}\,\Del_j N_{i'k'}^{\mathrm o}=0$ whenever $(i,k)\neq(i',k')$,
	\[
	\E_{j-1}[\chi_j^{ik}\chi_j^{i'k'}]
	=\begin{cases}
		\E_{j-1}\bigl[(\chi_j^{ik})^2\bigr], & (i,k)=(i',k'),\\[1mm]
		-\E_{j-1}[\Del_j N_{ik}^{\mathrm o}]\,\E_{j-1}[\Del_j N_{i'k'}^{\mathrm o}], & (i,k)\neq(i',k').
	\end{cases}
	\]
	Proposition \ref{prop:hybrid-switching} and Assumption~\ref{ass:coeff} therefore give
	\begin{align*}
		\E_{j-1}\bigl[(m^{\vartheta}_{j,n})^{\otimes 2}\bigr]
		&=\frac1{T_n}\sum_{i=1}^m\sum_{k\neq i}
		\bigl(\p_\vartheta\log q_{ik}(X_{t_{j-1}},\vartheta_0)\bigr)^{\otimes 2}\,
		\E_{j-1}[\Del_j N_{ik}^{\mathrm o}]+\rho_{j,n}\\
		&=\frac{h_n}{T_n}\sum_{i=1}^m\sum_{k\neq i}\mathbf 1_{\{\Lam_{t_{j-1}}=i\}}
		\bigl(\p_\vartheta\log q_{ik}(X_{t_{j-1}},\vartheta_0)\bigr)^{\otimes 2}
		q_{ik}(X_{t_{j-1}},\vartheta_0)+\tilde\rho_{j,n},
	\end{align*}
	with $\sum_{j=1}^n|\tilde\rho_{j,n}|\le C(\sqrt{h_n}+h_n)\cdot\frac1n\sum_{j=1}^n(1+|X_{t_{j-1}}|^C)=o_p(1)$. Lemma~\ref{lem:ergodic-rate-unif} then yields
	\[
	\sum_{j=1}^n\E_{j-1}\bigl[(m^{\vartheta}_{j,n})^{\otimes 2}\bigr]\cip \Gamma_Q.
	\]
	
	\emph{Cross covariance blocks.} A direct computation gives
	\begin{align*}
		\sum_{j=1}^n\E_{j-1}[m^{\al}_{j,n}(m^{\gam}_{j,n})^\top]
		&=\frac1{2T_n}\sum_{j=1}^n A_\al(X_{t_{j-1}},\Lam_{t_{j-1}})\Psi_\gam(X_{t_{j-1}},\Lam_{t_{j-1}})^\top
		\E\bigl[\Del_j L\bigl\{(\Del_j L)^2-h_n\bigr\}\bigr]\\
		&=\frac{\kappa_3+O(h_n)}{2n}\sum_{j=1}^n A_\al(X_{t_{j-1}},\Lam_{t_{j-1}})\Psi_\gam(X_{t_{j-1}},\Lam_{t_{j-1}})^\top
		\cip \Sigma_{\al\gam},
	\end{align*}
	and the transpose block converges to $\Sigma_{\al\gam}^\top$. By Proposition \ref{prop:hybrid-switching},
	\[
	\sum_{j=1}^n\E_{j-1}[m^{\al}_{j,n}(m^{\vartheta}_{j,n})^\top]
	=O_p\!\left(\frac{n h_n^{3/2}}{T_n}\right)=o_p(1),
	\]
	and analogously $\sum_{j=1}^n\E_{j-1}[m^{\gam}_{j,n}(m^{\vartheta}_{j,n})^\top]\cip 0$.
	
	\emph{Lyapunov condition.} We have
	\begin{align*}
		\sum_{j=1}^n \E_{j-1}\bigl[|m^{\gam}_{j,n}|^4\bigr]
		&\le \frac{C}{T_n^2}\sum_{j=1}^n \E_{j-1}\bigl[(1+|X_{t_{j-1}}|^C)\bigl|(\Del_j L)^2-h_n\bigr|^4\bigr]
		\le \frac{1}{T_n^2}\,O_p(T_n)=o_p(1),\\
		\sum_{j=1}^n \E\bigl[|m^{\al}_{j,n}|^4\bigr]
		&\le \frac{C}{T_n^2}\sum_{j=1}^n \E\bigl[(1+|X_{t_{j-1}}|^C)|\Del_j L|^4\bigr]
		\le \frac{C}{T_n^2}\sum_{j=1}^n h_n=o_p(1),\\
		\sum_{j=1}^n \E_{j-1}\bigl[|m^{\vartheta}_{j,n}|^4\bigr]
		&\le \frac{C}{T_n^2}\sum_{j=1}^n\sum_{i=1}^m\sum_{k\neq i}
		\bigl|\p_\vartheta\log q_{ik}(X_{t_{j-1}},\vartheta_0)\bigr|^4
		\E_{j-1}[\Del_j N_{ik}^{\mathrm o}]
		\notag\\
		&\le \frac{C}{T_n^2}\sum_{j=1}^n
		h_n(1+|X_{t_{j-1}}|^C)=o_p(1).
	\end{align*}
	
	The triangular-array martingale CLT (see, e.g., \cite[Lemma~3.6]{kessler2012statistical}) therefore yields
	$\Del_{n,\zeta}\cil N(0,\Sigma_\zeta)$.

	\medskip
	\noindent
	\textbf{Step 3: Joint CLT.}

	A Taylor expansion of the estimating equations yields
	\begin{equation}\label{eq:taylor-gamma-Q}
		\sqrt{T_n}(\hat\gam_n-\gam_0)=\Gamma_\gam^{-1}\Del_{n,\gam}+o_p(1),
		\qquad
		\sqrt{T_n}(\hat\vartheta_n-\vartheta_0)=\Gamma_Q^{-1}\Del_{n,Q}+o_p(1).
	\end{equation}
	To obtain the analogous expansion for $\hat\al_n$, namely
	\begin{equation}\label{eq:taylor-alpha}
		\sqrt{T_n}(\hat\al_n-\al_0)=\Gamma_\al^{-1}\Del_{n,\al}+o_p(1),
	\end{equation}
	it suffices to establish the asymptotic decoupling identity
	\begin{equation}\label{eq:alpha-gamma-decoupling}
		\sqrt{T_n}\bigl\{\p_\al\mathbb G_{2,n}(\al_0;\hat\gam_n)-\p_\al\mathbb G_{2,n}(\al_0;\gam_0)\bigr\}=o_p(1).
	\end{equation}
	Define
	$M_n(\gam):=\sqrt{T_n}\,\p_\gam\p_\al\mathbb G_{2,n}(\al_0;\gam)$.
	The martingale difference moment bound used in Step~1 of the CLT proof, combined with the Sobolev embedding argument employed throughout, yields
	\[
	\sup_{\gam\in\Theta_\gam}\|M_n(\gam)\|=O_p(1).
	\]
	By the mean-value theorem, there exists $\tilde\gam_n$ on the segment between $\hat\gam_n$ and $\gam_0$ such that
	\[
	\sqrt{T_n}\bigl\{\p_\al\mathbb G_{2,n}(\al_0;\hat\gam_n)-\p_\al\mathbb G_{2,n}(\al_0;\gam_0)\bigr\}
	=M_n(\tilde\gam_n)\,(\hat\gam_n-\gam_0)=O_p(1)\cdot o_p(1)=o_p(1),
	\]
	where the consistency $\hat\gam_n\cip\gam_0$ is used in the last step. This proves \eqref{eq:alpha-gamma-decoupling}, and hence \eqref{eq:taylor-alpha}.
	
	Combining \eqref{eq:taylor-gamma-Q} and \eqref{eq:taylor-alpha} and setting
	\[
	\Gamma_\zeta:=\begin{pmatrix}\Gamma_\al & 0 & 0\\ 0 & \Gamma_\gam & 0\\ 0 & 0 & \Gamma_Q\end{pmatrix},
	\qquad
	\Del_{n,\zeta}:=\begin{pmatrix}\Del_{n,\al}\\ \Del_{n,\gam}\\ \Del_{n,Q}\end{pmatrix},
	\]
	we obtain the joint stochastic expansion
	\[
	\sqrt{T_n}(\hat\zeta_n-\zeta_0)=\Gamma_\zeta^{-1}\Del_{n,\zeta}+o_p(1).
	\]
	Slutsky's theorem, applied together with the joint CLT $\Del_{n,\zeta}\cil N(0,\Sigma_\zeta)$ established above, yields
	\[
	\sqrt{T_n}(\hat\zeta_n-\zeta_0)\cil N\bigl(0,\,\Gamma_\zeta^{-1}\Sigma_\zeta\Gamma_\zeta^{-1}\bigr).
	\]

	\subsubsection{Proof of Theorem \ref{thm:pldi} and Corollary \ref{cor:moments}}
	\label{sec:proof:pldi}

	Define the following
	\[\chi_{0,\gam}:=\inf_{\gam\neq\gam_0}
	\frac{-\Y_{\gam}(\gam)}{\abs{\gam-\gam_0}^2}, \quad 
	\chi_{0,\al}:=\inf_{\al\neq\al_0}
	\frac{-\Y_{\al}(\al)}{\abs{\al-\al_0}^2},
	\quad
	\chi_{0,Q} := \inf_{\vartheta\neq\vartheta_0} \frac{-\mathbb Y_Q(\vartheta)}{|\vartheta-\vartheta_0|^2}.
	\]
	Also define the contrast differences, for $\bar\gam\in\Theta_\gam$,
	\[
	\Y_{n,\gam}(\gam):=\mathbb G_{1,n}(\gam)-\mathbb G_{1,n}(\gam_0),
	\quad
	\Y_{n,\al}(\al;\bar\gam):=\mathbb G_{2,n}(\al;\bar\gam)-\mathbb G_{2,n}(\al_0;\bar\gam).
	\]
	\[ \mathbb Y_{n,Q}(\vartheta) := \mathbb G_{3,n}(\vartheta)-\mathbb G_{3,n}(\vartheta_0). 
	\]
	
	We first give the following Proposition \ref{prop:pldi-verify}.
	
	\begin{prop}
		\label{prop:pldi-verify}
		Suppose Assumptions \ref{ass:Levy}--\ref{ass:ergodic} and \ref{ass:Y-global} hold.
		Then the following hold.
		
		\medskip
		\noindent
		For the local field $\{\mathbb Z_{n,\gam}(v)\}$, the following conditions are satisfied for every
		$K>0$:
		\begin{enumerate}[label=\textup{(G\arabic*)},leftmargin=3.2em]
			\item
			$\sup_{n\in\mbbn}\E\bigl[|\Delta_{n,\gam}|^K\bigr]<\infty$;
			\medskip
			\item
			$\sup_{n\in\mbbn}
			\E\left[
			\left(
			\sup_{\gam\in\Theta_\gam}
			\|\p_\gam^3\mathbb G_{1,n}(\gam)\|
			\right)^K
			\right]
			<\infty$;
			\medskip
			\item writing $\Gamma_{n,\gam}:=-\p_\gam^2\mathbb G_{1,n}(\gam_0)$,
			$\sup_{n\in\mbbn}
			\E\left[
			\|\sqrt{T_n}(\Gamma_{n,\gam}-\Gamma_\gam)\|^K
			\right]
			<\infty$;
			\medskip
			\item
			$\sup_{n\in\mbbn}
			\E\left[
			\sup_{\gam\in\Theta_\gam}
			\left|
			\sqrt{T_n}\bigl\{\Y_{n,\gam}(\gam)-\Y_\gam(\gam)\bigr\}
			\right|^K
			\right]
			<\infty$.
		\end{enumerate}
		
		\medskip
		\noindent
		Let $\bar\gam_n$ be any $\Theta_\gam$-valued sequence such that, for every $K>0$,
		\begin{equation}
			\label{eq:plug-rate-ass}
			\sup_{n\in\mbbn}\E\bigl[|\sqrt{T_n}(\bar\gam_n-\gam_0)|^K\bigr]<\infty.
		\end{equation}
		Then, for the plug-in local field $\{\mathbb Z_{n,\al}(u;\bar\gam_n)\}$, the following conditions
		are satisfied for every $K>0$:
		\begin{enumerate}[label=\textup{(A\arabic*)},leftmargin=3.2em]
			\item writing $S_{n,\al}(\gam):=\sqrt{T_n}\,\p_\al\mathbb G_{2,n}(\al_0;\gam)$,
			$\sup_{n\in\mbbn}\E\bigl[|S_{n,\al}(\bar\gam_n)|^K\bigr]<\infty$;
			\medskip
			\item
			$\sup_{n\in\mbbn}
			\E\left[
			\left(
			\sup_{\al\in\Theta_\al}
			\|\p_\al^3\mathbb G_{2,n}(\al;\bar\gam_n)\|
			\right)^K
			\right]
			<\infty;$
			\medskip
			\item writing $\Gamma_{n,\al}(\al;\gam):=-\p_\al^2\mathbb G_{2,n}(\al;\gam)$,
			$\sup_{n\in\mbbn}
			\E\left[
			\|\sqrt{T_n}\bigl(\Gamma_{n,\al}(\al_0;\bar\gam_n)-\Gamma_\al\bigr)\|^K
			\right]
			<\infty$;
			\medskip
			\item
			$\sup_{n\in\mbbn}
			\E\left[
			\sup_{\al\in\Theta_\al}
			\left|
			\sqrt{T_n}\bigl\{\Y_{n,\al}(\al;\bar\gam_n)-\Y_\al(\al)\bigr\}
			\right|^K
			\right]
			<\infty$.
		\end{enumerate}
		For the switching field \(\{\mathbb Z_{n,Q}(w)\}\), the following conditions are satisfied for every \(K>0\): 
		\begin{enumerate}[label=\textup{(Q\arabic*)},leftmargin=3.2em] \item $\sup_{n\in\mathbb N} \mathbb E[|\Delta_{n,Q}|^K]<\infty$. 
			\medskip
			\item $\sup_{n\in\mathbb N} \mathbb E\left[ \left( \sup_{\vartheta\in\Theta_\vartheta} \|\partial_\vartheta^3\mathbb G_{3,n}(\vartheta)\| \right)^K \right] <\infty$.
			\medskip
			\item Writing $\Gamma_{n,Q}:=-\partial_\vartheta^2\mathbb G_{3,n}(\vartheta_0)$, $\sup_{n\in\mathbb N} \mathbb E\left[ \|\sqrt{T_n}(\Gamma_{n,Q}-\Gamma_Q)\|^K \right] <\infty$.
			\medskip
			\item $\sup_{n\in\mathbb N} \mathbb E\left[ \sup_{\vartheta\in\Theta_\vartheta} \left| \sqrt{T_n}\{\mathbb Y_{n,Q}(\vartheta)-\mathbb Y_Q(\vartheta)\} \right|^K \right] <\infty$. 
		\end{enumerate}
	\end{prop}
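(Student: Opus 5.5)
The plan is to reduce all twelve conditions to two generic moment tools, applied to the decompositions already used in the proofs of Theorems~\ref{thm:full-consistency} and~\ref{thm:joint-an}. The first tool is a martingale moment bound. For a martingale-difference array $\{\mu_j\}$ with $\E_{j-1}[|\mu_j|^k]\le C h_n^{k/2\wedge1}(1+|X_{t_{j-1}}|^C)$, Burkholder's inequality followed by Jensen gives $\E|\sum_j\mu_j|^K\le C\{(n h_n)^{K/2}+n h_n\}$ for even $K\ge2$, using $\sup_t\E|X_t|^q<\infty$ (stationarity under $\pi_0$ and (E2)). The second tool is an $L^K$-rate ergodic bound. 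For $f$ of polynomial growth,
\[
\sup_n\E\Bigl|\frac{1}{\sqrt{T_n}}\sum_{j=1}^n h_n\{f(X_{t_{j-1}},\Lambda_{t_{j-1}})-\pi_0(f)\}\Bigr|^K<\infty .
\]
I would prove this via the Poisson-equation/mixing argument from (E1): exponential $V_q$-mixing gives summable covariances of all orders, hence a Rosenthal-type bound. I would take the rate form of Lemma~\ref{lem:ergodic-rate-unif} as providing exactly this, extended uniformly in the parameter by the Sobolev embedding $W^{1,q}\hookrightarrow C$ already used in the consistency proof.

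\textbf{Scale field (G1)--(G4).} For (G1), write $\Delta_{n,\gamma}=M_{n,\gamma}+R_{n,\gamma}$. The martingale part has bounded moments by the first tool, since $\E_{j-1}|(\Delta_jL)^2-h_n|^k\le Ch_n$. For the remainder I would strengthen the $L^1$ bound in the proof of Theorem~\ref{thm:joint-an} to an $L^K$ bound. I would split $\zeta_{j,n}$ into its conditional mean, which is $O(h_n^{3/2})$ by Proposition~\ref{prop:hybrid-one-step}, plus a martingale part with $\E_{j-1}|\cdot|^2\le Ch_n^2$. This gives $\|R_{n,\gamma}\|_K\le C(\sqrt{nh_n^2}+h_n^{1/2})$.

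For (G2), the bound is immediate from the explicit form of $\p_\gamma^3\mathbb G_{1,n}$, which is dominated by $T_n^{-1}\sum_j(1+|X_{t_{j-1}}|^C)\{h_n+(\Delta_jX)^2\}$, together with Lemma~\ref{lem:short-time-moment}.

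For (G3), use the decomposition \eqref{eq:hessian-gamma-at-true}. The term $\frac1{2n}\sum_j\psi_{j-1}^{\otimes2}-\Gamma_\gamma$ multiplied by $\sqrt{T_n}$ is handled by the ergodic rate bound. The term $\sqrt{T_n}R_{n,1}$ is a martingale part of order $O(1)$ in $L^K$, plus a bias of order $\sqrt{T_n}h_n^{1/2}=\sqrt{nh_n^2}$. The terms $\sqrt{T_n}R_{n,2}$ and $\sqrt{T_n}R_{n,3}$ are $O(\sqrt{nh_n^2})$.

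For (G4), I would use the representation $\mathbb Y_{n,\gamma}-\mathbb Y_\gamma=\frac{-1}{2}(\mathcal Y_{n,\gamma}+2\mathbb Y_\gamma)+r_{n,\gamma}+(\text{terms with }\eta_j)$ from Step~1 of the consistency proof. The estimate \eqref{eq:pointwise-rn-gamma} already gives $\|\sqrt{T_n}\,\p^\beta r_{n,\gamma}\|_K\le C(1+\sqrt{nh_n^2})$. The mean part is controlled by the ergodic rate bound with Sobolev. The cross terms linear in $\eta_j$ need recentering, so that their conditional means are $O(h_n^2)$ by Proposition~\ref{prop:hybrid-one-step}(ii).

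\textbf{Drift field (A1)--(A4).} These follow the same pattern with $\gamma$ replaced by the plug-in $\bar\gamma_n$. For (A1), write
\[
S_{n,\alpha}(\bar\gamma_n)=S_{n,\alpha}(\gamma_0)+\int_0^1 M_n(\gamma_0+s(\bar\gamma_n-\gamma_0))\,ds\,(\bar\gamma_n-\gamma_0).
\]
Then $\sup_\gamma|M_n(\gamma)|/\sqrt{T_n}$ is bounded in every $L^K$ (Sobolev again). Moreover $\sqrt{T_n}(\bar\gamma_n-\gamma_0)$ has bounded moments by \eqref{eq:plug-rate-ass}, and Hölder's inequality closes the estimate. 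For (A3) and (A4), I would similarly expand in $\gamma$ around $\gamma_0$. The derivative terms are $O(1)$ in $L^K$ uniformly in $\gamma$, so the error is $\sqrt{T_n}|\bar\gamma_n-\gamma_0|\cdot O_{L^K}(1)$, which is bounded. The $\gamma_0$ terms are treated as for the scale field.

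\textbf{Switching field (Q1)--(Q4).} I would use the decomposition $T_n\bar{\mathbb G}_{3,n}=\mathbb M_n^Q+\mathbb G_n^Q+\mathbb R_n^Q$ and its $\vartheta$-derivatives. The martingale parts have $\E_{j-1}|\chi_j^{ik}|^k\le Ch_n(1+|X_{t_{j-1}}|^C)$, so $T_n^{-1/2}\mathbb M_n^Q$ is bounded in $L^K$ uniformly in $\vartheta$ (Sobolev). The remainder satisfies $T_n^{-1/2}|\mathbb R_n^Q|\le C\sqrt{nh_n^2}\,n^{-1}\sum_j(1+|X_{t_{j-1}}|^C)$. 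The drift part $\mathbb G_n^Q$ is controlled by the ergodic rate bound. (Q2) is immediate from the polynomial growth of $\p_\vartheta^r\log q_{ik}$ and $\p_\vartheta^r q_{ik}$.

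\textbf{Main obstacle.} The main obstacle is the $L^K$ ergodic rate bound $\sqrt{T_n}\{n^{-1}\sum_j f_{j-1}-\pi_0(f)\}=O_{L^K}(1)$ for all $K$, since Assumption~\ref{ass:ergodic} gives only $V_q$-mixing. I would first try the Poisson equation $f-\pi_0(f)=-\mathcal A_0 g$, with $g=\int_0^\infty(P_tf-\pi_0 f)\,dt$ of polynomial growth by (E1). Then I would rewrite $\int_0^{T_n}(f-\pi_0 f)(X_t,\Lambda_t)\,dt=g(Z_0)-g(Z_{T_n})+\mathcal M_{T_n}$ and bound the martingale by BDG, where the discretization error costs $\sqrt{nh_n^2}$. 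This requires $g$ to be regular enough for Itô's formula. If that fails, I would fall back on a direct moment computation using mixing covariance bounds for products of $f$ evaluated along the chain.
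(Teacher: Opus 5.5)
Your architecture is the paper's: the ergodic-average/martingale/remainder decompositions from the consistency and CLT proofs, Lemma~\ref{lem:ergodic-rate-unif} for the averages, BDG plus the Sobolev embedding for uniformity in the parameter, and a mean-value expansion in $\gamma$ combined with \eqref{eq:plug-rate-ass} for the plug-in field.

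There is, however, a genuine gap in (G2) and (Q2), which you call immediate from a polynomial domination and Lemma~\ref{lem:short-time-moment}, and in (A2), which you leave to ``the same pattern''. For pure-jump noise, $\E_{j-1}[|\Delta_jX|^{2K}]$ and $\E_{j-1}[(\Delta_jN^{\mathrm o}_{ik})^K]=\E_{j-1}[\Delta_jN^{\mathrm o}_{ik}]$ are of order $h_n$, not $h_n^K$. So each summand $F_{j-1}(\Delta_jX)^2$ or $F_{j-1}\Delta_jN^{\mathrm o}_{ik}$ has $L^K$-norm of order $h_n^{1/K}$. The triangle inequality applied to your dominating sum $T_n^{-1}\sum_j(1+|X_{t_{j-1}}|^C)\{h_n+(\Delta_jX)^2\}$, or to its counting analogue, therefore only gives $T_n^{-1}\,n\,h_n^{1/K}=h_n^{1/K-1}\to\infty$ for every $K>1$.

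In (A2) no sign-free domination works at all. There $\partial_\alpha^3\mathbb G_{2,n}(\alpha;\gamma)$ contains $T_n^{-1}\sum_jA(X_{t_{j-1}},\Lambda_{t_{j-1}},\alpha,\gamma)\,\eta_j$. Its majorant $T_n^{-1}\sum_j|A|\,|\eta_j|$ has mean of order $\E|\Delta_jL|/h_n$, which is unbounded for infinite-variation noise such as the NIG noise of Section~\ref{sec:numerical}.

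Relatedly, the bound you state for your first tool is the Burkholder--Rosenthal bound, which involves $\sum_j\E_{j-1}[\mu_j^2]$. Burkholder followed by Jensen on $(\sum_j\mu_j^2)^{K/2}$ only gives $n^{K/2}h_n$, which diverges after division by $T_n^{K/2}$ once $K>2$.

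The repair is the one the paper uses, and the one you already apply in (G1) and (G4). Write $(\Delta_jX)^2$, $\eta_j$ and $\Delta_jN^{\mathrm o}_{ik}=\E_{j-1}[\Delta_jN^{\mathrm o}_{ik}]+\chi_j^{ik}$ as $\mathcal F_{t_{j-1}}$-conditional means plus martingale differences. The conditional-mean parts are dominated by a constant times $n^{-1}\sum_j(1+|X_{t_{j-1}}|^C)$, which is bounded in every $L^K$. The martingale parts are $O(T_n^{-1/2})$ in $L^K$ by Rosenthal.

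After this split, your parameter-free domination in (G2) and (Q2) is legitimate. It is even slightly more economical than the paper's route: the paper applies the Sobolev embedding to $\partial^\beta\partial^3$ of the contrasts, which uses one more parameter derivative than Assumption~\ref{ass:coeff} grants. In (A2), where the sign of $\eta_j$ matters, you must instead apply the Sobolev embedding on $\Theta_\alpha\times\Theta_\gamma$ to the recentred sum, and bound the value at $\bar\gamma_n$ by the joint supremum, as the paper does.

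Finally, your ``main obstacle'' is not one here. Lemma~\ref{lem:ergodic-rate-unif} is proved separately in the paper from a Rosenthal inequality for the exponentially $\beta$-mixing sampled chain, so no Poisson equation is needed. Along your Poisson route, the discretization error would be controlled in $L^K$ only after a conditional-mean/martingale split, for the same reason as above. It would also need regularity of $f$ in $x$, which the lemma does not assume.
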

	
	\begin{proof}
		We prove it separately.
		
		\medskip
		\noindent
		\textbf{Step 1: proof of \textup{(G1)}--\textup{(G4)}.}
		
		\emph{Verification of (G1): moments of the score.}
		Recall the decomposition $\Del_{n,\gam}=M_{n,\gam}+R_{n,\gam}$ from the proof of Theorem~\ref{thm:joint-an}. Since $\{M_{n,\gam}\}$ is a martingale array, the Burkholder--Davis--Gundy inequality gives
		\[
		\sup_{n\in\mbbn}\E[|M_{n,\gam}|^K]<\infty \qquad \text{for every } K>0.
		\]
		Combining Proposition \ref{prop:hybrid-one-step} with the orthogonality estimate for martingale-difference arrays yields, in the same way, $\sup_{n\in\mbbn}\E[|R_{n,\gam}|^K]<\infty$ for every $K>0$. Consequently,
		\[
		\sup_{n\in\mbbn}\E[|\Del_{n,\gam}|^K]<\infty,
		\]
		which is (G1).
		
		\smallskip
		\emph{Verification of (G2): moments of higher derivatives.}
		For $|\beta|\le 1$, direct differentiation of \eqref{eq:G1-stage} yields
		\[
		\p_\gam^\beta\p_\gam^3\mathbb G_{1,n}(\gam)
		=\frac1{T_n}\sum_{j=1}^n A_\beta(X_{t_{j-1}},\Lam_{t_{j-1}},\gam)\,(\Del_j X)^2
		+\frac{h_n}{T_n}\sum_{j=1}^n B_\beta(X_{t_{j-1}},\Lam_{t_{j-1}},\gam),
		\]
		where $A_\beta$ and $B_\beta$ are continuous in $\gam$ and satisfy
		\[
		\sup_{|\beta|\le 1}\sup_{\gam\in\Theta_\gam}
		\bigl(|A_\beta(x,i,\gam)|+|B_\beta(x,i,\gam)|\bigr)
		\le C(1+|x|^C).
		\]
		Decomposing $(\Del_j X)^2$ as in Proposition \ref{prop:hybrid-one-step} separates the right-hand side into a martingale-difference component and a predictable remainder. The Burkholder--Davis--Gundy inequality, together with Proposition \ref{prop:hybrid-one-step} and Lemmas \ref{lem:short-time-moment}, then yields
		\[
		\sup_{n\in\mbbn}\sup_{\gam\in\Theta_\gam}
		\E\bigl[\|\p_\gam^\beta\p_\gam^3\mathbb G_{1,n}(\gam)\|^q\bigr]<\infty
		\qquad (|\beta|\le 1)
		\]
		for every $q>0$. A Sobolev embedding and Jensen's inequality therefore give
		\[
		\sup_{n\in\mbbn}\E\!\left[
		\Bigl(\sup_{\gam\in\Theta_\gam}\|\p_\gam^3\mathbb G_{1,n}(\gam)\|\Bigr)^{\!K}
		\right]<\infty
		\qquad \text{for every } K>0,
		\]
		which is (G2).
		
		\smallskip
		\emph{Verification of (G3): moments of the Hessian deviation.}
		Using the decomposition \eqref{eq:hessian-gamma-at-true},
		\[
		\Gamma_{n,\gam}-\Gamma_\gam
		=\left\{\frac1{2n}\sum_{j=1}^n\psi_{j-1}^{\otimes 2}-\Gamma_\gam\right\}+R_{n,1}+R_{n,2}+R_{n,3}.
		\]
		For the leading term, Lemma~\ref{lem:ergodic-rate-unif} applied to the kernel $f(x,i)=\tfrac12\Psi_\gam(x,i)^{\otimes 2}$, together with the identity
		\[
		\sqrt{T_n}\left\{\frac1{2n}\sum_{j=1}^n\psi_{j-1}^{\otimes 2}-\Gamma_\gam\right\}
		=\frac1{\sqrt{T_n}}\sum_{j=1}^n h_n
		\left\{\frac12\Psi_\gam(X_{t_{j-1}},\Lam_{t_{j-1}})^{\otimes 2}-\Gamma_\gam\right\},
		\]
		gives uniformly bounded $K$-th moments. For $R_{n,1}$, the martingale-difference argument used in the proof of Theorem~\ref{thm:joint-an}, now multiplied by $\sqrt{T_n}$, gives
		\[
		\sup_{n\in\mbbn}\E\bigl[|\sqrt{T_n}\,R_{n,1}|^K\bigr]<\infty.
		\]
		For $R_{n,2}$ and $R_{n,3}$, Proposition \ref{prop:hybrid-one-step} implies that, for every $K\ge 2$,
		\[
		\E\bigl[|\sqrt{T_n}\,R_{n,2}|^K\bigr]+\E\bigl[|\sqrt{T_n}\,R_{n,3}|^K\bigr]
		\le C\bigl((n h_n^2)^{K/2}+h_n^{K/2}(n h_n^2)^{K/2}\bigr)
		\le C.
		\]
		Combining these bounds,
		\[
		\sup_{n\in\mbbn}\E\bigl[\|\sqrt{T_n}(\Gamma_{n,\gam}-\Gamma_\gam)\|^K\bigr]<\infty,
		\]
		which is (G3).
		
		\smallskip
		\emph{Verification of (G4): moments of the contrast deviation.}
		The proof of Theorem~\ref{thm:full-consistency} establishes
		\[
		\sup_{\gam\in\Theta_\gam}|\Y_{n,\gam}(\gam)-\Y_\gam(\gam)|
		\le \frac12\sup_{\gam\in\Theta_\gam}|\mathcal Y_{n,\gam}(\gam)+2\Y_\gam(\gam)|
		+\sup_{\gam\in\Theta_\gam}|r_{n,\gam}(\gam)|+R_n,
		\]
		where
		\[
		|R_n|\le \frac{C}{n}\sum_{j=1}^n(1+|X_{t_{j-1}}|^C)|\eta_j|
		+\frac{C h_n}{n}\sum_{j=1}^n(1+|X_{t_{j-1}}|^C).
		\]
		Applying Lemma~\ref{lem:ergodic-rate-unif} with $f(x,i,\gam)=G_\gam(x,i,\gam)$ to the first term gives
		\[
		\sup_{n\in\mbbn}
		\E\!\left[\Bigl(\sqrt{T_n}\sup_{\gam\in\Theta_\gam}|\mathcal Y_{n,\gam}(\gam)+2\Y_\gam(\gam)|\Bigr)^{\!K}\right]<\infty.
		\]
		For the remainder $r_{n,\gam}$, the proof of Theorem~\ref{thm:full-consistency} already establishes that, for some $q>p_\gam$,
		\[
		\E\!\left[\sup_{\gam\in\Theta_\gam}|r_{n,\gam}(\gam)|^q\right]
		\le C\bigl(T_n^{-q/2}+h_n^{q/2}\bigr),
		\]
		and hence
		\[
		\E\!\left[\Bigl(\sqrt{T_n}\sup_{\gam\in\Theta_\gam}|r_{n,\gam}(\gam)|\Bigr)^{\!q}\right]
		\le C\bigl(1+(n h_n^2)^{q/2}\bigr)\le C.
		\]
		Finally, Proposition \ref{prop:hybrid-one-step} yields $\sup_{n\in\mbbn}\E[|\sqrt{T_n}\,R_n|^K]<\infty$. This establishes (G4).

		\medskip
		\noindent
		\textbf{Step 2: proof of \textup{(A1)}--\textup{(A4)}.}
		
		\emph{Verification of (A1).}
		Recall the decomposition $\Del_{n,\al}=M_{n,\al}+R_{n,\al}$ from the proof of Theorem~\ref{thm:joint-an}. The martingale-difference/BDG argument used for (G1), applied to $M_{n,\al}$ and $R_{n,\al}$, gives
		\[
		\sup_{n\in\mbbn}
		\E\!\left[\Bigl(\sup_{\gam\in\Theta_\gam}\bigl\|\sqrt{T_n}\,\p_\gam\p_\al\mathbb G_{2,n}(\al_0;\gam)\bigr\|\Bigr)^{\!K}\right]<\infty.
		\]
		Together with the plug-in rate \eqref{eq:plug-rate-ass}, this yields (A1).
		
		\smallskip
		\emph{Verification of (A2).}
		For $|\nu|\le 1$, direct differentiation of \eqref{eq:G2-stage} gives
		\[
		\p_\al^\nu\p_\al^3\mathbb G_{2,n}(\al;\gam)
		=\frac1{T_n}\sum_{j=1}^n A_\nu(X_{t_{j-1}},\Lam_{t_{j-1}},\al,\gam)\,\eta_j
		+\frac{h_n}{T_n}\sum_{j=1}^n B_\nu(X_{t_{j-1}},\Lam_{t_{j-1}},\al,\gam),
		\]
		where $A_\nu,B_\nu$ are continuous on $\Theta$ and satisfy
		$\sup_{|\nu|\le 1}\sup_{(\al,\gam)\in\Theta}(|A_\nu|+|B_\nu|)\le C(1+|x|^C)$.
		Repeating the BDG/Sobolev argument of (G2), with Proposition \ref{prop:hybrid-one-step}, yields
		\[
		\sup_{n\in\mbbn}
		\E\!\left[\Bigl(\sup_{\al\in\Theta_\al}\|\p_\al^3\mathbb G_{2,n}(\al;\bar\gam_n)\|\Bigr)^{\!K}\right]<\infty,
		\]
		which is (A2).
		
		\smallskip
		\emph{Verification of (A3).}
		Setting $H(x,i,\gam):=G_\al^{(2)}(x,i,\al_0,\gam)$ and $A(x,i,\gam):=\p_\al^2 b(x,i,\al_0)/c(x,i,\gam)^2$,
		\[
		\Gamma_{n,\al}(\al_0;\gam)
		=\frac1n\sum_{j=1}^n H(X_{t_{j-1}},\Lam_{t_{j-1}},\gam)
		-\frac1{T_n}\sum_{j=1}^n A(X_{t_{j-1}},\Lam_{t_{j-1}},\gam)\,\eta_j,
		\]
		so that
		\[
		\sqrt{T_n}\bigl(\Gamma_{n,\al}(\al_0;\bar\gam_n)-\Gamma_\al\bigr)
		=\sqrt{T_n}\bigl(\Gamma_{n,\al}(\al_0;\gam_0)-\Gamma_\al\bigr)+U_n+V_n,
		\]
		where
		$U_n$ is the plug-in difference of the empirical-average part, and $V_n$ is the plug-in difference of the
		$\eta_j$-part. The term $\sqrt{T_n}\bigl(\Gamma_{n,\al}(\al_0;\bar\gam_n)-\Gamma_\al\bigr)$ is controlled by
		Lemma \ref{lem:ergodic-rate-unif}.
		For $U_n$, the mean-value theorem yields a factor
		$|\sqrt{T_n}(\bar\gam_n-\gam_0)|$ times an empirical average of polynomial-growth functions of
		$(X_{t_{j-1}},\Lam_{t_{j-1}})$, so the required moment bound is given by \eqref{eq:plug-rate-ass}. For $V_n$, we separate it into a martingale difference part and a predictable remainder. Then, by BDG inequality, Proposition \ref{prop:hybrid-one-step} and Sobolev inequality, the required moment bound holds.
		Combining these bounds with \eqref{eq:plug-rate-ass}, we obtain
		\[
		\sup_{n\in\mbbn}
		\E\!\left[\bigl\|\sqrt{T_n}\bigl(\Gamma_{n,\al}(\al_0;\bar\gam_n)-\Gamma_\al\bigr)\bigr\|^K\right]<\infty,
		\]
		which is (A3).
		
		\smallskip
		\emph{Verification of (A4).}
		By Theorem~\ref{thm:full-consistency},
		\[
		\Y_{n,\al}(\al;\gam_0)-\Y_\al(\al)
		=-\tfrac12\bigl\{\mathcal Y_{n,\al}(\al)+2\Y_\al(\al)\bigr\}+r_{n,\al}(\al,\gam_0).
		\]
		The leading bracket has uniformly bounded $K$-th moments after multiplication by $\sqrt{T_n}$ by Lemma~\ref{lem:ergodic-rate-unif}, and the proof of Theorem~\ref{thm:full-consistency} shows that, for some $q>p_\al\vee K$,
		\[
		\E\!\left[\sup_{\theta\in\Theta}|r_{n,\al}(\al,\gam)|^q\right]\le C(T_n^{-q/2}+h_n^{q/2}),
		\]
		so that $\sup_n\E[\sup_{\al}|\sqrt{T_n}\,r_{n,\al}(\al,\gam_0)|^K]<\infty$.
		For the plug-in difference,
		\[
		\Y_{n,\al}(\al;\bar\gam_n)-\Y_{n,\al}(\al;\gam_0)
		=-\tfrac12\bigl\{\mathcal Y_{n,\al}(\al;\bar\gam_n)-\mathcal Y_{n,\al}(\al)\bigr\}
		+\bigl\{r_{n,\al}(\al,\bar\gam_n)-r_{n,\al}(\al,\gam_0)\bigr\},
		\]
		where the first bracket is dominated by $|\bar\gam_n-\gam_0|\cdot\tfrac{C}{n}\sum_{j=1}^n(1+|X_{t_{j-1}}|^C)$ as in Theorem~\ref{thm:full-consistency}, and the second is handled by the same single-remainder argument. Combined with \eqref{eq:plug-rate-ass}, this controls $\sqrt{T_n}\sup_\al|\Y_{n,\al}(\al;\bar\gam_n)-\Y_{n,\al}(\al;\gam_0)|$ in $L^K$, and the triangle inequality yields
		\[
		\sup_{n\in\mbbn}
		\E\!\left[\sup_{\al\in\Theta_\al}\bigl|\sqrt{T_n}\bigl\{\Y_{n,\al}(\al;\bar\gam_n)-\Y_\al(\al)\bigr\}\bigr|^K\right]<\infty,
		\]
		which is (A4).
		
		\medskip
		\noindent
		\textbf{Step 3: proof of \textup{(Q1)}--\textup{(Q4)}.}
		
		\emph{Verification of (Q1).}
		Set $D_{ik}(x,\vartheta):=\p_\vartheta\log q_{ik}(x,\vartheta)$ and $D_{ik}^{(r)}(x,\vartheta):=\p_\vartheta^r\log q_{ik}(x,\vartheta)$. The proof of Theorem~\ref{thm:joint-an} gives
		\[
		\Del_{n,Q}
		=\frac1{\sqrt{T_n}}\sum_{j=1}^n\sum_{i=1}^m\sum_{k\ne i}
		D_{ik}(X_{t_{j-1}},\vartheta_0)\,\chi_j^{ik}+o_p(1),
		\]
		with the leading term a martingale sum. By the polynomial-growth bounds on $D_{ik}$, the boundedness of the switching intensities, and the Burkholder--Davis--Gundy inequality,
		\[
		\sup_{n\in\mbbn}\E[|\Del_{n,Q}|^K]<\infty,
		\]
		which is (Q1).
		
		\smallskip
		\emph{Verification of (Q2).}
		For $r=3$, direct differentiation of $\mathbb G_{3,n}$ yields
		\[
		\p_\vartheta^r\mathbb G_{3,n}(\vartheta)
		=\frac1{T_n}\sum_{j=1}^n\sum_{i=1}^m\sum_{k\ne i}\Del_j N_{ik}^{\mathrm o}\,D_{ik}^{(r)}(X_{t_{j-1}},\vartheta)
		-\frac1{T_n}\sum_{j=1}^n h_n\sum_{i=1}^m\mathbf 1_{\{\Lam_{t_{j-1}}=i\}}\p_\vartheta^r q_i(X_{t_{j-1}},\vartheta),
		\]
		where $D_{ik}^{(r)}$ has polynomial growth uniformly in $\vartheta\in\Theta_\vartheta$. Combining Proposition \ref{prop:hybrid-switching}, the BDG inequality, the sampled ergodic moment bound and applying the Sobolev embedding argument used in (G2) yields
		\[
		\sup_{n\in\mbbn}
		\E\!\left[\Bigl(\sup_{\vartheta\in\Theta_\vartheta}\|\p_\vartheta^3\mathbb G_{3,n}(\vartheta)\|\Bigr)^{\!K}\right]<\infty,
		\]
		which is (Q2).
		
		\smallskip
		\emph{Verification of (Q3).}
		The identity
		$-q_{ik}\p_\vartheta^2\log q_{ik}+\p_\vartheta^2 q_{ik}=q_{ik}(\p_\vartheta\log q_{ik})^{\otimes 2}$ gives the decomposition
		\[
		\Gamma_{n,Q}-\Gamma_Q
		=\frac1{T_n}\sum_{j=1}^n h_n\bigl\{H_Q(X_{t_{j-1}},\Lam_{t_{j-1}})-\pi_0(H_Q)\bigr\}+M_{n,Q}+R_{n,Q},
		\]
		where $H_Q(x,i):=\sum_{k\ne i}q_{ik}(x,\vartheta_0)D_{ik}(x,\vartheta_0)^{\otimes 2}$, $M_{n,Q}$ is a martingale-difference sum, and $R_{n,Q}$ is the endpoint-count remainder. Lemma~\ref{lem:ergodic-rate-unif} controls the first term at the $T_n^{-1/2}$ scale, the BDG inequality controls $\sqrt{T_n}\,M_{n,Q}$, and Proposition \ref{prop:hybrid-switching} gives $\sqrt{T_n}\,R_{n,Q}=O_p(\sqrt{n h_n^2})=o_p(1)$ with bounded polynomial moments. Hence
		\[
		\sup_{n\in\mbbn}\E\bigl[\|\sqrt{T_n}(\Gamma_{n,Q}-\Gamma_Q)\|^K\bigr]<\infty,
		\]
		which is (Q3).
		
		\smallskip
		\emph{Verification of (Q4).}
		Set $L_{ik}(x,\vartheta):=\log\{q_{ik}(x,\vartheta)/q_{ik}(x,\vartheta_0)\}$. Then
		\[
		\mathbb Y_{n,Q}(\vartheta)-\mathbb Y_Q(\vartheta)
		=\frac1{T_n}\sum_{j=1}^n\sum_{i=1}^m\sum_{k\ne i}\chi_j^{ik}L_{ik}(X_{t_{j-1}},\vartheta)
		+\frac1{T_n}\sum_{j=1}^n h_n\bigl\{F_Q(X_{t_{j-1}},\Lam_{t_{j-1}},\vartheta)-\mathbb Y_Q(\vartheta)\bigr\}
		+R_{n,Q}(\vartheta),
		\]
		with $R_{n,Q}(\vartheta)$ the endpoint remainder. Lemma~\ref{lem:ergodic-rate-unif} controls the second term uniformly in $\vartheta$. The Sobolev embedding on $\Theta_\vartheta$, together with the BDG inequality and the polynomial-growth bounds on $\p_\vartheta^\ell L_{ik}$ for $\ell\le 1$, gives
		\[
		\sup_{n\in\mbbn}
		\E\!\left[\sup_{\vartheta\in\Theta_\vartheta}\left|\frac1{\sqrt{T_n}}\sum_{j=1}^n\sum_{i=1}^m\sum_{k\ne i}\chi_j^{ik}L_{ik}(X_{t_{j-1}},\vartheta)\right|^K\right]<\infty,
		\]
		and Proposition \ref{prop:hybrid-switching} bounds the endpoint remainder. Therefore
		\[
		\sup_{n\in\mbbn}
		\E\!\left[\sup_{\vartheta\in\Theta_\vartheta}\bigl|\sqrt{T_n}\bigl\{\mathbb Y_{n,Q}(\vartheta)-\mathbb Y_Q(\vartheta)\bigr\}\bigr|^K\right]<\infty,
		\]
		which is (Q4).
	\end{proof}

	By Proposition~\ref{prop:pldi-verify}, the local random fields $\mathbb Z_{n,\gam}(v)$ and $\mathbb Z_{n,Q}(w)$ satisfy the hypotheses of \cite[Theorem~3(c)]{Yos11}, so that, for every $L>0$,
	\[
	\sup_{n\in\mbbn}\Prob\!\left(\sup_{v\in\mathbb U_{n,\gam}(r)}\mathbb Z_{n,\gam}(v)\ge e^{-r^2/C_L}\right)\le \frac{C_L}{r^L},
	\qquad r>0,
	\]
	and similarly for $\mathbb Z_{n,Q}$; see \cite[Section~4]{Mas13as}.
	
	\smallskip
	\noindent\textit{Verification of \eqref{eq:plug-rate-ass} for $\bar\gam_n=\hat\gam_n$.}
	Let $\hat v_n:=\sqrt{T_n}(\hat\gam_n-\gam_0)$. Since $\hat\gam_n$ maximizes $\mathbb G_{1,n}$, we have $\mathbb Z_{n,\gam}(\hat v_n)\ge 1$, and hence on the event $\{|\hat v_n|>r\}$,
	\[
	\hat v_n\in\mathbb U_{n,\gam}(r)
	\quad\text{and}\quad
	\sup_{v\in\mathbb U_{n,\gam}(r)}\mathbb Z_{n,\gam}(v)\ge \mathbb Z_{n,\gam}(\hat v_n)\ge 1\ge e^{-r^2/C_L}.
	\]
	The PLDI above therefore yields
	\[
	\sup_{n\in\mbbn}\Prob(|\hat v_n|>r)\le \frac{C_L}{r^L}, \qquad r>0,
	\]
	and the tail-integral formula gives
	\[
	\sup_{n\in\mbbn}\E\bigl[|\sqrt{T_n}(\hat\gam_n-\gam_0)|^K\bigr]<\infty
	\qquad\text{for every } K>0,
	\]
	which is \eqref{eq:plug-rate-ass}.
	
	\smallskip
	\noindent\textit{Joint tail bound.}
	With \eqref{eq:plug-rate-ass} verified, Proposition~\ref{prop:pldi-verify} delivers the polynomial tail bound for $\mathbb Z_{n,\al}(u;\hat\gam_n)$, and the argument above yields
	$\sup_{n\in\mbbn}\Prob(\sqrt{T_n}|\hat\al_n-\al_0|>r)\le C/r^L$. Combining the three component bounds,
	\[
	\sup_{n\in\mbbn}\Prob\bigl(\sqrt{T_n}|\hat\zeta_n-\zeta_0|>r\bigr)\le \frac{C_L'}{r^L},
	\qquad r>0,
	\]
	which is Theorem~\ref{thm:pldi}.
	
	\smallskip
	\noindent\textit{Proof of Corollary~\ref{cor:moments}.}
	Set $Y_n:=\sqrt{T_n}(\hat\zeta_n-\zeta_0)$. By Theorem~\ref{thm:pldi} and the tail-integral formula,
	\[
	\sup_{n\in\mbbn}\E[|Y_n|^q]
	=\sup_{n\in\mbbn}\int_0^\infty q r^{q-1}\Prob(|Y_n|>r)\,dr<\infty
	\qquad\text{for every } q>0.
	\]
	Let $f:\R^p\to\R$ be continuous with $|f(u)|\le C(1+|u|^m)$ for some $C,m>0$. Theorem~\ref{thm:joint-an} gives $Y_n\cil Y$, and the continuous mapping theorem then yields $f(Y_n)\cil f(Y)$. The moment bound above, applied with any $q>m$, implies that $\{f(Y_n)\}_{n\in\mbbn}$ is uniformly integrable, and weak convergence combined with uniform integrability gives
	\[
	\lim_{n\to\infty}\E[f(Y_n)]=\E[f(Y)],
	\]
	which is Corollary~\ref{cor:moments}.

	\section{Appendix}
	\label{sec:tech}
	
	This appendix collects the technical lemmas used in the proofs of the main results.
	
	We begin with an analogue of \cite[Lemma~4.3]{Mas13as}, established by the same Sobolev-embedding and exponential-ergodicity argument as in that reference; see also \cite{Yos11}.
	
	\begin{lem}
		\label{lem:ergodic-rate-unif}
		Suppose Assumptions \ref{ass:Levy}--\ref{ass:ergodic} hold. Let $\Xi\subset\R^d$ be compact and let
		$f:\R\times\mbbs\times\Xi\to\R$ be continuously differentiable in the parameter variable with
		\[
		\sup_{\vartheta\in\Xi}\sum_{|\beta|\le 1}
		\abs{\p_\vartheta^\beta f(x,i,\vartheta)}
		\le C(1+\abs{x}^C),
		\qquad (x,i)\in\R\times\mbbs.
		\]
		Then, for every $K>0$,
		\[
		\sup_{n\in\mbbn}
		\E\left[
		\left(
		\sup_{\vartheta\in\Xi}
		\left|
		\frac1{\sqrt{T_n}}
		\sum_{j=1}^n h_n
		\Bigl(
		f(X_{t_{j-1}},\Lam_{t_{j-1}},\vartheta)
		-
		\int f(x,i,\vartheta)\,\pi_0(dx,di)
		\Bigr)
		\right|
		\right)^K
		\right]
		<\infty.
		\]
	\end{lem}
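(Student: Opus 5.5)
We reduce the uniform-in-$\vartheta$ statement to pointwise moment bounds by the Sobolev embedding used throughout Section~\ref{sec:thmproof}. Set $\bar f(x,i,\vartheta):=f(x,i,\vartheta)-\pi_0(f(\cdot,\vartheta))$ and
\[
S_n(\vartheta):=\frac1{\sqrt{T_n}}\sum_{j=1}^n h_n\,\bar f(X_{t_{j-1}},\Lam_{t_{j-1}},\vartheta).
\]
Since $f$ is $C^1$ in $\vartheta$ with polynomially dominated derivative, dominated convergence (using Assumption~\ref{ass:ergodic}(E2)) gives $\p_\vartheta^\beta S_n(\vartheta)$ of the same form with $f$ replaced by $\p_\vartheta^\beta f$, $|\beta|\le1$. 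Choosing $q>d\vee K$, the embedding $W^{1,q}(\Xi)\hookrightarrow C(\Xi)$ (after enlarging $\Xi$ to a bounded domain with regular boundary if needed) yields $\E[\sup_\vartheta|S_n|^q]\le C\sum_{|\beta|\le1}\int_\Xi\E|\p^\beta_\vartheta S_n(\vartheta)|^q\,d\vartheta$, and Jensen reduces the claim to showing $\sup_n\sup_\vartheta\E|\sqrt{T_n}^{-1}\sum_j h_n g(X_{t_{j-1}},\Lam_{t_{j-1}})|^q<\infty$ for every centered $g$ with $|g|\le C(1+|x|^C)$ (uniformly over the family).

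For the pointwise bound I would use a Poisson-equation/martingale decomposition built from the exponential ergodicity (E1). Define $u(z):=\int_0^\infty P_tg(z)\,dt$; by (E1) with $V_C$, $|P_tg(z)|\le C e^{-at}V_C(z)$, so $u$ is well defined with $|u|\le CV_C$. Rather than working with the continuous generator, I would work with the skeleton: put $u_n(z):=\sum_{k\ge0}h_nP_{kh_n}g(z)$, which satisfies $|u_n(z)|\le C\sum_k h_ne^{-akh_n}V_C(z)\le CV_C(z)$ uniformly in $n$ (as $h_n\le1$), and the discrete Poisson identity $u_n-P_{h_n}u_n=h_ng$. Then
\[
\sum_{j=1}^n h_ng(Z_{t_{j-1}})=\sum_{j=1}^n\bigl\{u_n(Z_{t_j})-P_{h_n}u_n(Z_{t_{j-1}})\bigr\}+u_n(Z_{t_0})-u_n(Z_{t_n}),
\]
with $Z=(X,\Lam)$. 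The boundary terms have $q$-th moments bounded by $C\,\pi_0(V_C^q)<\infty$ by stationarity and (E2), hence are negligible after dividing by $\sqrt{T_n}$.

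The main obstacle is the martingale part: BDG gives $\E|\sum_j\delta_j|^q\le C\,\E(\sum_j\E_{j-1}\delta_j^2)^{q/2}+C\sum_j\E|\delta_j|^q$ with $\delta_j:=u_n(Z_{t_j})-P_{h_n}u_n(Z_{t_{j-1}})$, and the crude bound $|\delta_j|\le CV_C$ only gives order $n^{q/2}$, not $T_n^{q/2}$. So one needs $\E_{j-1}\delta_j^2\le Ch_n(1+|X_{t_{j-1}}|^C)$, i.e. a short-time regularity of $u_n$: $\E_{j-1}|u_n(Z_{t_j})-u_n(Z_{t_{j-1}})|^2\lesssim h_n$. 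I would obtain this by splitting $u_n=h_ng+\sum_{k\ge1}h_nP_{kh_n}g$ and writing the tail as $P_{h_n}$ applied to an $h_n$-uniformly bounded function, then using that $t\mapsto P_t\phi$ has increments controlled by Lemma~\ref{lem:short-time-moment}: regime changes cost probability $O(h_n)$ by (i), and for the $x$-increments I would use a coupling/Lipschitz-in-$x$ estimate of $P_t g$ (from (C1) via synchronous coupling over unit time, then ergodic decay). Alternatively, and more simply, I would follow the route of \cite[Lemma~4.3]{Mas13as}: block the sum into $\lfloor T_n\rfloor$ unit-time blocks, use stationarity and the $\beta$-mixing bound implied by (E1) (geometric mixing coefficients), and apply a Rosenthal-type moment inequality for geometrically mixing stationary sequences to the block sums, each of which has all moments bounded; this yields $\E|\sum_{\text{blocks}}|^q\le CT_n^{q/2}$ directly, avoiding regularity of $u_n$. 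I expect verifying the mixing-based moment inequality uniformly in $n$ (the block variables depend on $n$ through $h_n$) to be the delicate point, handled because the bounds involve only $\sup_n$ of moments of block sums, which are controlled by $\pi_0(V_C^q)$.
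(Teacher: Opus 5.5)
Your second route is essentially the paper's proof. The paper also derives exponential $\beta$-mixing of the stationary skeleton $(X_{t_j},\Lam_{t_j})$ from (E1), applies a Rosenthal-type inequality for exponentially mixing sequences (citing Yoshida's Lemma~4) to bound $\sup_n\sup_\vartheta$ of the moments of $\p_\vartheta^\beta S_n(\vartheta)$ for $|\beta|\le 1$, and concludes with the Sobolev embedding. Your unit-time blocking is a harmless refinement: it makes the mixing coefficients explicitly independent of $n$, whereas the paper states $\beta(k)\le Ce^{-akh_n}$ and applies the inequality directly.

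Drop the Poisson-equation route. Its key step, the pointwise bound $\E_{j-1}\delta_j^2\lesssim h_n(1+|X_{t_{j-1}}|^C)$ needed for moments $q>2$, would require regularity of $f$ in $x$ (for the Lipschitz/coupling estimate on $P_tg$), and the lemma does not assume any.
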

	
	\begin{proof}
		Under Assumption \ref{ass:ergodic}, the skeleton chain
		$Y_j:=(X_{t_j},\Lam_{t_j})$ is strictly stationary and exponentially $\beta$-mixing. More precisely,
		its $\beta$-mixing coefficient satisfies
		\[
		\beta_Y(k)\le C e^{-a k h_n},
		\qquad k\in\mbbn,
		\]
		for some constants $C,a>0$ independent of $n$.
		Fix $|\beta|\le1$ and $\vartheta\in\Xi$, and set
		\[
		\bar f_{j-1}^{(\beta)}(\vartheta)
		:=
		\p_\vartheta^\beta f(X_{t_{j-1}},\Lam_{t_{j-1}},\vartheta)
		-
		\int \p_\vartheta^\beta f(x,i,\vartheta)\,\pi_0(dx,di).
		\]
		By Assumption \ref{ass:ergodic},
		\[
		\sup_{\vartheta\in\Xi}
		\int \abs{\p_\vartheta^\beta f(x,i,\vartheta)}^m\,\pi_0(dx,di)<\infty
		\qquad (m>0).
		\]
		Hence the Rosenthal inequality for exponentially $\beta$-mixing sequences (See \cite{Yos11}[Lemma 4]), applied to
		$\{h_n\bar f_{j-1}^{(\beta)}(\vartheta)\}_{j\ge1}$, yields for every $K>0$,
		\[
		\sup_{n\in\mbbn}\sup_{\vartheta\in\Xi}
		\E\left[
		\left|
		\frac1{\sqrt{T_n}}
		\sum_{j=1}^n h_n\bar f_{j-1}^{(\beta)}(\vartheta)
		\right|^K
		\right]
		\le C_K.
		\]
		Now choose $q>d$. By Sobolev embedding $W^{1,q}(\Xi)\hookrightarrow C(\Xi)$ and the preceding
		moment bounds for derivatives up to order one,
		\begin{align*}
			&\E\left[
			\left(
			\sup_{\vartheta\in\Xi}
			\left|
			\frac1{\sqrt{T_n}}
			\sum_{j=1}^n h_n
			\Bigl(
			f(X_{t_{j-1}},\Lam_{t_{j-1}},\vartheta)
			-
			\int f(x,i,\vartheta)\,\pi_0(dx,di)
			\Bigr)
			\right|
			\right)^K
			\right]
			\\
			&\le
			C
			\sum_{|\beta|\le1}
			\int_\Xi
			\E\left[
			\left|
			\frac1{\sqrt{T_n}}
			\sum_{j=1}^n h_n\bar f_{j-1}^{(\beta)}(\vartheta)
			\right|^K
			\right]d\vartheta
			\le C_K.
		\end{align*}
		This proves the lemma.
	\end{proof}
	
	\subsection{Auxiliary Lemmas for Propositions in Section \ref{sec:estimation}}
	
	We split the proofs of Propositions~\ref{prop:hybrid-one-step} and~\ref{prop:hybrid-switching} into a sequence of lemmas. Recall the increment decomposition
	\[
	\Del_j X=h_n b_{j-1}(\al_0)+c_{j-1}(\gam_0)\,\Del_j L+r_{j,n}.
	\]

	\begin{lem}
		\label{lem:increment}
		Suppose Assumptions \ref{ass:Levy}--\ref{ass:ergodic} hold.
		Then, for every $q>0$,
		\[
		\E\left[\abs{r_{j,n}}^q\mid\mathcal{F}_{t_{j-1}}\right]
		\le C_q h_n^{q\wedge2}\bigl(1+\abs{X_{t_{j-1}}}^{C_q}\bigr)
		\]
		uniformly in $j,n$.
	\end{lem}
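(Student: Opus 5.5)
The plan is to write the remainder explicitly and bound its two pieces separately. By \eqref{eq:model},
\[
r_{j,n}
=\int_{t_{j-1}}^{t_j}\bigl\{b(X_s,\Lam_s,\al_0)-b_{j-1}(\al_0)\bigr\}\,ds
+\int_{t_{j-1}}^{t_j}\bigl\{c(X_{s-},\Lam_{s-},\gam_0)-c_{j-1}(\gam_0)\bigr\}\,dL_s
=:r^{(1)}_{j,n}+r^{(2)}_{j,n}.
\]
It suffices to prove the bound for $q\ge2$. For $q\in(0,2)$ the conditional Jensen inequality applied with exponent $2$ then gives $\E_{j-1}|r_{j,n}|^q\le (Ch_n^2(1+|X_{t_{j-1}}|^C))^{q/2}\le Ch_n^{q}(1+|X_{t_{j-1}}|^{C})$, which is the claim since $q\wedge2=q$ there.

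For each coefficient $g\in\{b(\cdot,\cdot,\al_0),c(\cdot,\cdot,\gam_0)\}$, split the difference by adding and subtracting the value at the current regime:
\[
g(X_s,\Lam_s)-g(X_{t_{j-1}},\Lam_{t_{j-1}})
=\{g(X_s,\Lam_s)-g(X_{t_{j-1}},\Lam_s)\}+\{g(X_{t_{j-1}},\Lam_s)-g(X_{t_{j-1}},\Lam_{t_{j-1}})\}.
\]
By (C1), the first bracket is at most $C|X_s-X_{t_{j-1}}|$. The second bracket is at most $C(1+|X_{t_{j-1}}|)\mathbf 1_{\{\Lam_s\ne\Lam_{t_{j-1}}\}}$. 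Lemma~\ref{lem:short-time-moment}(ii) gives, for $q\ge2$, the bound $\E_{j-1}|X_s-X_{t_{j-1}}|^q\le Ch_n(1+|X_{t_{j-1}}|^C)$. Lemma~\ref{lem:short-time-moment}(i) gives $\Prob(\Lam_s\ne\Lam_{t_{j-1}}\mid\mathcal F_{t_{j-1}})\le Ch_n(1+|X_{t_{j-1}}|^C)$. Together these show that $H_s:=c(X_{s-},\Lam_{s-},\gam_0)-c_{j-1}(\gam_0)$ satisfies
\[
\sup_{s\in[t_{j-1},t_j]}\E_{j-1}|H_s|^q\le C_qh_n(1+|X_{t_{j-1}}|^{C_q}),
\]
and the same holds for the drift difference.

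The drift piece then follows from Jensen's inequality in time:
\[
\E_{j-1}|r^{(1)}_{j,n}|^q\le h_n^{q-1}\int_{t_{j-1}}^{t_j}\E_{j-1}|b(X_s,\Lam_s,\al_0)-b_{j-1}(\al_0)|^q\,ds\le Ch_n^{q+1}(1+|X_{t_{j-1}}|^C)\le Ch_n^2(1+|X_{t_{j-1}}|^C).
\]

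The stochastic-integral piece is the main step. Since $L$ is a pure-jump martingale with all moments finite (Assumption~\ref{ass:Levy}), I will use the Kunita/BDG-type inequality for integrals against compensated Poisson random measures. For $q\ge2$ and predictable $H$,
\[
\E_{j-1}\Bigl|\int_{t_{j-1}}^{t_j}H_s\,dL_s\Bigr|^q
\le C_q\Bigl\{\E_{j-1}\Bigl(\int_{t_{j-1}}^{t_j}H_s^2\,ds\Bigr)^{q/2}+\E_{j-1}\int_{t_{j-1}}^{t_j}|H_s|^q\,ds\Bigr\},
\]
where the constants absorb $\int z^2\nu(dz)$ and $\int|z|^q\nu(dz)$. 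Applying Jensen in time to the first term bounds it by $h_n^{q/2-1}\int\E_{j-1}|H_s|^q\,ds\le Ch_n^{q/2+1}(1+|X_{t_{j-1}}|^C)$. The second term is at most $Ch_n^2(1+|X_{t_{j-1}}|^C)$, and this is the dominant order. The key point is that the $h_n^2$ order does not come from the size of $H$: it comes from the fact that $H$ is nonzero only on events of conditional probability $O(h_n)$ (a regime switch) or is controlled by an increment whose $q$-th moment is $O(h_n)$ (a jump of $X$), integrated over a time window of length $h_n$.

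When products of indicators and polynomial factors appear, I will handle them by Hölder's inequality together with Lemma~\ref{lem:short-time-moment}(iii). This keeps every weight of the form $1+|X_{t_{j-1}}|^{C_q}$ and keeps all constants uniform in $j$ and $n$.
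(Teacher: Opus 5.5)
Your proof is correct and follows the paper's structure: you split $r_{j,n}$ into the drift and stochastic-integral remainders, apply the BDG/Kunita--Rosenthal inequality for $q\ge2$, integrate an $O(h_n)$ bound on the $q$-th conditional moment of the coefficient increment over a window of length $h_n$, and pass to $q<2$ by Lyapunov/Jensen. The one difference is how you get $\E_{j-1}|c(X_{s-},\Lam_{s-},\gam_0)-c_{j-1}(\gam_0)|^q\le Ch_n(1+|X_{t_{j-1}}|^C)$: you use the pathwise split with the regime-change term evaluated at the $\mathcal F_{t_{j-1}}$-measurable point $X_{t_{j-1}}$, together with Lemma~\ref{lem:short-time-moment}(i)--(ii), whereas the paper applies Dynkin's formula with localization to $|c(\cdot,\cdot,\gam_0)-c(x_0,i_0,\gam_0)|^r$. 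Your route is slightly more elementary, and freezing at $X_{t_{j-1}}$ is what makes it work: evaluating at $X_s$ and then applying Cauchy--Schwarz to the indicator would lose a factor $h_n^{1/2}$.
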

	
	\begin{proof}
		Set $A_j:=\int_{t_{j-1}}^{t_j}\{b(X_s,\Lam_s,\al_0)-b_{j-1}(\al_0)\}\,ds$ and $B_j:=\int_{t_{j-1}}^{t_j}\{c(X_{s-},\Lam_{s-},\gam_0)-c_{j-1}(\gam_0)\}\,dL_s$, so that $r_{j,n}=A_j+B_j$.
		
		\emph{Bound on $A_j$.} For $q\ge 1$, Hölder's inequality combined with the pathwise bound
		\[
		|b(X_s,\Lam_s,\al_0)-b_{j-1}(\al_0)|
		\le C|X_s-X_{t_{j-1}}|+C(1+|X_s|)\mathbf 1_{\{\Lam_s\ne\Lam_{t_{j-1}}\}}
		\]
		from Assumption~\ref{ass:coeff}, together with Lemma~\ref{lem:short-time-moment} (applying the Cauchy--Schwarz inequality on the indicator term), gives
		\[
		\E[|A_j|^q\mid\mathcal F_{t_{j-1}}]\le C_q h_n^q\bigl(1+|X_{t_{j-1}}|^{C_q}\bigr).
		\]
		The conditional Lyapunov inequality extends this to $0<q<1$.
		
		\emph{Bound on $B_j$.} Set $\Delta c_s:=c(X_{s-},\Lam_{s-},\gam_0)-c_{j-1}(\gam_0)$. The Burkholder--Davis--Gundy and Rosenthal inequalities yield, for $q\ge 2$,
		\begin{equation}\label{eq:Bj-BDG}
			\E[|B_j|^q\mid\mathcal F_{t_{j-1}}]
			\le C_q\!\left\{
			\E\!\left[\!\left(\int_{t_{j-1}}^{t_j}|\Delta c_s|^2\,ds\right)^{\!q/2}\!\Big|\mathcal F_{t_{j-1}}\right]
			+\E\!\left[\int_{t_{j-1}}^{t_j}|\Delta c_s|^q\,ds\,\Big|\mathcal F_{t_{j-1}}\right]\!
			\right\}.
		\end{equation}
		For $r\ge 2$ and $(x_0,i_0)\in\R\times\mbbs$, set $\Phi_r^{x_0,i_0}(x,i):=|c(x,i,\gam_0)-c(x_0,i_0,\gam_0)|^r$. Assumptions~\ref{ass:Levy} and~\ref{ass:coeff} yield the generator bound
		\[
		|\mathcal A_{\theta_0}\Phi_r^{x_0,i_0}(x,i)|\le C_r\bigl(1+|x|^{C_r}+|x_0|^{C_r}\bigr).
		\]
		Taking $(x_0,i_0)=(X_{t_{j-1}},\Lam_{t_{j-1}})$ and applying Dynkin's formula for hybrid-switching jump processes (\cite{ZhuYinBaran2017,yin2009hybrid}) along $X_{\cdot\wedge\tau_m}$ with $\tau_m:=\inf\{u\ge t_{j-1}:|X_u|\ge m\}$, then letting $m\to\infty$ via Fatou's lemma, gives
		\[
		\E\bigl[|\Delta c_s|^r\mid\mathcal F_{t_{j-1}}\bigr]
		\le C_r(s-t_{j-1})\bigl(1+|X_{t_{j-1}}|^{C_r}\bigr),\qquad s\in[t_{j-1},t_j].
		\]
		Integrating this bound and applying Jensen's inequality to the first term of \eqref{eq:Bj-BDG} gives $\E[|B_j|^q\mid\mathcal F_{t_{j-1}}]\le C_q h_n^2(1+|X_{t_{j-1}}|^{C_q})$ for $q\ge 2$; the conditional Lyapunov inequality extends this to $C_q h_n^q(1+|X_{t_{j-1}}|^{C_q})$ for $0<q<2$.
		
		Since $h_n\in(0,1]$, combining the two bounds yields
		\[
		\E[|r_{j,n}|^q\mid\mathcal F_{t_{j-1}}]\le C_q h_n^{q\wedge 2}\bigl(1+|X_{t_{j-1}}|^{C_q}\bigr).\qedhere
		\]
	\end{proof}

	\begin{lem}
		\label{lem:centered-increment}
		Under Assumptions \ref{ass:Levy}--\ref{ass:ergodic}, there exist remainders
		$\mathcal R_{j,n}^{(1)},\mathcal R_{j,n}^{(2)},\mathcal R_{j,n}^{(3)}$ and constants
		$C>0$, $q_0>0$ such that
		\[
		\E[\eta_j\mid\mathcal F_{t_{j-1}}]=\mathcal R_{j,n}^{(1)},
		\qquad
		\abs{\mathcal R_{j,n}^{(1)}}
		\le C h_n^2\bigl(1+\abs{X_{t_{j-1}}}^{q_0}\bigr),
		\]
		\[
		\E[\eta_j^2\mid\mathcal F_{t_{j-1}}]=h_n c_{j-1}(\gam_0)^2+\mathcal R_{j,n}^{(2)},
		\qquad
		\abs{\mathcal R_{j,n}^{(2)}}
		\le C h_n^{3/2}\bigl(1+\abs{X_{t_{j-1}}}^{q_0}\bigr),
		\]
		\[
		\E[\eta_j^3\mid\mathcal F_{t_{j-1}}]=h_n m_3 c_{j-1}(\gam_0)^3+\mathcal R_{j,n}^{(3)},
		\qquad
		\abs{\mathcal R_{j,n}^{(3)}}
		\le C h_n^{3/2}\bigl(1+\abs{X_{t_{j-1}}}^{q_0}\bigr).
		\]
		Moreover, for every $q>0$, there exists $C_q>0$ such that
		\[
		\E\left[\abs{\eta_j}^q\mid\mathcal F_{t_{j-1}}\right]
		\le C_q h_n^{q/2\wedge1}\bigl(1+\abs{X_{t_{j-1}}}^{C_q}\bigr).
		\]
	\end{lem}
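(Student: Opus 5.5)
We write $\eta_j=c_{j-1}\Del_j L+r_{j,n}$ with $c_{j-1}:=c_{j-1}(\gam_0)$. Each conditional moment of $\eta_j$ is then expanded binomially into a pure L\'evy part and cross terms involving $r_{j,n}$. The pure L\'evy parts are exact. Since $\Del_j L$ is independent of $\mathcal F_{t_{j-1}}$, Assumption~\ref{ass:Levy} gives
\[
\E[\Del_j L]=0,\qquad \E[(\Del_j L)^2]=h_n,\qquad \E[(\Del_j L)^3]=h_n\kappa_3,\qquad \E|\Del_j L|^q\le C_q h_n^{(q/2)\wedge1}.
\]
The last bound is the standard small-time moment estimate for a pure-jump L\'evy process with all moments finite. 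The identification $m_3=\kappa_3=\E[L_1^3]$ holds because the third cumulant of $L_1$ equals its third central moment when $\E[L_1]=0$.

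\emph{First moment.} We have $\E_{j-1}[\eta_j]=\E_{j-1}[A_j]$, where $A_j,B_j$ are as in the proof of Lemma~\ref{lem:increment}. Indeed, $B_j$ is a stochastic integral against the martingale $L$ with a predictable, polynomially bounded integrand, so $\E_{j-1}[B_j]=0$. To get the $h_n^2$ bound we apply Dynkin's formula for the hybrid generator \eqref{eq:generator} to $b(\cdot,\cdot,\al_0)$, localized as in Lemma~\ref{lem:increment}:
\[
\E_{j-1}[b(X_s,\Lam_s,\al_0)]-b_{j-1}(\al_0)=\int_{t_{j-1}}^s\E_{j-1}[\mathcal A_0 b(X_u,\Lam_u)]\,du .
\]
Assumption~\ref{ass:coeff}(C2) gives $|\mathcal A_0 b(x,i)|\le C(1+|x|^C)$, using bounded switching rates, $\partial_x^2 b$ of polynomial growth, and all moments of $\nu$ finite. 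Lemma~\ref{lem:short-time-moment}(iii) then yields $|\E_{j-1}[A_j]|\le Ch_n^2(1+|X_{t_{j-1}}|^{q_0})$. This Dynkin step is where the regime switching enters: the switching term $\sum_k q_{ik}\{b(x,k)-b(x,i)\}$ is of order $O(1)$, not small, but it is integrated over $[t_{j-1},s]$ and therefore still only contributes $O(h_n^2)$ to $\E_{j-1}[A_j]$.

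\emph{Second and third moments.} We write
\begin{align*}
\eta_j^2&=c_{j-1}^2(\Del_j L)^2+2c_{j-1}\Del_j L\,r_{j,n}+r_{j,n}^2,\\
\eta_j^3&=c_{j-1}^3(\Del_j L)^3+3c_{j-1}^2(\Del_j L)^2r_{j,n}+3c_{j-1}\Del_j L\,r_{j,n}^2+r_{j,n}^3 .
\end{align*}
The leading terms give $h_nc_{j-1}^2$ and $h_n\kappa_3c_{j-1}^3$ exactly. The cross terms are bounded by H\"older's inequality together with Lemma~\ref{lem:increment} and the L\'evy moment bound. For example,
\[
\E_{j-1}|\Del_j L\,r_{j,n}|\le (\E|\Del_j L|^2)^{1/2}(\E_{j-1}r_{j,n}^2)^{1/2}\le h_n^{1/2}\,h_n\,(1+|X_{t_{j-1}}|^C)=h_n^{3/2}(1+|X_{t_{j-1}}|^C).
\]
Also $\E_{j-1}r_{j,n}^2\le Ch_n^2$. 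For the cubic terms we use exponents $p,p'$ with $p$ close to $1$ on the $r_{j,n}$ factor. For instance, $\E_{j-1}|(\Del_j L)^2r_{j,n}|\le(\E|\Del_j L|^{2p'})^{1/p'}(\E_{j-1}|r_{j,n}|^{p})^{1/p}\le h_n^{1/p'}h_n$, which is at most $h_n^{3/2}$. The remaining cubic terms are handled the same way, so every remainder is $O(h_n^{3/2}(1+|X_{t_{j-1}}|^{q_0}))$.

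\emph{Absolute moments.} For $q\ge2$ we use $|\eta_j|^q\le C_q(|c_{j-1}|^q|\Del_j L|^q+|r_{j,n}|^q)$. This gives $h_n+h_n^2\lesssim h_n$ times the polynomial weight, with $c_{j-1}$ bounded polynomially by (C1). For $q<2$ the same inequality gives $h_n^{q/2}+h_n^{q}\lesssim h_n^{q/2}$. Together these give the rate $h_n^{(q/2)\wedge1}$.

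\emph{Main obstacle.} The main difficulty is the cross term $\E_{j-1}[(\Del_j L)^2r_{j,n}]$ in the third moment. A crude H\"older bound gives only $h_n^{1/p'}h_n^{1/p}\cdot$(a further factor from $r_{j,n}$). The estimate needed is $\E_{j-1}|r_{j,n}|^p\lesssim h_n^{p\wedge2}$, which for $p$ near $1$ gives $h_n^{1/p}\cdot h_n^{1/p'}=h_n$ only. So I would first try the finer structure $r_{j,n}=A_j+B_j$. The $A_j$ part is trivially $O(h_n)$ in every $L^p$, which handles it. For $B_j$, I would compute $\E_{j-1}[(\Del_j L)^2B_j]$ directly through It\^o's product rule for the pair $(L,B)$. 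This product has compensator density of order $\Delta c_s\,\kappa_3$ plus lower-order terms, and $\E_{j-1}|\Delta c_s|\lesssim (s-t_{j-1})^{1/2}$ or better, so the term is $O(h_n^{3/2})$.
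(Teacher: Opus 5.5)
Your route is the paper's route. You expand $\eta_j=c_{j-1}\Del_jL+r_{j,n}$ binomially and use the exact L\'evy moments. You get the $h_n^2$ mean from $\E_{j-1}[B_j]=0$ plus Dynkin's formula for $b(\cdot,\cdot,\al_0)$. You bound the cross terms and the absolute moments with Lemma~\ref{lem:increment}.

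The one thing to fix is the cubic cross term. Your ``main obstacle'' there comes only from choosing the H\"older exponents in the wrong direction. With $p$ close to $1$ on $r_{j,n}$ you get $h_n^{1/p'}\cdot h_n=h_n^{1+1/p'}$. For small $h_n$ this is \emph{larger} than $h_n^{3/2}$, so your sentence ``which is at most $h_n^{3/2}$'' is false. Lemma~\ref{lem:increment} gives $\|r_{j,n}\|_{L^p}\lesssim h_n^{\min(1,2/p)}$, and $\|(\Del_jL)^2\|_{L^{p'}}\lesssim h_n^{1/p'}$. The product exponent is therefore maximised at $p=p'=2$, which is plain Cauchy--Schwarz:
\[
\E_{j-1}\bigl|(\Del_jL)^2r_{j,n}\bigr|\le\bigl(\E|\Del_jL|^4\bigr)^{1/2}\bigl(\E_{j-1}r_{j,n}^2\bigr)^{1/2}\le Ch_n^{1/2}\cdot h_n\bigl(1+|X_{t_{j-1}}|^{C}\bigr).
\]
This uses $\E|\Del_jL|^4\le Ch_n$ and $\E_{j-1}r_{j,n}^2\le Ch_n^2$. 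It is exactly the paper's step (``the same Cauchy--Schwarz argument applied to each of the three cross terms''), and no finer structure of $r_{j,n}=A_j+B_j$ is needed.

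Your It\^o product-rule fallback for $\E_{j-1}[(\Del_jL)^2B_j]$ is also sound. The predictable drift of $(L_t-L_{t_{j-1}})^2B_t$ is $\{B_{s-}+(2(L_{s-}-L_{t_{j-1}})+\kappa_3)\Delta c_s\}\,ds$. Here $B_{s-}$ has zero conditional mean, and the remaining terms integrate to $O(h_n^{3/2})$. Still, it is an unnecessary detour once the exponents are chosen correctly.
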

	
	\begin{proof}
		Recall from Lemma~\ref{lem:increment} that $\eta_j=c_{j-1}(\gam_0)\Del_j L+r_{j,n}$ with $r_{j,n}=A_j+B_j$.
		
		\emph{Conditional $q$-th moment.}
		By Lemma~\ref{lem:increment}, Assumption~\ref{ass:coeff}, the moment bound $\E[|\Del_j L|^q]\le C_q h_n^{q/2\wedge 1}$, and the BDG inequality,
		\[
		\E\bigl[|\eta_j|^q\mid\mathcal F_{t_{j-1}}\bigr]\le C_q\,h_n^{q/2\wedge 1}\bigl(1+|X_{t_{j-1}}|^{C_q}\bigr).
		\]
		
		\emph{Conditional mean.}
		Since $\E[L_1]=0$, $\E_{j-1}[B_j]=0$, and hence $\E[\eta_j\mid\mathcal F_{t_{j-1}}]=\E_{j-1}[A_j]=:\mathcal R_{j,n}^{(1)}$. Setting $f(x,i):=b(x,i,\al_0)$, Assumption~\ref{ass:coeff} yields $|\mathcal A_{\theta_0}f(x,i)|\le C(1+|x|^{q_0})$ for some $C,q_0>0$. Dynkin's formula on $[t_{j-1},s]$ together with Lemma~\ref{lem:short-time-moment} gives
		\[
		\bigl|\E_{j-1}[f(X_s,\Lam_s)-f(X_{t_{j-1}},\Lam_{t_{j-1}})]\bigr|
		\le C(s-t_{j-1})\bigl(1+|X_{t_{j-1}}|^{q_0}\bigr),
		\]
		and integrating over $[t_{j-1},t_j]$ yields
		$|\mathcal R_{j,n}^{(1)}|\le C h_n^2\bigl(1+|X_{t_{j-1}}|^{q_0}\bigr)$.
		
		\emph{Conditional second moment.}
		Expanding $\eta_j^2=c_{j-1}(\gam_0)^2(\Del_j L)^2+2c_{j-1}(\gam_0)\Del_j L\,r_{j,n}+r_{j,n}^2$ and using $\E[(\Del_j L)^2]=h_n$,
		\[
		\E[\eta_j^2\mid\mathcal F_{t_{j-1}}]=h_n c_{j-1}(\gam_0)^2+\mathcal R_{j,n}^{(2)},
		\]
		where Cauchy--Schwarz on the cross term together with Lemma~\ref{lem:increment} gives
		$|\mathcal R_{j,n}^{(2)}|\le C h_n^{3/2}\bigl(1+|X_{t_{j-1}}|^{C}\bigr)$.
		
		\emph{Conditional third moment.}
		Similarly, expanding $\eta_j^3$ and using $\E[(\Del_j L)^3]=h_n m_3$,
		\[
		\E[\eta_j^3\mid\mathcal F_{t_{j-1}}]=h_n m_3\,c_{j-1}(\gam_0)^3+\mathcal R_{j,n}^{(3)},
		\]
		where the same Cauchy--Schwarz argument applied to each of the three cross terms, combined with Assumption~\ref{ass:Levy} and Lemma~\ref{lem:increment}, gives
		$|\mathcal R_{j,n}^{(3)}|\le C h_n^{3/2}\bigl(1+|X_{t_{j-1}}|^{C}\bigr)$.
	\end{proof}

	\begin{lem}
		\label{lem:q-obs-direct}
		Suppose Assumptions \ref{ass:Levy}, \ref{ass:coeff} and
		\ref{ass:ergodic} hold.
		Then, for each \(i\neq k\),
		\[
		\E\!\left[
		\Delta_j N_{ik}^{\mathrm{o}}
		\Bigm| \mathcal F_{t_{j-1}}
		\right]
		=
		\mathbf 1_{\{\Lambda_{t_{j-1}}=i\}}
		q_{ik}(X_{t_{j-1}},\vartheta_0)\,h_n
		+
		r_{j,n}^{ik},
		\]
		where \(r_{j,n}^{ik}\) is \(\mathcal F_{t_{j-1}}\)-measurable and satisfies
		$|r_{j,n}^{ik}|
		\le
		C h_n^{3/2}\bigl(1+|X_{t_{j-1}}|^{C}\bigr)$
		for some constant \(C>0\) independent of \(j\) and \(n\).
	\end{lem}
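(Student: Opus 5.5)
We prove the lemma by working on the event $\{\Lambda_{t_{j-1}}=i\}$; off this event $\Delta_j N_{ik}^{\mathrm o}=0$ and the claimed expansion holds trivially with $r_{j,n}^{ik}=0$. The plan is to use Dynkin's formula for the hybrid process (as in the proof of Lemma~\ref{lem:centered-increment}), applied to the regime indicator. Fix $k\neq i$ and set $f(x,l):=\mathbf 1_{\{l=k\}}$. This function does not depend on $x$, so the drift and jump parts of the generator \eqref{eq:generator} vanish, and
\[
\mathcal A_{\zeta_0}f(x,l)=\sum_{l'\neq l}q_{ll'}(x,\vartheta_0)\{f(x,l')-f(x,l)\}
=q_{lk}(x,\vartheta_0)\mathbf 1_{\{l\neq k\}}-q_l(x,\vartheta_0)\mathbf 1_{\{l=k\}} .
\]
Since $f$ is bounded and the total rate is bounded by (C4), Dynkin's formula holds without localization. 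It gives, on $\{\Lambda_{t_{j-1}}=i\}$,
\[
\E_{j-1}[\Delta_j N_{ik}^{\mathrm o}]=\E_{j-1}[\mathbf 1_{\{\Lambda_{t_j}=k\}}]
=\int_{t_{j-1}}^{t_j}\E_{j-1}\bigl[\mathcal A_{\zeta_0}f(X_s,\Lambda_s)\bigr]ds .
\]

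Next we compare the integrand with its value at the left endpoint, $\mathcal A_{\zeta_0}f(X_{t_{j-1}},i)=q_{ik}(X_{t_{j-1}},\vartheta_0)$, and split the difference into two parts.

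\emph{Regime-change part.} The function $|\mathcal A_{\zeta_0}f|$ is bounded by $C$ uniformly by (C4). By Lemma~\ref{lem:short-time-moment}(i), the contribution of the event $\{\Lambda_s\neq i\}$ is therefore at most
\[
C\,\Prob(\Lambda_s\ne\Lambda_{t_{j-1}}\mid\mathcal F_{t_{j-1}})\le C(s-t_{j-1})(1+|X_{t_{j-1}}|^C).
\]

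\emph{Spatial part.} On $\{\Lambda_s=i\}$ the integrand equals $q_{ik}(X_s,\vartheta_0)$. The bound on $\partial_x q_{ik}$ in (C4), together with the mean value theorem, gives
\[
|q_{ik}(X_s,\vartheta_0)-q_{ik}(X_{t_{j-1}},\vartheta_0)|\le C(1+|X_s|^C+|X_{t_{j-1}}|^C)|X_s-X_{t_{j-1}}| .
\]
Applying Cauchy--Schwarz and then Lemma~\ref{lem:short-time-moment}(ii),(iii) with $q=2$ bounds its conditional expectation by $C(s-t_{j-1})^{1/2}(1+|X_{t_{j-1}}|^C)$.

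Integrating both parts over $[t_{j-1},t_j]$ gives
\[
|r_{j,n}^{ik}|\le C\bigl(h_n^2+h_n^{3/2}\bigr)(1+|X_{t_{j-1}}|^C)\le Ch_n^{3/2}(1+|X_{t_{j-1}}|^C).
\]
Since $r_{j,n}^{ik}$ is defined as a conditional expectation minus an $\mathcal F_{t_{j-1}}$-measurable quantity, it is automatically $\mathcal F_{t_{j-1}}$-measurable.

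The main obstacle is the spatial modulus step. The pure-jump increments of $X$ only have $\E_{j-1}|X_s-X_{t_{j-1}}|^2\lesssim (s-t_{j-1})$, which is exactly what limits the remainder to order $h_n^{3/2}$ rather than $h_n^2$. One might hope to improve this by applying Dynkin's formula a second time to $q_{ik}(\cdot,\vartheta_0)$, but (C4) provides only one $x$-derivative of $q_{ik}$, so the generator cannot be applied to it; for that reason I would stay with the Cauchy--Schwarz route. A second point needing care is making sure Lemma~\ref{lem:short-time-moment}(ii) is used with $q=2$, where $h^{(q/2)\wedge1}=h$, so that the square root produces exactly the exponent $1/2$.
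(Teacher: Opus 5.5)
Your proposal is correct and follows essentially the same route as the paper: Dynkin's formula applied to the regime indicator $f(x,l)=\mathbf 1_{\{l=k\}}$, then a split of the deviation of $\mathcal A_{\zeta_0}f(X_s,\Lambda_s)$ from $q_{ik}(X_{t_{j-1}},\vartheta_0)$ into two parts. The regime-change part is $O(s-t_{j-1})$ by Lemma~\ref{lem:short-time-moment}(i), and the spatial part is $O((s-t_{j-1})^{1/2})$ by the $x$-derivative bound in (C4) together with Lemma~\ref{lem:short-time-moment}(ii),(iii); integrating gives the $h_n^2+h_n^{3/2}$ bound. The only cosmetic difference is that the paper adds and subtracts $q_{ik}(X_s,\vartheta_0)$ rather than conditioning on $\{\Lambda_s=i\}$.
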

	
	\begin{proof}
		Fix $i\ne k$ and let $f_k(x,\ell):=\mathbf 1_{\{\ell=k\}}$. Since $f_k$ depends only on $\ell$, its generator at $(x,\ell)$ equals $q_{\ell k}(x,\vartheta_0)$ for $\ell\ne k$. Dynkin's formula, together with $f_k(X_{t_{j-1}},\Lam_{t_{j-1}})=0$ on $\{\Lam_{t_{j-1}}=i\}$, gives
		\[
		\E\bigl[\Del_j N_{ik}^{\mathrm o}\mid\mathcal F_{t_{j-1}}\bigr]
		=\mathbf 1_{\{\Lam_{t_{j-1}}=i\}}\,\E\!\left[\int_{t_{j-1}}^{t_j}\mathcal A_{\theta_0}f_k(X_s,\Lam_s)\,ds\,\bigg|\,\mathcal F_{t_{j-1}}\right].
		\]
		Adding and subtracting $q_{ik}(X_s,\vartheta_0)$ inside the integral and applying the triangle inequality,
		\[
		\Bigl|\E_{j-1}\!\bigl[\Del_j N_{ik}^{\mathrm o}\bigr]
		-\mathbf 1_{\{\Lam_{t_{j-1}}=i\}}q_{ik}(X_{t_{j-1}},\vartheta_0)\,h_n\Bigr|
		\le \mathbf 1_{\{\Lam_{t_{j-1}}=i\}}\int_{t_{j-1}}^{t_j}\bigl(\mathrm I_s+\mathrm{II}_s\bigr)\,ds,
		\]
		where
		\[
		\mathrm I_s:=\E\!\bigl[|\mathcal A_{\theta_0}f_k(X_s,\Lam_s)-q_{ik}(X_s,\vartheta_0)|\,\big|\,\mathcal F_{t_{j-1}}\bigr],
		\qquad
		\mathrm{II}_s:=\E\!\bigl[|q_{ik}(X_s,\vartheta_0)-q_{ik}(X_{t_{j-1}},\vartheta_0)|\,\big|\,\mathcal F_{t_{j-1}}\bigr].
		\]
		On $\{\Lam_{t_{j-1}}=i\}$, the integrand defining $\mathrm I_s$ vanishes when $\Lam_s=i$ and is otherwise dominated by $C(1+|X_s|^C)$, so Lemma~\ref{lem:short-time-moment} gives $\mathbf 1_{\{\Lam_{t_{j-1}}=i\}}\mathrm I_s\le C(s-t_{j-1})(1+|X_{t_{j-1}}|^C)$. Assumption~\ref{ass:coeff} and Lemma~\ref{lem:short-time-moment} also give $\mathrm{II}_s\le C(s-t_{j-1})^{1/2}(1+|X_{t_{j-1}}|^C)$. Integrating these bounds over $[t_{j-1},t_j]$ produces $C h_n^2$ and $C h_n^{3/2}$ contributions, respectively, so
		\[
		\E\bigl[\Del_j N_{ik}^{\mathrm o}\mid\mathcal F_{t_{j-1}}\bigr]
		=\mathbf 1_{\{\Lam_{t_{j-1}}=i\}}\,q_{ik}(X_{t_{j-1}},\vartheta_0)\,h_n+r_{j,n}^{ik},
		\qquad |r_{j,n}^{ik}|\le C h_n^{3/2}(1+|X_{t_{j-1}}|^C).\qedhere
		\]
	\end{proof}

	\begin{lem}
		\label{lem:cont-switch-cross}
		Suppose Assumptions~\ref{ass:Levy}, \ref{ass:coeff}, and
		\ref{ass:ergodic} hold. Let \(F_{j-1}\) be
		\(\mathcal F_{t_{j-1}}\)-measurable and satisfy
		$|F_{j-1}|\le C(1+|X_{t_{j-1}}|^C)$.
		Then, for every \(i\neq k\),
		\[
		\left|
		\mathbb E_{j-1}
		\left[
		F_{j-1}\Delta_jL\,\chi_j^{ik}
		\right]
		\right|
		\le
		Ch_n^{3/2}(1+|X_{t_{j-1}}|^C),
		\]
		\[
		\left|
		\mathbb E_{j-1}
		\left[
		F_{j-1}\{(\Delta_jL)^2-h_n\}\chi_j^{ik}
		\right]
		\right|
		\le
		Ch_n^{3/2}(1+|X_{t_{j-1}}|^C).
		\]
	\end{lem}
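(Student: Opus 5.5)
Since $F_{j-1}$ is $\mathcal F_{t_{j-1}}$-measurable, it can be pulled out of $\mathbb E_{j-1}$, and it suffices to prove both bounds with $F_{j-1}\equiv 1$ and then multiply by $C(1+|X_{t_{j-1}}|^C)$. Because $\mathbb E_{j-1}[\chi_j^{ik}]=0$ and $\mathbb E_{j-1}[\Delta_jL]=\mathbb E_{j-1}[(\Delta_jL)^2-h_n]=0$ (Assumption~\ref{ass:Levy} and independence of increments), the subtracted compensator in $\chi_j^{ik}$ drops out:
\[
\mathbb E_{j-1}[\Delta_jL\,\chi_j^{ik}]=\mathbb E_{j-1}[\Delta_jL\,\Delta_jN_{ik}^{\mathrm o}],
\qquad
\mathbb E_{j-1}[\{(\Delta_jL)^2-h_n\}\chi_j^{ik}]=\mathbb E_{j-1}[\{(\Delta_jL)^2-h_n\}\Delta_jN_{ik}^{\mathrm o}].
\]
Set $M_t:=L_t-L_{t_{j-1}}$ and $\tilde M_t:=(L_t-L_{t_{j-1}})^2-(t-t_{j-1})$ for $t\in[t_{j-1},t_j]$. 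Both are $(\mathcal F_t)$-martingales null at $t_{j-1}$. The task is then to show $|\mathbb E_{j-1}[Z_{t_j}\Delta_jN_{ik}^{\mathrm o}]|\le Ch_n^{3/2}(1+|X_{t_{j-1}}|^C)$ for $Z\in\{M,\tilde M\}$.

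\textbf{Step 1: replace the endpoint indicator by a continuous-time count.} Let
\[
N^{ik}_t:=\int_{t_{j-1}}^t\int_{\mathbb R_+}\mathbf 1_{\{\Lambda_{s-}=i\}}\mathbf 1_{\Gamma_{ik}(X_{s-},\vartheta_0)}(z)\,N(ds,dz)
\]
be the number of $i\to k$ switches in $(t_{j-1},t]$. On $\{\Lambda_{t_{j-1}}=i\}$, $\Delta_jN^{\mathrm o}_{ik}$ and $N^{ik}_{t_j}$ can differ only if $\Lambda$ jumps at least twice in $[t_{j-1},t_j]$. The total switching rate is bounded (Assumption~\ref{ass:coeff}), so this event has conditional probability $\le Ch_n^2$. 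Moreover $|\Delta_jN^{\mathrm o}_{ik}-N^{ik}_{t_j}|\le N^\Lambda_{t_j}-N^\Lambda_{t_{j-1}}$, whose conditional moments of all orders are bounded, being dominated by a Poisson count with bounded rate. By Hölder with exponents $(2,4,4)$ and $\mathbb E|Z_{t_j}|^2\le Ch_n$, the error term is $\le C h_n^{1/2}\cdot O(1)\cdot (h_n^2)^{1/4}$. This is too weak. Instead I will use the exponents $(2,2)$ applied to $Z_{t_j}$ and $(N^\Lambda_{t_j}-N^\Lambda_{t_{j-1}})\mathbf 1_{\{\ge2\text{ jumps}\}}$; the latter has second moment $O(h_n^2)$ by the Poisson domination. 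This gives $Ch_n^{1/2}\cdot h_n=Ch_n^{3/2}$.

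\textbf{Step 2: orthogonality of $Z$ and the compensated count.} Write $N^{ik}_t=\tilde N^{ik}_t+\int_{t_{j-1}}^t\mathbf 1_{\{\Lambda_s=i\}}q_{ik}(X_s,\vartheta_0)\,ds$ with $\tilde N^{ik}$ a martingale driven by the Poisson measure $N$. Since $L$ and $N$ are independent, they a.s.\ have no common jumps, and $L$ is pure jump, so $[Z,\tilde N^{ik}]\equiv0$ and $\mathbb E_{j-1}[Z_{t_j}\tilde N^{ik}_{t_j}]=0$; integrability follows from the moment bounds. Hence
\[
\mathbb E_{j-1}[Z_{t_j}N^{ik}_{t_j}]
=\int_{t_{j-1}}^{t_j}\mathbb E_{j-1}\bigl[Z_{t_j}\{\mathbf 1_{\{\Lambda_s=i\}}q_{ik}(X_s,\vartheta_0)-\mathbf 1_{\{\Lambda_{t_{j-1}}=i\}}q_{ik}(X_{t_{j-1}},\vartheta_0)\}\bigr]\,ds .
\]
The subtracted term is $\mathcal F_{t_{j-1}}$-measurable, so it contributes zero against $Z_{t_j}$. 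By Cauchy--Schwarz, the integrand is at most $(\mathbb E_{j-1}Z_{t_j}^2)^{1/2}$ times the $L^2$-norm of the bracket. Here $\mathbb E_{j-1}Z_{t_j}^2\le Ch_n$, since $\mathbb E(L_{h})^4=O(h)$. For the bracket, the locally Lipschitz / $C^1$ polynomial-growth bound on $q_{ik}$ (Assumption~\ref{ass:coeff}) together with Lemma~\ref{lem:short-time-moment}(ii) with $q=2$ gives $\mathbb E_{j-1}|q_{ik}(X_s)-q_{ik}(X_{t_{j-1}})|^2\le C(s-t_{j-1})(1+|X_{t_{j-1}}|^C)$, after a Hölder step to absorb the polynomial weight. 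Lemma~\ref{lem:short-time-moment}(i) gives $\mathbb P_{j-1}(\Lambda_s\neq\Lambda_{t_{j-1}})\le C(s-t_{j-1})(1+|X_{t_{j-1}}|^C)$. Thus the $L^2$-norm of the bracket is $\le C(s-t_{j-1})^{1/2}(1+|X_{t_{j-1}}|^C)$, and integrating yields $C h_n^{1/2}\cdot h_n\cdot h_n^{1/2}=Ch_n^{2}\le Ch_n^{3/2}$.

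\textbf{Main obstacle.} The delicate point is Step~2: rigorously justifying $\mathbb E_{j-1}[Z_{t_j}\tilde N^{ik}_{t_j}]=0$ when $\tilde N^{ik}$ depends on $L$ through $X$. I plan to handle this via the product rule $Z\tilde N^{ik}=\int Z_-\,d\tilde N^{ik}+\int\tilde N^{ik}_-\,dZ+[Z,\tilde N^{ik}]$. The two stochastic integrals are true martingales, using the moment bounds of Lemma~\ref{lem:short-time-moment} and the bounded rates, possibly after a localization removed by dominated convergence. The bracket vanishes because $N$ and the jump measure of $L$ are independent Poisson random measures and hence a.s.\ have disjoint jump times. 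A naive Cauchy--Schwarz on $\mathbb E[\Delta_jL\,\Delta_jN^{\mathrm o}_{ik}]$ gives only $O(h_n)$, so this cancellation is what produces the extra $h_n^{1/2}$.
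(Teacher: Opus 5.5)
Your proof is correct and follows essentially the same route as the paper: replace the endpoint indicator $\Delta_j N_{ik}^{\mathrm o}$ by the actual $i\to k$ switching count at cost $Ch_n^{3/2}$ (Poisson domination plus Cauchy--Schwarz), then split that count into a compensated martingale part orthogonal to $\Delta_j L$ and $(\Delta_j L)^2-h_n$ plus its compensator, removing the frozen rate via $\mathbb E_{j-1}[\Delta_j L]=\mathbb E_{j-1}[(\Delta_j L)^2-h_n]=0$. The only difference is that you justify the orthogonality through the product rule and the vanishing covariation $[Z,\tilde N^{ik}]$, since independent noises have no common jumps, whereas the paper conditions on the path of $L$ over $[t_{j-1},t_j]$ and uses the independence of the Poisson random measure driving $\Lambda$.
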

	
	\begin{proof}
		For $s\in(t_{j-1},t_j]$ set $\lambda_s^{ik}:=\mathbf 1_{\{\Lam_{s-}=i\}}q_{ik}(X_{s-},\vartheta_0)$ and $\lambda_{j-1}^{ik}:=\mathbf 1_{\{\Lam_{t_{j-1}}=i\}}q_{ik}(X_{t_{j-1}},\vartheta_0)$. Let
		\[
		\Del_j\mathcal N_{ik}
		:=\int_{t_{j-1}}^{t_j}\int_{\R_+}\mathbf 1_{\{\Lam_{s-}=i\}}\mathbf 1_{\Gamma_{ik}(X_{s-},\vartheta_0)}(u)\,N(ds,du)
		\]
		denote the actual number of $i\to k$ jumps of $\Lam$ on $(t_{j-1},t_j]$, and let $R_j$ be the total number of switches on the same interval. Fix $Y_j\in\{\Del_j L,(\Del_j L)^2-h_n\}$; for either choice, $\E_{j-1}|Y_j|^2\le C h_n$.
		
		Since $\{\Del_j N_{ik}^{\mathrm o}\ne\Del_j\mathcal N_{ik}\}\subset\{R_j\ge 2\}$ and $|\Del_j N_{ik}^{\mathrm o}|\vee|\Del_j\mathcal N_{ik}|\le 1+R_j$,
		\[
		|\Del_j N_{ik}^{\mathrm o}-\Del_j\mathcal N_{ik}|^2\le (1+R_j)^2\mathbf 1_{\{R_j\ge 2\}}.
		\]
		The bounded total switching rate from Assumption~\ref{ass:coeff} dominates $R_j$, conditionally on $\mathcal F_{t_{j-1}}$, by a Poisson random variable with mean $Ch_n$, hence
		\begin{equation}\label{eq:obs-actual-count-error}
			\E_{j-1}\bigl|\Del_j N_{ik}^{\mathrm o}-\Del_j\mathcal N_{ik}\bigr|^2\le C h_n^2,
		\end{equation}
		and Cauchy--Schwarz yields
		\begin{equation}\label{eq:cross-endpoint-error}
			\bigl|\E_{j-1}\!\bigl[Y_j\bigl(\Del_j N_{ik}^{\mathrm o}-\Del_j\mathcal N_{ik}\bigr)\bigr]\bigr|\le C h_n^{3/2}.
		\end{equation}
		
		Write $\Del_j\mathcal N_{ik}=\int_{t_{j-1}}^{t_j}\lambda_s^{ik}\,ds+\Del_j M_{ik}$ for the corresponding compensated martingale increment. Since the Poisson random measure driving $\Lam$ is independent of $L$, conditioning on the $L$-path over $[t_{j-1},t_j]$ gives $\E_{j-1}[Y_j\Del_j M_{ik}]=0$. Using also $\E_{j-1}[Y_j]=0$ to eliminate the frozen-rate contribution,
		\[
		\E_{j-1}[Y_j\Del_j\mathcal N_{ik}]
		=\E_{j-1}\!\left[Y_j\int_{t_{j-1}}^{t_j}(\lambda_s^{ik}-\lambda_{j-1}^{ik})\,ds\right].
		\]
		Assumption~\ref{ass:coeff}, Lemma~\ref{lem:increment}, and Proposition~\ref{prop:hybrid-one-step} give $\E_{j-1}|\lambda_s^{ik}-\lambda_{j-1}^{ik}|^2\le C(s-t_{j-1})(1+|X_{t_{j-1}}|^C)$, and Cauchy--Schwarz then yields
		\[
		|\E_{j-1}[Y_j\Del_j\mathcal N_{ik}]|
		\le C h_n^{1/2}\!\int_{t_{j-1}}^{t_j}(s-t_{j-1})^{1/2}ds\,(1+|X_{t_{j-1}}|^C)
		\le C h_n^2(1+|X_{t_{j-1}}|^C).
		\]
		
		Combining this with \eqref{eq:cross-endpoint-error} gives $|\E_{j-1}[Y_j\chi_j^{ik}]|\le C h_n^{3/2}(1+|X_{t_{j-1}}|^C)$, and multiplying by the polynomially bounded, $\mathcal F_{t_{j-1}}$-measurable factor $F_{j-1}$ delivers the two stated estimates.
	\end{proof}

	\subsection{Auxiliary Lemmas for Proposition~\ref{prop:fixed-h-skeleton}}
	\label{app:erg}
	Recall \(L^{\mathrm s}\) and \(L^{\mathrm r}\) from Section~\ref{sec:proof:Tchain}.

	\begin{lem}
		\label{lem:one-jump-geometry}
		Suppose Assumptions~\ref{ass:Levy} and
		Assumption~\ref{ass:erg-coeff} hold.
		Fix a compact interval $K\subset\mathbb R$, a regime $i\in\mathbb S$.
		Then there exist
		\begin{itemize}
			\item a compact interval $K_0$ such that
			$K\subset\operatorname{int}(K_0)\subset K_0$;
			\item constants
			$h_K>0$,
			$\delta_{z,K}>0$,
			$0<\kappa_{1,K}\le \kappa_{2,K}<\infty$;
		\end{itemize}
		such that, for every fixed $h\in(0,h_K]$ and every $y\in K$, there exist
		an open interval $U_{y,h}^0\ni y$, an open interval $J_{y,h}^0\ni y$, and an
		event
		$\mathcal R_{K,h}\in\sigma(L^{\mathrm r})$ with 
		$\mathbb P(\mathcal R_{K,h})\ge\frac34$,
		such that, setting
		\[
		I_{\tau,h}:=\left[\frac h4,\frac{3h}{4}\right],
		\qquad
		I_{z,K}:=[-\delta_{z,K},\delta_{z,K}]\subset(-r_0,r_0),
		\]
		the following hold for every $(u,\omega,\tau,z)\in \overline{U_{y,h}^0} \times \mathcal R_{K,h} \times I_{\tau,h} \times I_{z,K}$.
		\begin{enumerate}[label=\textup{(\roman*)},leftmargin=3em]
			\item The equation
			\begin{align}
				\label{eq:frozen-one-jump-sde}
				\bar X_t^{u,\tau,z}
				&=
				u
				+
				\int_0^t b(\bar X_s^{u,\tau,z},i,\alpha_0)\,ds
				+
				\int_0^t c(\bar X_{s-}^{u,\tau,z},i,\gamma_0)\,dL_s^{\mathrm r}
				\notag\\
				&\qquad
				+
				\mathbf 1_{\{t\ge\tau\}}\,
				c(\bar X_{\tau-}^{u,\tau,z},i,\gamma_0)z,
				\qquad 0\le t\le h,
			\end{align}
			admits a unique strong solution satisfying
			$\bar X_t^{u,\tau,z}(\omega)\in K_0$,
			for $0\le t\le h$.
			
			\item The endpoint map
			$G_{u,\tau,\omega}^{(h)}(z)
			:=
			\bar X_h^{u,\tau,z}(\omega)
			\colon I_{z,K}\to\mathbb R$
			belongs to $C^1(I_{z,K})$, is strictly increasing, and satisfies
			\[
			\kappa_{1,K}
			\le
			\partial_zG_{u,\tau,\omega}^{(h)}(z)
			\le
			\kappa_{2,K},
			\qquad z\in I_{z,K}.
			\]
			
			\item The image of $I_{z,K}$ contains the target interval:
			$J_{y,h}^0
			\subset
			G_{u,\tau,\omega}^{(h)}(I_{z,K})$.
		\end{enumerate}
	\end{lem}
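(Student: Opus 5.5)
The plan is to treat \eqref{eq:frozen-one-jump-sde} as an ordinary Lévy-driven SDE in the fixed regime $i$, driven by $L^{\mathrm r}$, with one extra deterministic jump $c(\bar X_{\tau-},i,\gamma_0)z$ at time $\tau$. Existence and uniqueness follow by solving on $[0,\tau)$, adding the jump, and solving again on $[\tau,h]$. The local Lipschitz and linear-growth bounds of (E1) suffice; (E4) and the global Lipschitz bound in Assumption~\ref{ass:coeff} remove any localization issue.

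\textbf{Step 1: choosing $K_0$ and $\mathcal R_{K,h}$.} Let $K_0:=K+[-3,3]$. Since $c$ is bounded by (E4), the single jump moves the path by at most $\|c\|_\infty r_0$. Shrinking $\delta_{z,K}$ (and $r_0$ if needed) makes this at most $1$.

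For $\mathcal R_{K,h}$ take the event
\[
\sup_{0\le t\le h}|L^{\mathrm r}_t|\le \epsilon_h, \qquad \epsilon_h\downarrow0,
\]
where $\epsilon_h$ is chosen, using $L^{\mathrm r}_t\to0$ in probability as $t\downarrow0$ (Doob/Lévy maximal inequality), so that $\mathbb P(\mathcal R_{K,h})\ge 3/4$. Since the coefficients are not constant, $\sup_t|L^{\mathrm r}_t|$ does not directly control $\int c\,dL^{\mathrm r}$. I would therefore also include in $\mathcal R_{K,h}$ a bound on the number and size of the big jumps of $L^{\mathrm r}$ on $[0,h]$ and on the small-jump martingale part, via the Lévy–Itô split at level $1$.

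On $\mathcal R_{K,h}$, pathwise Gronwall estimates over the short horizon $h$ (with $|b|\le C$ on $K_0$) give $\sup_t|\bar X_t-u|\le C(h+\epsilon_h)+\|c\|_\infty|z|$. Hence the path stays in $K_0$ for $u\in\overline{U^0_{y,h}}\subset K+[-1,1]$, which proves (i).

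\textbf{Step 2: differentiability in $z$.} Write $Y_t:=\partial_z\bar X_t$, which vanishes for $t<\tau$. At the jump,
\[
Y_\tau=c(\bar X_{\tau-},i,\gamma_0)>0,
\]
and on $[\tau,h]$ the derivative solves the linear equation
\[
dY_t=\partial_xb\,Y_t\,dt+\partial_xc\,Y_{t-}\,dL^{\mathrm r}_t .
\]
Its solution is a Doléans-Dade exponential,
\[
Y_h=c(\bar X_{\tau-})\exp\Bigl(\int\partial_xb\,ds+\int\partial_xc\,dL^{\mathrm r}\Bigr)\prod(\text{jump factors}),
\]
which is justified since $b,c\in C^2$ in $x$ on $K_0$ by (E1). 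The factor $c(\bar X_{\tau-})$ lies in $[\inf_{K_0}c,\sup_{K_0}c]$ by (E2)/(E4). This is the \textbf{main obstacle}: the exponential and the jump factors $1+\partial_xc\,\Delta L^{\mathrm r}$ must be bounded above and below uniformly on $\mathcal R_{K,h}$. The jump factors may even vanish for large jumps. I would handle this by putting into $\mathcal R_{K,h}$ the event that $L^{\mathrm r}$ has no jumps of size larger than $\epsilon'$ on $[0,h]$, which has probability $\ge 7/8$ for small $h$ since large jumps have finite intensity. I would then also require the compensated small-jump integral to be small, with probability $\ge7/8$ by an $L^2$ bound of order $h$. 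On this event every factor lies in $[1/2,2]$, which gives $\kappa_{1,K}\le Y_h\le\kappa_{2,K}$, uniformly in $u,\tau\in I_{\tau,h},z$. Continuity of $Y$ in $z$ follows from the continuous dependence of $\bar X$ on $z$, so $G\in C^1$ and is strictly increasing.

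\textbf{Step 3: the image.} The point $G(0)=\bar X_h^{u,\tau,0}$ lies within $C(h+\epsilon_h)+|u-y|$ of $y$. By the mean value theorem,
\[
G(I_{z,K})\supset[G(0)-\kappa_{1,K}\delta_{z,K},\,G(0)+\kappa_{1,K}\delta_{z,K}].
\]
Choose $h_K$ small and $U^0_{y,h}$ of radius $\kappa_{1,K}\delta_{z,K}/4$ so that $|G(0)-y|\le\kappa_{1,K}\delta_{z,K}/2$. Then take
\[
J^0_{y,h}:=(y-\kappa_{1,K}\delta_{z,K}/2,\,y+\kappa_{1,K}\delta_{z,K}/2),
\]
which gives (iii). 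All constants depend only on $K$ (through $K_0$), not on $y$ or $h\le h_K$.
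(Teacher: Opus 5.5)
\textbf{Gap: the event $\mathcal R_{K,h}$ is not shown to work uniformly in $(u,\tau,z)$.} The lemma needs one event, fixed before $(u,\tau,z)$, on which (i)--(iii) hold simultaneously for all $(u,\tau,z)\in\overline{U^0_{y,h}}\times I_{\tau,h}\times I_{z,K}$.

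\emph{Step~1.} Your bound $\sup_t|\bar X_t-u|\le C(h+\epsilon_h)+\|c\|_\infty|z|$ rests on ``pathwise Gronwall''. But $\int_0^t c(\bar X_{s-},i,\gamma_0)\,dL^{\mathrm r}_s$ is not a pathwise integral when $L^{\mathrm r}$ has infinite variation. NIG noise is an example: its L\'evy density is of order $|z|^{-2}$ at the origin. So controlling $\sup_t|L^{\mathrm r}_t|$, the big jumps, or the compensated small-jump martingale itself does not control this integral.

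\emph{Step~2.} Here the problem is sharper. The exponent contains $\int_\tau^h\partial_xc(\bar X^{u,\tau,z}_{s-},i,\gamma_0)\,dL^{\mathrm r}_s$, whose integrand depends on $(u,\tau,z)$. An $L^2$ bound of order $h$ gives smallness with probability $\ge 7/8$ only for each fixed parameter, not on a common event for the uncountable family. Your phrase ``uniformly in $u,\tau,z$'' is asserted, not proved. This is not a routine Kolmogorov-continuity step: $\tau\mapsto\bar X^{u,\tau,z}_h(\omega)$ jumps whenever $\tau$ crosses a jump time of $L^{\mathrm r}$, because the inserted jump and the $L^{\mathrm r}$-jump do not commute when $c$ is non-constant. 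Similarly, claiming $G_{u,\tau,\omega}\in C^1$ for every $\omega$ in the event requires a version of the solution that is regular in all $z$ at once. Solving the derivative SDE separately for each $z$ does not provide that. A smaller point: bounding each jump factor of the Dol\'eans-Dade exponential does not bound the infinite product. You need $[L^{\mathrm r}]_h$ small, which is at least parameter-free since $|\partial_xc|$ is bounded on $K_0$.

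\textbf{The missing idea is the paper's stochastic-flow device.}
\begin{itemize}
\item Replace $b(\cdot,i,\alpha_0)$ and $c(\cdot,i,\gamma_0)$ by $C^2_b$ functions that agree with them on $K_0$.
\item Let $\Phi_{s,t}(x)$ be the two-parameter stochastic flow of the resulting SDE driven by $L^{\mathrm r}$, and set $\Upsilon_{s,t}=\partial_x\Phi_{s,t}$.
\item Let $\mathcal R_{K,h}$ be the event that both $|\Phi_{s,t}(x)-\phi^{x,i}_{t-s}|$ and $|\Upsilon_{s,t}(x)-\partial_x\phi^{x,i}_{t-s}|$, taken with $\sup_{0\le s\le t\le h,\,x\in K_1}$, are at most $\varepsilon_*$. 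Here $\phi$ is the deterministic ODE flow.
\item A sup-moment bound of order $h$ plus Markov's inequality then gives probability $\ge 3/4$ for small $h$.
\item The one-jump endpoint is defined for every $\omega$ by composition: $G_{u,\tau,\omega}(z)=\Phi_{\tau,h}\bigl(\Phi_{0,\tau}(u)+c(\Phi_{0,\tau-}(u),i,\gamma_0)z\bigr)$.
\end{itemize}
All parameter dependence now passes through the initial point and the times of a single flow. Regularity in $z$, the formula $\partial_zG=\Upsilon_{\tau,h}(\cdot)\,c(\Phi_{0,\tau-}(u),i,\gamma_0)$, confinement in $K_0$, and the bounds $\kappa_{1,K}\le\partial_zG\le\kappa_{2,K}$ then follow deterministically by comparison with the ODE flow, uniformly in $(u,\tau,z)$.

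Once such a uniform event is in hand, your Step~3 works: the symmetric target interval around $y$ via the mean value theorem is fine, and simpler than the paper's choice of $J^0_{y,h}$ as a deterministic image interval. If $L^{\mathrm r}$ had finite variation, your pathwise route could be rescued by putting the total variation of $L^{\mathrm r}$ on $[0,h]$ into the event. The lemma, however, carries no such restriction.
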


	\begin{proof}
		\medskip
		\noindent\textbf{Step 1: deterministic construction on a small time window.}
		
		Choose compact intervals $K^\flat$, $K^\dagger$, $K_1$, and $K_0$ such that
		\[
		K
		\subset \operatorname{int}(K^\flat)
		\subset K^\flat
		\subset \operatorname{int}(K^\dagger)
		\subset K^\dagger
		\subset \operatorname{int}(K_1)
		\subset K_1
		\subset \operatorname{int}(K_0)
		\subset K_0.
		\]
		Set
		\[
		L_b:=\sup_{v\in K_0}|\partial_x b(v,i,\alpha_0)|,
		\qquad
		M_b:=\sup_{v\in K_0}|b(v,i,\alpha_0)|,
		\]
		\[
		L_c:=\sup_{v\in K_0}|\partial_x c(v,i,\gamma_0)|,
		\qquad
		M_c:=\sup_{v\in K_0}|c(v,i,\gamma_0)|,
		\]
		and 
		\[
		c_{*,K}:=\inf_{v\in K_0}c(v,i,\gamma_0)>0 .
		\]
		Let $\phi_t^{x,i}$ denote the flow of the ODE $\dot y_t=b(y_t,i,\alpha_0)$ with $y_0=x$ and $d_{\dagger,1}:=\operatorname{dist}(K^\dagger,K_1^c)>0$.
		Choose $\delta_{z,K}>0$ such that
		\[
		I_{z,K}:=[-\delta_{z,K},\delta_{z,K}]\subset(-r_0,r_0),
		\qquad
		M_c\delta_{z,K}<\frac{d_{\dagger,1}}4 .
		\]
		Define $\mathcal C_0
		:=
		K^\dagger
		\cup
		\left\{
		v+c(v,i,\gamma_0)z:
		v\in K^\dagger,\ z\in I_{z,K}
		\right\}$.
		Since, for $v\in K^\dagger$ and $z\in I_{z,K}$, we have $|c(v,i,\gamma_0)z|
		\le M_c\delta_{z,K}
		<
		\frac{d_{\dagger,1}}4$.
		Hence $\mathcal C_0\subset \operatorname{int}(K_1)$.

		Since $K^\flat\subset\operatorname{int}(K^\dagger)$ and
		$\phi_0^{x,i}=x$, uniform continuity of the ODE flow on compact sets gives $T_{\mathrm{pre}}>0$ such that
		\[
		\phi_s^{x,i}\in K^\dagger,
		\qquad \text{for}\quad
		x\in K^\flat,\quad 0\le s\le T_{\mathrm{pre}}.
		\]
		Since $\mathcal C_0\subset\operatorname{int}(K_1)$ is compact and
		$\phi_0^{v,i}=v$, there exists $T_{\mathrm{post}}>0$ such that
		\[
		\phi_r^{v,i}\in K_1,
		\qquad \text{for}\quad
		v\in\mathcal C_0,\quad 0\le r\le T_{\mathrm{post}}.
		\]
		Set 
		\[
		T_1:=T_{\mathrm{pre}}\wedge T_{\mathrm{post}}, \qquad 
		I_\tau(t):=\left[\frac t4,\frac{3t}{4}\right].
		\] for $0<t\le T_1$.
		For $(x,\tau,z) \in K^\flat\times I_\tau(t) \times I_{z,K}$,
		define
		\[
		y_-(x,\tau):=\phi_\tau^{x,i},
		\qquad
		y_+(x,\tau,z)
		:=
		y_-(x,\tau)+c(y_-(x,\tau),i,\gamma_0)z,
		\]
		and
		\[
		Y_s^{x,\tau,z;t}
		:=
		\begin{cases}
			\phi_s^{x,i}, & 0\le s<\tau,\\[1mm]
			\phi_{s-\tau}^{\,y_+(x,\tau,z),i}, & \tau\le s\le t.
		\end{cases}
		\]
		We claim that
		\begin{equation}
			\label{eq:compact-Y-in-K1-step1}
			Y_s^{x,\tau,z;t}\in K_1,
			\qquad \text{for}\quad0\le s\le t.
		\end{equation}
		For $0\le s<\tau$, then $s\le T_1\le T_{\mathrm{pre}}$, and hence
		$Y_s^{x,\tau,z;t}
		=
		\phi_s^{x,i}
		\in K^\dagger
		\subset K_1$.
		For $\tau\le s\le t$, since $\tau\le T_1\le T_{\mathrm{pre}}$, so $y_-(x,\tau)=\phi_\tau^{x,i}\in K^\dagger$.
		Therefore $y_+(x,\tau,z)\in\mathcal C_0$.
		Since $s-\tau\le t\le T_1\le T_{\mathrm{post}}$, we obtain
		$Y_s^{x,\tau,z;t}
		=
		\phi_{s-\tau}^{\,y_+(x,\tau,z),i}
		\in K_1$.
		This proves \eqref{eq:compact-Y-in-K1-step1}.
		
		Finally, we set
		$ d_*:=\operatorname{dist}(K_1,K_0^c)>0$.

		\medskip
		\noindent\textbf{Step 2: deterministic target intervals and deterministic margins.}
		
		For $t\in(0,T_1]$, $(x,\tau,z) \in K^\flat\times I_\tau(t) \times I_{z,K}$, define the deterministic endpoint map
		\[
		H_{x,\tau}^{(t)}(z):=Y_t^{x,\tau,z;t}
		=
		\phi_{t-\tau}^{\,y_+(x,\tau,z),i}.
		\]
		The map $(x,\tau,z)\mapsto H_{x,\tau}^{(t)}(z)$ is $C^1$. The variational
		equation for the ODE flow gives
		\[
		\partial_x\phi_r^{x,i}
		=
		\exp\left\{
		\int_0^r
		\partial_xb(\phi_s^{x,i},i,\alpha_0)\,ds
		\right\},
		\]
		and hence
		\[
		e^{-L_bT_*}
		\le
		\partial_x\phi_r^{x,i}
		\le
		e^{L_bT_*},
		\qquad 0\le r\le T_1,\quad x\in K_0 .
		\]
		Therefore, we have
		\begin{equation}
			\label{eq:compact-uniform-det-derivative-window}
			\bar\kappa_K
			\le
			\partial_zH_{x,\tau}^{(t)}(z)
			=
			\partial_x\phi_{t-\tau}^{y,i}\big|_{y=y_+(x,\tau,z)}
			c(y_-(x,\tau),i,\gamma_0)
			\le
			\bar\kappa_K',
		\end{equation}
		where
		\[
		\bar\kappa_K:=e^{-L_bT_1}c_{*,K}>0,
		\qquad
		\bar\kappa_K':=e^{L_bT_1}M_c<\infty .
		\]
		Note that the bounds in \eqref{eq:compact-uniform-det-derivative-window} hold uniformly
		over $x\in K^\flat$,
		$0<t\le T_1$,
		$\tau\in I_\tau(t)$,
		$z\in I_{z,K}$.
		In particular, $z\mapsto H_{x,\tau}^{(t)}(z)$ is strictly increasing on
		$I_{z,K}$.
		
		Let $\bar m_K:=\frac{\bar\kappa_K\delta_{z,K}}{16}$.
		There exists
		$T_2\in(0,T_1]$ such that
		\begin{equation}
			\label{eq:compact-flow-close-to-id}
			\sup_{y\in K}|\phi_h^{y,i}-y|<\bar m_K,
			\qquad 0<h\le T_2 .
		\end{equation}
		For $h\in(0,T_2]$ and $y\in K$, define
		\[
		\tau_h:=\frac h2,
		\qquad
		J_{y,h}^0
		:=
		H_{y,\tau_h}^{(h)}
		\left(
		\left(-\frac{\delta_{z,K}}4,\frac{\delta_{z,K}}4\right)
		\right).
		\]
		Since $z\mapsto H_{y,\tau_h}^{(h)}(z)$ is continuous and strictly increasing,
		\[
		J_{y,h}^0
		=
		\left(
		H_{y,\tau_h}^{(h)}\left(-\frac{\delta_{z,K}}4\right),
		H_{y,\tau_h}^{(h)}\left(\frac{\delta_{z,K}}4\right)
		\right),
		\]
		and hence $J_{y,h}^0$ is an open interval.
		
		By the derivative lower bound
		\eqref{eq:compact-uniform-det-derivative-window}, we have
		\[
		\begin{aligned}
			H_{y,\tau_h}^{(h)}\left(\frac{\delta_{z,K}}4\right)
			-
			H_{y,\tau_h}^{(h)}(0)
			=
			\int_0^{\delta_{z,K}/4}
			\partial_zH_{y,\tau_h}^{(h)}(z)\,dz                \ge
			\bar\kappa_K\frac{\delta_{z,K}}4
			=
			4\bar m_K,
		\end{aligned}
		\]
		and similarly
		\[
		\begin{aligned}
			H_{y,\tau_h}^{(h)}(0)
			-
			H_{y,\tau_h}^{(h)}\left(-\frac{\delta_{z,K}}4\right)
			\ge
			\bar\kappa_K\frac{\delta_{z,K}}4
			=
			4\bar m_K.
		\end{aligned}
		\]
		Therefore
		\[
		\left(
		H_{y,\tau_h}^{(h)}(0)-4\bar m_K,
		H_{y,\tau_h}^{(h)}(0)+4\bar m_K
		\right)
		\subset J_{y,h}^0.
		\]
		Using $H_{y,\tau_h}^{(h)}(0)=
		\phi_{h-\tau_h}^{\phi_{\tau_h}^{y,i},i}=\phi_h^{y,i}$ and
		\eqref{eq:compact-flow-close-to-id}, we get
		\[
		|H_{y,\tau_h}^{(h)}(0)-y|
		=
		|\phi_h^{y,i}-y|
		<
		\bar m_K.
		\]
		Hence $y\in J_{y,h}^0$.
		
		By the derivative lower bound \eqref{eq:compact-uniform-det-derivative-window}, we have
		\begin{equation}
			\label{eq:compact-base-endpoint-margin-left}
			H_{y,\tau_h}^{(h)}
			\left(-\frac{\delta_{z,K}}2\right)
			\le
			\inf J_{y,h}^0-4\bar m_K,
		\end{equation}
		and
		\begin{equation}
			\label{eq:compact-base-endpoint-margin-right}
			H_{y,\tau_h}^{(h)}
			\left(\frac{\delta_{z,K}}2\right)
			\ge
			\sup J_{y,h}^0+4\bar m_K .
		\end{equation}
		Note that Grönwall's inequality gives the Lipschitz estimate
		\begin{equation}
			\label{eq:flow-lipschitz-rev}
			|\phi_r^{y,i_0}-\phi_r^{y',i_0}|
			\le C_\varphi|y-y'|,
			\qquad 0\le r\le T_2,\quad y,y'\in K_0.
		\end{equation}
		Let $C_H:=M_b\bigl(1+C_\varphi(1+L_c r_0)\bigr)$.
		We choose $T_3\in(0,T_2]$ such that
		\begin{equation}
			\label{eq:compact-TH-choice}
			C_HT_3\le \bar m_K .
		\end{equation}
		Then, for every $h\in(0,T_3]$, every $\tau\in I_{\tau,h}$, and every
		$z\in I_{z,K}$, we have
		\begin{align}
			\left|
			H_{y,\tau}^{(h)}(z)-H_{y,\tau_h}^{(h)}(z)
			\right|                                             
			&\le
			\left|
			\phi_{h-\tau}^{\,y_+(y,\tau,z),i}
			-
			\phi_{h-\tau_h}^{\,y_+(y,\tau,z),i}
			\right|
			+
			\left|
			\phi_{h-\tau_h}^{\,y_+(y,\tau,z),i}
			-
			\phi_{h-\tau_h}^{\,y_+(y,\tau_h,z),i}
			\right|                                                    \notag\\
			&\le
			M_b|\tau-\tau_h|
			+
			C_\varphi
			\left|y_+(y,\tau,z)-y_+(y,\tau_h,z)\right|
			\notag\\
			&\le
			M_b|\tau-\tau_h|
			+
			C_\varphi\left|\phi_\tau^{y,i}-\phi_{\tau_h}^{y,i}\right|
			+
			C_\varphi\left|
			c(\phi_\tau^{y,i},i,\gamma_0)
			-
			c(\phi_{\tau_h}^{y,i},i,\gamma_0)
			\right|
			|z|                                                        \notag\\
			&\le
			M_b|\tau-\tau_h|
			+
			C_\varphi M_b|\tau-\tau_h|
			+
			C_\varphi L_c r_0 M_b|\tau-\tau_h|                                  \notag\\
			&=
			C_H |\tau-\tau_h| \le C_Hh
			\le
			\bar m_K. \label{eq:compact-H-tau-midpoint-bound}
		\end{align}
		
		Now fix $h\in(0,T_3]$ and $y\in K$. By continuity of
		$u\mapsto H_{u,\tau}^{(h)}(z)$
		uniformly over the compact set 
		$I_{\tau,h}\times
		\left\{-\frac{\delta_{z,K}}2,\frac{\delta_{z,K}}2\right\}$,
		there exists an open interval $U_{y,h}^0\ni y$ such that
		$\overline{U_{y,h}^0}\subset K^\flat$
		and, for every $u\in\overline{U_{y,h}^0}$, every $(\tau,z)\in I_{\tau,h}\times
		\left\{-\frac{\delta_{z,K}}2,\frac{\delta_{z,K}}2\right\}$, and
		$z\in\{-\delta_{z,K}/2,\delta_{z,K}/2\}$,
		\begin{equation}
			\label{eq:compact-u-perturb-bound}
			\left|
			H_{u,\tau}^{(h)}(z)-H_{y,\tau}^{(h)}(z)
			\right|
			\le
			\bar m_K .
		\end{equation}
		Therefore, by
		\eqref{eq:compact-base-endpoint-margin-left},
		\eqref{eq:compact-H-tau-midpoint-bound}, and
		\eqref{eq:compact-u-perturb-bound}, for every $u\in\overline{U_{y,h}^0}$ and every $\tau\in I_{\tau,h}$,
		\begin{align}
			H_{u,\tau}^{(h)}
			\left(-\frac{\delta_{z,K}}2\right)
			&\le
			H_{y,\tau_h}^{(h)}
			\left(-\frac{\delta_{z,K}}2\right)
			+
			\left|
			H_{y,\tau}^{(h)}
			\left(-\frac{\delta_{z,K}}2\right)
			-
			H_{y,\tau_h}^{(h)}
			\left(-\frac{\delta_{z,K}}2\right)
			\right|                                                    \notag\\
			&\quad+
			\left|
			H_{u,\tau}^{(h)}
			\left(-\frac{\delta_{z,K}}2\right)
			-
			H_{y,\tau}^{(h)}
			\left(-\frac{\delta_{z,K}}2\right)
			\right|                                                    \notag\\
			&\le
			\inf J_{y,h}^0
			-4\bar m_K
			+\bar m_K
			+\bar m_K                                                   \notag\\
			&=
			\inf J_{y,h}^0-2\bar m_K . \label{eq:compact-deterministic-left-margin}
		\end{align}
		Similarly, we have
		\begin{equation}
			\label{eq:compact-deterministic-right-margin}
			H_{u,\tau}^{(h)}
			\left(\frac{\delta_{z,K}}2\right)
			\ge
			\sup J_{y,h}^0+2\bar m_K .
		\end{equation}

		\medskip
		\noindent\textbf{Step 3: good residual event.}
		
		Choose $\widetilde{b}, \widetilde{c} \in C_b^2(\mathbb{R})$ satisfying
		\[
		\widetilde{b}(u) = b(u, i, \alpha_0),
		\qquad
		\widetilde{c}(u) = c(u, i, \gamma_0),
		\qquad u \in K_0.
		\]
		For $0 \le s \le t \le T_3$, let $\Phi_{s,t}(y)$ denote the stochastic flow of the modified equation
		\[
		Z_t = y + \int_s^t \widetilde{b}(Z_u) \, du
		+ \int_s^t \widetilde{c}(Z_{u-}) \, dL_u^{\mathrm{r}},
		\qquad t \in [s, T_3].
		\]
		Since $\widetilde{b}, \widetilde{c} \in C_b^2(\mathbb{R})$, standard variational arguments guarantee that the solution field $\Phi_{s,t}(y)$ is of class \(C^1\) in the initial value
		\(y\). We 
		write $\Upsilon_{s,t}(y):=\partial_y\Phi_{s,t}(y)$.
		Standard moment estimates for stochastic flows with bounded $C^2$ coefficients
		yield a constant $C_*>0$ such that, for all $0<h\le T_3$,
		\[
		\E\left[
		\sup_{\substack{0\le s\le t\le h\\ x\in K_1}}
		|\Phi_{s,t}(x)-\phi_{t-s}^{x,i}|^2
		\right]
		+
		\E\left[
		\sup_{\substack{0\le s\le t\le h\\ x\in K_1}}
		|\Upsilon_{s,t}(x)-\partial_x\phi_{t-s}^{x,i}|^2
		\right]
		\le
		C_*h .
		\]
		
		Since $(r,x)\mapsto \partial_x\phi_{r}^{x,i}$ is uniformly continuous on
		$[0,T_3]\times K_0$, there exists $\rho_{K}>0$ such that
		\begin{equation}
			\label{eq:compact-rho-u-choice}
			|\partial_x\phi_{r}^{x,i}-\partial_x\phi_{r}^{x',i}|
			\le
			\frac{\bar\kappa_K}{6M_c},
			\qquad
			0\le r\le T_3,
		\end{equation}
		whenever $x,x'\in K_0$ and $|x-x'|<\rho_{K}$.
		
		Let $\delta_{+,K}
		:=
		\operatorname{dist}(\mathcal C_0,K_1^c)$.
		Choose $\varepsilon_*>0$
		satisfying
		\begin{equation}
			\label{eq:compact-eps-star-conditions}
			\varepsilon_*
			\le
			\min\left(
			\frac{d_*}{4},
			\frac{\bar m_K}{4},
			1
			\right),
			\qquad
			(1+L_cr_0)\varepsilon_*
			\le
			\min\left(
			\frac{\delta_{+,K}}{2},
			\rho_{K}
			\right),
		\end{equation}
		and
		\begin{equation}
			\label{eq:compact-eps-star-conditions-2}
			\varepsilon_*+C_\varphi(1+L_cr_0)\varepsilon_*
			\le
			\min\left(\frac{d_*}{2},\bar m_K\right),
			\qquad
			M_c\varepsilon_*
			\le
			\frac{\bar\kappa_K}{6},
			\qquad
			M_uL_c\varepsilon_*
			\le
			\frac{\bar\kappa_K}{6},
		\end{equation}
		where
		$M_u:=\sup_{\substack{0\le t\le T_3\\ x\in K_0}}\partial_x\phi_{t}^{x,i}<\infty$.

		Now define
		\[
		h_K
		:=
		\min\left(
		T_3,
		\frac{\varepsilon_*^2}{4C_*}
		\right), \quad I_{\tau,h}:=\left[\frac h4,\frac{3h}{4}\right],
		\]
		for $h\in(0,h_K]$.
		Define
		\[
		\mathcal R_{K,h}
		:=
		\left\{
		\sup_{\substack{0\le s\le t\le h\\ x\in K_1}}
		|\Phi_{s,t}(x)-\phi_{t-s}^{x,i}|
		\le \varepsilon_*,
		\quad
		\sup_{\substack{0\le s\le t\le h\\ x\in K_1}}
		|\Upsilon_{s,t}(x)-\partial_x\phi_{t-s}^{x,i}|
		\le \varepsilon_*
		\right\}.
		\]
		By Markov's inequality,
		\[
		\mathbb P(\mathcal R_{K,h}^c)
		\le
		\frac{C_*h}{\varepsilon_*^2}
		\le
		\frac14,
		\]
		and hence $\mathbb P(\mathcal R_{K,h})\ge\frac34$.

		\medskip
		\noindent\textbf{Step 4: confinement in $K_0$.}
		
		Fix $h\in(0,h_K]$,
		$y\in K$,
		$u\in\overline{U_{y,h}^0}$,
		$\omega\in\mathcal R_{K,h}$,
		$\tau\in I_{\tau,h}$,
		$z\in I_{z,K}$.
		Define
		\[
		\widehat y_-(u,\tau,\omega)
		:=
		\Phi_{0,\tau-}(u)(\omega),
		\qquad
		\widehat y(u,\tau,\omega)
		:=
		\Phi_{0,\tau}(u)(\omega),
		\]
		and
		\[
		\widehat y_+(u,\tau,z,\omega)
		:=
		\widehat y(u,\tau,\omega)
		+
		c(\widehat y_-(u,\tau,\omega),i,\gamma_0)z .
		\]
		On $\mathcal R_{K,h}$, we have
		$|\widehat y(u,\tau,\omega)-y_-(u,\tau)|\le\varepsilon_*$,
		and
		$|\widehat y_-(u,\tau,\omega)-y_-(u,\tau)|\le\varepsilon_*$.
		Therefore
		\begin{align}
			\label{eq:compact-yplus-difference}
			|\widehat y_+(u,\tau,z,\omega)-y_+(u,\tau,z)|
			&\le
			|\widehat y-y_-|
			+
			|c(\widehat y_-,i,\gamma_0)-c(y_-,i,\gamma_0)|\,|z|
			\notag\\
			&\le
			\varepsilon_*+L_c r_0\varepsilon_*
			=
			(1+L_c r_0)\varepsilon_* .
		\end{align}
		By the choice of $\varepsilon_*$, the point $\widehat y_+(u,\tau,z,\omega)$ lies in $K_1$.

		Define the candidate solution
		\[
		\widetilde{X}_t^{u,\tau,z}(\omega)
		:=
		\begin{cases}
			\Phi_{0,t}(u)(\omega), & 0\le t<\tau,\\[1mm]
			\Phi_{\tau,t}\bigl(\widehat{y}_+(u,\tau,z,\omega)\bigr)(\omega),
			& \tau\le t\le h.
		\end{cases}
		\]
		This path is càdlàg with $\widetilde{X}_{\tau-}^{u,\tau,z}(\omega)
		=\widehat{y}_-(u,\tau,\omega)$, and one verifies directly that it satisfies
		the cutoff SDE:
		\begin{equation}
			\label{eq:cutoff-sde-check}
			\widetilde{X}_t^{u,\tau,z}
			=
			u+\int_0^t\widetilde{b}(\widetilde{X}_s^{u,\tau,z})\,ds
			+\int_0^t\widetilde{c}(\widetilde{X}_{s-}^{u,\tau,z})\,dL_s^{\mathrm{r}}
			+\mathbf{1}_{\{t\ge\tau\}}\,
			\widetilde{c}(\widetilde{X}_{\tau-}^{u,\tau,z})\,z,
			\quad 0\le t\le h.
		\end{equation}
		
		We now show $\widetilde{X}_t^{u,\tau,z}(\omega)\in K_0$ for all $t\in[0,h]$.
		For $0\le t<\tau$, on $\mathcal{R}_{K,h}$,
		\[
		|\widetilde{X}_t^{u,\tau,z}(\omega)-Y_t^{u,\tau,z;h}|
		=|\Phi_{0,t}(u)(\omega)-\phi_t^{u,i}|\le\varepsilon_*.
		\]
		For $\tau\le t\le h$, using $\mathcal{R}_{K,h}$,
		\eqref{eq:flow-lipschitz-rev}, and~\eqref{eq:compact-yplus-difference},
		\begin{align*}
			|\widetilde{X}_t^{u,\tau,z}(\omega)-Y_t^{u,\tau,z;h}|
			&\le
			|\Phi_{\tau,t}(\widehat{y}_+)(\omega)-\phi_{t-\tau}^{\widehat{y}_+,i}|
			+|\phi_{t-\tau}^{\widehat{y}_+,i}-\phi_{t-\tau}^{y_+,i}|
			\\
			&\le
			\varepsilon_*+C_\varphi|\widehat{y}_+-y_+|
			\le
			\varepsilon_*+C_\varphi(1+L_cr_0)\varepsilon_* \le d_*.
		\end{align*}
		By~\eqref{eq:compact-Y-in-K1-step1}, we have
		\[
		\widetilde{X}_t^{u,\tau,z}(\omega)\in K_0,\qquad 0\le t\le h.
		\]
		Since $\widetilde{X}^{u,\tau,z}(\omega)$ stays in $K_0$ and $\widetilde{b},
		\widetilde{c}$ agree with $b(\cdot,i,\alpha_0)$, $c(\cdot,i,\gamma_0)$
		on $K_0$, the process $\widetilde{X}^{u,\tau,z}(\omega)$ solves the original
		SDE~\eqref{eq:frozen-one-jump-sde}. By pathwise uniqueness,
		$\bar{X}_t^{u,\tau,z}(\omega)=\widetilde{X}_t^{u,\tau,z}(\omega)\in K_0$ for
		all $t\in[0,h]$, establishing conclusion~(1).

		\medskip
		\noindent\textbf{Step 5: endpoint derivative.}
		
		For $\omega\in\mathcal R_{K,h}$ define
		\[
		G_{u,\tau,\omega}^{(h)}(z):=
		\bar X_h^{u,\tau,z}(\omega)=
		\Phi_{\tau,h}
		\left(
		\widehat y_+(u,\tau,z,\omega)
		\right)(\omega).
		\]
		Hence $z\mapsto G_{u,\tau,\omega}^{(h)}(z)$ is $C^1$ on $I_{z,K}$, and
		\[
		\partial_zG_{u,\tau,\omega}^{(h)}(z)
		=
		\Upsilon_{\tau,h}(\widehat y_+(u,\tau,z,\omega))\,
		c(\widehat y_-(u,\tau,\omega),i,\gamma_0).
		\]
		On $\mathcal R_{K,h}$, comparing this derivative with
		\[
		\partial_zH_{u,\tau}^{(h)}(z)
		=
		\partial_x\phi_{h-\tau}^{y_+(u,\tau,z),i}
		\,
		c(y_-(u,\tau),i,\gamma_0)
		\]
		and using \eqref{eq:compact-yplus-difference}, \eqref{eq:compact-rho-u-choice}, \eqref{eq:compact-eps-star-conditions}, \eqref{eq:compact-eps-star-conditions-2}, we obtain
		\begin{align*}
			|\partial_z G_{u,\tau,\omega}^{(h)}(z)-\partial_z H_{u,\tau}^{(h)}(z)|
			&\le
			|\Upsilon_{\tau,h}(\widehat{y}_+)-\partial_x\phi_{h-\tau}^{\widehat{y}_+,i}|
			\cdot|c(\widehat{y}_-,i,\gamma_0)|\\
			&\quad+|\partial_x\phi_{h-\tau}^{\widehat{y}_+,i}-\partial_x\phi_{h-\tau}^{y_+,i}|
			\cdot|c(\widehat{y}_-,i,\gamma_0)|\\
			&\quad+|\partial_x\phi_{h-\tau}^{y_+,i}|
			\cdot|c(\widehat{y}_-,i,\gamma_0)-c(y_-,i,\gamma_0)|
			\\
			&\le M_c\varepsilon_* + \frac{\bar\kappa_K}{6M_c}\,M_c + M_u L_c \varepsilon_*
			\\
			&\le \frac{\bar\kappa_K}{2}.
		\end{align*}
		Combining this with
		\eqref{eq:compact-uniform-det-derivative-window}, we get
		\[
		\frac{\bar\kappa_K}{2}
		\le
		\partial_zG_{u,\tau,\omega}^{(h)}(z)
		\le
		\bar\kappa_K'+\frac{\bar\kappa_K}{2},
		\qquad z\in I_{z,K}.
		\]
		Set
		\[
		\kappa_{1,K}:=\frac{\bar\kappa_K}{2},
		\qquad
		\kappa_{2,K}:=\bar\kappa_K'+\frac{\bar\kappa_K}{2}.
		\]
		Then conclusion~\textup{(ii)} follows. In particular,
		$z\mapsto G_{u,\tau,\omega}^{(h)}(z)$ is strictly increasing on $I_{z,K}$.
		
		\medskip
		\noindent\textbf{Step 6: $J_{y,h}^0\subset G_{u,\tau,\omega}^{(h)}(I_{z,K})$.}
		
		Since $z\mapsto G_{u,\tau,\omega}^{(h)}(z)$ is strictly increasing, it is
		enough to show
		\[
		G_{u,\tau,\omega}^{(h)}
		\left(-\frac{\delta_{z,K}}2\right)
		<
		\inf J_{y,h}^0,
		\qquad
		G_{u,\tau,\omega}^{(h)}
		\left(\frac{\delta_{z,K}}2\right)
		>
		\sup J_{y,h}^0 .
		\]
		The same triangle-inequality estimate as in Step~5 gives
		\begin{equation}
			\label{eq:compact-G-H-bound}
			\left|
			G_{u,\tau,\omega}^{(h)}(z)
			-
			H_{u,\tau}^{(h)}(z)
			\right|
			\le
			\varepsilon_*+
			C_\varphi(1+L_c r_0)\varepsilon_*
			<
			\bar m_K,
			\qquad z\in I_{z,K}.
		\end{equation}
		By 
		\eqref{eq:compact-deterministic-left-margin} and
		\eqref{eq:compact-deterministic-right-margin}, we have
		\[
		G_{u,\tau,\omega}^{(h)}
		\left(-\frac{\delta_{z,K}}2\right)
		<
		\inf J_{y,h}^0, \quad 
		G_{u,\tau,\omega}^{(h)}
		\left(\frac{\delta_{z,K}}2\right)
		>
		\sup J_{y,h}^0.
		\]
		Therefore
		$J_{y,h}^0
		\subset
		G_{u,\tau,\omega}^{(h)}(I_{z,K})$,
		which proves conclusion~\textup{(iii)}.
		
		The proof is complete.
	\end{proof}

	The next lemma is the fixed-time local minorization.
	
	\begin{lem}
		\label{lem:fixed-h-local-minorization}
		Suppose Assumptions~\ref{ass:Levy} and
		Assumption~\ref{ass:erg-coeff} hold.
		Fix a regime $i\in\mathbb{S}$ and a compact interval $K\subset\mathbb{R}$.
		Then there exists $h_K>0$ such that, for every fixed
		$h\in(0,h_K]$, there exist constants
		$\delta_{K,h}>0$ and $\varepsilon_{K,h}>0$ with the following property:
		for every $x\in K$, there is an open interval $U_{x,h}\ni x$ such that
		\begin{equation}
			\label{eq:fixed-h-local-minorization}
			P_h\bigl((x',i),\,B\times\{i\}\bigr)
			\ge
			\varepsilon_{K,h}\,
			\lambda\!\bigl(B\cap J_{x,h}\bigr)
			\qquad
			\text{for all }x'\in U_{x,h},\;B\in\mathcal{B}(\mathbb{R}),
		\end{equation}
		where
		\[
		J_{x,h}:=(x-\delta_{K,h},\,x+\delta_{K,h}).
		\]
	\end{lem}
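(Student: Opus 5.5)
The plan is to build the minorization out of the single-jump construction of Lemma~\ref{lem:one-jump-geometry}. The target event is $E_h$: during $[0,h]$ the compound Poisson component $L^{\mathrm s}$ jumps exactly once, at a time $\tau\in I_{\tau,h}$, with mark $Y_1\in I_{z,K}$; the regime $\Lambda$ does not switch; and $\omega\in\mathcal R_{K,h}$. On $E_h$ the path of $X$ started at $(x',i)$ coincides with the frozen-regime solution $\bar X^{x',\tau,Y_1}$ of \eqref{eq:frozen-one-jump-sde}. Hence $X_h=G^{(h)}_{x',\tau,\omega}(Y_1)$ and $\Lambda_h=i$. We take $h_K$ from Lemma~\ref{lem:one-jump-geometry}, for each $x\in K$ set $U_{x,h}:=U^0_{x,h}$, and choose $\delta_{K,h}$ so that $J_{x,h}\subset J^0_{x,h}$ for all $x\in K$. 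This choice of $\delta_{K,h}$ is discussed at the end.

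\textbf{Lower bound.} By independence of $N^{\mathrm s}$, $(Y_k)$, $L^{\mathrm r}$ and the Poisson measure $N$ driving $\Lambda$, we get
\[
P_h((x',i),B\times\{i\})\ge \E\Bigl[\mathbf 1_{\mathcal R_{K,h}}\int_{I_{\tau,h}}\rho e^{-\rho h}\,d\tau\int_{I_{z,K}}\frac{dz}{2r_0}\,\mathbf 1_B\bigl(G^{(h)}_{x',\tau,\omega}(z)\bigr)\,\mathbf 1_{\{\text{no switch on }[0,h]\}}\Bigr].
\]
We handle the no-switch indicator as follows. The path $X$ stays in $K_0$ on $E_h$, and the total rate satisfies $\sum_{j\ne i}q_{ij}\le \bar q$ by (E3). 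Realising the switches through $N$ on $[0,\bar q]$, the event "$N$ has no atoms in $[0,h]\times[0,\bar q]$" implies no switch, has probability $e^{-\bar q h}$, and is independent of everything else. It therefore factors out.

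\textbf{Change of variables.} For the inner $z$-integral, Lemma~\ref{lem:one-jump-geometry}(ii) says $G=G^{(h)}_{x',\tau,\omega}$ is a $C^1$ increasing map with $\partial_zG\le\kappa_{2,K}$. Substituting $w=G(z)$ gives
\[
\int_{I_{z,K}}\mathbf 1_B(G(z))\,dz\ge \kappa_{2,K}^{-1}\,\lambda\bigl(B\cap G(I_{z,K})\bigr)\ge\kappa_{2,K}^{-1}\,\lambda(B\cap J^0_{x,h}),
\]
where the last step uses Lemma~\ref{lem:one-jump-geometry}(iii). Collecting the factors yields \eqref{eq:fixed-h-local-minorization} with
\[
\varepsilon_{K,h}:=\tfrac34\cdot\tfrac h2\,\rho e^{-\rho h}\,(2r_0\kappa_{2,K})^{-1}e^{-\bar q h}.
\]
This constant is uniform in $x\in K$ and $x'\in U_{x,h}$.

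\textbf{Main obstacle.} The delicate step is choosing $\delta_{K,h}$ uniformly in $x\in K$, since the lemma requires $J_{x,h}=(x\pm\delta_{K,h})$ to be centred at $x$. From Step 2 of Lemma~\ref{lem:one-jump-geometry}, $J^0_{x,h}$ contains $(\phi_h^{x,i}-4\bar m_K,\phi_h^{x,i}+4\bar m_K)$, and $|\phi_h^{x,i}-x|<\bar m_K$ by \eqref{eq:compact-flow-close-to-id}. So $\delta_{K,h}:=3\bar m_K$ works, independently of $x$ and $h$. A secondary point is measurability and joint handling of $(\tau,z,\omega)$ in the Fubini step. This reduces to the fact that $(\tau,z)\mapsto \bar X_h^{x',\tau,z}$ is measurable, which follows from the explicit flow representation in Step 4 of Lemma~\ref{lem:one-jump-geometry}.
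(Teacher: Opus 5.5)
Your proof is correct. It reaches the lemma by a different route from the paper's in two places, plus one constant that needs fixing.

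\textbf{Uniform radius.} You read off from Step~2 of the proof of Lemma~\ref{lem:one-jump-geometry} that $J^0_{x,h}\supset(\phi_h^{x,i}-4\bar m_K,\phi_h^{x,i}+4\bar m_K)$ and $|\phi_h^{x,i}-x|<\bar m_K$. Hence $(x-3\bar m_K,x+3\bar m_K)\subset J^0_{x,h}$ for every $x\in K$. You also use that $\kappa_{2,K}$ and $\mathcal R_{K,h}$ do not depend on the centre. This lets you take $U_{x,h}=U^0_{x,h}$ directly, and gives explicit constants, with $\delta_{K,h}=3\bar m_K$ even independent of $h$.

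The paper instead uses only the statement-level fact $y\in J^0_{y,h}$. It proves a pointwise bound with centre-dependent constants and covers $K$ by finitely many $V_{y,h}$ with $\overline{V_{y,h}}\subset U^0_{y,h}\cap J^0_{y,h}$. It then takes $\delta_{K,h}$ and $\varepsilon_{K,h}$ as minima over the cover. That is less explicit, but it does not depend on the internal details of the one-jump construction, whereas yours does.

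\textbf{No-switch event.} You replace it by a void-strip event of the Poisson random measure $N$. This event is independent of $L^{\mathrm s}$ and $L^{\mathrm r}$, so it factors out, using the global rate bound in (E3). The paper instead conditions on $\sigma(L^{\mathrm r})$ and the jump data. It then uses the survival probability $\exp(-\int_0^h\lambda_s^{u,\tau,z}\,ds)\ge e^{-q^*h}$ along the frozen path confined to $K_0$. The paper's version needs only local boundedness of the rates; yours needs the global bound but no confinement, so your mention of $K_0$ there is superfluous.

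\textbf{Correction to the strip height.} The intervals $\Gamma_{ij}(x,\vartheta_0)$ in Section~2 are consecutive in the lexicographic order over all pairs. So the intervals for row $i$ start at $\sum_{k<i}q_k(x,\vartheta_0)$ and lie in $[0,\sum_k q_k(x,\vartheta_0))\subset[0,m\bar q)$, not in $[0,\bar q)$. For example, when $m=2$, $\Gamma_{21}(x)$ can extend up to $2\bar q$. Take the strip $[0,h]\times[0,m\bar q]$ and replace $e^{-\bar q h}$ by $e^{-m\bar q h}$ in $\varepsilon_{K,h}$. Everything else, including the change-of-variables bound with $\kappa_{2,K}^{-1}$, goes through unchanged.
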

	
	\begin{proof}
		Fix a compact interval $K\subset\mathbb R$ and a regime $i\in\mathbb S$.
		
		Recall the decomposition $L=L^{\mathrm s}+L^{\mathrm r}$, and $\rho:=\nu^{\mathrm s}(\mathbb R)=2\kappa_0r_0$.
		We use Lemma~\ref{lem:one-jump-geometry}.
		There exists $h_K>0$ such that, for every fixed
		$h\in(0,h_K]$ and every $y\in K$, there exist open intervals $U_{y,h}^0\ni y$ and
		$J_{y,h}^0\ni y$, a compact interval $K_{y,h}^0$, constants
		$\delta_{z,y,h}>0$ and $0<\kappa_{1,y,h}\le\kappa_{2,y,h}<\infty$, and an
		event
		\[
		\mathcal R_{y,h}\in\sigma(L^{\mathrm r}),
		\qquad
		\mathbb P(\mathcal R_{y,h})\ge\frac34,
		\]
		such that, with
		\[
		I_{\tau,h}:=\left[\frac h4,\frac{3h}{4}\right],
		\qquad
		I_{z,y,h}:=[-\delta_{z,y,h},\delta_{z,y,h}]\subset(-r_0,r_0),
		\]
		the following hold for every $u\in \overline{U_{y,h}^0}$,
		$\omega\in\mathcal R_{y,h}$,
		$\tau\in I_{\tau,h}$,
		$z\in I_{z,y,h}$:
		The frozen one-jump equation in regime $i$ has a unique solution
		$\bar X^{u,\tau,z}$ on $[0,h]$, the path remains in $K_{y,h}^0$, and the
		endpoint map $G_{u,\tau,\omega}(z):=\bar X_h^{u,\tau,z}(\omega)$
		is of class $C^1$, strictly increasing on $I_{z,y,h}$, and satisfies $\kappa_{1,y,h}\le\partial_zG_{u,\tau,\omega}(z)\le\kappa_{2,y,h}$.
		Moreover, $J_{y,h}^0\subset G_{u,\tau,\omega}(I_{z,y,h})$.

		We first prove a pointwise local minorization. Fix $y\in K$, fix
		$u\in U_{y,h}^0$, and let $A\subset J_{y,h}^0$ be Borel. Define
		\[
		\mathcal N_h
		:=
		\{\Lambda_s=i\ \text{for all }0\le s\le h\}.
		\]
		Let $N^{\mathrm s}_{(0,h]}$ be the number of jumps of $L^{\mathrm s}$ on
		$(0,h]$, and on the event $\{N^{\mathrm s}_{(0,h]}=1\}$ let
		$\tau^{\mathrm s}$ and $Z$ denote the unique jump time and jump size. Then
		\[
		\begin{aligned}
			P_h\bigl((u,i),A\times\{i\}\bigr)
			&\ge
			\Prob_{(u,i)}
			\Bigl(
			\mathcal R_{y,h},\
			N^{\mathrm s}_{(0,h]}=1,\
			\tau^{\mathrm s}\in I_{\tau,h},\
			Z\in I_{z,y,h},                                      \\
			&\hspace{7em}
			\mathcal N_h,\
			G_{u,\tau^{\mathrm s},\omega}(Z)\in A
			\Bigr).
		\end{aligned}
		\]
		Since $\mathcal R_{y,h}\in\sigma(L^{\mathrm r})$, the tower property gives
		\[
		\begin{aligned}
			&\Prob_{(u,i)}
			\Bigl(
			\mathcal R_{y,h},\
			N^{\mathrm s}_{(0,h]}=1,\
			\tau^{\mathrm s}\in I_{\tau,h},\
			Z\in I_{z,y,h},\
			\mathcal N_h,\
			G_{u,\tau^{\mathrm s},\omega}(Z)\in A
			\Bigr)                                                    \\
			&\quad =
			\E\Bigl[
			\mathbf 1_{\mathcal R_{y,h}}\,
			\Prob_{(u,i)}
			\Bigl(
			N^{\mathrm s}_{(0,h]}=1,\
			\tau^{\mathrm s}\in I_{\tau,h},\
			Z\in I_{z,y,h},\
			\mathcal N_h,\
			G_{u,\tau^{\mathrm s},\omega}(Z)\in A
			\,\Big|\,
			\sigma(L^{\mathrm r})
			\Bigr)
			\Bigr].
		\end{aligned}
		\]
		
		Conditional on the event \(\{N^{\mathrm s}_{(0,h]}=1\}\), the jump time $\tau^{\mathrm s}$ is uniformly distributed on \((0,h]\), the jump size \(Z\) has density \((2r_0)^{-1}\mathbf 1_{(-r_0,r_0)}(z)\), and
		\(\tau^{\mathrm s}\) and \(Z\) are independent.
		Therefore, for every Borel sets
		\(C\subset(0,h]\) and \(D\subset\R\),
		\begin{align}
			\Prob\!\left(
			N^{\mathrm s}_{(0,h]}=1,\;
			\tau^{\mathrm s}\in C,\;
			Z\in D
			\right)&=
			\Prob\bigl(N^{\mathrm s}_{(0,h]}=1\bigr)\,
			\Prob\left(
			\tau^{\mathrm s}\in C
			\,\middle|\,
			N^{\mathrm s}_{(0,h]}=1
			\right)\,
			\Prob\left(
			Z\in D
			\,\middle|\,
			N^{\mathrm s}_{(0,h]}=1
			\right)
			\notag\\
			&=
			e^{-\rho h}(\rho h)\cdot \frac{\lambda(C)}{h}\cdot
			\int_D \frac{1}{2r_0}\mathbf 1_{(-r_0,r_0)}(z)\,dz
			\notag\\
			&=
			e^{-\rho h}
			\int_C\int_D
			\frac{\rho}{2r_0}\,
			\mathbf 1_{(-r_0,r_0)}(z)\,dz\,d\tau.
			\notag
		\end{align}
		For $(\tau,z)\in I_{\tau,h}\times I_{z,y,h}$, set
		\[
		\lambda_s^{u,\tau,z}
		:=
		\sum_{k\neq i}
		q_{ik}\bigl(\bar X_s^{u,\tau,z}\bigr),
		\qquad 0\le s\le h.
		\]
		The no-switch probability along the frozen one-jump path is
		\[
		\Prob_{(u,i)}
		\bigl(
		\mathcal N_h
		\,\big|\,
		\tau^{\mathrm s}=\tau,\,
		Z=z,\,
		\sigma(L^{\mathrm r})
		\bigr)
		=
		\exp\left(
		-\int_0^h\lambda_s^{u,\tau,z}\,ds
		\right).
		\]
		Since \(L^{\mathrm s}\) and \(L^{\mathrm r}\) are independent,
		we have,
		\begin{align}
			&\Prob_{(u,i)}
			\Bigl(
			N^{\mathrm s}_{(0,h]}=1,\
			\tau^{\mathrm s}\in I_{\tau,h},\
			Z\in I_{z,y,h},\
			\mathcal N_h,\
			G_{u,\tau^{\mathrm s},\omega}(Z)\in A
			\,\Big|\,
			\sigma(L^{\mathrm r})
			\Bigr)
			\notag\\
			&\qquad=
			e^{-\rho h}\frac{\rho}{2r_0}
			\int_{I_{\tau,h}}\int_{I_{z,y,h}}
			\ind_A\!\bigl(G_{u,\tau,\omega}(z)\bigr)\,
			\Prob_{(u,i)}\!\left(
			\mathcal N_h
			\,\middle|\,
			\tau^{\mathrm s}=\tau,\,
			Z=z,\,
			\sigma(L^{\mathrm r})
			\right)
			dz\,d\tau
			\notag\\
			&\qquad=
			e^{-\rho h}\frac{\rho}{2r_0}
			\int_{I_{\tau,h}}\int_{I_{z,y,h}}
			\ind_A\!\bigl(G_{u,\tau,\omega}(z)\bigr)\,
			\exp\!\Bigl(
			-\int_0^{h}\lambda_s^{u,\tau,z}\,ds
			\Bigr)
			dz\,d\tau. \notag
		\end{align}
		Therefore,
		\[
		\begin{aligned}
			P_h\bigl((u,i),A\times\{i\}\bigr)
			&\ge
			e^{-\rho h}\frac{\rho}{2r_0}
			\E\Biggl[
			\mathbf 1_{\mathcal R_{y,h}}
			\int_{I_{\tau,h}}\int_{I_{z,y,h}}
			\mathbf 1_A\bigl(G_{u,\tau,\omega}(z)\bigr)       \\
			&\hspace{8em}
			\times
			\exp\left(
			-\int_0^h\lambda_s^{u,\tau,z}\,ds
			\right)
			dz\,d\tau
			\Biggr].
		\end{aligned}
		\]
		
		On $\mathcal R_{y,h}$, the frozen one-jump path remains in $K_{y,h}^0$.
		Define
		\[
		q_{y,h}^*
		:=
		\sup_{\substack{v\in K_{y,h}^0\\ k\in\mathbb S}}
		\sum_{\ell\neq k}q_{k\ell}(v)
		<\infty .
		\]
		Then
		\[
		\exp\left(
		-\int_0^h\lambda_s^{u,\tau,z}\,ds
		\right)
		\ge
		e^{-q_{y,h}^*h}
		\]
		on $\mathcal R_{y,h}$. Hence
		\[
		\begin{aligned}
			P_h\bigl((u,i),A\times\{i\}\bigr)
			&\ge
			e^{-(\rho+q_{y,h}^*)h}\frac{\rho}{2r_0}
			\E\Biggl[
			\mathbf 1_{\mathcal R_{y,h}}
			\int_{I_{\tau,h}}
			\lambda\bigl(
			G_{u,\tau,\omega}^{-1}(A)\cap I_{z,y,h}
			\bigr)
			d\tau
			\Biggr].
		\end{aligned}
		\]
		
		Fix $\omega\in\mathcal R_{y,h}$ and $\tau\in I_{\tau,h}$. 
		We have
		$G_{u,\tau,\omega}:I_{z,y,h}\to\R$ is strictly increasing, of class $C^1$, and satisfies
		$\partial_zG_{u,\tau,\omega}\le\kappa_{2,y,h}$, and
		\[
		J_{y,h}^0\subset G_{u,\tau,\omega}(I_{z,y,h}).
		\]
		Let $H:=G_{u,\tau,\omega}^{-1}\big|_{J_{y,h}^0}:J_{y,h}^0\to I_{z,y,h}$ be the inverse map. Then
		the inverse-function theorem gives, for every Borel set
		$A\subset J_{y,h}^0$,
		\[
		\lambda\bigl(
		G_{u,\tau,\omega}^{-1}(A)\cap I_{z,y,h}
		\bigr)
		=\int_A H'(y)\,dy
		\ge
		\frac{1}{2\kappa_{2,y,h}}\lambda(A).
		\]
		Consequently,
		\[
		\begin{aligned}
			P_h\bigl((u,i),A\times\{i\}\bigr)
			&\ge
			e^{-(\rho+q_{y,h}^*)h}\frac{\rho}{2r_0}
			\mathbb P(\mathcal R_{y,h})
			\lambda(I_{\tau,h})
			\frac{1}{2\kappa_{2,y,h}}
			\lambda(A)                                                \\
			&\ge
			e^{-(\rho+q_{y,h}^*)h}
			\frac{3\rho h}{32r_0\kappa_{2,y,h}}
			\lambda(A).
		\end{aligned}
		\]
		Set
		\[
		\varepsilon_{y,h}^0
		:=
		e^{-(\rho+q_{y,h}^*)h}
		\frac{3\rho h}{32r_0\kappa_{2,y,h}}
		>0 .
		\]
		Taking $A=B\cap J_{y,h}^0$ gives the pointwise estimate
		\begin{equation}
			\label{eq:pointwise-fixed-h-minorization-from-geometry}
			P_h\bigl((u,i),B\times\{i\}\bigr)
			\ge
			\varepsilon_{y,h}^0
			\lambda(B\cap J_{y,h}^0),
			\qquad
			u\in U_{y,h}^0,\quad B\in\mathcal B(\mathbb R).
		\end{equation}
		
		It remains to make the interval radius and the minorization constant uniform
		over the compact set $K$, for this fixed value of $h$. For every $y\in K$,
		choose an open interval $V_{y,h}$ such that
		\[
		y\in V_{y,h},
		\qquad
		\overline{V_{y,h}}
		\subset
		U_{y,h}^0\cap J_{y,h}^0 .
		\]
		The family $\{V_{y,h}:y\in K\}$ covers $K$. Since $K$ is compact, choose
		points $y_1,\dots,y_M\in K$ such that
		\[
		K\subset\bigcup_{\ell=1}^M V_{y_\ell,h}.
		\]
		Write
		\[
		V_{\ell,h}:=V_{y_\ell,h},
		\qquad
		U_{\ell,h}^0:=U_{y_\ell,h}^0,
		\qquad
		J_{\ell,h}^0:=J_{y_\ell,h}^0,
		\qquad
		\varepsilon_{\ell,h}^0:=\varepsilon_{y_\ell,h}^0 .
		\]
		Since $\overline{V_{\ell,h}}\subset J_{\ell,h}^0$ and $J_{\ell,h}^0$ is open,
		\[
		r_{\ell,h}
		:=
		\operatorname{dist}
		\bigl(
		\overline{V_{\ell,h}},
		(J_{\ell,h}^0)^c
		\bigr)
		>0.
		\]
		Define
		\[
		\delta_{K,h}
		:=
		\frac12\min_{1\le\ell\le M}r_{\ell,h}>0,
		\qquad
		\varepsilon_{K,h}
		:=
		\min_{1\le\ell\le M}\varepsilon_{\ell,h}^0>0.
		\]
		
		Now fix $x\in K$. Choose an index $\ell(x)\in\{1,\dots,M\}$ such that
		\[
		x\in V_{\ell(x),h}.
		\]
		Set
		\[
		U_{x,h}:=V_{\ell(x),h},
		\qquad
		J_{x,h}:=(x-\delta_{K,h},x+\delta_{K,h}).
		\]
		Then $U_{x,h}$ is an open interval containing $x$. If $x'\in U_{x,h}$, then
		\[
		x'\in V_{\ell(x),h}\subset U_{\ell(x),h}^0.
		\]
		Moreover, by the definition of $\delta_{K,h}$,
		\[
		J_{x,h}\subset J_{\ell(x),h}^0 .
		\]
		Applying \eqref{eq:pointwise-fixed-h-minorization-from-geometry} with
		$y=y_{\ell(x)}$ gives, for every $B\in\mathcal B(\mathbb R)$,
		\[
		\begin{aligned}
			P_h\bigl((x',i),B\times\{i\}\bigr)
			&\ge
			\varepsilon_{\ell(x),h}^0
			\lambda(B\cap J_{\ell(x),h}^0)                         \\
			&\ge
			\varepsilon_{K,h}
			\lambda(B\cap J_{x,h}).
		\end{aligned}
		\]
		This proves \eqref{eq:fixed-h-local-minorization}.
	\end{proof}

	We also need a one-step switching lemma at fixed time.
	
	\begin{lem}
		\label{lem:fixed-h-switch}
		Suppose Assumptions~\ref{ass:Levy} and
		Assumption~\ref{ass:erg-coeff} hold.
		Fix a compact interval $K\subset\mathbb R$ and distinct regimes
		$i,j\in\mathbb S$. For every $\eta>0$ there exists
		$h_{K,\eta}^{ij}>0$ such that, for every fixed
		$h\in(0,h_{K,\eta}^{ij}]$, there exists a constant
		$\xi_{K,\eta,h}^{ij}>0$ satisfying
		\begin{equation}
			\label{eq:fixed-h-switch}
			P_h\bigl((x,i),(x-\eta,x+\eta)\times\{j\}\bigr)
			\ge
			\xi_{K,\eta,h}^{ij},
			\qquad x\in K .
		\end{equation}
	\end{lem}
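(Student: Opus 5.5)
The plan is to bound $P_h((x,i),(x-\eta,x+\eta)\times\{j\})$ from below by the probability of a single explicit scenario. On $[0,h]$ the continuous coordinate stays within $\eta$ of $x$, and $\Lambda$ makes exactly one switch, from $i$ to $j$, with no further switches. The construction uses the Poisson random measure $N$ driving $\Lambda$, which is independent of $L$ and $X_0$.

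Fix $K$ and $\eta$. First choose a compact interval $K'\supset K$ with $\operatorname{dist}(K,(K')^c)\ge\eta$. Set
\[
\underline q:=\inf_{v\in K'}q_{ij}(v,\vartheta_0)>0,\qquad \bar q:=\sup_{v\in\mathbb R,\,k\in\mathbb S}\sum_{\ell\neq k}q_{k\ell}(v,\vartheta_0)<\infty,
\]
which are finite and positive by (E2) and (E3). Next introduce the confinement event
\[
E_h(x):=\Bigl\{\sup_{0\le t\le h}|X_t-x|<\eta\Bigr\}.
\]
The key estimate is that $\sup_{x\in K}\mathbb P_{(x,i)}(E_h(x)^c)\to0$ as $h\downarrow0$. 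This follows from a maximal version of Lemma~\ref{lem:short-time-moment}(ii), obtained by applying BDG to the stochastic integral and using the linear-growth bound on $b$ and the boundedness of $c$ from (E1) and (E4). That lemma gives $\mathbb E_{(x,i)}\bigl[\sup_{t\le h}|X_t-x|^2\bigr]\le Ch(1+|x|^C)$. Since the constant depends on the switching path only through the uniform coefficient bounds, it is uniform over $x\in K$ and over regime histories. Markov's inequality then gives $\mathbb P(E_h^c)\le Ch/\eta^2$ uniformly in $x\in K$.

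Then I write the scenario in terms of the counting processes of switches. On $E_h(x)$ the path stays in $K'$, so the instantaneous rate of an $i\to j$ switch is at least $\underline q$, and the total switching rate is at most $\bar q$ in every state. Let $\tau_1$ be the first switching time and $R_h$ the number of switches in $[0,h]$. I would use a standard thinning comparison: realize the switching jumps from $N$ restricted to $[0,h]\times[0,m\bar q]$, which is a Poisson process of rate $m\bar q$ (the intervals $\Gamma_{k\ell}$ lie in a bounded set since $\sum_{\ell}q_{k\ell}\le\bar q$). This gives
\[
\mathbb P\bigl(R_h=1,\ \Lambda_{\tau_1}=j,\ E_h(x)\bigr)\ \ge\ \underline q\,h\,e^{-m\bar q h}-\mathbb P(E_h(x)^c)-\mathbb P(\text{Poisson}(m\bar q h)\ge2).
\]
More carefully, the left side is at least $\mathbb E\bigl[\int_0^h\mathbf 1_{E}\,\mathbf 1_{\{\Lambda_{s-}=i\}}q_{ij}(X_{s-})\,ds\bigr]$ minus the probability of two or more atoms. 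That expectation in turn is bounded below by $\underline q\,\mathbb E[\text{time spent in } i \text{ before the first switch}\wedge h;\,E]$. This is $\ge \underline q\,h\,(1-\bar qh)-\underline q\,h\,\mathbb P(E^c)$, and hence $\ge \underline q h(1-\bar q h - Ch/\eta^2)$, while the two-atom term is $O(h^2)$.

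The main obstacle is this step. Confinement and the switching events are not independent, since the switches alter the drift and scale of $X$. I would handle this by using the compensator identity for the $i\to j$ counting process, with the integrand restricted to $E$, rather than any independence. This keeps the needed bound $\underline q h(1-O(h))-O(h^2)$ with $O(\cdot)$ uniform on $K$.

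Finally I choose $h_{K,\eta}^{ij}$ small enough that the bracket above is at least $\underline q h/2$ for all $h\le h^{ij}_{K,\eta}$. On the scenario event, $X_h\in(x-\eta,x+\eta)$ and $\Lambda_h=j$. Hence \eqref{eq:fixed-h-switch} holds with $\xi^{ij}_{K,\eta,h}:=\underline q h/4>0$, uniformly in $x\in K$.
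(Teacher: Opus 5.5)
The gap is at the step you flag as the main obstacle. The compensator identity
\[
\mathbb E\Bigl[\int_0^h H_s\,dN_{ij}(s)\Bigr]=\mathbb E\Bigl[\int_0^h H_s\lambda^{ij}_s\,ds\Bigr],
\qquad \lambda^{ij}_s:=\mathbf 1_{\{\Lambda_{s-}=i\}}q_{ij}(X_{s-},\vartheta_0),
\]
requires $H$ to be predictable. Here $N_{ij}(h)$ is the number of $i\to j$ switches in $[0,h]$. The indicator $\mathbf 1_E$ of $E=\{\sup_{t\le h}|X_t-x|<\eta\}$ is not predictable, because it depends on the path after the switching time. So "restricting the integrand to $E$" is not available. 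What you actually have is
\[
\mathbb E\bigl[N_{ij}(h)\mathbf 1_E\bigr]=\mathbb E\Bigl[\int_0^h\lambda^{ij}_s\,ds\Bigr]-\mathbb E\bigl[N_{ij}(h)\mathbf 1_{E^c}\bigr].
\]
The last term is the probability of switching \emph{and} then leaving the $\eta$-window. It has no sign relation to $\mathbb E[\mathbf 1_{E^c}\int_0^h\lambda^{ij}_s\,ds]$, since after a switch to $j$ the coefficients change and exit may become more likely.

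Crude bounds do not rescue this. $\mathbb P(E^c)$ is in general of exact order $h$: one large jump of $L$ suffices. Its constant, like $C/\eta^2$, can be much larger than $\underline q$. So your first display, which subtracts $\mathbb P(E^c)$ outright, can be negative. Cauchy--Schwarz or H\"older applied to $\mathbb E[N_{ij}(h)\mathbf 1_{E^c}]$ also gives only $O(h)$ with an uncontrolled constant. The missing step is a proof that $\mathbb E[N_{ij}(h)\mathbf 1_{E^c}]=o(h)$ uniformly in $x\in K$. Your other ingredients are fine: the uniform bound $\mathbb P(E^c)\le Ch/\eta^2$, the thinning domination, and the lower bound $\underline q h(1-\bar q h-Ch/\eta^2)$ for $\mathbb E[\int_0^h\mathbf 1_E\lambda^{ij}_s\,ds]$.

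There are two ways to close this.

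\emph{The paper's route.} The paper puts only the \emph{pre-switch} confinement $\{\sup_{r<s}|X_r-x|<\eta/4\}$, which is predictable, into the compensator integral. It handles the post-switch segment by the strong Markov property at the switching time. There, the probability of staying in $j$ and staying within $\eta/4$ is bounded by $R_j(y,t)\ge\frac12e^{-q^*_{K^\eta}t}$, using the frozen-regime process. Forcing the first switch into $[h/3,2h/3]$ then yields the bound $\frac14\underline q^{ij}_{K^\eta}\frac h3e^{-q^*_{K^\eta}h}$.

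\emph{A shorter fix within your framework.} Make precise your remark that the moment constant is uniform over regime histories.
\begin{enumerate}
\item Dominate $N_{ij}(h)\le\Pi_h:=N([0,h]\times[0,m\bar q])$, which is $\sigma(N)$-measurable.
\item Since $L$ is independent of $N$, it remains a martingale after $\sigma(N)$ is adjoined to the filtration at time $0$. Hence your BDG/Gronwall estimate holds conditionally: $\mathbb P(E^c\mid\sigma(N))\le Ch/\eta^2$ uniformly on $K$.
\item Then $\mathbb E[N_{ij}(h)\mathbf 1_{E^c}]\le\mathbb E[\Pi_h]\,Ch/\eta^2=O(h^2)$.
\end{enumerate}
The rest of your computation then gives $\xi^{ij}_{K,\eta,h}=\underline q h/4$ for small $h$, provided you take the two-atom correction to be $\mathbb E[\Pi_h\mathbf 1_{\{\Pi_h\ge2\}}]=O(h^2)$ rather than a probability. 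This version avoids frozen processes and the strong Markov split; the paper's argument is more explicit and uses only rate bounds localized to $K^\eta$.
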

	
	\begin{proof}
		Set $K^\eta:=\{u\in\mathbb R:\operatorname{dist}(u,K)\le \eta\}$.
		Then $K^\eta$ is compact. Write
		\[
		q_k(u):=\sum_{\ell\neq k}q_{k\ell}(u),
		\qquad k\in\mathbb S.
		\]
		By Assumption~\ref{ass:erg-coeff},
		\[
		q_{K^\eta}^*
		:=
		\sup_{\substack{u\in K^\eta\\k\in\mathbb S}}
		q_k(u)
		<\infty, \quad  
		\underline q_{K^\eta}^{ij}
		:=
		\inf_{u\in K^\eta}q_{ij}(u)
		>0 .
		\]
		
		For $k\in\{i,j\}$ and $y\in\mathbb R$, let $X^{k,y}$ denote the solution of
		the continuous equation with the regime frozen at $k$ and initial condition
		$X^{k,y}_0=y$:
		\[
		dX_t^{k,y}
		=
		b(X_t^{k,y},k,\alpha_0)\,dt
		+
		c(X_{t-}^{k,y},k,\gamma_0)\,dL_t .
		\]
		By standard Doob--Gronwall estimate, choose $h_{K,\eta}^{ij}>0$ such that, for every
		$h\in(0,h_{K,\eta}^{ij}]$,
		\begin{equation}
			\label{eq:frozen-localization-switch}
			\inf_{\substack{y\in K^{\frac{\eta}{2}}\\ k\in\{i,j\}}}
			\mathbb P\left(
			\sup_{0\le r\le h}|X_r^{k,y}-y|<\frac{\eta}{4}
			\right)
			\ge \frac12 .
		\end{equation}
		
		For $y\in K^{\eta/2}$ and $t\in[0,h]$, define
		\[
		R_j(y,t)
		:=
		\Prob_{(y,j)}
		\left(
		\sup_{0\le r\le t}|X_r-y|<\frac{\eta}{4},
		\ \Lambda_r=j\ \text{for all }r\in[0,t]
		\right).
		\]
		Under $\Prob_{(y,j)}$, on the event $\{\Lambda_r=j\text{ for all }
		r\in[0,t]\}$, the continuous component is the frozen-regime process
		$X^{j,y}$ up to time $t$. 
		Let $\mathcal G_t^{i,x}:=\sigma\{X_r^{i,x}:0\le r\le t\}$, $ N_{ij}(t) :=\sum_{0<s\le t}\mathbf 1_{\{\Lambda_{s-}=i,\ \Lambda_s=j\}}$ and $\tau_1$, $\tau_2$ be the first and second switching time of $\Lambda$.
		Therefore,
		\[
		\begin{aligned}
			R_j(y,t)
			&=
			\E\left[
			\mathbf 1_{\left\{
				\sup_{0\le r\le t}|X_r^{j,y}-y|<\eta/4
				\right\}}
			\Prob\left(
			\tau_1>t
			\,\middle|\,
			\mathcal G_t^{j,y}
			\right)
			\right]                                                     \\
			&=
			\E\left[
			\mathbf 1_{\left\{
				\sup_{0\le r\le t}|X_r^{j,y}-y|<\eta/4
				\right\}}
			\exp\left\{
			-\int_0^t q_j(X_r^{j,y})\,dr
			\right\}
			\right].
		\end{aligned}
		\]
		On the event $\left\{\sup_{0\le r\le t}|X_r^{j,y}-y|<\frac{\eta}{4}\right\}$,
		we have $X_r^{j,y}\in K^\eta$ for all $0\le r\le t$. Hence, by \eqref{eq:frozen-localization-switch},
		\[
		\begin{aligned}
			R_j(y,t)
			&\ge
			e^{-q_{K^\eta}^*t}
			\Prob\left(
			\sup_{0\le r\le t}|X_r^{j,y}-y|<\frac{\eta}{4}
			\right)                                                   \\
			&\ge
			\frac12 e^{-q_{K^\eta}^*t},
		\end{aligned}
		\]
		for $y\in K^{\eta/2}$ and $0\le t\le h$.

		Fix $h\in(0,h_{K,\eta}^{ij}]$ and $x\in K$. For
		$s\in[h/3,2h/3]$, define
		\[
		A_{i,x}(s)
		:=
		\left\{
		\sup_{0\le r\le s}|X_r^{i,x}-x|<\frac{\eta}{4}
		\right\}.
		\]
		Then define 
		\[
		D_{x,h}:= \left\{
		\tau_1 \in [h/3,2h/3], \, \Lambda_{\tau_1}=j,\, \tau_2 > h,\, \sup_{0\le r\le \tau_1}|X_r-x|<\frac{\eta}{4}, \sup_{\tau_1\le r\le h}|X_r-X_{\tau_1}|<\frac{\eta}{4}
		\right\}.
		\]
		We estimate $\Prob_{(x,i)}(D_{x,h})$. Using strong Markov property, we have
		\[
		\begin{aligned}
			\Prob_{(x,i)}(D_{x,h})
			&=
			\E_{(x,i)}
			\left[
			\int_{h/3}^{2h/3}
			\mathbf 1_{\left\{\tau_1\ge s,\,\sup_{0\le r<s}|X_r-x|<\eta/4\right\}}
			\mathbf 1_{\left\{
				\tau_2>h,\,
				\sup_{s\le r\le h}|X_r-X_s|<\eta/4
				\right\}}
			\,dN_{ij}(s)
			\right]                                                     \\
			&=
			\E_{(x,i)}
			\left[
			\int_{h/3}^{2h/3}
			\mathbf 1_{\left\{\tau_1\ge s,\,\sup_{0\le r<s}|X_r-x|<\eta/4\right\}}
			R_j(X_{s-},h-s)
			\,dN_{ij}(s)
			\right]                                                     \\
			&=
			\E_{(x,i)}
			\left[
			\int_{h/3}^{2h/3}
			\mathbf 1_{\left\{\tau_1\ge s,\,\sup_{0\le r<s}|X_r-x|<\eta/4\right\}}
			R_j(X_{s-},h-s)
			q_{ij}(X_{s-})
			\,ds
			\right]
			\\
			&=
			\int_{h/3}^{2h/3}
			\E\left[
			\mathbf 1_{A_{i,x}(s)}
			\mathbf 1_{\{\tau_1>s\}}
			q_{ij}(X_s^{i,x})
			R_j(X_s^{i,x},h-s)
			\right]\,ds                                               \\
			&=
			\int_{h/3}^{2h/3}
			\E\left[
			\mathbf 1_{A_{i,x}(s)}
			\E\!\left[
			\mathbf 1_{\{\tau_1>s\}}
			\,\middle|\,
			\mathcal G_s^{i,x}
			\right]
			q_{ij}(X_s^{i,x})
			R_j(X_s^{i,x},h-s)
			\right]\,ds                                               \\
			&=
			\int_{h/3}^{2h/3}
			\E\left[
			\mathbf 1_{A_{i,x}(s)}
			\exp\left\{
			-\int_0^s q_i(X_r^{i,x})\,dr
			\right\}
			q_{ij}(X_s^{i,x})
			R_j(X_s^{i,x},h-s)
			\right]\,ds .
		\end{aligned}
		\]
		
		For $s\in[h/3,2h/3]$, on the event $A_{i,x}(s)$, we have $X_r^{i,x}\in K^\eta$ for $0\le r\le s$ and $X_s^{i,x}\in K^{\eta/4}\subset K^{\eta/2}$.
		Thus, by the estimate for $R_j$ and \eqref{eq:frozen-localization-switch}
		\[
		\begin{aligned}
			\Prob_{(x,i)}(D_{x,h})
			&\ge
			\int_{h/3}^{2h/3}
			\Prob(A_{i,x}(s))\,
			e^{-q_{K^\eta}^*s}\,
			\underline q_{K^\eta}^{ij}\,
			\frac12 e^{-q_{K^\eta}^*(h-s)}
			\,ds                                                        \\
			&\ge
			\frac14\,
			\underline q_{K^\eta}^{ij}
			\int_{h/3}^{2h/3}
			e^{-q_{K^\eta}^*h}
			\,ds                                                        \\
			&=
			\frac14\,
			\underline q_{K^\eta}^{ij}
			\frac{h}{3}
			e^{-q_{K^\eta}^*h}.
		\end{aligned}
		\]
		It is easy to check that
		\[
		D_{x,h}
		\subset
		\left\{
		(X_h,\Lambda_h)\in (x-\eta,x+\eta)\times\{j\}
		\right\}.
		\]
		Consequently,
		\[
		\begin{aligned}
			P_h\bigl((x,i),(x-\eta,x+\eta)\times\{j\}\bigr)
			&=
			\Prob_{(x,i)}
			\left(
			(X_h,\Lambda_h)\in(x-\eta,x+\eta)\times\{j\}
			\right)                                                   \\
			&\ge
			\Prob_{(x,i)}(D_{x,h})                                   \\
			&\ge
			\frac14\,
			\underline q_{K^\eta}^{ij}
			\frac{h}{3}
			e^{-q_{K^\eta}^*h}.
		\end{aligned}
		\]
		For the fixed value of $h\in(0,h_{K,\eta}^{ij}]$, set
		\[
		\xi_{K,\eta,h}^{ij}
		:=
		\frac14\,
		\underline q_{K^\eta}^{ij}
		\frac{h}{3}
		e^{-q_{K^\eta}^*h}
		>0.
		\]
		
	\end{proof}

	\bigskip
	


\begin{thebibliography}{10}
		
		\bibitem{Aalen1978}
		Odd~O. Aalen, \emph{Nonparametric inference for a family of counting
			processes}, The Annals of Statistics \textbf{6} (1978), no.~4, 701--726.
		
		\bibitem{Andersen1993}
		Per~K. Andersen, {\O}rnulf Borgan, Richard~D. Gill, and Niels Keiding,
		\emph{Statistical models based on counting processes}, Springer, New York,
		1993.
		
		\bibitem{applebaum2009levy}
		David Applebaum, \emph{L{\'e}vy processes and stochastic calculus}, Cambridge
		university press, 2009.
		
		\bibitem{AzaisMullerGueudin2016}
		Romain Aza{\"i}s and Aur{\'e}lie Muller-Gueudin, \emph{Optimal choice among a
			class of nonparametric estimators of the jump rate for
			piecewise-deterministic {Markov} processes}, Electronic Journal of Statistics
		\textbf{10} (2016), no.~2, 3648--3692.
		
		\bibitem{BarndorffNielsen1997}
		Ole~E. Barndorff-Nielsen, \emph{Normal inverse gaussian distributions and
			stochastic volatility modelling}, Scandinavian Journal of Statistics
		\textbf{24} (1997), no.~1, 1--13.
		
		\bibitem{bhattacharya1982functional}
		Rabi~N Bhattacharya, \emph{On the functional central limit theorem and the law
			of the iterated logarithm for markov processes}, Zeitschrift f{\"u}r
		Wahrscheinlichkeitstheorie und verwandte Gebiete \textbf{60} (1982), no.~2,
		185--201.
		
		\bibitem{bladt2005statistical}
		Mogens Bladt and Michael S{\o}rensen, \emph{Statistical inference for
			discretely observed {M}arkov jump processes}, Journal of the Royal
		Statistical Society Series B: Statistical Methodology \textbf{67} (2005),
		no.~3, 395--410.
		
		\bibitem{Borgan1984}
		{\O}rnulf Borgan, \emph{Maximum likelihood estimation in parametric counting
			process models, with applications to censored failure time data},
		Scandinavian Journal of Statistics \textbf{11} (1984), no.~1, 1--16.
		
		\bibitem{ContTankov2004}
		Rama Cont and Peter Tankov, \emph{Financial modelling with jump processes},
		Chapman \& Hall/CRC Financial Mathematics Series, Chapman \& Hall/CRC, Boca
		Raton, 2004.
		
		\bibitem{down1995exponential}
		Douglas Down, Sean~P Meyn, and Richard~L Tweedie, \emph{Exponential and uniform
			ergodicity of markov processes}, The Annals of Probability \textbf{23}
		(1995), no.~4, 1671--1691.
		
		\bibitem{friedman2006stochastic}
		Avner Friedman, \emph{Stochastic differential equations and applications},
		Courier Corporation, 2006.
		
		\bibitem{Gobet2002}
		Emmanuel Gobet, \emph{Lan property for ergodic diffusions with discrete
			observations}, Annales de l'I.H.P. Probabilités et statistiques \textbf{38}
		(2002), no.~5, 711--737 (eng).
		
		\bibitem{Hibbah2020}
		El~Houcine Hibbah, Hamid El~Maroufy, Christiane Fuchs, and Taib Ziad, \emph{An
			{MCMC} computational approach for a continuous time state-dependent regime
			switching diffusion process}, Journal of Applied Statistics \textbf{47}
		(2020), no.~8, 1354--1374.
		
		\bibitem{Kessler1997}
		Mathieu Kessler, \emph{Estimation of an ergodic diffusion from discrete
			observations}, Scandinavian Journal of Statistics \textbf{24} (1997), no.~2,
		211--229.
		
		\bibitem{Kes97}
		\bysame, \emph{Estimation of an ergodic diffusion from discrete observations},
		Scand. J. Statist. \textbf{24} (1997), no.~2, 211--229.
		
		\bibitem{kessler2012statistical}
		Mathieu Kessler, Alexander Lindner, and Michael S{\o}rensen, \emph{Statistical
			methods for stochastic differential equations}, Monographs on Statistics and
		Applied Probability \textbf{124} (2012), 7--12.
		
		\bibitem{KrellSchmisser2021}
		Nathalie Krell and Emeline Schmisser, \emph{Nonparametric estimation of jump
			rates for a specific class of piecewise deterministic {Markov} processes},
		Bernoulli \textbf{27} (2021), no.~4, 2362--2388.
		
		\bibitem{Kulik2009}
		Alexey~M. Kulik, \emph{Exponential ergodicity of the solutions to {SDE}'s with
			a jump noise}, Stochastic Processes and their Applications \textbf{119}
		(2009), no.~2, 602--632.
		
		\bibitem{mao2006stochastic}
		Xuerong Mao and Chenggui Yuan, \emph{Stochastic differential equations with
			{M}arkovian switching}, Imperial college press, 2006.
		
		\bibitem{Masuda2007}
		Hiroki Masuda, \emph{Ergodicity and exponential \(\beta\)-mixing bounds for
			multidimensional diffusions with jumps}, Stochastic Processes and their
		Applications \textbf{117} (2007), no.~1, 35--56.
		
		\bibitem{Mas13as}
		\bysame, \emph{Convergence of {G}aussian quasi-likelihood random fields for
			ergodic {L}\'evy driven {SDE} observed at high frequency}, Ann. Statist.
		\textbf{41} (2013), no.~3, 1593--1641.
		
		\bibitem{MariMari2023}
		Carlo Mari and Emiliano Mari, \emph{Deep learning based regime-switching
			models of energy commodity prices}, Energy Systems \textbf{14} (2023),
		913--934.
		
		\bibitem{MasudaUehara2017}
		Hiroki Masuda and Yuma Uehara, \emph{Two-step estimation of ergodic {L}{\'e}vy
			driven {SDE}}, Statistical Inference for Stochastic Processes \textbf{20}
		(2017), no.~1, 105--137.
		
		\bibitem{MerlevedePeligrad2013}
		Florence Merlev\`ede and Magda Peligrad, \emph{Rosenthal-type inequalities for
			the maximum of partial sums of stationary processes and examples}, The Annals
		of Probability \textbf{41} (2013), no.~2, 914--960.
		
		\bibitem{meyn1992criteria}
		Sean~P. Meyn and R.~L. Tweedie, \emph{Stability of markovian processes i:
			Criteria for discrete-time chains}, Advances in Applied Probability
		\textbf{24} (1992), no.~3, 542--574.
		
		\bibitem{meyn1993stability}
		Sean~P Meyn and Richard~L Tweedie, \emph{Stability of markovian processes iii:
			Foster--lyapunov criteria for continuous-time processes}, Advances in Applied
		Probability \textbf{25} (1993), no.~3, 518--548.
		
		\bibitem{meyn2012markov}
		\bysame, \emph{Markov chains and stochastic stability}, Springer Science \&
		Business Media, 2012.
		
		\bibitem{Sat99}
		Ken-iti Sato, \emph{L\'evy processes and infinitely divisible distributions},
		Cambridge Studies in Advanced Mathematics, vol.~68, Cambridge University
		Press, Cambridge, 1999, Translated from the 1990 Japanese original, Revised
		by the author.
		
		\bibitem{UchidaYoshida2012}
		Masayuki Uchida and Nakahiro Yoshida, \emph{Adaptive estimation of an ergodic
			diffusion process based on sampled data}, Stochastic Processes and their
		Applications \textbf{122} (2012), no.~8, 2885--2924.
		
		\bibitem{Viennet1997}
		Gabrielle Viennet, \emph{Inequalities for absolutely regular sequences:
			application to density estimation}, Probability Theory and Related Fields
		\textbf{107} (1997), no.~4, 467--492.
		
		\bibitem{Xi2009}
		F.~B. Xi, \emph{Asymptotic properties of jump-diffusion processes with
			state-dependent switching}, Stochastic Processes and their Applications
		\textbf{119} (2009), no.~7, 2198--2221.
		
		\bibitem{XiYin2011}
		F.~B. Xi and G.~Yin, \emph{Jump-diffusions with state-dependent switching:
			existence and uniqueness, feller property, linearization, and uniform
			ergodicity}, Science China Mathematics \textbf{54} (2011), no.~12,
		2651--2667.
		
		\bibitem{XiYin2017}
		Fubao Xi and G.~George Yin, \emph{On feller and strong feller properties and
			exponential ergodicity of regime-switching jump diffusion processes with
			countable regimes}, SIAM Journal on Control and Optimization \textbf{55}
		(2017), no.~3, 1789--1818.
		
		\bibitem{yin2009hybrid}
		G~George Yin and Chao Zhu, \emph{Hybrid switching diffusions: properties and
			applications}, vol.~63, Springer Science \& Business Media, 2009.
		
		\bibitem{Yoshida2021}
		N.~Yoshida, \emph{Simplified quasi-likelihood analysis for a locally
			asymptotically quadratic random field}, arXiv preprint arXiv:2102.12460,
		2021.
		
		\bibitem{Yuzhong2025}
		Yuzhong Cheng and Hiroki Masuda, \emph{Statistical inference for ergodic
			diffusion with {M}arkovian switching}, Discrete and Continuous Dynamical
		Systems - B \textbf{30} (2025), no.~10, 3910--3940.
		
		\bibitem{Yos11}
		Nakahiro Yoshida, \emph{Polynomial type large deviation inequalities and
			quasi-likelihood analysis for stochastic differential equations}, Ann. Inst.
		Statist. Math. \textbf{63} (2011), no.~3, 431--479.
		
		\bibitem{ZhuYinBaran2017}
		Chao Zhu, G.~Yin, and Nicholas~A. Baran, \emph{Feynman--kac formulas for
			regime-switching jump diffusions and their applications}, SIAM Journal on
		Control and Optimization \textbf{55} (2017), no.~2, 1045--1085.
		
	\end{thebibliography}
\end{document}